\newtheorem{theorem}{Theorem}[section]
\newtheorem{lemma}[theorem]{Lemma}
\newtheorem*{lemma*}{Lemma}
\newtheorem{proposition}[theorem]{Proposition}
\newtheorem{corollary}[theorem]{Corollary}
\theoremstyle{definition}
\newtheorem{definition}[theorem]{Definition}
\newtheorem{example}[theorem]{Example}
\newtheorem{question}[theorem]{Question}
\theoremstyle{remark}
\newtheorem{remark}[theorem]{Remark}
\numberwithin{equation}{section}
\newcommand{\be}[1]{\begin{equation}\label{#1}}
\newcommand{\ee}{\end{equation}}
\newcommand{\abs}[1]{\lvert#1\rvert}
\newcommand{\norm}[1]{\lVert#1\rVert}
\newcommand{\A}{\mathbb{A}}
\newcommand{\C}{\mathbb{C}}
\newcommand{\W}{\mathscr{W}}
\newcommand{\R}{\mathbb{R}}
\newcommand{\RR}{\mathcal{R}}
\newcommand{\X}{\mathbb{X}}
\newcommand{\U}{\mathbb{U}}
\newcommand{\Y}{\mathbb{Y}}
\newcommand{\Ho}{\mathscr{H}}
\newcommand{\dtext}{\textnormal d}
\newcommand{\onto}{\xrightarrow[]{{}_{\!\!\textnormal{onto\,\,}\!\!}}}
\newcommand{\deff}{\stackrel {\textnormal{\tiny{def}}}{=\!\!=} }
\DeclareMathOperator{\diam}{diam}
\DeclareMathOperator{\dist}{dist}
\DeclareMathOperator{\re}{Re}
\DeclareMathOperator{\im}{Im}
\DeclareMathOperator{\loc}{loc}
\DeclareMathOperator{\Mod}{Mod}
\def\XXint#1#2#3{{\setbox0=\hbox{$#1{#2#3}{\int}$}\vcenter{\hbox{$#2#3$}}\kern-.5\wd0}}
\def\XXiint#1#2#3{{\setbox0=\hbox{$#1{#2#3}{\iint}$}\vcenter{\hbox{$#2#3$}}\kern-.5\wd0}}
\def\le{\leqslant}
\def\ge{\geqslant}
\begin{document}

\title[Dirichlet Energy and Hopf differentials]{Mappings of Least Dirichlet Energy \\and their Hopf Differentials}

\author[T. Iwaniec]{Tadeusz Iwaniec}
\address{Department of Mathematics, Syracuse University, Syracuse,
NY 13244, USA and Department of Mathematics and Statistics,
University of Helsinki, Finland}
\email{tiwaniec@syr.edu}

\author[J. Onninen]{Jani Onninen}
\address{Department of Mathematics, Syracuse University, Syracuse,
NY 13244, USA}
\email{jkonnine@syr.edu}
\thanks{Iwaniec was supported by the NSF grant DMS-0800416 and the Academy of Finland project 1128331.  Onninen was supported by the NSF grant DMS-1001620.}

%    General info
\subjclass[2010]{Primary 58E20;  Secondary 74B20, 35A15, 30C75.}

%\date{\today}

\keywords{Harmonic mappings, Hopf-differentials, Energy-minimal homeomorphisms-existence and uniqueness, Formation of cracks}
\begin{abstract}
The paper is concerned with mappings $\,h \colon \mathbb X \onto\mathbb Y\,$ between planar domains having least Dirichlet energy. The existence and uniqueness (up to a conformal change of variables in $\,\mathbb X\,$)  of the energy-minimal mappings is established within the class $\,\overline{\mathscr H}_2(\mathbb X, \mathbb Y)\,$ of strong limits of homeomorphisms in the Sobolev space  $\,\mathscr W^{1,2}(\mathbb X , \mathbb Y)\,$, a result of considerable interest in the mathematical models of Nonlinear Elasticity. The inner variation (of the independent variable in $\,\mathbb X $) leads to the Hopf differential $ \,h_z \overline{h_{\bar{z}}} \,\textnormal d z \otimes \textnormal{d} z \,$ and its trajectories. For a pair of doubly connected domains, in which $\,\mathbb X\,$ has finite conformal modulus,  we establish the following principle:  \begin{center}\textit{A mapping $h \in \overline{\Ho}_2 (\X, \Y)$ is  energy-minimal if and only if its Hopf-differential is analytic in $\,\mathbb X\,$ and real along $\partial \X$.}\end{center}
   In general, the energy-minimal mappings may not be injective, in which case one observes the occurrence of cracks in $\mathbb X\,$.  Nevertheless, cracks are triggered only by the points in $\,\partial \mathbb Y\,$ where $\,\mathbb Y\,$  fails to be convex. The general law of formation of cracks reads as follows:

 \begin{center}\textit{Cracks propagate along vertical trajectories of the Hopf differential\\ from $\partial \mathbb X\,$ toward the interior of $\,\mathbb X\,$ where they eventually terminate before making a crosscut.}
 \end{center}
\end{abstract}

\maketitle

 \tableofcontents

\section{Introduction}

Recently there have been new challenges and substantial work done \cite{AIM, AIMb, AIMO,  IKKO, IOtr, IOa} on minimizing the Dirichlet energy integral

\begin{equation}\label{direnergy}
\mathscr E _{_{\mathbb X}} [h]= \iint_{\X} \abs{Dh}^2 =2 \iint_{\X} \left( \abs{h_z}^2+ \abs{h_{\bar z}}^2 \right)\, \dtext z\;,\;\;\;\dtext z = \dtext x_1\, \dtext x_2
\end{equation}
subject to mappings $\,h \colon \mathbb X \onto\mathbb Y\,$ between bounded planar domains $\,\mathbb X\,$   and $\,\mathbb Y\,$. The novelty of new problems is that the mappings in question are allowed to slip along certain parts of $\,\partial \mathbb X\,$; so-called \textit{traction free} boundary problems. Motivated by mathematical models of nonlinear elasticity \cite{Ba2, Ba,  Ba1}  the focus has been on finding a class of mappings as close  to homeomorphisms as possible, in which the minimum energy is attained. The case when $\,\mathbb X\,$ and $\,\mathbb Y\,$ are simply connected is clear, by virtue of Riemann Mapping Theorem. Thus throughout this paper  the domains $\,\mathbb X\,$   and $\,\mathbb Y\,$ will be predominantly \textit{multiply connected} and of the same topological type, meaning that their complements $\mathbb C \setminus \mathbb X\,$ and  $\mathbb C \setminus \mathbb Y\,$ consists of the same finite number of components, say $\,\ell \geqslant 2\,$.
From these perspectives, the most desirable spaces in which to look for the energy-minimal mappings are the class $\Ho_2 (\X, \Y) \subset \W^{1,2} (\X, \Y)\,$ of homeomorphisms and its strong closure $\,\overline{\Ho_2} (\X, \Y)\,$ in the Sobolev space $\,\W^{1,2} (\X, \Y)$, see Definition \ref{spaces} for precise conditions. Note that the class $\Ho_2 (\X, \Y) \subset \W^{1,2} (\X, \Y)$ can be empty if $\,\partial \mathbb X\,$ contains isolated points as  boundary components,  but $\,\partial \mathbb Y\,$ does not. Isolated points are removable singularities for homeomorphisms in $\mathscr W^{1,2} (\X, \Y)$. Such $\,\mathbb X\,$ can be viewed as a domain with punctures, which we call \textit{degenerate} multiply connected domain. Apart from such a situation we have:

\begin{theorem}[Existence]\label{Emin} Consider a pair $\,(\mathbb X\, , \mathbb Y)\,$ of nondegenerate multiply connected domains,  in which $\,\mathbb Y\,$ is a Lipschitz domain. Then there exists $\,h \colon \mathbb X \rightarrow \overline{\mathbb Y} \,$ in the class  $\,\overline{\Ho}_2 (\X, \Y)\subset \W^{1,2} (\X, \Y)\,$ such that
\begin{equation}
\mathscr E_\X [h] = \inf \{\,\mathscr E_\X  [f]\,\colon \;f \in \Ho_2 (\X , \Y) \,\}
\end{equation}
\end{theorem}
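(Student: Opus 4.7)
The plan is to apply the direct method of the calculus of variations; the only non-routine step is to verify that the weak $\W^{1,2}$-limit of a minimizing sequence of homeomorphisms lies in the strong closure $\overline{\Ho}_2(\X,\Y)$, rather than merely in $\W^{1,2}(\X,\Y)$.

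First I would check that the hypotheses ensure $\Ho_2(\X,\Y) \neq \emptyset$: since the two planar multiply connected domains have the same topological type, $\Y$ is Lipschitz and $\X$ is non-degenerate, a standard construction produces a bi-Lipschitz homeomorphism between them, which has finite Dirichlet energy. Hence $E_0 \deff \inf\{\mathscr E_\X[f] : f \in \Ho_2(\X,\Y)\}$ is finite. Fix a minimizing sequence $\{h_n\} \subset \Ho_2(\X,\Y)$ with $\mathscr E_\X[h_n]\to E_0$. Boundedness of $\Y$ gives $\norm{h_n}_\infty \leqslant \diam \Y + \const$, and combined with the bounded Dirichlet energy this provides a uniform $\W^{1,2}$-bound. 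Passing to a subsequence, $h_n \rightharpoonup h$ weakly in $\W^{1,2}$, strongly in $L^2$ by Rellich-Kondrachov, and a.e.\ pointwise. Since each $h_n(\X)\subset\overline{\Y}$, the a.e.\ limit satisfies $h(x) \in \overline{\Y}$, and by the weak lower-semicontinuity of the Dirichlet integral, $\mathscr E_\X[h]\leqslant E_0$.

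If we can establish $h \in \overline{\Ho}_2(\X,\Y)$, the proof is complete: expressing $h$ as a strong $\W^{1,2}$-limit of $\tilde h_k \in \Ho_2(\X,\Y)$, continuity of the Dirichlet energy under strong convergence gives $\mathscr E_\X[h] = \lim \mathscr E_\X[\tilde h_k] \geqslant E_0$, forcing equality. The main obstacle is therefore precisely this membership, equivalently to upgrade the weak $\W^{1,2}$-convergence to strong convergence along a subsequence. The Hilbert-space structure of $L^2$ provides a clean criterion — weak convergence $Dh_n \rightharpoonup Dh$ combined with $\norm{Dh_n}_{L^2} \to \norm{Dh}_{L^2}$ automatically yields strong convergence — so the task reduces to ruling out any loss of energy in the weak limit. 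This step relies on the specific structure of planar Sobolev homeomorphisms: weak $\W^{1,2}$-limits of such homeomorphisms retain monotonicity properties and admit strong approximation by homeomorphisms, as in the Iwaniec-Kovalev-Onninen framework invoked earlier in the paper. With this approximation result in hand, every weak limit of homeomorphisms from $\Ho_2(\X,\Y)$ belongs to $\overline{\Ho}_2(\X,\Y)$, closing the argument.
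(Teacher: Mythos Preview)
Your approach is essentially the same as the paper's: run the direct method and appeal to the Iwaniec--Onninen result that $\mathscr{H}^{1,2}_{\lim}(\X,\Y)=\overline{\Ho}_2(\X,\Y)$ to ensure the weak limit of a minimizing sequence lies in $\overline{\Ho}_2(\X,\Y)$. That is the correct and only substantive step.

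One logical muddle deserves cleanup. You write that establishing $h\in\overline{\Ho}_2(\X,\Y)$ is ``equivalently to upgrade the weak $\W^{1,2}$-convergence to strong convergence along a subsequence,'' and then frame the remaining task as ``ruling out any loss of energy in the weak limit.'' This reverses cause and effect. Membership $h\in\overline{\Ho}_2(\X,\Y)$ only requires that \emph{some} sequence of homeomorphisms converges strongly to $h$, not that your original minimizing sequence does; and the cited approximation result delivers exactly this, with no reference to energies. Strong convergence of the minimizing sequence itself is a \emph{consequence}, not a prerequisite: once you know $h\in\overline{\Ho}_2(\X,\Y)$ you get $\mathscr E_\X[h]\geqslant E_0$, hence $\mathscr E_\X[h]=E_0=\lim\mathscr E_\X[h_n]$, and then the Hilbert-space criterion forces $Dh_n\to Dh$ strongly. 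So delete the detour about energy loss and simply invoke the equality of weak and strong closures directly, as the paper does.
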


Specifically, we say that $\,\mathbb Y\,$ is a Lipschitz domain if every local arc in $\,\partial \mathbb Y\,$ upon a suitable rotation becomes a graph of a Lipschitz function.
\begin{proof} We appeal to Theorem 1.1  and Remark 2.7 in \cite{IOa} which tell us that the class $\,\mathscr H^{1,2}_{\textnormal{lim}}(\mathbb X, \mathbb Y)\,$ of weak limits of homeomorphisms $\,f \colon \mathbb X \onto \mathbb Y\,$  in $\,\mathscr W^{1,2}(\mathbb X , \mathbb Y)\,$ (see Definition \ref{spaces} )  and $\,\overline{\Ho}_2 (\X, \Y) \subset \W^{1,2} (\X, \Y)\,$ are equal. Armed with this result, the direct method in the Calculus of Variations concludes the proof.
\end{proof}

Theorem \ref{Emin} calls  for the following concept:
\begin{definition}
Let $\,\mathbb X\,$ and $\,\mathbb Y\,$ be bounded domains. A map $\,h \in \overline{\Ho}_2 (\X, \Y)\,$ such that
\begin{equation}\label{EM}
\mathscr E_\X [h] = \inf \{\,\mathscr E_\X  [f]\,\colon \;f \in \Ho_2 (\X , \Y) \,\}
\end{equation}
will hereafter be referred to as an \textit{energy-minimal map}.
\end{definition}
However the proof of uniqueness of the energy-minimal map is tricky and far more involved. 

\begin{theorem}[Uniqueness]\label{thmuni}
Let $\,\X\,$ be a nondegenerate doubly connected domain  and $\Y$ a bounded  doubly connected Lipschitz domain. Then the energy-minimal map $h \in  \overline{\Ho}_2 (\X, \Y)$  is unique up to a conformal change of variables in  $\X$.
\end{theorem}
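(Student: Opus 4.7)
The plan is to combine conformal invariance with the Hopf-differential principle stated in the introduction, reducing the uniqueness question to a one-parameter classification on the annulus. Since both the Dirichlet energy and the admissible class $\overline{\Ho}_2(\X,\Y)$ are preserved under precomposition with conformal self-maps of $\X$, and since the conclusion is only up to such maps, I would first uniformize the nondegenerate doubly connected domain $\X$ to a round annulus $\A=\{z\in\C : 1<|z|<R\}$, $R=e^{\Mod \X}$, and carry out the rest of the argument on $\A$.

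On $\A$, the key input is the Hopf-differential characterization of minimizers: $h\in\overline{\Ho}_2(\A,\Y)$ is energy-minimal if and only if $\varphi_h:=h_z\overline{h_{\bar z}}\,\dtext z\otimes \dtext z$ is holomorphic on $\A$ and real along $\partial \A$. On the annulus the real vector space of such holomorphic quadratic differentials is one-dimensional and spanned by $\dtext z^2/z^2$, so every minimizer satisfies $\varphi_h=c_h\,z^{-2}\,\dtext z\otimes\dtext z$ with some $c_h\in\R$. Given two minimizers $h_1,h_2$, I would then establish the identity $c_{h_1}=c_{h_2}$ using that both realize the common infimum energy, that both carry the same boundary-to-boundary topological degree, and that $c_h$ admits an intrinsic integral expression (a period of $\sqrt{\varphi_h}$ around a generator of $\pi_1(\A)$) determined by the geometry of the target $\Y$.

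Once $\varphi_{h_1}=\varphi_{h_2}=:\varphi_0$, I would pass to natural coordinates $\Phi=\int\sqrt{\varphi_0}\,\dtext z$, which unfold $\A$ as a flat Euclidean strip $S$ modulo a one-parameter group of translations in one direction; that translation freedom corresponds exactly to the rotations $z\mapsto e^{i\theta}z$ of $\A$, which are the allowed conformal changes of variables in $\X$. On $S$ the lifts $H_1,H_2$ are harmonic away from their crack sets, they satisfy the normalized pointwise identities $|\partial_\xi H_i|^2-|\partial_\eta H_i|^2=4$ and $\langle\partial_\xi H_i,\partial_\eta H_i\rangle=0$ coming from the Hopf differential being constantly equal to $1$ in $\Phi$-coordinates, and they send $\partial S$ monotonically onto $\partial \Y$ in a fashion dictated by the doubly connected topology. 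Because cracks of a minimizer propagate along vertical trajectories of $\varphi_0$ from nonconvex points of $\partial \Y$, the crack patterns of $H_1$ and $H_2$ are congruent up to a horizontal translation of $S$. After absorbing that translation into a rotation of $\A$, both maps carry identical boundary correspondences on $\partial S$ together with both banks of each crack, whereupon standard uniqueness of harmonic functions with prescribed Dirichlet data forces $H_1=H_2$, i.e.\ $h_1=h_2\circ\tau$ for a rotation $\tau$ of $\A$.

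The principal obstacle, I expect, is the crack-matching step: one must show that the location and length of every crack depend only on the pair $(\X,\Y)$ and not on the particular minimizer. This calls for a delicate analysis of the free-boundary Euler--Lagrange equation at a crack tip, where $\varphi_0$ must vanish along the tangent direction, combined with the convexity-failure geometry of $\partial \Y$. A secondary structural difficulty is that $\overline{\Ho}_2(\A,\Y)$ is not a convex set, so uniqueness cannot be obtained by applying strict convexity of $h\mapsto\int|Dh|^2$ to the average $(h_1+h_2)/2$; precisely this obstruction is what the Hopf-differential machinery developed in the body of the paper is designed to bypass.
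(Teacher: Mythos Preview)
Your outline has the right opening moves---uniformize to an annulus and use the Hopf characterization to get $h_z\overline{h_{\bar z}}=c_h/z^2$ with $c_h\in\R$---but the core of your argument has two genuine gaps that the paper's proof does not share.

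First, your claim that $c_{h_1}=c_{h_2}$ via ``a period of $\sqrt{\varphi_h}$ determined by the geometry of $\Y$'' is not justified. The period $\oint \sqrt{c}\,dz/z$ is $2\pi i\sqrt{c}$, and there is no a priori reason this should depend only on $\Y$. In the paper this equality is \emph{derived}, not assumed: one first uses the integral identity of Lemma~\ref{lemmaidentity} (applied to $H_k\to H$ approximating diffeomorphisms) together with the energy equality $\mathscr E[H]=\mathscr E[h]$ to force the change of variables $f=H^{-1}\circ h:\mathbb G\to\mathbb G_H$ to be \emph{conformal} on the set $\mathbb G=h^{-1}(\Y)$. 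Only then does a winding-number computation give $f(z)=\lambda z$ and $c=c_H$.

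Second, and more seriously, your endgame appeals to ``standard uniqueness of harmonic functions with prescribed Dirichlet data'' once the crack patterns are aligned. But this is a traction-free problem: the boundary values of $h_1,h_2$ on $\partial\A$ are \emph{not} prescribed, only their monotone correspondence $\partial\A_I\rightsquigarrow\partial\Y_I$, $\partial\A_O\rightsquigarrow\partial\Y_O$. Two monotone boundary maps onto $\partial\Y$ can differ, and there is no Dirichlet principle to invoke. You yourself flag crack-matching as the ``principal obstacle'' and leave it unresolved; but even a perfect crack match would not give you equal boundary data. The paper circumvents this entirely by a different mechanism (see \S\ref{Distortion}): the difference $F=H-h$ of two solutions to the \emph{same} Hopf equation has nonnegative Jacobian and finite distortion wherever $J_h+J_H>0$; one then shows $F$ extends by zero across $\partial\A$ in $\mathscr W^{1,2}$ (using the crack structure of Theorem~\ref{thmcracks} and the relation $H(\lambda z)=h(z)$), whence $\iint J_F=0$, so $J_F\equiv 0$, so $DF\equiv 0$ on $\mathbb G\cup\mathbb G_H$. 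This is the missing idea.
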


If one wants to find an easy and clear way of verifying whether a given map $\, h \colon \mathbb X \onto \mathbb Y\,$  is  energy-minimal, one must look into  the Hopf differential $h_z \overline{h_{\bar z}}\; \dtext z \otimes \dtext z\,$. Here is the recipe.

\begin{theorem}\label{thmhopf}
Let $\X$ and $\Y$ be bounded doubly connected domains,  $\,\X\,$ being nondegenerate. Then a mapping $h \in \overline{\Ho}_2 (\X, \Y)$ is  energy-minimal if and only if its Hopf-differential is analytic and real along $\partial \X$.
\end{theorem}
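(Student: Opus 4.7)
The equivalence splits naturally into two implications, handled by distinct techniques.

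\emph{Necessity} ($h$ energy-minimal $\Rightarrow$ $\varphi$ analytic and real on $\partial\X$). The tool here is \textit{inner variation}. Given a smooth vector field $\eta \in C^\infty(\overline{\X};\C)$ tangent to $\partial\X$, let $\phi_t\colon \X\to\X$ be its flow; for $|t|$ small, $\phi_t$ is a diffeomorphism of $\X$. If $h$ is the strong $\W^{1,2}$-limit of homeomorphisms $h_n \in \Ho_2(\X,\Y)$, then each $h_n \circ \phi_t^{-1}$ is a homeomorphism in $\Ho_2(\X,\Y)$, and the limit $h\circ\phi_t^{-1} \in \overline{\Ho}_2(\X,\Y)$ is an admissible competitor. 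A change-of-variables computation yields the identity
\begin{equation*}
\frac{\dtext}{\dtext t}\bigg|_{t=0}\mathscr{E}_\X[h\circ\phi_t^{-1}] \;=\; -4\,\re \iint_\X \varphi\,\eta_{\bar z}\,\dtext z,\qquad \varphi = h_z\overline{h_{\bar z}}.
\end{equation*}
Minimality forces this to vanish. Testing against $\eta \in C_c^\infty(\X)$ (and $i\eta$) gives $\D_{\bar z}\varphi=0$ distributionally, so $\varphi$ is analytic in $\X$. Allowing general tangential $\eta$ on $\overline{\X}$ and integrating by parts (using the analyticity just proved to kill the interior term) leaves a boundary integral whose vanishing for all such $\eta$ is equivalent to $\im(\varphi\,\tau^2)=0$ along $\partial\X$, i.e., $\varphi\,\dtext z\otimes\dtext z$ is real along $\partial\X$.

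\emph{Sufficiency}. Assume $\varphi = h_z\overline{h_{\bar z}}$ is analytic on $\X$ and real along $\partial\X$. By Theorem~\ref{Emin} an energy-minimizer $h^*\in\overline{\Ho}_2(\X,\Y)$ exists, and by the necessity direction its Hopf differential $\varphi^*$ has the same two properties. A rigidity fact drives the comparison: for nondegenerate doubly connected $\X$, the real vector space of analytic quadratic differentials that are real on $\partial\X$ is one-dimensional. Indeed, Schwarz reflection across $\partial\X$ (admissible precisely by the reality condition) extends such a differential to a holomorphic quadratic differential on the Schottky double $\widehat\X$, a compact Riemann surface of genus one; on a torus, holomorphic quadratic differentials form a complex line, and the real subspace fixed by the anti-holomorphic reflection is one-real-dimensional. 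Consequently $\varphi = c\,\varphi^*$ for some $c\in\R$.

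\emph{Main obstacle}. The remaining and principal difficulty is to promote $\varphi = c\varphi^*$ to the equality $\mathscr{E}_\X[h]=\mathscr{E}_\X[h^*]$. My plan is to exploit Theorem~\ref{thmuni} together with conformal change of variables: the conformal self-maps of the doubly connected $\X$ form a real one-parameter group whose action on the one-dimensional space of admissible Hopf differentials rescales them, so one can choose $\psi\colon \X \to \X$ conformal with $(h^*\circ\psi)_z\overline{(h^*\circ\psi)_{\bar z}} = \varphi$. Setting $\tilde h^* := h^* \circ \psi$, both $h$ and $\tilde h^*$ now lie in $\overline{\Ho}_2(\X,\Y)$ and satisfy the same Hopf equation. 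Passing to natural $\varphi$-coordinates $\zeta = \int\sqrt{\varphi}\,\dtext z$, the domain $\X$ becomes an explicit flat cylinder whose two boundary components are horizontal trajectories of $\varphi$, and in these coordinates the combination ``$\overline{\Ho}_2(\X,\Y)$-class + prescribed analytic Hopf differential + target $\overline{\Y}$'' is expected to pin $h$ down up to a further conformal self-map. Theorem~\ref{thmuni}, applied to $\tilde h^*$, supplies precisely the rigidity needed to force $h$ and $\tilde h^*$ into the same conformal class; conformal invariance of the Dirichlet energy then gives $\mathscr{E}_\X[h] = \mathscr{E}_\X[\tilde h^*] = \mathscr{E}_\X[h^*]$, so $h$ is energy-minimal.
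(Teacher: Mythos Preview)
Your necessity argument is essentially the paper's (Corollary~\ref{Weakreal}), and it is fine.

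The sufficiency argument, however, has genuine gaps. First, invoking Theorem~\ref{thmuni} is circular: in the paper the proof of Theorem~\ref{thmuni} opens by applying Theorem~\ref{thmhopf} to both competing minimizers, so you cannot use uniqueness as an input here. Second, even granting Theorem~\ref{thmuni}, it does not say what you need. It asserts that two \emph{energy-minimal} maps agree up to a conformal change of variables; it says nothing about an arbitrary $h\in\overline{\Ho}_2(\X,\Y)$ that merely shares a Hopf differential with a minimizer. Your sentence ``Theorem~\ref{thmuni}, applied to $\tilde h^*$, supplies precisely the rigidity needed'' is exactly the step to be proved. Third, the rescaling maneuver fails outright: the conformal automorphisms of an annulus are rotations $z\mapsto e^{i\theta}z$ (and the boundary-swapping inversion), and a direct computation shows these leave $\frac{c}{z^2}\,\dtext z\otimes\dtext z$ invariant. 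So you cannot pass from $\varphi=c\,\varphi^*$ to $\varphi=\varphi^*$ by precomposition with a conformal self-map. Finally, Theorem~\ref{Emin} assumes $\Y$ Lipschitz, which Theorem~\ref{thmhopf} does not; the paper's proof avoids constructing a minimizer altogether.

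What the paper actually does for sufficiency is a direct energy comparison, not a rigidity/uniqueness argument. After conformally normalizing $\X$ to an annulus so that $h_z\overline{h_{\bar z}}=c/z^2$, it splits on the sign of $c$. For $c>0$ (Proposition~\ref{propcp}) one first shows via Theorem~\ref{thmcpo} that $h$ is a harmonic diffeomorphism, then for any diffeomorphism $H\colon\A\onto\Y$ the integral identity of Lemma~\ref{lemmaidentity} with $f=H^{-1}\circ h$ expresses $\mathscr E_\A[H]-\mathscr E_\A[h]$ as a sum of two nonnegative terms, the first controlled by the free-Lagrangian bound $\iint_\A K_T^f\,|z|^{-2}\,\dtext z\ge\iint_\A|z|^{-2}\,\dtext z$ (Lemma~\ref{lemKt}). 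For $c<0$ (Proposition~\ref{propcn}) cracks may occur; one works on $\mathbb G=h^{-1}(\Y)$, uses the same identity, and accounts for the crack region via \eqref{remaining} and Lemma~\ref{ctheory}, closing with the dual free-Lagrangian bound on $K_N^f$ (Lemma~\ref{lemKn}). In both cases $\mathscr E_\A[H]\ge\mathscr E_\A[h]$ for every diffeomorphism $H$, hence for every competitor in $\overline{\Ho}_2(\X,\Y)$ by density. The key analytic ingredient you are missing is Lemma~\ref{lemmaidentity} together with the distortion estimates of \S9; there is no shortcut through uniqueness.
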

It is surprising that this very useful criterion has not been established before.
  For the convenience of the reader we devote the entire \S\ref{InnerVar} to the inner variations and the associated Hopf differentials; in particular, see Definition~\ref{RealDiff} that is pertinent to Theorem~\ref{thmhopf}. The key observations in the proof of the uniqueness Theorem \ref{thmuni} is that the difference of two solutions to the same  Hopf-Laplace equation has  finite (not necessarily bounded) distortion on the set where at least one of the solutions has strictly positive Jacobian, see \S\ref{Distortion}. 

\textit{Propagation of cracks}  is an interesting phenomenon not only in modern theories of elasticity and plasticity, materials science or microscopic crystallographic defects found in real materials but also from mathematical point of view.   Let $\,h \in    \overline{\Ho}_2 (\X, \Y)$ be an energy-minimal map between multiply connected domains. It should be noted that in general $h$, being a limit of homeomorphisms from $\mathbb X$  onto $\mathbb Y$,  has range in the closure of $\mathbb Y$,  $\, h \colon \mathbb X \to\overline{\mathbb Y}$. The fact, referred to as \textit{partial harmonicity},  is that every $\,h \in    \overline{\Ho}_2 (\X, \Y)\,$is a harmonic diffeomorphism of $\,h^{-1}(\mathbb Y) \subset \mathbb X\,$  onto $\mathbb Y$, see~\cite{CIKO} and Proposition~\ref{Partialharmonicity}.
There might, however, be sets in $\,\mathbb X\,$ which are taken into a single point in $\,\partial \mathbb Y\,$.

\begin{definition}
Given a point $\,a\in \partial \mathbb Y\,$, the term \textit{crack} (or $\,a$-crack) in $\,\mathbb X\,$ refers to any connected component of the set $\,\{\,x \in \mathbb X \colon  h(x) = a\,\}\,$.
\end{definition}
It will be  seen (at least when $\,\mathbb Y\,$  is Lipschitz regular) via Lemma\,\ref{cdLemma}\,   that cracks always emanate  from $\,\partial\mathbb X\,$; thus, are  never reduced to a single point  or any continuum in $\mathbb X\,$.

How to predict occurrence of cracks?  One practical evidence is contained in the following result.
\begin{theorem}\label{nocracks}
Let $\,h \in    \overline{\Ho}_2 (\X, \Y)\,$ be an energy-minimal map, where  $\,\X\,$ is a Jordan domain and $\,\Y\,$ a Lipschitz domain, both multiply connected. Suppose $\,\mathbb Y\,$ is convex at a boundary point $\,a\in \partial \Y\,,$  meaning that the set $\, B \cap \mathbb Y\,$ is convex for some ball $\,B\,$ centered at $\,a\,$. Then  $\,a \not \in h(\X)\,$.
\end{theorem}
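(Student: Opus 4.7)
My plan is to argue by contradiction: assume $a\in h(\X)$ and fix $x_0\in \X$ with $h(x_0)=a$. By the continuity of $h$ (inherited from its status as a strong $\W^{1,2}$-limit of homeomorphisms, together with Proposition~\ref{Partialharmonicity}), for sufficiently small $\rho>0$ the closed disc $\overline{D}:=\overline{D(x_0,\rho)}\subset\X$ satisfies $h(\overline{D})\subset B\cap\overline{\Y}\subset\overline{B\cap\Y}$, which is a closed convex set.

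The main step is a harmonic-replacement competitor. Let $\tilde h\colon D\to\R^2$ be the harmonic extension of $h\lvert_{\partial D}$. Because its boundary values lie in the convex $\overline{B\cap\Y}$, the maximum principle applied to $\ell\circ\tilde h$ for every affine functional $\ell$ supporting $\overline{B\cap\Y}$ gives $\tilde h(D)\subset\overline{B\cap\Y}\subset\overline{\Y}$. Define $\hat h:=\tilde h$ on $D$ and $\hat h:=h$ elsewhere; then $\hat h\in \W^{1,2}(\X,\overline{\Y})$, and the Dirichlet principle gives $\mathscr E_\X[\hat h]\le \mathscr E_\X[h]$ with equality only if $h$ is itself harmonic on $D$. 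The first technical obstacle is verifying that $\hat h$ belongs to the admissible class $\overline{\Ho}_2(\X,\Y)$: I would approximate $h$ in $\W^{1,2}$ by homeomorphisms $h_n\in\Ho_2(\X,\Y)$ and modify each $h_n$ on $D$ so that the resulting maps remain homeomorphisms of $\X$ onto $\Y$ and converge to $\hat h$, using the convexity of $\overline{B\cap\Y}$ and the Lipschitz regularity of $\partial\Y$.

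Granted this, minimality of $h$ forces $h\equiv\tilde h$ on $D$, so $h$ is harmonic there. Choose by Hahn--Banach a supporting affine function $\ell$ with $\ell\ge 0$ on $\overline{B\cap\Y}$ and $\ell(a)=0$; then $u:=\ell\circ h$ is non-negative harmonic on $D$ and vanishes at the interior point $x_0$, so the strong maximum principle yields $u\equiv 0$ on $D$, i.e.\ $h(D)$ lies in the supporting line $L$. Since $\Y$ is open and $\ell>0$ on $\Y$, one has $L\cap\Y=\emptyset$. In the generic case $L\cap\overline{\Y}\cap B=\{a\}$, this gives $h\equiv a$ on $D$; running the same argument around every $y\in h^{-1}(a)$ (using the same ball $B$, since the convexity assumption concerns only the single point $a$) makes $h^{-1}(a)$ open in $\X$. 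Being closed by continuity and containing $x_0$, it is clopen in the connected domain $\X$, so $h\equiv a$ on $\X$, contradicting the energy lower bound $\mathscr E_\X[h]\ge 2|\Y|>0$ inherited from the approximating homeomorphisms via $|Dh_n|^2\ge 2\, J_{h_n}$. The remaining, and main, obstacle is the degenerate case in which $L$ meets $\partial\Y$ along a segment through $a$: writing $h=a+uv$ on $D$ with $v$ a unit vector along $L$, one finds $\varphi=u_z^2$ there, and one must then invoke the analyticity and real-on-$\partial\X$ condition for the Hopf differential from Theorem~\ref{thmhopf}, together with a refined variational argument, to rule out this collapse.
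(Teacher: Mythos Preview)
Your approach diverges from the paper's in an essential way, and the divergence is where both of your acknowledged obstacles become fatal.

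The paper does \emph{not} replace on a small disc $D\subset\X$ centered at a putative preimage $x_0$. Instead it performs harmonic replacement on the entire \emph{boundary cell} $\U=h^{-1}(B\cap\overline{\Y})$, which is the preimage of the full convex piece of the target. The heavy lifting is done in Proposition~\ref{replacement}: for that cell one produces $h_\U\in\overline{\Ho}_2(\X,\Y)$ with $h_\U\colon\U\onto B\cap\Y$ a harmonic \emph{diffeomorphism onto the open set} $B\cap\Y$. Energy-minimality then forces $h\equiv h_\U$, and since $h$ takes $\U$ \emph{onto} the open set $B\cap\Y\subset\Y$ and $\X\setminus\U$ into $\overline{\Y}\setminus B$, the point $a\in\partial\Y$ is simply not in the image. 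No maximum-principle argument, no supporting-line analysis, no degenerate case.

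Your first obstacle is genuine and your sketch does not resolve it. To place $\hat h$ in $\overline{\Ho}_2(\X,\Y)$ by modifying approximating homeomorphisms $h_n$ on $D$, you would need the modified maps to remain homeomorphisms $\X\onto\Y$. But $h_n(D)$ is an arbitrary Jordan subdomain of $\Y$, not convex, so Rad\'o--Kneser--Choquet gives you nothing; and the harmonic extension of $h_n\vert_{\partial D}$ lands in the convex hull of $h_n(\partial D)$, which need not be contained in $\Y$. The paper avoids this precisely by working with cells: the approximating sets $\U_n^j=h_j^{-1}(B_n)$ have \emph{convex} images $B_n$, so Rad\'o--Kneser--Choquet applies and the replacements stay homeomorphisms onto $\Y$.

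Your second obstacle is also genuine. When the supporting line $L$ meets $\overline{B\cap\Y}$ in a segment (which happens whenever $\partial\Y$ has a flat piece through $a$, perfectly compatible with Lipschitz regularity), your argument yields only $h(D)\subset L\cap\partial\Y$, not $h\equiv a$ on $D$, so the clopen argument collapses. Your proposed rescue via Theorem~\ref{thmhopf} is unavailable: that theorem is stated and proved only for doubly connected domains, whereas the present result is for arbitrary connectivity $\ell\ge 2$. The paper's cell-based replacement sidesteps this entirely because it never needs to analyze what harmonicity on a subdisc implies about the image.
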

This theorem may be interpreted as saying that cracks do not collapse  into a boundary point at which $\, \mathbb Y\,$ is convex, see Theorem \ref{critical radii} and  \S \ref{NitscheHammering}\,, see also  \cite{IOnAnnuli} for similar phenomenon in higher dimensions.
  In spite of the impressive progress in the field,  formation of cracks under energy-minimal deformations, to our knowledge, is not fully resolved. In this domain the best reference is the book by K. B.  Broberg~\cite{Br}.

 The problem clearly depends on the geometry of trajectories of the Hopf differential $h_z \overline{h_{\bar z}}\; \dtext z \otimes \dtext z$.
  In this paper we give a detailed description of cracks in case of doubly connected domains. Accordingly, if the conformal modulus of $\,\mathbb Y\,$ (deformed configuration) is large relative to $\,\mathbb X\,$ then the Hopf differential $h_z \overline{h_{\bar z}}\; \dtext z \otimes \dtext z$ is real and negative along $\partial \mathbb X$. Consequently no cracks occur, even in the presence of points of concavity in the  boundary of $\,\mathbb Y\,$, see~\cite{IKKO} and Theorem~\ref{thmcpo}.  If, on the other hand, the target is conformally very thin then the cracks are unavoidable. In this case the boundary components of $\,\mathbb X\,$ are horizontal trajectories along which the Hopf differential is positive. The cracks are born in  $\,\partial \mathbb X\,$ and propagate along the vertical trajectories toward the interior of $\,\mathbb X\,$ where they eventually terminate. In particular, cracks create no crosscuts in $\,\mathbb X\,$; the domain $\,\mathbb X\,$ with cracks remains doubly connected, see Theorem \ref{thmcracks}. 
  
  A thoughtful analysis of the doubly connected case shows that one needs, in order to reveal the curiosities and the mechanics of cracks, investigate and explore the geometry of trajectories of the Hopf quadratic differential $h_z \overline{h_{\bar z}}\; \dtext z \otimes \dtext z$ by means of the level sets of $h$. Theoretical prediction of failure of bodies caused by cracks  is a good motivation that should appeal to mathematical analysts and researchers in the engineering fields.

\section{Notation and Preliminaries}
\subsection{Domains}

We shall study a pair of bounded domains $\mathbb X\,,\,\mathbb Y \subset \mathbb R^2\simeq \mathbb C\,$ of the same connectivity $ 1\leqslant \ell <\infty\,. $  This amounts to saying that each of the complements $\,\mathbb C \setminus \mathbb X\,$ and $\,\mathbb C \setminus \mathbb Y\,$ consists of $\,\ell\,$ disjoint closed connected sets.  Let us introduce the notation,
$$\mathbb X_1, \mathbb X_2, ..., \mathbb X_\ell \;\;\;\;\;\;\; \textnormal{-the components of}  \;\;\mathbb C \setminus \mathbb X $$
$$\mathbb Y_1, \mathbb Y_2, ..., \mathbb Y_\ell  \;\;\;\;\;\;\textnormal{-the components of} \;\; \mathbb C \setminus \mathbb Y$$
Their boundaries are exactly the components of $\partial \mathbb X$ and $\partial \mathbb Y$, respectively;
$$\partial\mathbb X_1, \partial\mathbb X_2, ..., \partial\mathbb X_\ell  \;\;\;\;\;\;\textnormal{-the components of} \;\;\partial \mathbb X $$
$$\partial\mathbb Y_1, \partial\mathbb Y_2, ..., \partial\mathbb Y_\ell  \;\;\;\;\;\;\textnormal{-the components of} \;\;\partial \mathbb Y $$

Note that adding any number of sets in $\{\mathbb X_1, \mathbb X_2, ..., \mathbb X_\ell\} $ to $\mathbb X$  results in a domain. \subsubsection{Doubly connected domains}
For a doubly connected domain $\X$  we reserve special notation, $\X_I$ and  $\X_O$, for the bounded and unbounded components of $\C \setminus \X$\,, respectively.  Thus $\,\partial \X\,$ consists  of two continua $\,\partial \mathbb X_I\,$ and $\,\partial \mathbb X_O\,$, referred to as  \textit{inner} and \textit{outer boundaries}.

\subsubsection{Boundary Correspondence}

Every homeomorphism $\, h\colon  \mathbb X  \overset{\textnormal{\tiny{onto}}}{\longrightarrow} \mathbb Y $ gives rise to a one-to-one correspondence between boundary components of $\mathbb X$ and boundary components of $\mathbb Y\,$ which, upon a suitable arrangement of indices, will be designated as
\begin{equation}\label{Correspondence}   h \colon  \partial \mathbb  X_\nu \rightsquigarrow \partial \mathbb Y_\nu \;,\;\;\; \textnormal{for}\;\;\; \nu = 1,..., \ell
\end{equation}
Precisely this means that the cluster set of $\,\partial \mathbb  X_\nu\,$ under $\,h\,$ is the boundary component $\,\partial \mathbb  Y_\nu\,$. The components will be so ordered that the outer boundary of $\,\mathbb X\,$ corresponds to the outer boundary of $\,\mathbb Y\,$.
\subsection{The classes $\,\mathscr H(\mathbb X, \mathbb Y)\,$ and $\,\mathscr H_{cd}(\mathbb X, \mathbb Y)$}
Let $\,\mathscr{H}(\mathbb X, \mathbb Y)\,$  denote the class of all orientation preserving  homeomorphisms $\, h\colon  \mathbb X  \overset{\textnormal{\tiny{onto}}}{\longrightarrow} \mathbb Y $ which preserve the order of boundary components  in  (\ref{Correspondence}) .\\
The following geometric concept of convergence \cite{IKKO} proves very useful.
 \begin{definition} A sequence of continuous mappings $h_j \colon  \mathbb X \rightarrow \mathbb Y\, $ is said to converge \textit{cd-uniformly}  to a mapping  $h \colon  \mathbb X \rightarrow \overline{\mathbb Y} $  if ;
 \begin{itemize}
 \item $ h_j \rightarrow h \,$ ,\; $c$-uniformly \,(uniformly on compact subsets of $\mathbb X$ )
 \item $\textnormal{dist}[ h_j(x),\, \partial \mathbb Y ] \; \rightarrow \;\textnormal{dist}[ h(x),\, \partial \mathbb Y ] \;$, \;\;uniformly  in $ \mathbb X$
 \end{itemize}
 We write it as $h_j  \overset{\textnormal{\tiny{cd}}}{\longrightarrow} h\,$.
 \end{definition}

 Let $\,\mathscr H_{cd}(\mathbb X, \overline{\mathbb Y}) \,$ denote the class of   $\,cd$ -limits of homeomorphisms  in  $\mathscr H(\mathbb X, \mathbb Y) \,.$
  Passing to the $cd$-limit one may lose injectivity but the boundary correspondence in (\ref{Correspondence}) remains the same.
  It is a matter of topological routine to see that

  \begin{lemma}\label{cdLemma}
  For every $\,h \in \mathscr H_{cd}(\mathbb X, \overline{\mathbb Y}) \,$ it holds
  \begin{itemize}
  \item [(i)] \;\;\;$\mathbb Y \subset h(\mathbb X) \subset \overline{\mathbb Y} $
  \item [(ii)]\;\; Given a boundary point  $\,a\in \,\partial \mathbb Y_\nu \subset \partial \mathbb Y\,$, every component of the set $\, h^{-1}(a) = \{\, x \in \mathbb X\,;\, h(x)  = a\,\}\,$ \textnormal{(later interpreted as crack)}\, touches the boundary component $\,\partial \mathbb X_\nu \subset \partial \mathbb X\,$, i.e.  its closure intersects $\,\partial \mathbb X_\nu\,$.
  \end{itemize}
  \end{lemma}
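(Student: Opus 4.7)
Part (i). The containment $h(\X)\subset\overline{\Y}$ is immediate from pointwise convergence. For the reverse, I would first use the cd-hypothesis to show that $\phi(x):=\dist(h(x),\partial\Y)$ extends continuously by zero to $\partial\X$: each $h_j$ clusters $\partial\X$ onto $\partial\Y$, so $\phi_j(x):=\dist(h_j(x),\partial\Y)$ vanishes at $\partial\X$, and the uniform convergence $\phi_j\to\phi$ transfers this property to $\phi$. Then for $y\in\Y$ the preimages $x_j:=h_j^{-1}(y)$ satisfy $\phi_j(x_j)=\dist(y,\partial\Y)>0$, so $\{x_j\}$ stays in a compact subset of $\X$; any subsequential limit $x\in\X$ satisfies $h(x)=y$ by $c$-uniform convergence on a compact neighbourhood.

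Part (ii). Fix $x_0\in C$. For $j$ large, a nearest point $p_j\in\partial\Y$ to $h_j(x_0)$ must lie on $\partial\Y_\nu$, since the boundary components are pairwise disjoint compacta and $h_j(x_0)\to a\in\partial\Y_\nu$. The open segment $\sigma_j=[h_j(x_0),p_j)$ sits in $\Y$ because $p_j$ is nearest, so $\beta_j:=h_j^{-1}(\sigma_j)$ is a continuous path in $\X$ from $x_0$ whose trace accumulates on $\partial\X_\nu$ at the far end, by the boundary correspondence of $h_j$. Its closure $\Gamma_j$ in $\overline{\X}$ is a continuum sitting in $\X\cup\partial\X_\nu$, meeting $\partial\X_\nu$ and containing $x_0$, with $\sup\{|h_j(x)-a|:x\in\Gamma_j\cap\X\}\to 0$. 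Blaschke selection yields a subsequence $\Gamma_{j_k}\to\Gamma$ in Hausdorff distance; the limit $\Gamma$ is a continuum in $\overline{\X}$ containing $x_0$, meeting $\partial\X_\nu$, and satisfying $\Gamma\cap\X\subset h^{-1}(a)$ via $c$-uniform convergence on compact neighbourhoods.

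The decisive step is to verify $\Gamma\cap\partial\X\subset\partial\X_\nu$. Suppose a subsequence $x_{j_k}\in\Gamma_{j_k}\cap\X$ converges to $x^*\in\partial\X_\mu$ with $\mu\ne\nu$. Choose a Jordan curve $\gamma\subset\X$ encircling $\X_\mu$ alone with $a\notin h(\gamma)$; this can be arranged by picking $\gamma$ from the level curves $\{z:\dist(z,\X_\mu)=\rho\}$ for a generic small $\rho$. For $k$ large, $x_{j_k}$ lies inside $\gamma$ while $\Gamma_{j_k}$ also meets $\partial\X_\nu$ outside $\gamma$; connectedness of $\Gamma_{j_k}$ forces $\Gamma_{j_k}\cap\gamma\ne\emptyset$, producing $y_{j_k}\in\gamma$ with $|h_{j_k}(y_{j_k})-a|\to 0$. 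A subsequential limit $y\in\gamma$ then satisfies $h(y)=a$ by $c$-uniform convergence on the compact $\gamma$, contradicting $a\notin h(\gamma)$. Once $\Gamma\cap\partial\X\subset\partial\X_\nu$ is in hand, the boundary-bumping theorem applied to the continuum $\Gamma$ and its open subset $\Gamma\cap\X$ makes the component $C^*$ of $\Gamma\cap\X$ containing $x_0$ have $\overline{C^*}\cap\partial\X\ne\emptyset$; since $\Gamma\cap\partial\X\subset\partial\X_\nu$ and $C^*\subset C$ (being a connected subset of $h^{-1}(a)$ through $x_0$), one concludes $\overline{C}\cap\partial\X_\nu\ne\emptyset$. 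The principal obstacle is precisely this Jordan-curve selection: the preimage $h^{-1}(a)$ may be topologically wild, so the genericity argument in the one-parameter family of level curves is what bears the technical weight.
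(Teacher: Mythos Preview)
The paper does not actually prove this lemma; it merely says ``It is a matter of topological routine to see that \ldots'' So there is nothing to compare your argument against directly. Your Part~(i) is correct and cleanly done.

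For Part~(ii), however, your plan has a genuine gap at precisely the point you flag. The genericity argument for selecting a Jordan curve $\gamma$ encircling $\X_\mu$ with $a\notin h(\gamma)$ does not work: nothing you have established prevents $h^{-1}(a)$ from meeting \emph{every} leaf of a one-parameter foliation of a collar of $\partial\X_\mu$. For instance, if some connected piece of $h^{-1}(a)$ were an arc running from the interior of $\X$ down to $\partial\X_\mu$, it would cross every such $\gamma_\rho$; and ruling this out is exactly the content of~(ii). So the argument is circular at this step. The level sets of the distance function are in any case not Jordan curves without extra regularity on $\partial\X_\mu$, though this can be repaired via a conformal map to a circular domain; the real obstruction is the first one.

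The clean way out is the sentence the paper places immediately before the lemma: ``Passing to the $cd$-limit one may lose injectivity but the boundary correspondence in~(2.1) remains the same.'' That is, for $h\in\mathscr H_{cd}(\X,\overline{\Y})$ the cluster set of $\partial\X_\mu$ under $h$ is still $\partial\Y_\mu$. Granting this, your Hausdorff-limit and boundary-bumping machinery already gives $\overline{C^*}\cap\partial\X\ne\emptyset$ for the component $C^*$ of $\Gamma\cap\X$ through $x_0$; then if $z\in\overline{C^*}\cap\partial\X_\mu$, choose $z_n\in C^*\subset h^{-1}(a)$ with $z_n\to z$, so $a$ is a cluster value of $h$ at $z\in\partial\X_\mu$, forcing $a\in\partial\Y_\mu$ and hence $\mu=\nu$. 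This replaces the Jordan-curve selection entirely. If you want a self-contained proof, the substantive work becomes proving that the boundary correspondence survives the $cd$-limit; this is where the topological routine lives, and your current plan does not address it.
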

Another concept we are going to take from \cite{IKKO} is
\begin{definition}
A mapping $\,h \colon  \mathbb X \rightarrow \overline{\mathbb Y}\,$  is called \textit{deformation} if
\begin{itemize}
\item $\,h \in \mathscr W^{1,2}(\mathbb X)\,$
\item $\,J(x, h) \geqslant 0\,$  almost everywhere
\item $\,\iint_\mathbb X J(x, h) \,\textnormal{d} x  \leqslant |\mathbb Y| \,$
\item There exist sense preserving homeomorphisms $\, f_j \colon \mathbb X \onto \mathbb Y\,$ (called an approximating sequence) converging $\,cd$ -uniformly to $\,h\,$ .
\end{itemize}
Let us emphasize that the approximating homeomorphisms $\,f_j\,$ need not have finite energy. The set of deformations is denoted by $\,\mathfrak D (\mathbb X , \mathbb Y)\,$.
\end{definition}

\subsection{The Dirichlet Energy} The Dirichlet energy (or Dirichlet integral)  for functions $\,h \colon  \mathbb X  \rightarrow \mathbb C\,$ in the Sobolev space  $\,\mathscr W^{1,2}(\mathbb X,\mathbb C)\,$ is defined by
\begin{equation}\label{direnergy}
\mathscr E _{_{\mathbb X}} [h]= \iint_{\X} \abs{Dh}^2 =2 \iint_{\X} \left( \abs{h_z}^2+ \abs{h_{\bar z}}^2 \right)\, \dtext z\;,\;\;\;\dtext z = \dtext x_1\, \dtext x_2
\end{equation}
Here and throughout, we take advantage of the complex variable $z = x_1 + i\,x_2  $ and partial derivatives (Wirtinger operators)
$$
 \quad \quad h_z  = \frac{\partial h}{\partial z}= \frac{1}{2} \left( \frac{\partial h}{\partial x_1} - i \frac{\partial h}{\partial x_2}  \right)\,,   \quad
h_{\bar z}  = \frac{\partial h}{\partial \bar z}= \frac{1}{2} \left( \frac{\partial h}{\partial x_1} + i \frac{\partial h}{\partial x_2}  \right).
$$
Thus, the Jacobian determinant is
\begin{equation}
J(z, h) = J_h(z) = \textnormal{det}\, Dh (z) = |h_z|^2 - |h_{\bar z} |^2
\end{equation}
\subsection{The Space $\mathscr{H}_{_2}(\mathbb X, \mathbb Y)\,$  and other Relevant Classes of Mappings}
Various classes of mappings of finite energy will be considered.

\begin{definition}\label{spaces} Here they are:
\begin{itemize}
\item $\mathscr{H}_{_2}(\mathbb X, \mathbb Y) = \mathscr{H}(\mathbb X, \mathbb Y)\cap \mathscr W^{1,2}(\mathbb X,\mathbb Y) \,$.
    \item $\overline{\mathscr{H}_{_2}}(\mathbb X, \mathbb Y)\,$ is the closure of $\,\mathscr{H}_{_2}(\mathbb X, \mathbb Y)\,$ in strong topology of $\,\mathscr W^{1,2}(\mathbb X,\mathbb Y)\,$
        \item $\widetilde{\mathscr{H}_{_2}}(\mathbb X, \mathbb Y)\,$ is the closure of $\,\mathscr{H}_{_2}(\mathbb X, \mathbb Y)\,$ in weak topology of $\,\mathscr W^{1,2}(\mathbb X,\mathbb Y)\,$
    \item $\textit{Diff}_{_{\,2}}\,(\mathbb X\,,\mathbb Y)\,$ consists of  $\mathscr C^\infty$-diffeomorphisms in $\mathscr{H}_{_2}(\mathbb X, \mathbb Y)\,$. The fact is that  $\overline{\mathscr{H}_{_2}} (\mathbb X, \mathbb Y) = \overline{\textit{Diff}_{_{\,2}}}\,(\mathbb X\,,\mathbb Y)\,$, see \cite{IKOApproximation}.
 %       \item $\,\mathscr R(\mathbb X)\,$ is Royden algebra of uniformly continuous functions on $\,\mathbb X\,$ having finite energy. The norm is given by
  %          $$ \big{\|} h \big{\|}_{\mathscr R(\mathbb X)}\, = \,\underset{z\in \mathbb X}{\sup}\,|h(z)| \,+\,\Big(\iint_\mathbb X |Dh(z)|^2 \textnormal{d} z\,\Big)^{\frac{1}{2}}$$
    %    \item $\,\mathscr R_0(\mathbb X)\,$ is the completion of  $\,\mathscr C^\infty_0 (\mathbb X)\,$ in the above norm.
        \item $\,{\mathscr{H}}^{1,2}_{\lim}(\mathbb \X, \mathbb Y)\,$ stands for the family of all weak limits of homeomorphisms  $\,h_j \colon  \mathbb X  \onto \mathbb Y\,, \; j = 1, 2, ...\,,$ in the Sobolev Space $\,W^{1,2}(\mathbb X, \mathbb Y)$.
\end{itemize}

\end{definition}

Let us call attention to an important detail that $\,h\,$, in any of the above classes, has  a range  in the closure of the target domain; that is, $\,h \colon  \mathbb X \rightarrow \overline{\mathbb Y}\,$.

\begin{remark}

In general, the family of all weak limits of a set in a Banach space need not be weakly closed. That is why we have only the inclusion $\,{\mathscr{H}}^{1,2}_{\lim}(\mathbb \X, \mathbb Y)\, \varsubsetneq\widetilde{\mathscr{H}_{_2}}(\mathbb X, \mathbb Y)\,$. However, if $\X$ is a Jordan domain and $\Y$ is a Lipschitz domain, both not simply connected, then
\begin{equation}\label{setiden}
{\mathscr{H}}^{1,2}_{\lim}(\mathbb \X, \mathbb Y)\, = \widetilde{\mathscr{H}_{_2}}(\mathbb X, \mathbb Y)\, = \overline{\mathscr{H}}_2(\mathbb \X, \mathbb Y)\, =  \overline{\mathscr{H}}_2(\overline{\mathbb \X}, \overline{\mathbb Y}) \subset \mathscr H_{cd}(\mathbb X, \mathbb Y) .
\end{equation}
For the first two equalities we refer to~\cite{IOa}.
The  symbol  $\,\overline{\mathscr{H}}_2(\overline{\mathbb \X}, \overline{\mathbb Y}) \,$ denotes the class of strong limits  of homeomorphisms up to the boundaries, $\,h_j \colon  \overline{\mathbb X} \onto \overline{\mathbb Y}\,$, in the Sobolev space $\,\mathscr W^{1,2}(\mathbb X, \mathbb Y)\,$; the equality is immediate from~\cite{IKOsa}. The last inclusion is then obvious.
\end{remark}

Finally, we infer from \cite{IKKO} that
\begin{lemma}\label{inclusions}
Let $\,\mathbb X\,$ and $\,\mathbb Y\,$ be multiply connected domains, $\,\mathbb X\,$ being nondegenerate.  If a sequence $\,\{ h_j\} \subset \mathfrak D(\mathbb X, \mathbb Y)\,$ converges weakly in $\,\mathscr W^{1,2}(\mathbb X\,\mathbb Y)\,$, then its limit belongs to $
 \,\mathfrak D(\mathbb X, \mathbb Y)\,$. Thus, in particular
 \begin{equation}
 \mathscr H^{1,2}_{\textnormal{lim}}(\mathbb X, \mathbb Y) \subset \mathfrak D (\mathbb X , \mathbb Y) \subset \mathscr H_{cd}(\mathbb X, \overline{\mathbb Y})
 \end{equation}
 \begin{equation}
 \mathbb Y \subset h(\mathbb X) \subset \overline{\mathbb Y}  \;,\;\;\textnormal{for every}\;\; h \in \mathscr H^{1,2}_{\textnormal{lim}}(\mathbb X, \mathbb Y)
 \end{equation}
\end{lemma}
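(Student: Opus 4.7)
The substance of the lemma is the closedness of $\mathfrak D(\X,\Y)$ under weak $\W^{1,2}$ convergence; the two displayed inclusions are then formal. The plan is to verify each of the four defining requirements of a deformation for the weak limit $h$, with the construction of a cd-approximating sequence of homeomorphisms for $h$ being the main obstacle.

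The first three requirements follow from standard weak-convergence arguments. Sobolev regularity $h \in \W^{1,2}(\X)$ is automatic. For the pointwise non-negativity $J(\cdot,h) \ge 0$ I would exploit the distributional identity $J(h_j) = (u_j\, \partial_{x_2} v_j)_{x_1} - (u_j\, \partial_{x_1} v_j)_{x_2}$ with $h_j = u_j + i v_j$; weak $\W^{1,2}$ convergence of $h_j$ together with Rellich strong $L^2$ convergence of $u_j$ deliver distributional convergence $J(h_j) \to J(h)$. Since each $J(h_j) \ge 0$, the limit pairs non-negatively against any non-negative test function, forcing the $L^1$ function $J(h)$ to be non-negative almost everywhere. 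The integral bound $\iint_\X J(h) \le |\Y|$ follows by a weak-$*$ compactness argument: extract a Radon-measure limit $\mu$ of $J(h_j)\,\dtext x$ on $\X$, whose total mass satisfies $\mu(\X) \le \liminf_{j} \iint_\X J(h_j) \le |\Y|$; since $\mu$ dominates $J(h)\,\dtext x$, the bound on $\iint_\X J(h)$ is immediate.

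The key remaining step is to produce sense-preserving homeomorphisms $f_n \colon \X \onto \Y$ converging cd-uniformly to $h$. By hypothesis each $h_j$ admits an approximating sequence $\{f_j^{(k)}\}_k$ of homeomorphisms cd-converging to $h_j$, so the natural strategy is a diagonal extraction $f_n = f_{j(n)}^{(k(n))}$, provided one first obtains cd-uniform convergence $h_j \to h$ along a subsequence. The $c$-uniform half comes from equicontinuity on compacta $K \Subset \X$: the bounded Dirichlet energy combined with the uniform Jacobian mass bound yields a uniform modulus of continuity for $h_j|_K$ via the classical Courant--Lebesgue/conformal-capacity oscillation estimate for planar deformations, and Rellich compactness upgrades weak $\W^{1,2}$-convergence to uniform convergence on $K$. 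The delicate half, and the main obstacle, is uniform convergence of the boundary-distance functional $\dist(h_j(x),\partial \Y)$ up to $\partial \X$; this is what preserves the boundary correspondence and the cluster-set structure in the limit, and it is precisely the technical boundary analysis for the class $\mathfrak D(\X,\Y)$ imported from \cite{IKKO}.

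Finally, the displayed inclusions are routine consequences. The inclusion $\mathfrak D(\X,\Y) \subset \mathscr H_{cd}(\X,\overline \Y)$ is immediate from the fourth axiom of a deformation. For $\mathscr H^{1,2}_{\textnormal{lim}}(\X,\Y) \subset \mathfrak D(\X,\Y)$, note that every $f \in \Ho_2(\X,\Y)$ is its own cd-approximating sequence and is therefore trivially a deformation; hence any weak $\W^{1,2}$-limit of $\Ho_2(\X,\Y)$-maps is a weak limit of deformations, which by the closedness just established lies in $\mathfrak D(\X,\Y)$. The set-theoretic inclusion $\Y \subset h(\X) \subset \overline \Y$ for $h \in \mathscr H^{1,2}_{\textnormal{lim}}(\X,\Y)$ then follows by applying Lemma \ref{cdLemma}(i) to $h \in \mathscr H_{cd}(\X,\overline \Y)$.
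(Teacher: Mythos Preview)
The paper does not give a proof of this lemma at all; it is introduced by the phrase ``we infer from \cite{IKKO} that'' and stated without argument. So there is no in-paper proof to compare against --- the entire content is delegated to \cite{IKKO}.

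Your outline is a reasonable reconstruction of what such a proof must contain, and it correctly isolates the structure: the first three axioms of a deformation pass to weak $\mathscr W^{1,2}$-limits by routine compensated-compactness arguments (distributional convergence of the Jacobian, lower semicontinuity of the mass), while the real work lies in manufacturing a cd-approximating sequence of homeomorphisms for the limit map. Your proposed route --- first upgrade weak $\mathscr W^{1,2}$-convergence of the $h_j$ to cd-uniform convergence along a subsequence, then run a diagonal extraction through the approximating homeomorphisms of each $h_j$ --- is the natural one.

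One point deserves a caution. You obtain interior equicontinuity of the $h_j$ by invoking a Courant--Lebesgue oscillation bound ``for planar deformations.'' The Courant--Lebesgue lemma alone only controls the oscillation of $h_j$ on some circle of small radius; turning this into a genuine modulus of continuity requires a monotonicity-type property (oscillation on a disk dominated by oscillation on its boundary), and it is not entirely automatic that an arbitrary deformation --- a cd-limit of homeomorphisms whose approximants need not have finite energy --- enjoys this property. That structural fact about $\mathfrak D(\mathbb X,\mathbb Y)$ (connected fibers, local monotonicity) is itself part of the machinery developed in \cite{IKKO}, on the same footing as the boundary-distance analysis you already flag as imported. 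So both halves of the cd-convergence, not just the ``$d$'' half, ultimately rest on \cite{IKKO}. With that caveat, your sketch accurately captures what is going on.
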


\subsection{Monotone mappings}
Monotone mappings were born  in the work of C.B. Morrey  \cite{Mor}. We will be concerned with compact subsets of $\,\mathbb R^2\,$ so let us combine Morrey's original idea with a theorem of G.T. Whyburn , see page 2  in \cite{McAuley}, to make the following definition:
 \begin{definition}[monotonicity]
 A continuous mapping $\,f\colon  \mathbf X \onto \mathbf Y\,$ between compact metric spaces is monotone if for each connected set $\,C \subset \mathbf Y\,$ its preimage $\,f^{-1}(C)\,$ is connected in $\mathbf X\,$.
 \end{definition}
\begin{theorem}[Kuratowski-Lacher]\label{Kuratowski-Lacher}
   Let $\,\mathbf X\,$ and $\,\mathbf Y\,$ be compact Hausdorff spaces,  $\,\mathbf Y\,$ being locally connected. Suppose we are given a sequence of monotone mappings $\,f_k \colon  \mathbf X \onto \mathbf Y\,$  converging uniformly to a mapping $\, f \colon \mathbf X \rightarrow \mathbf Y\,$, then $\,f \colon  \mathbf X \onto \mathbf Y\,$ is monotone.
 \end{theorem}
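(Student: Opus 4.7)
The plan is to reduce monotonicity of $f$ to connectedness of fibers, and then to transfer a hypothetical separation of a fiber back to the $f_k$'s using uniform convergence; the crucial ingredient will be local connectedness of $\mathbf Y$. Surjectivity of $f$ comes for free: for $y \in \mathbf Y$ pick $x_k \in f_k^{-1}(y)$ and any cluster point $x_\ast \in \mathbf X$; since $f$ is continuous as a uniform limit of continuous maps and $\lvert f(x_k) - y \rvert = \lvert f(x_k) - f_k(x_k) \rvert \le \|f - f_k\|_\infty \to 0$, one obtains $f(x_\ast) = y$.

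The heart of the argument is fiber-connectedness. Fix $y \in \mathbf Y$ and suppose for contradiction that $f^{-1}(y) = A \sqcup B$ with $A, B$ disjoint, nonempty and closed in $\mathbf X$. Normality of $\mathbf X$ provides disjoint open $U \supset A$ and $V \supset B$; set $K = \mathbf X \setminus (U \cup V)$. Then $K$ is compact, $f(K)$ is compact, and $y \notin f(K)$. Regularity together with local connectedness of $\mathbf Y$ now supply a connected open neighborhood $W$ of $y$ with $\overline{W} \cap f(K) = \emptyset$. Uniform convergence on the compact set $K$ forces $f_k(K) \cap W = \emptyset$ for all large $k$, so $f_k^{-1}(W) \subset U \cup V$. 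Because $W$ is connected and each $f_k$ is monotone, $f_k^{-1}(W)$ is a nonempty connected subset of the disjoint union $U \sqcup V$, hence lies entirely in one of them. Along a subsequence we may assume $f_k^{-1}(W) \subset U$. Picking any $b \in B \subset V$, uniform convergence yields $f_k(b) \to f(b) = y$, and since $W$ is open and $y \in W$ we get $f_k(b) \in W$ eventually. Hence $b \in f_k^{-1}(W) \subset U$, contradicting $b \in V$.

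Finally I would upgrade fiber-connectedness to the full monotonicity statement by the standard closed-map argument. Since $\mathbf X$ is compact and $\mathbf Y$ is Hausdorff, $f$ is a closed map. Given any connected $C \subset \mathbf Y$ together with a putative separation $f^{-1}(C) = P \sqcup Q$, each fiber $f^{-1}(y)$ for $y \in C$ is connected and so lies wholly in $P$ or in $Q$. This partitions $C$ into $C_P = \{y \in C : f^{-1}(y) \subset P\}$ and $C_Q = \{y \in C : f^{-1}(y) \subset Q\}$; writing $P = f^{-1}(C) \cap U$ with $U$ open in $\mathbf X$ and using that $f(\mathbf X \setminus U)$ is closed, one checks that $C_P = C \setminus f(\mathbf X \setminus U)$ is open in $C$, and similarly for $C_Q$, contradicting connectedness of $C$.

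The main obstacle is the fiber-connectedness step, and inside it the use of local connectedness of $\mathbf Y$: without it I could not select a connected open neighborhood $W$ of $y$, and the key step — applying monotonicity of $f_k$ to deduce that $f_k^{-1}(W)$ is connected and therefore must be contained entirely in one of $U$ or $V$ — would collapse. Once connected neighborhoods are available, the whole separation is transported from the limit $f$ back to the approximants $f_k$ by uniform convergence, and the contradiction produces itself.
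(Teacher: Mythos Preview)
The paper does not prove this theorem; it merely quotes it and cites the original sources \cite{K, KL}. So there is no ``paper's own proof'' to compare against. Your argument is essentially the standard one and is correct: the separation of a fiber is pushed back to the approximants via a connected open neighborhood $W$ of $y$ (this is precisely where local connectedness of $\mathbf Y$ is indispensable), and monotonicity of $f_k$ forces $f_k^{-1}(W)$ into a single piece of $U \sqcup V$, yielding the contradiction. The upgrade from connected fibers to connected preimages of connected sets via the closed-map argument is also clean and correct.

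One small remark: you use metric notation such as $\lvert f(x_k)-y\rvert$ and $\|f-f_k\|_\infty$, and you tacitly pass to a convergent subsequence. In the generality of compact Hausdorff spaces this should be phrased in terms of the unique compatible uniformity on $\mathbf Y$ and cluster points (subnets) rather than subsequences. None of this affects the substance of the proof, and in any case the paper only applies the theorem to compact subsets of $\mathbb R^2$ and to $\mathbb S^2$, where your metric formulation is perfectly adequate.
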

See \cite{K, KL} and \cite{Rado, Why} for further reading about monotone mappings.
\subsection{Equicontinuity and monotonicity}
 \begin{lemma}\label{Equicontinuity} Let $\,\mathbb X\,$  and $\,\mathbb Y\,$ be  Lipschitz domains of connectivity $\,\ell \geqslant 2\,$. Then,
 every $\,h \in {\mathscr{H}}^{1,2}_{\lim}(\mathbb \X, \mathbb Y)\,$ extends to a continuous monotone map $\,h \colon \overline{\mathbb X} \onto \overline{\mathbb Y}\,$. The boundary map $\, h \colon \partial\overline{\mathbb X} \onto \partial \overline{\mathbb Y}\,$ is also monotone. Moreover,
 \begin{equation}\label{r2}
\abs{h(x_1)- h(x_2)}^2 \le \frac{C(\X, \Y)}{\log \left(e+ \frac{\textnormal{diam}\, \mathbb X}{\abs{x_1-x_2}}\right)} \iint_{\X} \abs{D h}^2, \quad x_1, x_2 \in \overline{\X}.
\end{equation}
 \end{lemma}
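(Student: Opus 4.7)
\emph{Strategy.} My plan is to establish the oscillation bound~\eqref{r2} uniformly for the approximating homeomorphisms $h_j\colon\mathbb X\onto\mathbb Y$, deduce equicontinuity on $\overline{\mathbb X}$, and then harvest monotonicity of the limit from the Kuratowski--Lacher theorem (Theorem~\ref{Kuratowski-Lacher}). The classical Courant--Lebesgue circular-oscillation trick drives the interior estimate, and the Lipschitz data of both $\mathbb X$ and $\mathbb Y$ are used to push it up to the boundary.

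\emph{Interior Courant--Lebesgue step.} For $h_j$ in the approximating sequence and interior points $x_1,x_2\in\mathbb X$, I would set $r=|x_1-x_2|$, $x_0=(x_1+x_2)/2$, $R=\diam\mathbb X$. Writing the Dirichlet integral of $h_j$ over the annulus $B(x_0,R)\setminus B(x_0,2r)$ in polar coordinates centred at $x_0$ and applying Chebyshev against the measure $d\rho/\rho$ (whose total mass on $(2r,R)$ is $\log(R/2r)$) produces a radius $\rho_\ast\in(2r,R)$ with
\[
\rho_\ast\int_{\partial B(x_0,\rho_\ast)}|Dh_j|^2\, ds \;\le\; \frac{1}{\log(R/2r)}\iint_{\mathbb X}|Dh_j|^2.
\]
Because $h_j$ is a homeomorphism, $h_j\bigl(\partial B(x_0,\rho_\ast)\bigr)$ is a Jordan curve whose diameter dominates $|h_j(x_1)-h_j(x_2)|$. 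Cauchy--Schwarz applied to the arc-length integral $\int|Dh_j|\,ds$ then yields~\eqref{r2} with $C=2\pi$, after the routine adjustment that turns $\log(R/2r)$ into $\log(e+R/|x_1-x_2|)$.

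\emph{Boundary extension.} When $x_1$ or $x_2$ lies near $\partial\mathbb X$, concentric circles eventually leave $\mathbb X$. Here I would invoke the Lipschitz regularity of $\partial\mathbb X$ to straighten the boundary locally by a bi-Lipschitz chart, reducing the picture to half-disks in a half-plane, and replay the Fubini--Chebyshev argument on half-circles. The image $h_j(\gamma_\ast)$ of the good half-circle is an arc joining two points on a single boundary component $\partial\mathbb Y_\nu$; closing it by the sub-arc of $\partial\mathbb Y_\nu$ between those endpoints produces a Jordan curve enclosing $h_j(B\cap\mathbb X)$, and the chord-arc character of the Lipschitz curve $\partial\mathbb Y$ bounds the length of that closing arc by a multiple of the chord distance between the endpoints, which is itself controlled by the interior oscillation already obtained. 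Both Lipschitz constants are absorbed into $C(\mathbb X,\mathbb Y)$, and~\eqref{r2} persists up to $\overline{\mathbb X}$.

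\emph{Passage to the limit, monotonicity, and the main obstacle.} Since~\eqref{r2} is uniform in $j$ and $h_j(\mathbb X)\subset\overline{\mathbb Y}$ is bounded, the family $\{h_j\}$ is equicontinuous on $\overline{\mathbb X}$. Arzel\`a--Ascoli then extracts a uniformly convergent subsequence whose continuous limit $\tilde h\colon\overline{\mathbb X}\to\overline{\mathbb Y}$ must coincide on $\mathbb X$ with the weak $\mathscr W^{1,2}$-limit $h$; hence $h$ has the advertised continuous extension, and~\eqref{r2} survives the limit by lower semicontinuity of $\iint|Dh|^2$. Surjectivity onto $\overline{\mathbb Y}$ follows from the inclusion $\mathbb Y\subset h(\mathbb X)$ of Lemma~\ref{inclusions} together with compactness of $h(\overline{\mathbb X})$. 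Every $h_j$ is trivially monotone, so Theorem~\ref{Kuratowski-Lacher} renders the uniform limit $h\colon\overline{\mathbb X}\onto\overline{\mathbb Y}$ monotone; applied to the boundary traces $h_j|_{\partial\mathbb X}$, whose uniform convergence is precisely the boundary case of~\eqref{r2}, the same theorem delivers monotonicity of the boundary map. The main obstacle is the boundary step: the clean ``a homeomorphism transfers the diameter of a region to the diameter of its bounding Jordan curve'' principle of the interior case must be replaced by a hybrid argument pairing a half-circle with an arc of $\partial\mathbb Y$, and the chord-arc control of that arc is exactly what forces the constant $C$ to depend on the Lipschitz data of both domains.
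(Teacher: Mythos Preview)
Your overall architecture---establish \eqref{r2} uniformly for the approximating homeomorphisms, harvest equicontinuity and a continuous extension via Arzel\`a--Ascoli, then invoke Kuratowski--Lacher for monotonicity---matches the paper's. The interior Courant--Lebesgue step is fine. The difference lies in how the estimate is pushed to the boundary.

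The paper does not run a chord-arc closing argument. Instead it first cites \cite{IOtr} for the continuous boundary extension of each $h_j$, notes that monotonicity of the extended map and its boundary trace is ``topologically clear,'' and then obtains the global estimate \eqref{r2} by \emph{extending $h$ beyond $\partial\mathbb X$} (bi-Lipschitz reflection using the Lipschitz data of both domains, cf.\ the Extension Lemma~\ref{extension}) so that the interior Courant--Lebesgue bound applies on a neighborhood of $\overline{\mathbb X}$. This is the ``standard method'' the paper alludes to and the reason both Lipschitz constants enter $C(\mathbb X,\mathbb Y)$.

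Your chord-arc route has a genuine soft spot. You assert that closing $h_j(\gamma_\ast)$ by the sub-arc of $\partial\mathbb Y_\nu$ between its endpoints yields a Jordan curve \emph{enclosing} $h_j(B\cap\mathbb X)$, and that the relevant sub-arc is the short one. But there are two sub-arcs, and nothing in your argument rules out the possibility that the cluster set of the boundary segment $I\subset\partial\mathbb X$ (the diameter of your half-disk) under $h_j$ is the \emph{long} arc of $\partial\mathbb Y_\nu$; in that case $h_j(B\cap\mathbb X)$ would be large and the diameter bound fails. Deciding which arc is correct amounts to already knowing something about the boundary modulus of continuity of $h_j$, which is precisely what you are trying to prove---a circularity. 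One can repair this with an additional topological argument (e.g.\ comparing with a second good half-circle at a larger radius, or using the $\dist(h_j(\cdot),\partial\mathbb Y)$ control from \cite{IOtr}), but the extension-beyond-the-boundary approach sidesteps the issue entirely and is what the paper intends.
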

\begin{proof} It is known that a homeomorphism $\,h \colon \mathbb X \onto\mathbb Y\,$ between Lipschitz domains in the Sobolev space $\,\mathscr W^{1,2}(\mathbb X , \mathbb Y)\,$ extends continuously up to the boundaries \cite{IOtr}. And it is topologically clear that a continuous extension of a homeomorphism $\,h \colon \mathbb X \onto\mathbb Y\,$ results in  monotone mappings $\,h \colon  \overline{\mathbb X} \onto\overline{\mathbb Y}\,$ and $\,h \colon  \partial\mathbb X \onto\partial\mathbb Y\,$.  These properties carry over to the uniform limits of homeomorphisms. Thus it only remains to justify the uniform estimates (\ref{r2}). Local estimates like this are well known  for monotone mappings of Sobolev class $\,\mathscr W^{1,2}(\mathbb X\,,\mathbb Y )\,$ \cite{IKosO, IMb}. The global inequality follows by the standard method of extending $\,h\,$ beyond the boundaries. It is at this point that the Lipschitz regularity of the domains is required again.  We leave the routine details to the readers.
\end{proof}
\begin{remark}\label{normalization}

Similar arguments, including the extension procedure, apply to a pair of simply connected Lipschitz domains if the mappings are fixed at some interior point ($\,h(x_\circ) = y _\circ\,$ for some $\,x_\circ \in \mathbb X\,$ and $\,y_\circ \in \mathbb Y\,$) or at three boundary points ($\,h(x_1) = y _1 \,,\, h(x_2) = y _2 \,,\, h(x_2) = y _2 \,$). We shall return to this later case in \S \ref{washer-nut}, where we will discuss quadrilateral mappings.

\end{remark}

\subsection{Rad\'{o}-Kneser-Choquet theorem}
The following proposition strengthens the well know theorem of Rad\'{o}-Kneser-Choquet.
\begin{theorem}\label{RaKnCh}
 A harmonic map $\,h \colon \Omega \rightarrow \mathbb C\,$ of a Jordan domain, which extends continuously as a monotone map of $\,\partial \Omega\,$ onto a boundary of a convex region is a $\,\mathscr C^\infty$ -diffeomorphism of $\, \Omega\,$ onto this region.
\end{theorem}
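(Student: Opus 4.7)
The plan is to adapt the classical proof of Rad\'{o}--Kneser--Choquet (as presented, for example, in Duren's book on harmonic mappings), observing that the only feature of the boundary data actually used is monotonicity; the usual homeomorphism hypothesis is gratuitous. Write $D$ for the convex region in the statement and $h=u+iv$.

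First I would reduce to showing that the Jacobian $J_h$ is nowhere zero in $\Omega$. If $J_h(z_0)=0$, then $\nabla u$ and $\nabla v$ are linearly dependent at $z_0$, so one can choose real numbers $a,b,c$ with $(a,b)\neq(0,0)$ such that $\phi:=au+bv+c$ is harmonic in $\Omega$ with $\phi(z_0)=0$ and $\nabla\phi(z_0)=0$. Locally, $\phi(z)=\re\bigl(\alpha(z-z_0)^{k+1}\bigr)+o(|z-z_0|^{k+1})$ for some integer $k\ge 1$, so the zero set $\{\phi=0\}$ contains at least four analytic arcs crossing at $z_0$. By the maximum principle none of these arcs can close up inside $\Omega$, hence each reaches $\partial\Omega$; a standard topological count (the zero set of $\phi$ in $\overline{\Omega}$ is a planar graph with no interior cycles) then forces $\phi$ to change sign at least four times as one traverses $\partial\Omega$.

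The second step is to derive a contradiction by bounding the sign-change count from above. Writing $L(w)=aU+bV+c$ for $w=U+iV$, convexity of $D$ implies that each of $\{w\in\partial D:L(w)\ge 0\}$ and $\{w\in\partial D:L(w)\le 0\}$ is a connected (possibly empty or full) subarc of $\partial D$. Monotonicity of $h\colon\partial\Omega\rightsquigarrow\partial D$ says that preimages of connected sets are connected, so $\{z\in\partial\Omega:\phi(z)\ge 0\}$ and $\{z\in\partial\Omega:\phi(z)\le 0\}$ are each connected arcs of $\partial\Omega$; hence $\phi$ has at most two sign changes on $\partial\Omega$, contradicting the previous paragraph. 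This is the step where the monotone (rather than homeomorphic) boundary hypothesis enters, and is where I expect the main delicacy to lie --- particularly in degenerate cases, for instance when $h|_{\partial\Omega}$ collapses an arc of $\partial\Omega$ to a single point of $\partial D$, or when $\partial D$ contains a straight segment lying on the line $\{L=0\}$. The cure in both cases is to argue directly with the connectedness of preimages rather than with pointwise inversion.

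Once $J_h>0$ throughout $\Omega$, the harmonic map $h$ is a local $C^\infty$-diffeomorphism. The maximum principle applied to each supporting half-plane of $D$ gives $h(\Omega)\subset\overline{D}$, and the Hopf boundary-point lemma (or the strong maximum principle applied to the non-constant harmonic function $\textnormal{dist}(\,\cdot\,,\partial H)\circ h$ for each supporting half-plane $H\supset D$) upgrades this to $h(\Omega)\subset D$. Finally, the boundary extension $h\colon\partial\Omega\onto\partial D$ is a monotone, orientation-preserving surjection of Jordan curves, hence has topological degree $1$; degree theory (equivalently, a covering-space argument applied to the proper local homeomorphism $h\colon\Omega\to D$ between simply connected regions) concludes that $h$ is a global $C^\infty$-diffeomorphism of $\Omega$ onto $D$.
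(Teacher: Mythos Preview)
The paper does not actually prove this theorem; it simply states ``For the proof and the best general references see~\cite{Duren}.'' Your proposal is precisely the classical Rad\'o--Kneser--Choquet argument as presented in Duren's book, with the correct observation that only monotonicity of the boundary map (not bijectivity) is used to bound the number of sign changes of the auxiliary harmonic function $\phi=au+bv+c$ on $\partial\Omega$. The argument is sound; one small remark is that you assert the boundary map is \emph{orientation-preserving} when invoking degree $1$, whereas the hypothesis does not fix an orientation --- but this is harmless, since $J_h\neq 0$ and continuity force $J_h$ to have constant sign, and the covering/degree argument goes through with degree $\pm 1$ to yield a global diffeomorphism in either case.
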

For the proof and the best general references see ~\cite{Duren}.

\section{Inner Variation and Hopf Differentials}\label{InnerVar}
Although the concept of inner variation has been used in the Calculus of Variations for a long time \cite{BOP, Cob,  Job, Ta, SSe}, see also more recent ones \cite{CIKO, IKOli, Mo, Ya}, some of its nuances are still to be scrutinized.

\subsection{Inner Variation}
Throughout this section $\,\mathbb X\subset \mathbb R^2 \backsimeq \mathbb C\,$\, is a bounded domain whose points, also called \textit{variables}, will be denoted by $\,x = x_1 + i x_2 \,$. To cover all the instances discussed later, we distinguish a compact subset $\,\mathbb K \subset \partial \mathbb X\,$, possibly empty or the entire boundary.
\subsubsection{Change of variables}
The term change of variables in $\mathbb X\,$ that is fixed at the compact subset $\,\mathbb K \subset \partial \mathbb X\,$, refers to a $\mathscr C^\infty $-diffeomorphism $\,\Psi \colon \mathbb X \onto\mathbb X \,$ which is continuous up to $\,\partial \mathbb X\,$ and satisfies  $\, \Psi(x)  =  x\,$ for $\,x \in \,\mathbb K\,$.

\subsubsection{Variation  of variables in $\,\mathbb X\,$ }\label{changeofVariables} This is a one-parameter family of change of variables $\,\{\Psi^{\,\varepsilon}\}_{-\varepsilon_\circ < \varepsilon <\varepsilon_\circ}\,$, such that

 \begin{itemize}
 \item [(i)] The function $\,(\varepsilon, x) \rightsquigarrow  \Psi^{\,\varepsilon}(x)\,$ is $\,\mathscr C^\infty $-smooth in $\,(-\varepsilon_\circ \,,\,\varepsilon_\circ )\times {\mathbb X}\,$
 \item [(ii)] $\, \Psi^{\,0} = id  \,$\;\; in \;$\mathbb X\,$
 \item [(iii)] $\, \Psi^{\,\varepsilon} = \,id\,$\;\; in\; $\mathbb K\,$,\;\;\textnormal{for every}\;\;\;   $\,\varepsilon \in (-\varepsilon_\circ\, ,  \,\varepsilon_\circ )\,$
 \end{itemize}

  Define $\,\lambda \deff \; \frac{\textnormal d}{\textnormal d  \varepsilon}\big{|}_{\varepsilon = 0}  \Psi^{\,\varepsilon}\;$  and note that  $\,D\lambda \equiv \frac{\textnormal d}{\textnormal d  \varepsilon}\big{|}_{\varepsilon = 0} D\Psi^{\,\varepsilon}\,$.
\subsubsection{Inner variation of a mapping} Let $\, h\,$ be a function in the Sobolev space $\,\mathscr W^{1,2}(\mathbb X , \mathbb C)\,$ and $\,\{\Psi^{\,\varepsilon}\}\,$  a  variation of variables in $\,\mathbb X\,$ that is fixed at $\,\mathbb K \subset \partial \mathbb X\,$.  The family $\, h^\varepsilon =  h\,\circ\, \Psi^\varepsilon\, $ is called an \textit{inner variation} of $\,h\,$.  Observe that all the mappings $\, h^\varepsilon\,$ have the same range as $\,h\, $, which is one of the desired  properties that motivates the use of inner variations. Additional assumption on the behavior of $\,\Psi^\varepsilon\, $ near $\,\partial \mathbb X\,$ will be needed in order to ensure that $\,h^\varepsilon \in \mathscr W^{1,2}(\mathbb X , \mathbb C)\,$. In our applications, however, this property will always be satisfied, so there is no need for formulating any explicit conditions here. Nevertheless, the interested reader may notice that if all the diffeomorphisms $\,\Psi^{\,\varepsilon} \colon \mathbb X \onto\mathbb X\,$  are $\,K$-quasiconformal; that is, $\, |D\Psi^\varepsilon | \leqslant K \,\textnormal{det}
  \, D \Psi^\varepsilon \;$, then
 $$
 \mathscr E_\mathbb X [h^\varepsilon] \;= \; \iint _\mathbb X |Dh^\varepsilon(x)|^2 \,\textnormal{d} x\;   \leqslant    K  \iint _\mathbb X |Dh(x)|^2 \,\textnormal{d} x\;    < \infty
 $$

  \subsubsection{Critical points} Choose and fix a variation  $\,\{\Psi^{\,\varepsilon}\}_{-\varepsilon_\circ < \varepsilon <\varepsilon_\circ}\,$ of variables in $\,\mathbb X\,$.  We say that $\,h \in \mathscr W^{1,2}(\mathbb X , \mathbb C)\,$ is a \textit{critical point} for
 $\,\{\Psi^{\,\varepsilon}\}\,$ if
  $$
  \frac{\textnormal{d}\,\mathscr E_\mathbb X[h^\varepsilon]}{\textnormal{d}\varepsilon}\;\Big{|}_{ \varepsilon=0}\;\equiv\;0
  $$
  \begin{lemma}[Integral form of the variational equation]\label{WeakHL}  Every critical point for $\,\{\Psi^{\,\varepsilon}\}_{-\varepsilon_\circ < \varepsilon <\varepsilon_\circ}\,$ satisfies the equation
  \begin{equation}\label{critical}
  \re \iint _{\mathbb X }  \overline{h_{\overline{z}}}\, h_z \, \lambda_{\overline{z}} \,\,\textnormal{d} z\; = \;0  \;,\;\;\;\;where \;\;\;\lambda \deff \frac{\textnormal{d}\,\Phi^\varepsilon}{\textnormal{d}\varepsilon}\;\Big{|}_{ \varepsilon=0}
  \end{equation}
  \end{lemma}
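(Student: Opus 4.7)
The approach is to compute $\tfrac{\dtext}{\dtext\varepsilon}\mathscr E_\mathbb X[h^\varepsilon]\big|_{\varepsilon = 0}$ directly. Since $h \in \mathscr W^{1,2}(\mathbb X,\mathbb C)$ we cannot differentiate $h_z$ or $h_{\bar z}$ in $\varepsilon$; the standard device for inner variations is to transfer all of the $\varepsilon$-dependence onto the smooth diffeomorphism $\Psi^\varepsilon$ by changing variables $y = \Psi^\varepsilon(x)$ in the energy integral, so that only the smooth fields $\Psi^\varepsilon_z,\Psi^\varepsilon_{\bar z}$ are differentiated in $\varepsilon$.

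First I would apply the Wirtinger chain rule to $h^\varepsilon = h\circ\Psi^\varepsilon$,
\begin{equation*}
h^\varepsilon_z = (h_z\circ\Psi^\varepsilon)\Psi^\varepsilon_z + (h_{\bar z}\circ\Psi^\varepsilon)\overline{\Psi^\varepsilon_{\bar z}},\qquad h^\varepsilon_{\bar z} = (h_z\circ\Psi^\varepsilon)\Psi^\varepsilon_{\bar z} + (h_{\bar z}\circ\Psi^\varepsilon)\overline{\Psi^\varepsilon_z},
\end{equation*}
and obtain the pointwise identity
\begin{equation*}
|h^\varepsilon_z|^2 + |h^\varepsilon_{\bar z}|^2 = \bigl(|h_z|^2+|h_{\bar z}|^2\bigr)\bigl(|\Psi^\varepsilon_z|^2+|\Psi^\varepsilon_{\bar z}|^2\bigr) + 4\,\re\bigl(h_z\,\overline{h_{\bar z}}\,\Psi^\varepsilon_z\,\Psi^\varepsilon_{\bar z}\bigr),
\end{equation*}
with $h$-derivatives evaluated at $\Psi^\varepsilon(x)$. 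Then I would change variables $y = \Psi^\varepsilon(x)$, using $\Psi^\varepsilon(\mathbb X) = \mathbb X$ and $\dtext x = \dtext y/(|p|^2-|q|^2)$, where $p(\cdot,\varepsilon) := \Psi^\varepsilon_z\circ(\Psi^\varepsilon)^{-1}$, $q(\cdot,\varepsilon) := \Psi^\varepsilon_{\bar z}\circ(\Psi^\varepsilon)^{-1}$, and $|p|^2-|q|^2>0$ because $\Psi^\varepsilon$ is orientation preserving. The transformed energy
\begin{equation*}
\mathscr E_\mathbb X[h^\varepsilon] = 2\iint_\mathbb X\!\left[\bigl(|h_z|^2+|h_{\bar z}|^2\bigr)\frac{|p|^2+|q|^2}{|p|^2-|q|^2} + 4\,\re\!\left(h_z\,\overline{h_{\bar z}}\,\frac{pq}{|p|^2-|q|^2}\right)\right]\dtext y
\end{equation*}
has $\varepsilon$-dependence only through the smooth fields $p,q$.

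Next I would differentiate under the integral at $\varepsilon = 0$. Smoothness together with $\Psi^0 = \id$ yields $p(y,0) = 1$, $q(y,0) = 0$, and the chain-rule contribution coming from $(\Psi^\varepsilon)^{-1}$ drops out because $D^2\Psi^0 = 0$, so $\partial_\varepsilon p|_{\varepsilon=0} = \lambda_z$ and $\partial_\varepsilon q|_{\varepsilon = 0} = \lambda_{\bar z}$. Writing the first bracket as $1 + 2|q|^2/(|p|^2-|q|^2) = 1 + O(\varepsilon^2)$ shows that its derivative at $0$ vanishes, while $\partial_\varepsilon\bigl[pq/(|p|^2-|q|^2)\bigr]_{\varepsilon=0} = \lambda_{\bar z}$. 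Collecting these,
\begin{equation*}
\frac{\dtext}{\dtext\varepsilon}\mathscr E_\mathbb X[h^\varepsilon]\Big|_{\varepsilon = 0} = 8\,\re\iint_\mathbb X h_z\,\overline{h_{\bar z}}\,\lambda_{\bar z}\,\dtext z,
\end{equation*}
and setting this to zero is exactly \eqref{critical}.

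The one delicate step is legitimacy of differentiation under the integral sign: on a small window $|\varepsilon|<\varepsilon_1$ the $\varepsilon$-derivative of the integrand above is bounded by a constant (depending on the $\mathscr C^\infty$-norms of $\Psi^\varepsilon$ and of $\lambda$) times $|h_z|^2+|h_{\bar z}|^2 \in L^1(\mathbb X)$, so dominated convergence applies. Everything else is bookkeeping with the Wirtinger calculus; the decisive feature is that after the change of variables $y = \Psi^\varepsilon(x)$ no $\varepsilon$-derivative ever lands on $h$.
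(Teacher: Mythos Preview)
Your argument is correct. The change of variables $y=\Psi^\varepsilon(x)$ to push all $\varepsilon$-dependence onto the smooth diffeomorphism, together with the observation that $\partial_\varepsilon p|_0=\lambda_z$, $\partial_\varepsilon q|_0=\lambda_{\bar z}$ (the second-order term from differentiating $(\Psi^\varepsilon)^{-1}$ vanishes because $D\Psi^0_z$ and $D\Psi^0_{\bar z}$ are derivatives of constants), gives exactly the formula $\tfrac{\dtext}{\dtext\varepsilon}\mathscr E_\mathbb X[h^\varepsilon]\big|_{\varepsilon=0}=8\,\re\iint_\mathbb X h_z\overline{h_{\bar z}}\,\lambda_{\bar z}\,\dtext z$. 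Your dominated-convergence justification is adequate.

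The paper arrives at the same identity but packages the computation differently: instead of working entirely in Wirtinger coordinates, it writes $|Dh^\varepsilon|^2$ as the matrix inner product $\langle D^*h\cdot Dh \,|\, D\Psi^\varepsilon\cdot D^*\Psi^\varepsilon/\det D\Psi^\varepsilon\rangle$, identifies the first factor as the Cauchy--Green tensor $\mathbf C[h]$ and the second as Weyl's conformal tensor $\mathbf W[\Psi^\varepsilon]$, and then observes that $\tfrac{\dtext}{\dtext\varepsilon}\mathbf W[\Psi^\varepsilon]\big|_{\varepsilon=0}$ is (twice) the Ahlfors infinitesimal deformation operator $\mathbf S\lambda$. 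The final pointwise identity $\langle D^*h\cdot Dh\,|\,\mathbf S\lambda\rangle=4\,\re(\overline{h_{\bar z}}\,h_z\,\lambda_{\bar z})$ is exactly what your Wirtinger expansion produces. Your route is more elementary and self-contained in two dimensions; the paper's tensor formulation has the merit of making visible why the same structure persists for conformally invariant $n$-harmonic energies in $\mathbb R^n$, where the Ahlfors operator is the natural object.
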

\begin{proof}
 The peculiarity of our derivation of (\ref{critical})  lies in pointing out the relevance of the equation to the \textit{Cauchy-Green stress tensor} of $\, h\,$ and the \textit{Ahlfors infinitesimal deformation operator} for $\,\lambda\,$. These connections were certainly overlooked in the literature;  both are worth noting for prospective generalizations. For example, the same ideas work for the  conformally invariant  $\,n$-harmonic integrals in  $\,\mathbb R^n\,$, see \,\cite{IOnAnnuli}.\\
Let us begin with the composition rule $\, Dh^\varepsilon = Dh(\Psi^\varepsilon) \cdot D \Psi^\varepsilon \,$. We express the Hilbert-Schmidt norm of  the differential matrix \,$Dh^\varepsilon\,$ via scalar product of matrices
\begin{equation}
\begin{split}
 |Dh^\varepsilon |^2 &= \langle Dh^\varepsilon \, |\,Dh^\varepsilon  \rangle  \;=\; \langle Dh(\Psi^\varepsilon) \cdot D \Psi^\varepsilon\,\,|\,\,Dh(\Psi^\varepsilon) \cdot D \Psi^\varepsilon\rangle\, \\ & =
 \langle \,D^* h (\Psi^\varepsilon)\cdot Dh(\Psi^\varepsilon)\, \,\,|\,\, \,D \Psi^\varepsilon \cdot D^*\Psi^\varepsilon\,\rangle\,
\end{split}
\end{equation}
where $\,D^*\,$ stands for the transposed differential.
Then we integrate it over $\,\mathbb X\,$ by substitution $\,\xi = \Psi^\varepsilon(z)\,$

\begin{equation}
 \mathscr E_\mathbb X [h^\varepsilon]  = \iint_\mathbb X \Big{\langle} \,D^*h\cdot Dh  \,\; \Big{|}\,\; \,\frac{D \Psi^\varepsilon \cdot D^*\Psi^\varepsilon}{ \det  D \Psi^\varepsilon }  \, \Big{\rangle } \, \textnormal{d}\xi\;
\end{equation}
At this point one may recall the  Cauchy-Green stress tensor of $ \,h \,$
$$
\textbf{C}[h] \,\deff \,D^*h\cdot Dh
$$
as well as  Weyl's conformal tensor of $\,\Psi^\varepsilon\,$
 $$
\textbf{W}[\Psi^\varepsilon]\,\deff \,\;
\,\frac{D \Psi^\varepsilon \cdot D^*\Psi^\varepsilon \;
\;}{\;\det  D \Psi^\varepsilon}
$$
Since  $\,\Psi^0 = \textnormal{id}\,$,  $\,\frac{\textnormal{d} \Psi^\varepsilon}{\textnormal{d} \varepsilon} \big{|}_{\varepsilon =0}\equiv \lambda\,$ and   $\,\frac{\textnormal{d} D\Psi^\varepsilon}{\textnormal{d} \varepsilon}\big{|}_{\varepsilon =0}\equiv  D \lambda\,$, the infinitesimal variation of $\,\textbf{W}[\Psi^\varepsilon]\,$ at the identity map is readily computed as

  $$
   \frac{\textnormal{d} \textbf{W}[\Psi^\varepsilon]}{\textnormal{d} \varepsilon} {\,\Big |}_{\varepsilon = 0}\;= \;\,D\lambda\,+\, D^*\lambda\, -\,(\textnormal{Tr}\, D\lambda )\, I
  $$
  Here too, we recognize the Ahlfors infinitesimal deformation operator \cite{A1, A2, R}  which defines the conformal component of the matrix $\,D\lambda\,$,

  \begin{displaymath}
  \textbf{S} \lambda \,\deff \, \frac{1}{2}\left[D\lambda \,+\, D^*\lambda \,-\,(\textnormal{Tr}\, D\lambda)\, I \right]\, = \, \left[ \begin{array}{cc}
  \re  \lambda_{\overline{z}}\;\;, & \im \,\lambda_{\overline{z}} \\
  \im \,\lambda_{\overline{z}}\;\;, & -\,\re \lambda_{\overline{z}}
  \end{array} \right]
  \end{displaymath}
  The anticonformal component is afforded by the \textit{Ahlfors' adjoint} operator
  \begin{displaymath}
  \textbf{A} \lambda \,\deff\, \frac{1}{2}\left[ D\lambda \,-\, D^*\lambda \,+\,(\textnormal{Tr}\, D\lambda)\, I \right]\, = \, \left[ \begin{array}{cc}
  \re \lambda_{z}\;\;, & -\im \lambda_{z} \\
  \im \,\lambda_{z}\;\;, & \,\re \lambda_{z}
  \end{array} \right]
  \end{displaymath}
  Thus we have a pointwise orthogonal decomposition  $ \,D\lambda = \textbf{S} \lambda + \textbf{A}\lambda\, $.
Now, a straightforward computation gives  the variation of the energy integral

\begin{equation}\label{InnerVariation}
\begin{split}
\frac{\textnormal{d}\,\mathscr E_\mathbb X[h^\varepsilon]}{\textnormal{d}\varepsilon}\;\Big{|}_{ \varepsilon=0}& =   2\,\iint_\mathbb X \Big{\langle} \,D^*h\,\cdot Dh  \,\; \Big{|}\,\; \textbf{S}\,\lambda  \, \Big{\rangle } \, \textnormal{d}\xi\;\\&= \;
8\,\re \iint _{\mathbb X }  \overline{h_{\overline{z}}}\, h_z \, \lambda_{\overline{z}} \,\,\textnormal{d}z\;
\end{split}
\end{equation}
because $\,\big{\langle} \,D^*h\,\cdot Dh  \,\; \big{|}\,\; \textbf{S}\,\lambda  \, \big{\rangle }\,= \, 4\,\re (\overline{h_{\overline{z}}}\, h_z \, \lambda_{\overline{z}})\,$ .
\end{proof}

\subsection{Hopf-Laplace equation}
 Every complex function $\,\lambda \in \mathscr C^\infty_0 (\mathbb X, \mathbb C)\,$ gives rise to a variation $\,\Psi^\varepsilon (z) =  z + \varepsilon \lambda(z)\,$ of variables in $\,\mathbb X\,$ that is fixed at every point of $\,\partial \mathbb X\,$. This variation is legitimate for any function $\,h \in \mathscr W^{1,2}(\mathbb X,\mathbb C)\,$ that is fixed at a given compact subset $\,\mathbb K \subset \partial \mathbb X\,$. In particular, applying (\ref{critical}) to such $\,\lambda\,$  we infer via the classical Weyl's lemma that

 \begin{proposition}[Hopf-Laplace Equation]   A function $\,h \in \mathscr W^{1,2}(\mathbb X , \mathbb C)\,$ that is critical for all inner variations  (subject or not to boundary constraints)  must satisfy the \textit{Hopf-Laplace equation}  in $\,\mathbb X\,$,
  \begin{equation}\label{Critical1}
    \frac{\partial}{\partial \bar{z}}\,h_z\,\overline{h_{\overline{z}}}\, = 0\;\,, \;\;\;\;(\textnormal{in the sense of distributions})
  \end{equation}
  equivalently,
  \begin{equation}\label{Critical2}
   \,h_z\,\overline{h_{\overline{z}}}\, = \varphi(z) \;\,,\;\;\;\; \textnormal{for some analytic function} \;\;\;\;\varphi \in \mathscr L^1(\mathbb X)
  \end{equation}
  \end{proposition}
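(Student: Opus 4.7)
The plan is to feed the integral identity~(\ref{critical}) of Lemma~\ref{WeakHL} with a two-parameter family of compactly supported test fields, read off the Hopf-Laplace equation as a distributional identity, and then promote it to an analytic statement via Weyl's lemma.

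First, for any $\lambda \in \mathscr C^\infty_0(\mathbb X, \mathbb C)$ I would form the affine variation $\Psi^{\,\varepsilon}(z) = z + \varepsilon\,\lambda(z)$, as suggested in the paragraph preceding the proposition. Because $\lambda$ has compact support in $\mathbb X$, for $|\varepsilon|$ sufficiently small $\Psi^{\,\varepsilon}$ is a $\mathscr C^\infty$-diffeomorphism of $\mathbb X$ onto itself that equals the identity in a neighbourhood of $\partial \mathbb X$; in particular it fixes every prescribed compact $\mathbb K \subset \partial \mathbb X$, and the composition $h^{\,\varepsilon} = h \circ \Psi^{\,\varepsilon}$ stays in $\mathscr W^{1,2}(\mathbb X, \mathbb C)$. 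The criticality hypothesis combined with~(\ref{critical}) then gives
$$
\re \iint_{\mathbb X} \overline{h_{\bar z}}\, h_z\, \lambda_{\bar z}\, \textnormal{d} z \;=\; 0.
$$

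Next I would exploit complex linearity of the integrand in $\lambda$: replacing $\lambda$ by $i\lambda$ converts the real part of the integral into its imaginary part, so the two test fields together produce
$$
\iint_{\mathbb X} \overline{h_{\bar z}}\, h_z\, \lambda_{\bar z}\, \textnormal{d} z \;=\; 0 \qquad \text{for every } \lambda \in \mathscr C^\infty_0(\mathbb X, \mathbb C).
$$
This is exactly the statement that $\D_{\bar z}\bigl(h_z\,\overline{h_{\bar z}}\bigr) = 0$ in the sense of distributions on $\mathbb X$. Observe also that $h_z\,\overline{h_{\bar z}} \in \mathscr L^1(\mathbb X)$, since $|h_z\,\overline{h_{\bar z}}| \le \tfrac12(|h_z|^2 + |h_{\bar z}|^2)$ and $h \in \mathscr W^{1,2}(\mathbb X, \mathbb C)$.

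Finally, Weyl's lemma for the Cauchy-Riemann operator, which asserts that an $\mathscr L^1_{\textnormal{loc}}$ function annihilated distributionally by $\D_{\bar z}$ coincides almost everywhere with a holomorphic function, delivers $\varphi \in \mathscr L^1(\mathbb X)$ with $h_z\,\overline{h_{\bar z}} = \varphi$ a.e., which is the desired equivalent form. I do not foresee any serious obstacle: the hardest point is the admissibility of the variation $\Psi^{\,\varepsilon}$, and this is settled cleanly by the compact support of $\lambda$ together with the inverse function theorem applied uniformly in $\varepsilon$ near $0$. The "subject or not to boundary constraints" clause in the hypothesis is automatic, since the interior test fields $\lambda$ leave any boundary fixing set $\mathbb K$ untouched.
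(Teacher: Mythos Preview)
Your proof is correct and follows exactly the approach the paper takes: the paper itself derives the proposition in the sentence immediately preceding it by choosing the compactly supported variation $\Psi^{\,\varepsilon}(z) = z + \varepsilon\lambda(z)$, invoking equation~(\ref{critical}) from Lemma~\ref{WeakHL}, and appealing to Weyl's lemma. You have simply filled in the details the paper leaves implicit --- the $i\lambda$ substitution to kill the imaginary part and the $\mathscr L^1$ bound on the Hopf product --- so there is nothing to add.
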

  If one allows the critical mapping $\,h\,$ to slip along an arc $\,\gamma \subset \partial \mathbb X\,$ then an additional equation on $\,\gamma\,$ will emerge, which in turn yields a specific form of the analytic function $\,\varphi\,$.  Let us begin with an example of such situation.
  \begin{example}[Critical solutions in an annulus]\label{criticalAnnulus}
   Consider a traction free problem in an annulus $\,\mathbb X = \{\, z \,;  \,r < |z| < R\,\}\,$; that is, allow $\,h \colon  \partial \mathbb X \to \partial \mathbb Y\,$ to slide along the boundary circles of $\,\mathbb X\,$.
  \begin{proposition}\label{HopfAnnulus}
    The Hopf-Laplace equation for a function  $\, h \in \mathscr W^{1,2}(\mathbb X , \mathbb C )\,$ that is  critical for all inner variations in $\,\mathbb X\,$ takes the form
  \begin{equation}
  \,h_z\,\overline{h_{\overline{z}}}\, = \frac{c}{z^2} \;\,,\;\;\;\; \textnormal{for all} \;\;\;\;z \in \mathbb X
  \end{equation}
  where $\, c \,$ is a real number
  \end{proposition}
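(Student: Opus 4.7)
The plan is to combine the general Hopf-Laplace equation with the additional freedom in inner variations afforded by the traction-free sliding along the two boundary circles. As a first step, I would apply (\ref{Critical2}) with compactly supported test fields $\lambda \in \mathscr{C}_0^\infty(\mathbb{X}, \mathbb{C})$ to obtain $\varphi := h_z\overline{h_{\bar z}}$ holomorphic in the annulus $\mathbb{X}$; pinning down this $\varphi$ is then the remaining task.

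Next, I would test (\ref{critical}) against smooth vector fields $\lambda \in \mathscr{C}^\infty(\overline{\mathbb{X}}, \mathbb{C})$ that are tangential to $\partial \mathbb{X}$; that is, on each circle $|z| = \rho$ (with $\rho \in \{r,R\}$) of the form $\lambda(z) = i\alpha_\rho(z)\,z$ with $\alpha_\rho$ real-valued. Such a $\lambda$ is the infinitesimal generator of a legitimate traction-free inner variation $\Psi^\varepsilon$ that carries each boundary circle onto itself. Using the analyticity of $\varphi$, integration by parts converts the interior identity $\re \iint_{\mathbb{X}}\varphi\lambda_{\bar z}\,\dtext z = 0$ into the boundary identity
\[
\re\,\frac{1}{2i}\oint_{\partial \mathbb{X}} \varphi(z)\,\lambda(z)\,\dtext z \;=\; 0.
\]
Parametrizing each circle by $z = \rho e^{i\theta}$ so that $\dtext z = iz\,\dtext\theta$ and $\lambda\,\dtext z = -\alpha_\rho(\theta)\,z^2\,\dtext\theta$, this reduces to
\[
\int_0^{2\pi}\alpha_R(\theta)\,\im\bigl(z^2\varphi(z)\bigr)\Big|_{|z|=R}\!\dtext\theta \;=\; \int_0^{2\pi}\alpha_r(\theta)\,\im\bigl(z^2\varphi(z)\bigr)\Big|_{|z|=r}\!\dtext\theta.
\]
Since the real-valued functions $\alpha_r,\alpha_R$ may be chosen arbitrarily and independently on the two circles, I conclude $\im\bigl(z^2\varphi(z)\bigr) = 0$ on $\partial \mathbb{X}$.

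Finally, setting $F(z) := z^2\varphi(z)$, the function $F$ is holomorphic in $\mathbb{X}$ with vanishing imaginary part on $\partial \mathbb{X}$. The maximum principle applied to the harmonic function $\im F$ forces $\im F \equiv 0$ throughout $\mathbb{X}$, so $F$ is real-valued and holomorphic on the connected open set $\mathbb{X}$, hence equal to a real constant $c$. This gives $\varphi(z) = c/z^2$, as claimed.

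The main obstacle is justifying the integration by parts, since a priori we only know $\varphi \in \mathscr{L}^1(\mathbb{X})$ and it need not have pointwise boundary values. I would handle this by an exhaustion argument: first showing that the functional $\lambda \mapsto \iint_{\mathbb{X}} \varphi\,\lambda_{\bar z}\,\dtext z$ depends only on the boundary trace of $\lambda$ (via the first step applied to suitable smooth cutoffs), and then evaluating it through Stokes' theorem on the sub-annuli $\{r+\delta < |z| < R-\delta\}$ and letting $\delta \to 0$. Alternatively, the whole argument can be carried out in terms of the Laurent coefficients of $\varphi$: testing against a Fourier basis of tangential $\lambda$'s yields $a_n = 0$ for every $n \neq -2$ and $\im a_{-2} = 0$, which is equivalent to the conclusion.
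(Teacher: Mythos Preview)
Your closing ``alternative''—testing against a Fourier basis of tangential $\lambda$'s to kill all Laurent coefficients $a_n$ with $n\neq -2$ and force $a_{-2}\in\mathbb R$—is precisely the paper's approach. The paper makes it concrete: it exhibits explicit one-parameter families
\[
\Psi^\varepsilon(z)=z\cdot\frac{1+\varepsilon(a\bar z^{\,k}-\bar a z^k)}{[\,1-\varepsilon^2(a\bar z^{\,k}-\bar a z^k)^2\,]^{1/2}},\qquad k=\pm1,\pm2,\ldots,
\]
which satisfy $|\Psi^\varepsilon(z)|\equiv|z|$ and are therefore genuine diffeomorphisms of the annulus, computes $\lambda_{\bar z}=kaz\bar z^{\,k-1}$, and integrates against the Laurent expansion of $\varphi$ using orthogonality of the powers $\{z^n\}$ to obtain $a_{k-2}=0$ for every $k\neq 0$; one further variation with $\lambda_{\bar z}=iz/\bar z$ yields $\im a_{-2}=0$.

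Your primary route (Stokes plus the maximum principle on $\im(z^2\varphi)$) is an appealing geometric reading, but the obstacle you flag is real and not closed by your sketch. The exhaustion only shows that a certain \emph{combination} of circle integrals tends to zero as $\delta\to 0$; it does not furnish pointwise boundary traces of $\varphi$, and on the intermediate circles $|z|=r+\delta,\ R-\delta$ your chosen $\lambda$ is no longer tangential, so the clean boundary identity you wrote down is not available there. Consequently the step ``$\im(z^2\varphi)=0$ on $\partial\mathbb X$, hence $\equiv 0$ by the maximum principle'' is not justified. To extract content from the limiting identity one is effectively forced to decode it Fourier-mode by Fourier-mode—which is the Laurent-coefficient computation again. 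The virtue of the paper's version is that it works entirely with interior integrals from the outset and never needs $\varphi$ to extend to $\partial\mathbb X$.
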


  \begin{proof} We expand the Hopf product of $\,h\,$ into a Laurent series
 $$ \,h_z\,\overline{h_{\overline{z}}}\, = \varphi(z) =  \sum_{n = - \infty} ^ \infty  a_n \,z^n
  $$
  Then we test the integral equation (\ref{critical}) with the following variations of variables in $\,\mathbb X\,$,
  $$\Psi^\varepsilon (z)  = \; z \cdot  \frac{1 + \varepsilon ( a \bar{z}^k\;-\; \bar{a} z^k )}{ [\,1 - \varepsilon^2 ( a \bar{z}^k\;-\; \bar{a} z^k )^2 \,]^{1/2}}\;, \;\;\; k =  \pm 1, \pm 2, ...$$
  where $\,a\,$ can be any complex number and $ \varepsilon$ any sufficiently small real number. Evidently $\,|\Psi^\varepsilon (z) | \equiv |z|\,$, so $\,\Psi^\varepsilon \colon  \mathbb X \onto \mathbb X\,$ can easily be shown to be a $\,\mathscr C^\infty$ -diffeomorphism. A short computation gives
  $$
  \lambda \deff \frac{\textnormal{d}\,\Psi^\varepsilon}{\textnormal{d}\varepsilon}\;\Big{|}_{\varepsilon=0} \,=\, z ( a \bar{z}^k\;-\; \bar{a} z^k )\;\;\;\textnormal{and}\;\;\; \lambda_{\bar{z}} \,=\, k \,a z\, \bar{z} ^{k-1}
  $$
  Put these values of $\,\lambda_{\bar{z}} \,$ into the equation (\ref{critical}) to obtain

   $$  0 \,=\, \re \iint_\mathbb X \Big(\sum_{n = - \infty} ^ \infty  a_n \,z^n \Big)\,k \,a z\, \bar{z} ^{k-1}\; \textnormal{d} z\;=\; \re ( k\,a\, a_{k-2}) \,\iint_\mathbb X  |z| ^{2k-2} \;\textnormal{d} z
  $$
  for every complex number $\,a\,$. This yields $\,a_{k-2} = 0\,$, except for $\,k= 0\,$. We just proved that $\,\varphi(z) =  a_{_{-2}}\, z^{-2}\,$. To see that the coefficient $\, a_{_{-2}}\,$ is real we test the integral equation (\ref{critical} again, but with the following variation of variables in $\,\mathbb X\,$.
  $$\Psi^\varepsilon (z)  = \; z \cdot  \frac{1 + \,i\, \log |z|}{ [\,1+ \varepsilon^2 \log^2 |z| \,]^{1/2}}\,\;\;\;\lambda \deff \frac{\textnormal{d}\,\Phi^\varepsilon}{\textnormal{d}\varepsilon}\;\Big{|}_{\varepsilon=0} \,=\, i\,z \log |z|\;\;\;\textnormal{and}\;\;\; \lambda_{\bar{z}} \,= \,i \frac{z}{\bar{z}}
  $$
 Then we find that
 $$  0 \,=\, \re \iint_\mathbb X  \frac{a_{_{-2}}}{ z ^2} \; i \, \frac{z}{\bar{z}} \;\textnormal{d} z\, = - \im ( a_{_{-2}} ) \,\iint_\mathbb X  |z| ^{-2} \;\textnormal{d} z\;,\;\;\;\textnormal{hence}\;\;  a_{_{-2}} \in \mathbb R
  $$
  as desired.
  \end{proof}
  \end{example}

\subsection{Variations along a boundary arc}
In the study of \textit{traction free} elastic deformations $\, h \colon  \mathbb X \onto \mathbb Y\,$ one prescribes the values of $\,h\,$ in a compact set  $\,\mathbb K \subset \partial\mathbb X\,$ while allowing to slide along the rest of the boundary. The critical points, subjected to mappings which are traction free along  the set  $\,\Gamma\, \deff \, \partial \mathbb X \setminus \mathbb K \,$, must satisfy not only the Hopf-Laplace equation \,(\ref{Critical2})\,in $\,\mathbb X\,$  but also an equation on $\,\Gamma\,$. We begin with a smooth Jordan arc in $\,\Gamma\,$, and later extend this concept to more general settings, see Definition  \,\ref{RealDiff}.

\subsubsection{Boundary arcs}\label{RegularArcs}

A subset $\,\gamma\subset\partial \mathbb X\,$ is said to be  $\,\mathscr C^k $ -smooth boundary arc  if there is an open set  $\,\mathbb U \subset \mathbb C\,$ and a $\mathscr C^k$ -smooth diffeomorphism  $\,F \colon \mathbb U \onto \mathbb C\,$ such that
\begin{itemize}
\item $\,\mathbb U \cap \partial \mathbb X \,= \gamma \,, \;\;\;\; F(\gamma) = \mathbb R $
\item $\, F(\mathbb X \cap \mathbb U ) = \mathbb C_+ \deff  \{\,z = x_1 + i\, x_2\,\,; \,\, \,x_2 > 0\}$
\end{itemize}
We say that  $\,F \colon \mathbb U \onto \mathbb C\,$  transforms $\,\gamma\,$ into  a line segment in   $\,\mathbb U\,$, and refer to $\,\mathbb U\,$ as vicinity of $\,\gamma\,$.

  \subsubsection{Test functions}
    A complex function  $\,\lambda \in \mathscr C^k_0 (\mathbb U)\,$ is a \textit{test function} for the arc $\,\gamma\subset \partial \mathbb X\,$ if the linear differential $\,\lambda(z)\,\textnormal d\overline{z}\,$ is real along  $\,\gamma $. This  means that for any (or just one) $\,\mathscr C^k$- parametrization of $\,\gamma\,$, say  $\,\gamma = \{ z(t) \colon a<t<b\;\} $, it holds $\,\lambda(z(t))\, \overline{\dot{z}(t)}\,\in \mathbb R\,$. Here, as usual,  $\, \dot{z}(t) \,$ denotes the derivative of $\,z(t)\,$. In other words, the complex number $\,\lambda (z)\,$ represents the tangent vector at  $\,z \in \gamma\,$.  A diffeomorphism $\,F \colon \mathbb U \onto \mathbb C\,$ which transforms the arc $\,\gamma\,$ into a line segment in $\,\mathbb R\,$ gives a function $\, \lambda \circ F^{-1} \in \mathscr C^k_0 (\mathbb C)\,$  that is real along $\,\mathbb R\,$.
    \begin{lemma}[Existence of the variation]\label{ExistVar} Given a smooth boundary arc $\,\gamma\subset \partial \mathbb X\,$ and its test function $\,\lambda \in \mathbb C^k_0 (\mathbb U)\,$,    there exists a variation of variables  $\,\Psi^\varepsilon \colon \mathbb X \onto \mathbb X\,$ along $\,\gamma\,$, such that $\, \frac{\textnormal d}{\textnormal d  \varepsilon}\big{|}_{\varepsilon = 0}  \Psi^{\,\varepsilon}\; = \lambda\,$
    \end{lemma}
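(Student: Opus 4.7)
The plan is to straighten $\gamma$ by means of the chart $F$, construct the variation as the flow of a suitable compactly supported vector field in the flat model $(F(\U),\R,\C_+)$, and then transfer it back to $\X$. The key geometric input is that the test-function condition ``$\lambda(z)\,\dtext \bar z$ is real along $\gamma$'' says exactly that $\lambda$ is everywhere tangent to $\gamma$; this tangency transfers intact under the diffeomorphism $F$, and it is precisely what is needed to keep a flow inside $\X\cap \U$ while sliding points along $\gamma$.

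Concretely, I interpret $\lambda$ as a compactly supported $\mathscr C^{k}$ vector field on $\U$, and let $\mu = F_{\ast} \lambda$ be its pushforward. Then $\mu$ is compactly supported in $F(\U)$, and the hypothesis on $\lambda$ transfers to the statement $\im \mu \equiv 0$ on $\R \cap F(\U)$. Extending $\mu$ by zero to all of $\C$ produces a compactly supported smooth vector field everywhere tangent to $\R$. The autonomous ODE
\[
\frac{\dtext}{\dtext \varepsilon}\Phi^\varepsilon(w) \,=\, \mu\bigl(\Phi^\varepsilon(w)\bigr), \qquad \Phi^0 = \id,
\]
therefore has a globally defined smooth flow $\Phi^\varepsilon \colon \C \onto \C$ depending smoothly on $\varepsilon$. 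By uniqueness of ODE solutions combined with tangency of $\mu$ to $\R$, the flow preserves $\R$ setwise, and hence preserves each of $\C_+$ and $\C_-$ by connectedness.

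Pulling back, I define $\Psi^\varepsilon \deff F^{-1}\circ \Phi^\varepsilon \circ F$ on $\X\cap \U$ and $\Psi^\varepsilon \deff \id$ on $\X\setminus F^{-1}(\textnormal{supp}\,\mu)$; these two formulas agree on the overlap because $\Phi^\varepsilon$ is the identity off $\textnormal{supp}\,\mu$. The previous step delivers $\Psi^\varepsilon(\X) = \X$ and $\Psi^\varepsilon(\gamma) = \gamma$, while the inclusion $\textnormal{supp}\,\lambda \subset \U$ (which avoids $\mathbb K$ because $\U\cap \partial \X = \gamma$) yields $\Psi^\varepsilon \equiv \id$ on a neighborhood of $\mathbb K$. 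Finally, the chain rule gives
\[
\frac{\dtext}{\dtext \varepsilon}\bigg|_{\varepsilon=0}\Psi^\varepsilon(z) \,=\, \bigl(DF^{-1}\bigr)(F(z))\cdot \mu(F(z)) \,=\, \lambda(z).
\]
The only step that really deserves care is the passage from tangency of $\mu$ to actual invariance of $\C_+$ under the flow $\Phi^\varepsilon$; this is standard (integral curves cannot cross an invariant smooth submanifold), but it is the heart of the lemma and is what singles out the test-function condition among all smooth perturbations. Everything else -- the smoothness of $\Psi^\varepsilon$, its diffeomorphism property, and the behavior at $\mathbb K$ -- is a direct consequence of the construction and the compact support of $\lambda$.
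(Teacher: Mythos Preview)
Your proof is correct and shares the paper's core strategy: straighten $\gamma$ via the chart $F$, use the tangency condition (encoded as ``$\lambda\,\dtext\bar z$ real on $\gamma$'') to ensure the induced variation in the flat model preserves $\R$ and hence $\C_+$, then pull back. The only difference is in how the variation is manufactured in the flat picture: where you invoke the ODE flow of the pushed-forward vector field $\mu = F_\ast \lambda$, the paper writes down the explicit one-step perturbation
\[
\Psi^\varepsilon \;=\; F^{-1}\bigl(F + \varepsilon\,\lambda F_z + \varepsilon\,\bar\lambda F_{\bar z}\bigr),
\]
which in chart coordinates is simply $w \mapsto w + \varepsilon\,\mu(w)$. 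The paper's formula is slightly more elementary (no ODE theory; the diffeomorphism property for small $\varepsilon$ follows from the inverse function theorem and compact support), whereas your flow construction yields diffeomorphisms for all $\varepsilon$ automatically and makes the invariance of $\C_+$ a direct consequence of the standard fact that integral curves cannot cross an invariant hypersurface. Both give the same infinitesimal generator $\lambda$ at $\varepsilon=0$, so the choice is a matter of taste.
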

     In particular,  by Lemma \ref{WeakHL},  we have
    \begin{corollary}\label{Weakreal} If $\,h \in \mathscr W^{1,2}(\mathbb X,\mathbb C)\,$ is a critical point for all variations along  $\,\gamma\,$ then,
    \begin{equation}
    \re \iint _{\mathbb X }  \overline{h_{\overline{z}}}\, h_z \, \lambda_{\overline{z}} \,\,\textnormal{d} z\; = \;0
    \end{equation}
    for all test functions $\,\lambda \in \mathbb C^k_0 (\mathbb U)\,$ whose linear differential $\,\lambda \,\textnormal{d} \bar{z}\,$ is real on $\,\gamma\,$.
    \end{corollary}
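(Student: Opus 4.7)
The plan is to flatten $\gamma$ by $F$, construct the flow of a tangential vector field in the upper half-plane model, and then pull everything back to $\X$. First, push $\lambda$ forward to a vector field $\mu \deff (DF\cdot \lambda)\circ F^{-1}$ on $F(\U)$, and extend it by zero to all of $\C$; its support is compact and contained in $F(\U)$. Under $F$, the arc $\gamma$ becomes an interval of $\R$ and $\X\cap \U$ becomes $\C_+\cap F(\U)$. The test-function hypothesis that $\lambda(z(t))\,\overline{\dot z(t)}\in \R$ means that $\lambda$ points tangentially along $\gamma$, which translates, via the orientation-preserving $F$, to $\mu$ being real-valued (hence tangent to $\R$) on the real axis.

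Next, integrate the autonomous ODE
\[
\tfrac{\textnormal{d}}{\textnormal{d}\varepsilon}\widetilde{\Psi}^{\,\varepsilon}(z)\,=\, \mu\bigl(\widetilde{\Psi}^{\,\varepsilon}(z)\bigr), \qquad \widetilde{\Psi}^{\,0}(z)\,=\, z.
\]
Since $\mu$ is $\mathscr{C}^k$-smooth and compactly supported, standard ODE theory produces a global one-parameter group of $\mathscr{C}^k$-diffeomorphisms $\widetilde{\Psi}^{\,\varepsilon}\colon \C\onto \C$, depending smoothly on $(\varepsilon,z)$ and equal to the identity off $\mathrm{supp}\,\mu$. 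Because $\mu$ is real-valued on $\R$, integral curves starting on $\R$ remain on $\R$, so the flow preserves $\R$ and each open half-plane $\C_\pm$.

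Now pull back by setting
\[
\Psi^{\,\varepsilon}(x)\,\deff\, \begin{cases} F^{-1}\bigl(\widetilde{\Psi}^{\,\varepsilon}(F(x))\bigr), & x \in \X\cap \U,\\ x, & x \in \X \setminus \mathrm{supp}\,\lambda. \end{cases}
\]
The two branches agree on the overlap because $\widetilde{\Psi}^{\,\varepsilon}$ is the identity off $\mathrm{supp}\,\mu$, so $\Psi^{\,\varepsilon}\colon \X\onto \X$ is a $\mathscr{C}^k$-diffeomorphism that extends continuously to $\overline{\X}$ and equals the identity on $\overline{\X}\setminus \U$. Shrinking $\U$ if necessary so that $\U\cap \mathbb{K}=\emptyset$, we obtain $\Psi^{\,\varepsilon}=\mathrm{id}$ on $\mathbb{K}$, as required by the definition of \S\ref{changeofVariables}. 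The chain rule at $\varepsilon=0$ finally yields
\[
\tfrac{\textnormal{d}}{\textnormal{d}\varepsilon}\Big|_{\varepsilon=0}\Psi^{\,\varepsilon}(x)\,=\, DF^{-1}(F(x))\cdot \mu(F(x))\,=\, DF^{-1}(F(x))\cdot DF(x)\cdot \lambda(x)\,=\, \lambda(x).
\]

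The only genuinely nontrivial point is the tangential invariance --- that the flow of $\mu$ preserves $\R$. I would handle it by uniqueness for the initial value problem: the restriction $\mu|_\R$ defines a real ODE whose solution yields a complex integral curve of $\mu$ through the same real initial value, and uniqueness forces the two to coincide, pinning the orbit to $\R$. Everything else (smoothness, the group property, the boundary behavior along $\gamma$, and the identity constraint on $\mathbb{K}$) is transported by $F^{\pm 1}$ from the half-plane model and is routine.
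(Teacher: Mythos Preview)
Your proof is correct. Both you and the paper reduce to Lemma~\ref{ExistVar} --- the construction of a variation $\Psi^\varepsilon$ with $\tfrac{d}{d\varepsilon}\big|_{\varepsilon=0}\Psi^\varepsilon=\lambda$ --- and both flatten $\gamma$ via $F$ and work with the pushforward $\mu=(DF\cdot\lambda)\circ F^{-1}=(\lambda F_z+\bar\lambda F_{\bar z})\circ F^{-1}$, which is real on $\R$. The difference is in how the variation is built in the flattened picture. The paper writes down the explicit one-line formula
\[
\Psi^{\,\varepsilon}=F^{-1}\bigl(F+\varepsilon\,\lambda F_z+\varepsilon\,\bar\lambda F_{\bar z}\bigr),
\]
i.e.\ $w\mapsto w+\varepsilon\mu(w)$ in the $F$-coordinates, and checks by hand that for small $\varepsilon$ this is a diffeomorphism preserving $\C_+$ (the point being that $\im\mu(w)=O(\im w)$ because $\mu$ is $\mathscr C^1$ and real on $\R$). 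You instead integrate the autonomous ODE $\dot w=\mu(w)$ and pull back its flow. Your route is geometrically cleaner --- the flow is automatically a one-parameter group of diffeomorphisms, and the invariance of $\R$ (hence of $\C_\pm$) is immediate from uniqueness --- at the cost of invoking standard ODE existence theory. The paper's route is more elementary and self-contained but requires verifying the diffeomorphism and half-plane preservation properties directly. One minor remark: your ``shrink $\mathbb U$ if necessary'' is unneeded, since $\operatorname{supp}\lambda\cap\partial\X\subset\mathbb U\cap\partial\X=\gamma\subset\partial\X\setminus\mathbb K$ already forces $\operatorname{supp}\lambda\cap\mathbb K=\emptyset$.
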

\begin{proof} An explicit construction is as follows.
       Fix a $\,\mathscr C^k$ -diffeomorphism $\,F \colon \mathbb U \onto \mathbb C\,$  which transforms  $\,\gamma\,$  into a line segment in $\mathbb R\,$. Then define the desired one-parameter family of diffeomorphisms by the rule,
 $$
  \Psi^{\,\varepsilon} = F^{-1}\left(F\,+ \, \varepsilon \lambda F_z + \varepsilon \overline{\lambda} F _{\overline{z}}   \right) \colon \mathbb U \cap \mathbb X \rightarrow \,\mathbb C
 $$
 with $\,  |\varepsilon | < \varepsilon_\circ\,$ sufficiently small, and set
 $\,\; \;\Psi^{\,\varepsilon}(z) = z \;\; \textnormal{in}\;\; \mathbb X \setminus \mathbb U
 $.\\
 Here $\,F_z\,$ and $\, F_{\bar{z}}\,$  denote the complex derivatives of   $\,F \colon \mathbb U \onto \mathbb C\,$ .
 \end{proof}

With Corollary  \,\ref{Weakreal}\, in mind, we introduce the following concept
\begin{definition}
A holomorphic quadratic differential $\varphi (z)\, \dtext z \otimes \dtext z$, with  $\varphi \in \mathscr L^{1} (\X)\,$, is  real along a smooth  arc  $\,\gamma\subset \partial \X\,$, \textit{in a weak sense}, if

\begin{equation}
    \re \iint _{\mathbb X }  \varphi(z) \, \lambda_{\overline{z}} \,\,\textnormal{d} z\; = \;0
    \end{equation}
    for all test functions $\,\lambda \in \mathbb C^k_0 (\mathbb U)\,$ whose linear differential $\,\lambda \,\textnormal{d} \bar{z}\,$ is real on $\,\gamma\,$.
\end{definition}
This definition is not more general than the classical one, but is convenient when dealing with critical points of inner variations along $\,\gamma\,$. Indeed, it turns out that $\,\varphi\,$ admits continuous extension up to $\,\mathbb X \cup \gamma\,$ and as such fulfils the classical condition:
 \begin{equation}
 \varphi(z(t)) \dot{z}(t)^2  \,\in \mathbb R\;
 \end{equation}
 for some (also for all) smooth parametrization of $\,\gamma\,$, $\,\gamma = \{ z(t)\colon  a<t<b\;\} $.

In full generality one might say that:
\begin{definition}\label{RealDiff}
A holomorphic quadratic differential $\varphi (x)\, \dtext z \otimes \dtext z\,$, with  $\,\varphi \in \mathscr L^{1} (\X)\,$, is said to be  real along $\partial \X$ if  upon conformal transformation of $\X$ onto a $\mathscr C^1$-smooth domain the differential becomes continuous and real along its boundary.
\end{definition}
The definition is legitimate, because the choice of conformal transformation plays no role.

 \subsection{Quadrilateral mappings}
In quasiconformal geometry  the term quadrilateral  is used to describe a simply connected Jordan domain $\, \mathbb Q \subset \mathbb C\,$ together with the ordered set of four  points $\, a,b,c,d\,\in \partial\, \mathbb Q\,$ listed  counterclockwise along the boundary and called \textit{corners} of $\,\mathbb Q\,$. The corners disconnect $\,\partial \,\mathbb Q\,$  into four open Jordan arcs;  two of them, having no common endpoint are called \textit{horizontal sides} and the other two  \textit{vertical sides}.
The exemplary quadrilaterals, from which other quadrilaterals are derived via conformal transformations, are the rectangles. But we shall choose annular sectors as representatives of quadrilaterals.
\subsubsection{Annular Sectors}

Given  $\,0\leqslant \alpha < \beta \leqslant  2\pi\,$ and $\, 0<r <R\,$, the annular sector
$$
 \mathbb S = \mathbb S_\alpha^\beta(r,\, R) \,\deff \,\{ z\,; \,r < |z| < R \,,\;\; \alpha < \arg\,z  < \beta \,\}
$$
is viewed as a quadrilateral whose corners listed counterclockwise are $\, a= r e^{i \,\alpha}\,,\, b =  R e^{i \,\alpha}\,,\, c = R e^{i \,\beta}\,,\, d = r e^{i \,\beta} \,$. The circular arcs in
 $\,\partial\, \mathbb S\,$ are viewed as vertical sides and the rays  as horizontal sides. \\

  \subsubsection{Quadrilateral Maps}
  \begin{definition}
   A homeomorphism $\, h \colon \mathbb X \onto\mathbb Y \,$ between two quadrilaterals which has well defined limits at the corners of $\,\mathbb X \,$ and that these limits coincide with  the corners of $\,\mathbb Y\,$ (in the respective order) is called  \textit{quadrilateral map}.
  \end{definition}
The reader is cautioned that the inverse homeomorphism $\, h^{-1} \colon \,\mathbb Y \onto\mathbb X \,$ may not be a quadrilateral map. It should be noted, however, that the cluster set of any side of  $\, \mathbb X\,$  under a quadrilateral map $\, h \colon \mathbb X \onto\mathbb Y \,$ is exactly the corresponding side of  $\,\mathbb Y \,$. We have  a quadrilateral conformal transformation $\,z \rightsquigarrow r e^{i\,\alpha + \beta\,z - \alpha\,z}\,$ of a rectangle $\,\mathcal R = (0,\,L)\times (0,\,1)\,$  onto the annular sector $\, \mathbb S = \mathbb S_\alpha^\beta(r,\, R)\,$, $\,L = \frac{1}{\beta-\alpha} \log \frac{R}{r}\,$.\\
Minimization of the Dirichlet energy among quadrilateral mappings $\, h\colon  \mathbb S \onto \mathbb Q\,$ calls for inner variations $\,\Psi^\varepsilon \colon \mathbb S \onto \mathbb S\,$ that keep the corners of the sector fixed, while allowing slipping along the sides. Call them \textit{quadrilateral variations}. With a few additional tricks the same inner variations as are used in  Proposition\, \ref{HopfAnnulus} show that

\begin{proposition}\label{HopfAnnulus}
    The Hopf-Laplace equation for a quadrilateral mapping  $\, h \in \mathscr W^{1,2}(\mathbb S , \mathbb C )\,$ that is  critical for all quadrilateral variations in $\,\mathbb S\,$ takes the form
  \begin{equation}\label{HLeq}
  \, \phi(z) \,=\, h_z\,\overline{h_{\overline{z}}}\, = \frac{c}{z^2} \;\,,\;\;\;\; \textnormal{for all} \;\;\;\;z \in \mathbb S
  \end{equation}
  where $\, c \,$ is a real number
  \end{proposition}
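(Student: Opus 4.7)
The plan is to pull back to the model rectangle $\mathcal{R} = (0,L)\times(0,1)$ via the quadrilateral conformal map $g(w) = r e^{i\alpha + (\beta-\alpha)w}$, $L = \frac{1}{\beta-\alpha}\log\frac{R}{r}$, and study the pulled back mapping $H := h\circ g \in \mathscr W^{1,2}(\mathcal{R}, \mathbb{C})$. Because $g$ is a quadrilateral isomorphism of $\mathcal{R}$ onto $\mathbb S$ preserving corners and sides, $H$ is critical for every quadrilateral inner variation in $\mathcal{R}$. Using $g'(w) = (\beta-\alpha) g(w)$, the chain rule yields $\Phi(w) \deff H_w\,\overline{H_{\bar w}} = (\beta-\alpha)^2\, g(w)^2\, \phi(g(w))$, so it suffices to show $\Phi \equiv c_0$ for some real constant $c_0$; then $\phi(z) = c/z^2$ with $c = c_0/(\beta-\alpha)^2 \in \mathbb{R}$.

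Two properties of $\Phi$ must be established. First, by testing with inner variations compactly supported inside $\mathcal{R}$, the Hopf--Laplace equation (\ref{Critical1}) applies, so $\Phi$ is holomorphic in $\mathcal{R}$. Second, applying Corollary~\ref{Weakreal} along each of the four open sides of $\mathcal{R}$, with the sliding test variations furnished by Lemma~\ref{ExistVar}, gives weak reality of $\Phi(w)\,\dtext w\otimes\dtext w$ along every side; since each side is a straight segment on which $\dtext w^2$ is real up to sign, this amounts to $\Phi \in \mathbb{R}$ on all four sides, and the usual reflection principle upgrades this weak statement to continuity up to the interior of each open side with real boundary values.

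The endgame is a classical elliptic-function argument. Schwarz reflection across each side extends $\Phi$ successively to a meromorphic function on $\mathbb{C}\setminus \Lambda$, where $\Lambda = L\mathbb{Z} + i\mathbb{Z}$ is the orbit of the four corners; the extension is doubly periodic with periods $2L$ and $2i$. Because $|\Phi| \le \tfrac14|DH|^2 \in L^1(\mathcal{R})$, the reflected function lies in $L^1_{\loc}(\mathbb{C})$, so each lattice point is at worst a simple pole. At $w = 0$ write $\Phi(w) = a_{-1}/w + O(1)$: reality of $\Phi$ along $y=0$ forces $a_{-1} \in \mathbb{R}$, whereas reality along $x=0$ via $\Phi(iy) = -ia_{-1}/y + O(1)$ forces $a_{-1} \in i\mathbb{R}$. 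Hence $a_{-1}=0$ at every corner, the extension is entire and doubly periodic, and Liouville makes it constant. Reality of $\Phi$ on any side then pins this constant in $\mathbb{R}$.

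The step I expect to be the main obstacle is securing the local Laurent expansion at each corner so that the two reality conditions may be applied in tandem: Definition~\ref{RealDiff} is formulated on smooth arcs, so an ad hoc local analysis near each corner --- combining the $L^1$ bound with successive reflections across the two orthogonal sides meeting there --- is needed before the residue argument can begin. Once this is in place, the transport back through $g$ to the formula (\ref{HLeq}) is routine.
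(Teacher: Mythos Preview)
Your argument is correct and takes a genuinely different route from the paper's. The paper first uses a power map to reduce to the upper half annulus $\mathbb S_0^\pi$, then reflects $\phi$ once across the real axis to obtain a holomorphic function on the full annulus with a Laurent series $\sum a_n z^n$ having real coefficients; the heart of the paper's proof is then to manufacture, for each $k\ne 0$, an explicit quadrilateral variation $\Psi^\varepsilon(z)=z\cdot\frac{1+\varepsilon(\bar z^k-z^k)}{[1-\varepsilon^2(\bar z^k-z^k)^2]^{1/2}}$ whose $\lambda_{\bar z}=kz\bar z^{k-1}$ isolates and kills the coefficient $a_{k-2}$ in the variational equation (\ref{critical}). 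By contrast, you transplant to the rectangle, invoke weak reality along all four sides via Corollary~\ref{Weakreal}, and let successive Schwarz reflections produce a doubly periodic function on $\mathbb C\setminus\Lambda$; the $L^1$-bound caps the order of each corner singularity at one, and the pair of orthogonal reality conditions at a corner annihilates the residue, so Liouville for elliptic functions finishes. Your worry about the corners is not a real obstacle: once the four reflected copies are glued (they match on the axes because the boundary values are real), $\Phi$ is holomorphic on a genuine punctured disk about each lattice point, and the $L^1$ control plus the coefficient estimate $|a_{-n}|\le t^n\cdot\frac{1}{2\pi}\int_0^{2\pi}|\Phi(te^{i\theta})|\,d\theta$ along a sequence $t_j\to 0$ kills $a_{-n}$ for $n\ge 2$. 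The paper's approach is more computational and keeps everything inside the annular picture; yours is more structural, trades the explicit test variations for classical function theory, and would adapt with no change to any quadrilateral conformally equivalent to a rectangle.
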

  \begin{proof}
  First observe that the power conformal map $\, \chi(z) = e^{ i \alpha} \,z ^{\frac{\pi - \alpha}{\beta }}\, $ takes the sector  $\,\mathbb S_\alpha^\beta(r,\, R)\,$ onto an upper half annulus $\,\mathbb S_0^\pi(r',\, R')\,$, and it is a quadrilateral map. Another useful observation is that the Hopf-Laplace equation (\ref{HLeq})\, is invariant under power type transformations of sectors. It is therefore enough to proof (\ref{HLeq}) for an upper half annulus. Quadrilateral variations that we are going to explore this time are:

  $$\Psi^\varepsilon (z)  = \; z \cdot  \frac{1 + \varepsilon ( \bar{z}^k\;-\; z^k )}{ [\,1 - \varepsilon^2 ( \bar{z}^k\;-\;  z^k )^2 \,]^{1/2}}\;, \;\;\; k =  \pm 1, \pm 2, ...$$
  so
  $$\lambda = \frac{\textnormal{d}\Phi^\varepsilon}{\textnormal{d}\varepsilon}\;\Big{|}_{\varepsilon=0} \,=\,\,z \,\bar{z} ^k \;\;, \;\;\; \lambda_{\bar{z}} \,=  k \,z \,\bar{z}^{k-1}\,$$
  Curiously, the counterpart of $\,k = 0\,$ is
  $$\Psi^\varepsilon (z)  = \; z \cdot  \frac{1 + \,\varepsilon\, \textnormal{Arg }z}{ [\,1+ \varepsilon^2 \textnormal{Arg }^2  z  \,]^{1/2}}\,,\;\;\lambda = \frac{\textnormal{d}\Phi^\varepsilon}{\textnormal{d}\varepsilon}\;\Big{|}_{\varepsilon=0} \,=\,\,z \textnormal{Arg}\,z \;\;, \;\;\; \lambda_{\bar{z}} \,= \,i \frac{z}{\bar{z}}$$
  though we can dispense with this variation.
  But we need a Laurent series expansion
 \begin{equation}\label{Laurent}
 h_z\,\overline{h_{\overline{z}}}\, = \phi(z) =  \sum_{n = - \infty} ^ \infty  a_n \,z^n\;\;,\;\;\; a_n = \overline{a_n}
  \end{equation}
  The natural way to obtain such formula is by extending $\,\varphi\,$  to the whole annulus $\,\mathbb A = \mathbb S_+ \cup \mathbb S_- \,$, where $\,\mathbb S_+  = \mathbb S\,$ and $\,\mathbb S_-\,$ is a reflection of $\,\mathbb S\,$ to the lower half plane. We extend $\,\phi\,$  by setting  $ \phi(z) = \overline{\phi(\bar{z})}\,$ for $\im z < 0\,$. It is immaterial how $\,\phi\,$ is defined for $\im z =0\,$.
Clearly, the extended function, still denoted by $\,\phi \,$,  lies in $\,\mathscr L^1 (\mathbb A)\, $. Precisely,  $\,\|\phi\|_{\mathscr L^1(\mathbb A)} = 2 \,\|\phi\|_{\mathscr L^1(\mathbb S)}\,$.
Let $\,\eta \in \mathscr C^\infty_0(\mathbb A)\,$ be any test function,  to be used for verifying that the distributional Cauchy-Rieman equations in $\,\mathbb A\,$ are satisfied. We can write:
\begin{equation}\nonumber
\begin{split}
\re \iint _{\mathbb A }  \phi(z)  \,\eta_{\overline{z}}\;&\,\textnormal{d}z =
   \re \iint _{\mathbb S_+ }  \phi(z) \, \eta_{\overline{z}}\;\,\textnormal{d} z \;+\; \re \iint _{\mathbb S_- }  \phi(z)  \,\eta_{\overline{z}}\;\,\textnormal{d} z\\& =
   \re \iint _{\mathbb S_+ }  \phi(z) \, \eta_{\overline{z}}(z)\;\,\textnormal{d} z \;+\; \re \iint _{\mathbb S_+ }  \phi(\overline{z})  \,\eta_{\overline{z}}(\overline{z})\;\,\textnormal{d} z\\& =
   \re \iint _{\mathbb S_+ }  \phi(z)  \,\eta_{\overline{z}}(z)\;\,\textnormal{d} z \;+\; \re \iint _{\mathbb S_+ }  \overline{\phi(\overline{z})} \; \overline{\eta_{\overline{z}}(\overline{z})}\;\,\textnormal{d} z\\& =
   \re \iint _{\mathbb S_+ }  \phi(z) \big[ \eta_{\overline{z}}(z)\;+ \; \overline{\eta_{\overline{z}}(\overline{z})}\;\big ]\,\textnormal{d} z\\&
   =\; \re \iint _{\mathbb S }  \phi(z)  \,\lambda _{\overline{z}}(z)\,\textnormal{d} z\;=\;0
\end{split}
\end{equation}
where $\,\lambda (z) = \eta(z) +  \overline{\eta(\overline{z})} \,$ belongs to $\,\mathscr C^\infty_0 (\mathbb S_+)\,$ and is obviously real on $\, \mathbb R\,$. The last equality follows from Lemma \,\ref{ExistVar}\,.  Replacing $\, \eta\,$ by $\, i\, \eta\,$ we conclude that  $\,\iint _{\mathbb A }  \phi(z) \, \eta_{\overline{z}}\;\,\textnormal{d} z = 0\,$, which means that $\,\phi\,$ is analytic in $\,\mathbb S\,$, by Weyl's Lemma.
  Having the Laurent expansion at hand we now test the integral form of the variational equation (\ref{critical})\,,
  $$\re \iint _{\mathbb S }  \overline{h_{\overline{z}}}\, h_z \, \lambda_{\overline{z}} \,\,\textnormal{d} z\; = \;0  \;,\;\;\;\;\textnormal{with } \;\;\;\lambda_{\bar{z}} \,=  k \,z \,\bar{z}^{k-1}\;\;,\;\;\;k =  \pm 1, \pm 2, ...
  $$
  To this effect note that $\,\re \iint _{\mathbb S }  a_n\,z^n \, \lambda_{\overline{z}} \,\,\textnormal{d} z\; = \,a_n\,\re \iint _{\mathbb S }  \,z^n \, \lambda_{\overline{z}} \,\,\textnormal{d} z\;=\;0 \,$, except for the case $\, n = k-2\,$. We just proved that $\,\phi(z) = a_{_{-2}} \,z^{-2}\,$, where $a_{_{-2}} = \overline{a_{_{-2}}}\,$. That $\,a_{_{-2}}\,$ must be real can also be seen by testing the variational equation with $\,\lambda_{\bar{z}} \,= \,i \frac{z}{\bar{z}}$.

  \end{proof}

\subsection{Vertical and horizontal arcs of a quadratic differential} Let $\,\varphi (z)\, \dtext z \otimes \dtext z\,$ be a holomorphic quadratic differential in  $\,\mathbb X\,$.  A \textit{vertical arc} is a $\mathscr C^\infty$-smooth curve $\,\gamma = \gamma (t)$, $a<t<b$, along which
\[[\dot{\gamma} (t)]^2 \varphi \big(\gamma (t)\big)<0, \quad a<t<b.\]
A {\it vertical trajectory} of $\varphi$ in $\X$ is a maximal vertical arc; that is, not properly contained in any other vertical arc. In exactly similar way are defined the \textit{horizontal arcs} and \textit{horizontal trajectories}, via the opposite inequality.

 If $\X$ is a circular annulus $\A= A(r,R)$ and $\,\varphi (z)\, \dtext z \otimes \dtext z\,$ is real along its entire boundary then $\,\varphi(z) = \, c z^{-2}\,$, for some $\,c\in \mathbb R\,$. For $ \, c > 0\,$ the concentric circles $\,\mathcal C_\rho = \{\,\rho\, e^{i\theta} \colon 0 \leqslant  \theta < 2\pi\}\,$, $\,\rho \in [r,R]\,$, are the vertical trajectories, whereas  the rays $\,\mathcal R^\theta= \{\,\rho \,e^{i\theta} \colon r < \rho < R\}\,$, $\,\theta \in [0, 2\pi)\,$, are the horizontal trajectories. For a negative $\,c\,$, this holds in reverse order.
 These two cases  exhibit different behavior in regard to the formation of cracks.

 Every continuous mapping $\,h \colon \X \to \R^2\,$ has well defined  multiplicity function which is measurable so one can speak of the essential supremum. We are concerned with mappings of bounded multiplicity, meaning that there is $\,1 \leqslant M  < \infty\,$ so that
\begin{equation}\label{muless} \,\# \{\,x \in \mathbb X \,; \;h(x) = y \} \leqslant M \,,\,\; \textnormal{for almost every }\; y\,\in  \R^2 .\end{equation}
Indeed, for $\,h\in \mathscr H^2_{\textnormal{lim}}(\mathbb \X, \mathbb Y)\,$ we have

$$\,\# \{\,x \in \mathbb X \,; \;h(x) = y \} \leqslant 1 \,,\,\; \textnormal{for almost every }\; y\,\in  \R^2 $$
see ~\cite[Lemma 3.8]{IKKO}.
The following Proposition is from~\cite[Proposition 5.1]{CIKO}.

\begin{proposition}\label{lily}
Suppose $\,h \in \mathscr C(\mathbb X,\mathbb C) \cap \W^{1,1}_{\loc} (\X ,\mathbb C )$ satisfies the condition (\ref{muless}) and solves the Hopf-Laplace equation
\[h_z \overline{h_{\bar z}}= \varphi\;, \qquad \textnormal{\textit{ where}}\;\, \,\varphi \not\equiv 0\, \textnormal{ \textit{is a holomorphic function in}}\;\; \X \subset \C           \]
Then for each $y_\circ \in \R^2$ the union of all vertical trajectories of the quadratic differential $\varphi (z)\, \dtext z \otimes \dtext z\,$ that intersect $h^{-1} (y_\circ)$ has zero measure.
\end{proposition}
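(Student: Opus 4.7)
The plan is to work locally in the natural parameter of $\varphi$ and to derive pointwise identities from the Hopf-Laplace equation. Away from the discrete zero set $\varphi^{-1}(0)$---whose vertical trajectories form an at most countable family and contribute no area---I would introduce the coordinate $w=\int\sqrt{\varphi}\,\dtext z$, so that vertical trajectories become vertical lines $\{u=\mathrm{const}\}$ and the transformed mapping $f(w):=h(z(w))$ satisfies $f_w\,\overline{f_{\bar w}}=1$. Expanding $f_u=f_w+f_{\bar w}$ and $f_v=i(f_w-f_{\bar w})$ yields the pointwise identities $|\partial_u f|^{2}-|\partial_v f|^{2}=4$ and $\partial_u f\perp\partial_v f$; in particular $|\partial_u f|\ge 2$ everywhere. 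The conclusion reduces to showing that, for every $y_\circ$,
\[S(y_\circ):=\bigl\{\,u\in\R\,:\,\exists\,v\text{ with }f(u+iv)=y_\circ\,\bigr\}\]
has one-dimensional Lebesgue measure zero.

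Next I would split along the open set $\Omega:=\{J\ne 0\}$ and its complement $Z:=\{|f_w|=|f_{\bar w}|=1\}$. On $\Omega$, writing $f_w=e^{i\Theta}$ with $\Theta=\Theta_1+i\Theta_2$ (so that $f_{\bar w}=e^{i\bar\Theta}$ by the Hopf-Laplace relation), the commutation of mixed partials $\partial_{\bar w}f_w=\partial_w f_{\bar w}$ translates into $e^{i\Theta}\Theta_{\bar w}=e^{i\bar\Theta}\overline{\Theta_{\bar w}}$, which iterated yields $(e^{4\Theta_2}-1)\Theta_{\bar w}\equiv 0$; on $\Omega=\{\Theta_2\ne 0\}$ this forces $\Theta_{\bar w}=0$, so $f$ is a harmonic map with Hopf differential $\equiv 1$. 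Since $|\partial_u f|\ge 2$ forbids $f$ from being constant on any open subset, real-analyticity of harmonic components forces $f^{-1}(y_\circ)\cap\Omega$ to be a discrete set, and at each such preimage $f$ is a local homeomorphism. Consequently $y\mapsto\#\bigl(f^{-1}(y)\cap\Omega\bigr)$ is lower semicontinuous, and this combined with $\#f^{-1}(y)\le M$ for a.e.\ $y$ propagates, via $\liminf$, to $\#\bigl(f^{-1}(y_\circ)\cap\Omega\bigr)\le M$ at \emph{every} $y_\circ$; the $\Omega$-contribution to $S(y_\circ)$ is thus a finite set.

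It remains to treat $f^{-1}(y_\circ)\cap Z$. On $Z$ one computes $|\partial_v f|^{2}=|f_w|^{2}+|f_{\bar w}|^{2}-2=0$, so $\partial_v f=0$ almost everywhere on $Z$; since $f$ is absolutely continuous along almost every vertical line, $f$ is constant on every connected component of $V_u\cap Z$, and each vertical line meets at most countably many $f$-values arising from $Z$. The area formula with $J\equiv 0$ on $Z$ gives $|f(Z)|=0$, which already settles the conclusion for almost every $y_\circ$. The main obstacle is the remaining exceptional case $y_\circ\in f(Z)$: there I would argue that if $|S_Z(y_\circ)|>0$, a Lebesgue density point $u_0\in S_Z(y_\circ)$ with corresponding $(u_0,v_0)\in Z\cap f^{-1}(y_\circ)$ admits the local rank-one expansion $f(u_0+\delta u,v)\approx y_\circ+\partial_u f(u_0,v_0)\,\delta u$; varying $y$ along the tangent direction $\partial_u f(u_0,v_0)/|\partial_u f(u_0,v_0)|$ would furnish a translated positive-measure subset of $S(y)$ for $y$ in a one-parameter family issuing from $y_\circ$, and comparing this with the multiplicity bound applied at nearby generic $y$---for which $|S(y)|=0$ has already been established---would yield the desired contradiction.
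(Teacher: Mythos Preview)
The paper does not supply a proof of this proposition; it is quoted from \cite[Proposition~5.1]{CIKO}, so there is no in-paper argument to compare against. I can only evaluate your sketch on its own.

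Your local set-up is sound: passing to the natural parameter $w=\int\sqrt{\varphi}\,\dtext z$ and deriving $|\partial_u f|^2-|\partial_v f|^2=4$ (hence $|\partial_u f|\ge 2$ a.e.) is exactly the right starting point, and it is what one expects the argument in \cite{CIKO} to pivot on. The difficulties are in the two subsequent steps.

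\emph{The harmonicity step on $\Omega$.} You differentiate $\Theta=-i\log f_w$ and use $\partial_{\bar w}f_w=\partial_w f_{\bar w}$ pointwise. But the hypothesis is only $h\in\mathscr C\cap\mathscr W^{1,1}_{\loc}$; $f_w$ is merely $L^1_{\loc}$, and neither $\Theta$ nor its $\bar w$-derivative is a legitimate object at this regularity. The identity $\partial_{\bar w}f_w=\partial_w f_{\bar w}$ holds only distributionally, and multiplying a distributional equation by the nonsmooth factor $e^{i\Theta}$ is not justified. Even the set $\Omega=\{J_f\ne 0\}$ is defined only up to a null set, so speaking of $f$ being harmonic ``on $\Omega$'' needs an a priori regularity bootstrap you have not supplied (note that the Lipschitz result in \cite{CIKO} assumes $J_h\ge 0$, which is not part of the present hypotheses).

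\emph{The density-point endgame.} The last paragraph is where the argument breaks. You take a Lebesgue density point $u_0\in S_Z(y_\circ)$, choose $(u_0,v_0)\in Z\cap f^{-1}(y_\circ)$, and invoke a ``rank-one expansion'' $f(u_0+\delta u,v)\approx y_\circ+\partial_u f(u_0,v_0)\,\delta u$. There is no reason $f$ is differentiable at $(u_0,v_0)$; differentiability holds only almost everywhere, and density in the one-dimensional set $S_Z(y_\circ)$ gives no control over the two-dimensional location $(u_0,v_0)$. Even granting differentiability, the heuristic that this ``furnishes a translated positive-measure subset of $S(y)$ for $y$ near $y_\circ$'' is not a proof: you would need a quantitative, uniform version of the expansion valid on a positive-measure family of base points, together with a measurable selection $u\mapsto v(u)$ along which the expansion holds --- none of which is available at $W^{1,1}$ regularity. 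A related issue: your claim that $f$ is constant on connected components of $V_u\cap Z$ presumes those components are nondegenerate intervals, but $Z\cap V_u$ could be a totally disconnected set of positive measure.

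In short, the strategy (natural parameter, $|\partial_u f|\ge 2$, contrast with the multiplicity bound) is the right one, but the two places where you try to upgrade a.e.\ information to pointwise or topological information are genuine gaps. The argument in \cite{CIKO} handles the passage from $|\partial_u f|\ge 2$ to the conclusion by a more measure-theoretic route that does not require harmonicity on an open set or a Taylor expansion at a specific point.
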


 \subsection{Bizarre solutions}\label{bizarre}  Complex harmonic functions in the Sobolev space $\,\mathscr W^{1,2}(\mathbb X\,,\mathbb C)\,$ are among the solutions of the Hopf-Laplace equation. But there are many more, sometimes surreal ones. Harmonicity is lost exactly at the points where $\,h\,$ fails to be injective~\cite{IKOhopf}. Consider, for example, an origami folding of a square sheet of paper, denoted  by $\,\mathbb X\,$. Upon a countable number of suitable folds all the points in $\,\partial \mathbb X\,$ stock up into one point, say the origin. In this way one obtains a piece-wise orthogonal Lipschitz map $\,h \colon \mathbb X \onto \mathbb Y\,$ which vanishes on $\,\partial \mathbb X\,$. In fact the range of the differential $\,Dh\,$ may consist  of a finite number (eight is enough) of orthogonal matrices. Therefore, at almost every point $\,z \in \mathbb X\,$ we either have $\,h_z = 0\,$ or $\,h_{\bar{z}} = 0\,$. The interested reader is referred to \cite{IVV, Ki, KS} for more details. Thus we have a nonzero  solution of the homogeneous Dirichlet problem
 \begin{displaymath}
 \left\{\begin{array}{ll}\,h_z\,\overline{h_{\overline{z}}}\, = 0 \,\,,\;\;\;\; h\in  Lip \,(\mathbb X, \mathbb Y)&\\
 h\,  \equiv \,0 \,\,,\;\;\;\;\;\; \textnormal{on}\;\;\; \partial \mathbb X
 \end{array} \right.
 \end{displaymath}
 There are, perhaps, even more exotic solutions to the homogeneous Hopf-Laplace equation, $\,\,h_z\,\overline{h_{\overline{z}}}\, = 0\,$, interesting not only in their own right.

 \subsection{The natural domain of  definition and admissible solutions}

 It becomes clear that in order to build a viable theory of the  equation (\ref{critical}) one must assume that the Jacobian determinant is nonnegative almost everywhere. Such reflections urge the following concept.
 \begin{definition} The class $\,\mathscr W^{1,2}_+(\mathbb X, \mathbb C)\,$ of mappings in $\, \W^{1,2}_{\loc} (\X, \C)$ whose Jacobian $\,J(x,h) \geqslant 0\,$ almost everywhere in $\,\mathbb X\,$ will be regarded as the \textit{natural domain of definition} of the Hopf-Laplace operator. A map $\,h \in \mathscr W^{1,2}_+(\mathbb X, \mathbb C)\,$ which satisfies the equation $\frac{\partial}{\partial \bar z} \left(h_z \overline{h_{\bar z}}\right)= 0\,$\, in the sense of distributions, will be referred to as  \textit{admissible solution}.
 \end{definition}
 Critical points for the Dirichlet energy within any of the classes in Definition \ref{spaces}  comply with these requirements.
It should be remarked that all the admissible solutions to (\ref{critical}) are locally Lipschitz continuous ~\cite{CIKO} and ~\cite{IKOli}, but not $\mathscr C^1$-smooth in general. See also Theorem  \ref{globLip} which, in a specific case,  asserts Lipschitz regularity  up to the boundary.

\section{Harmonic Replacement\label{hrmRep}} The main idea in the proof of Theorem \,\ref{nocracks} is to show that  every $\,h \in {\mathscr{H}}^{1,2}_{\lim}(\mathbb \X, \mathbb Y) = \overline{\mathscr H}_2(\mathbb X, \mathbb Y) \,$ can be modified in a neighborhood of the set  $\,h^{-1}(a) \subset \mathbb X\,$, so-called a boundary cell,  with  lower energy, unless $\,h\,$  already takes the boundary cell diffeomorphically into $\,\mathbb Y\,$. To this effect we need  a thorough analysis of harmonic replacement procedure.\\

Let $\,h \in {\mathscr{H}}^{1,2}_{\lim}(\mathbb \X, \mathbb Y)\,$ and $\,B \subset \mathbb C\,$ be a ball. Assume that the set $\, \mathbb D = B \cap \mathbb Y\,$ is nonempty and \textbf{convex}. Then, because of monotonicity,  the set $\,\mathbb U = \{ x\in \mathbb X \,; \; h(x) \in B \cap \overline{\mathbb Y}\,\}\,$, is an open simply connected domain, called a \textit{cell} in $\,\mathbb X\,$.
More specifically, we call $\,\mathbb U\,$ an \textit{inner cell} if  $\,B \subset  \mathbb Y\,$ and a \textit{boundary cell} otherwise.
\begin{proposition}\label{replacement}
Let $\Y$ be a Lipschitz domain and $\,h \in {\mathscr{H}}^{1,2}_{\lim}(\mathbb \X, \mathbb Y)\,$. Then to every cell $\,\mathbb U \subset \mathbb X\,$   there corresponds (unique) mapping $\,h_{_\mathbb U} \in  \overline{\mathscr{H}}_2({\mathbb \X}, {\mathbb Y})\,$ such that
\begin{enumerate}[(i)]
\item\label{en1} $\,h_{_\mathbb U} = h \colon {\mathbb X} \setminus  \mathbb U \onto \overline{\mathbb Y} \setminus B $
\item\label{en2} $\,h_{_\mathbb U}\colon \mathbb U \onto\,\mathbb D \,$ is a harmonic diffeomorphism.
\item\label{en3} $
\mathscr E_\mathbb X [h_{_\mathbb U}]  \leqslant \mathscr E_\mathbb X [h]  $
\end{enumerate}
Equality in (\textit{iii}) occurs if and only if $h \equiv h_{_\U}$.
We refer to $\,h_{_\mathbb U}\,$ as \textbf{harmonic replacement} of $\,h\,$ in  $\,\mathbb U\,$.
\end{proposition}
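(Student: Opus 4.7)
The plan is to construct $h_{\U}$ as the harmonic extension to $\U$ of the boundary data $h|_{\partial \U}$, glued to $h$ outside, and to verify (i)--(iii) together with membership in $\overline{\Ho}_2(\X,\Y)$ by combining the Rad\'o--Kneser--Choquet theorem (Theorem \ref{RaKnCh}), the Dirichlet principle, and a parallel harmonic replacement on an approximating sequence of homeomorphisms.

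First I would analyze the cell and its boundary data. Since $\Y$ is Lipschitz and $h \in \Ho^{1,2}_{\lim}(\X,\Y) = \overline{\Ho}_2(\X,\Y) \subset \Ho_{cd}(\X,\overline{\Y})$, the map $h$ is continuous and monotone on $\overline{\X}$ by Lemma \ref{Equicontinuity}, so $h|_{\partial \U}$ is a continuous monotone map of the Jordan curve $\partial \U$ onto the Jordan boundary of the convex region $\mathbb D = B \cap \Y$. For an inner cell the image is simply an arc of $\partial B$; for a boundary cell, $\partial \U$ splits into an arc inside $\X$ (mapped onto $\partial B \cap \overline{\Y}$) and an arc on $\partial \X$ (mapped onto $\overline B \cap \partial \Y$), and their concatenation monotonically sweeps out $\partial \mathbb D$.

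Next let $h_{\U}$ on $\U$ be the complex harmonic function with boundary values $h|_{\partial \U}$, obtained by Poisson integration after a conformal uniformization of $\U$, and extend by $h_{\U} = h$ on $\X \setminus \U$. Part (i) is automatic, and Theorem \ref{RaKnCh}, applied to the monotone boundary data onto the convex region $\mathbb D$, yields (ii): $h_{\U}\colon \U \to \mathbb D$ is a $\mathscr C^\infty$-diffeomorphism. For (iii), since $h - h_{\U} \in \W^{1,2}_0(\U)$ and $h_{\U}$ is harmonic in $\U$, the orthogonality identity
\[
\mathscr E_{\U}[h] \;=\; \mathscr E_{\U}[h_{\U}] \,+\, \mathscr E_{\U}[h - h_{\U}]
\]
yields $\mathscr E_{\X}[h_{\U}] \le \mathscr E_{\X}[h]$, with equality if and only if $h \equiv h_{\U}$ on $\U$, hence everywhere on $\X$.

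The main obstacle is verifying $h_{\U} \in \overline{\Ho}_2(\X,\Y)$, i.e.\ exhibiting it as a strong $\W^{1,2}$-limit of homeomorphisms onto $\Y$. Starting from an approximating sequence $h_j \to h$ in $\Ho_2(\X,\Y)$, I would run a parallel replacement: set $\U_j = \{x \in \X : h_j(x) \in B \cap \overline{\Y}\}$, a Jordan subdomain of $\X$, and replace $h_j|_{\U_j}$ by the harmonic extension of $h_j|_{\partial \U_j}$, which by Rad\'o--Kneser--Choquet is a $\mathscr C^\infty$-diffeomorphism onto $\mathbb D$. Pasting with $h_j$ outside gives a homeomorphism $\tilde h_j \colon \X \onto \Y$. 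Strong $\W^{1,2}$-convergence of $h_j$ to $h$ implies $cd$-convergence via (\ref{setiden}), so $\U_j$ Hausdorff-approximates $\U$, the boundary data converge, and continuity of the Poisson extension in $\W^{1,2}$ with respect to its boundary data yields $\tilde h_j \to h_{\U}$ strongly in $\W^{1,2}(\X,\Y)$. Uniqueness of $h_{\U}$ is then immediate from uniqueness of the harmonic extension consistent with (i)--(ii).
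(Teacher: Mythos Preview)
Your construction hinges on the assertion that $\partial \U$ is a Jordan curve, so that you can uniformize $\U$, take a Poisson extension of $h|_{\partial\U}$, and invoke Theorem~\ref{RaKnCh}. This is not justified and is in general false: for $h\in\Ho^{1,2}_{\lim}(\X,\Y)$ one only knows that $h$ is continuous and monotone, so the cell $\U=\{x:h(x)\in B\cap\overline{\Y}\}$ is open and simply connected, but its boundary $\partial\U\subset h^{-1}\bigl(\partial(B\cap\overline{\Y})\bigr)$ is merely a continuum and can fail to be a simple closed curve. Consequently neither the Poisson extension of $h|_{\partial\U}$ nor Theorem~\ref{RaKnCh} is available in the way you use it. (Incidentally, for an inner cell the image of $\partial\U$ is all of $\partial B$, not ``an arc''.)

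This is exactly why the paper does \emph{not} define $h_{\U}$ as the harmonic extension of $h|_{\partial\U}$. Instead it performs the Rad\'o--Kneser--Choquet replacement on the cells $\U_n^{j}=h_j^{-1}(B_n)$ of the approximating homeomorphisms $h_j$ (these \emph{are} Jordan domains), takes a weak $\W^{1,2}$-limit $\widetilde h$, and then must argue separately that $\widetilde h|_{\U}$ is a harmonic diffeomorphism onto $B$. That last step requires a Hurwitz-type lemma for harmonic homeomorphisms together with a Jacobian computation (using $\mathscr L^1_{\loc}$-weak convergence of Jacobians) to rule out the degenerate alternative $J(\cdot,\widetilde h)\equiv 0$. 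Only after $\widetilde h$ is in hand does the Dirichlet principle yield (iii).

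Your parallel-replacement idea for membership in $\overline{\Ho}_2$ is in the right spirit, but the line ``continuity of the Poisson extension in $\W^{1,2}$ with respect to its boundary data yields $\tilde h_j\to h_{\U}$ strongly'' is a second gap: the domains $\U_j$ vary with $j$, so there is no fixed Poisson operator whose continuity one could invoke, and strong $\W^{1,2}$-convergence of harmonic replacements on moving domains is not automatic. The paper sidesteps this by using only the uniform energy bound $\mathscr E_{\X}[\tilde h_{j_n}]\le\mathscr E_{\X}[h_{j_n}]$, passing to a \emph{weak} limit, and then appealing to the identity $\Ho^{1,2}_{\lim}(\X,\Y)=\overline{\Ho}_2(\X,\Y)$. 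For boundary cells there is an additional subtlety you pass over: even for the homeomorphisms $h_j$, the relevant region $\U_n^j\cap\X$ is only seen to be a Jordan domain via a crosscut argument after first extending $h_j$ beyond $\partial\X$ (the Extension Lemma).
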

Before passing to the proof let us make a few useful observations. First,  Proposition \ref{replacement} is not affected by a conformal change of variables within  the domain $\,\X\,$. Thus it involves no loss of generality in assuming that $\,\X\,$ is a Schottky domain (bounded by circles), so  Lipschitz regular.
 Under this assumption, equalities in~\eqref{setiden} apply.  In particular, we dispose of a sequence of homeomorphisms $h_j \colon \overline{\X} \onto \overline{\Y}$ which converge to $\,h\,$ uniformly on $\overline{\X}$ and strongly in $\W^{1,2} (\X, \Y)$. The limit map $h$ extends continuously up to the closures, which we continue to denote by $\, h\colon \overline{\mathbb X} \onto\overline{\mathbb Y}\,$. The extended map $\, h\colon \overline{\mathbb X} \onto\overline{\mathbb Y}\,$ and the boundary map $\, h\colon \partial\mathbb X \onto\partial\mathbb Y\,$ are  monotone.
\begin{proof} The case of an inner cell is particularly convenient to begin with. Later, with additional topological arguments, we will be able to exploit this proof for the boundary cells as well.

{\bf Case 1. The inner cell.}  Suppose $B \subset \Y$ so $\mathbb U = h^{-1} (B) \subset \X$. Since $h$ is monotone, it follows that  $\mathbb U$ is a simply connected subdomain of $\mathbb X$. Consider an increasing sequence $B_1 \subset B_2 \subset \dots$, $\cup B_n =B$, of balls and the corresponding cells $\U_n = h^{-1} (B_n)$
\begin{equation}\label{eqseq3s}
\U_{n-1} \subset \overline{\U}_{n-1} \subset \U_n \subset \overline{\U}_n \subset \U_{n+1}
\end{equation}
and note that
\[\partial \U_n \subset h^{-1} (\partial B_n) \deff  \Gamma_n \, .\]
Here $\Gamma_n$ is a continuum disconnecting $\C$ into two components; the bounded component equals $\U_n$. Thus $\overline{\U}_{n-1}$ is a compact subset of the inner complement of  $\, \Gamma_n\,$. We look at the induced cells $\,\U_n^j = h_j^{-1} (B_n)\,$, $\,j=1,2, \dots\,$. Their boundaries  $\Gamma_n^j \deff \partial \U_n^j =h_j^{-1} (\partial B_n)$ are closed Jordan curves. Since $\,h_j\,$ converge uniformly to  $\,h\,$, it follows that
\[\lim_{j \to \infty} \;\sup \,\{\dist (x, \Gamma_n)\,;\;x \in \Gamma_n^j\}  =0\]
which in turn implies that for sufficiently large $j$, say $j \ge j_n$, we have $\Gamma_n^j \subset \U_{n+1}$ and $\overline{\U}_{n-1}$ is contained in $\U_n^j\,$ -the bounded component of $\C \setminus \Gamma_n^j$. We now appeal to Rad\'o-Kneser-Choquet theorem, see Theorem~\ref{RaKnCh}. Accordingly, we may extend the boundary homeomorphism $h_{j_n} \colon \Gamma_n^{j_n} \to \partial B_n$ inside the cell $\U_n^{j_n}$ to obtain a homeomorphism $\widetilde{h}_{j_n} \colon \overline{\X} \onto \overline{\Y}$ such that
\begin{itemize}
\item $\widetilde{h}_{j_n} = h_{j_n} \colon \overline{\X} \setminus \U^{j_n}_{n} \onto \overline{\Y} \setminus B_n$
\item $\widetilde{h}_{j_n}  \colon  \U^{j_n}_{n} \onto B_n$ is a harmonic diffeomorphism
\item $\mathscr E_\mathbb X [\widetilde{h}_{j_n}]  \leqslant \mathscr E_\mathbb X [h_{j_n}] \le M \, $, \;this is a bound independent of $n$.
\end{itemize}
Passing to a subsequence of $\{\widetilde{h}_{j_n}\}$, if necessary, we define $\widetilde{h} \in {\Ho}^{1,2}_{\lim} (\X, \Y)$ to be a weak limit of $\{\widetilde{h}_{j_n}\}$. This sequence also converges uniformly  to $\widetilde{h} \colon \X \onto \Y\,$, because of the uniform bounds in \, (\ref{r2}).
Moreover,
\[\widetilde{h}=h \colon \X \setminus \U \onto \Y \setminus B \, .\]
Let us demonstrate that
\begin{equation}
\widetilde{h} \colon \U \onto B \qquad \textnormal{is a harmonic diffeomorphism.}
\end{equation}

To this effect, fix any sequence $\{\mathbb K_n\}^\infty_{n=1}$ of  subdomains compactly contained in $\,\U_n\,$, such that $\cup \mathbb K_n = \U$.  Since  $\widetilde{h}_{j_k} \colon \U_n \to \C$, $\,k\geqslant n \,,$ converge uniformly to $\widetilde{h} \colon \mathbb K_n \to \C$ we shall infer that either $\widetilde{h} \colon \mathbb K_n \to \C$ is a diffeomorphism or $J(x, \widetilde{h}) \equiv 0$ in $\mathbb K_n$, at least for large values of $\,n\,$.  This fact is none other than a harmonic variant of the classical Hurwitz theorem for analytic functions.

\begin{lemma}[Hurwitz Lemma] \label{Hurwitz}
If a sequence of harmonic homeomorphisms $\,f_k \colon \Omega \rightarrow \mathbb C\,$ converges $\,c$-uniformly to $\,f \colon \Omega \rightarrow \mathbb C\,$ then either $\,f\,$ is a harmonic homeomorphisms (actually $\,\mathscr C^\infty$-diffeomorphism) or its Jacobian determinant vanishes identilcally, $\,J(x, f) \equiv 0\,$.
\end{lemma}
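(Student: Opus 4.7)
The plan is to reduce the lemma to the classical Hurwitz theorem on nowhere-vanishing holomorphic functions together with the maximum principle applied to a Blaschke-type ratio, and then to upgrade local injectivity of the limit map to global injectivity by a Brouwer degree argument that genuinely uses the hypothesis that every $f_k$ is a homeomorphism.

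First I would recall that for any $\C$-valued harmonic function $u$ on $\Omega$, the derivative $u_z$ is holomorphic and $u_{\bar z}$ is antiholomorphic, so that $\overline{u_{\bar z}}/u_z$ is a ratio of two holomorphic functions. Passing to a subsequence (and, if necessary, postcomposing with $z\mapsto \bar z$), I may assume every $f_k$ is orientation preserving; Lewy's theorem then gives $(f_k)_z \neq 0$ on $\Omega$, and the condition $J(z, f_k) > 0$ is equivalent to $|g_k| < 1$ on $\Omega$ for the holomorphic function $g_k \deff \overline{(f_k)_{\bar z}}/(f_k)_z$. The $c$-uniform convergence $f_k \to f$ is inherited by the complex derivatives, so $(f_k)_z \to f_z$ and $(f_k)_{\bar z} \to f_{\bar z}$ $c$-uniformly on $\Omega$, and the limit $f$ is harmonic.

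Classical Hurwitz applied to the nowhere-zero sequence $(f_k)_z$ produces two cases. If $f_z \equiv 0$ on $\Omega$, then the pointwise limit inequality $|f_z| \geq |f_{\bar z}|$ (inherited from $|g_k| < 1$) forces $f_{\bar z} \equiv 0$, so $f$ is constant and $J(\cdot, f) \equiv 0$. Otherwise $f_z$ never vanishes on $\Omega$, the ratio $g \deff \overline{f_{\bar z}}/f_z$ is holomorphic on $\Omega$, and as a $c$-uniform limit of $g_k$ it satisfies $|g| \leq 1$. The maximum modulus principle then yields a dichotomy: either $g$ is a unimodular constant, in which case $|f_z| \equiv |f_{\bar z}|$ and $J(\cdot, f) \equiv 0$; or $|g| < 1$ throughout $\Omega$, in which case $J(\cdot, f) > 0$ everywhere and $f$ is a $\mathscr C^\infty$ local diffeomorphism.

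The remaining step, which I expect to be the main obstacle, is to promote local injectivity to global injectivity; the holomorphic reasoning above is purely pointwise. Here one exploits that $f$ is a $c$-uniform limit of the actual \emph{homeomorphisms} $f_k$. Suppose for contradiction that $f(p_1) = f(p_2) = q$ with $p_1 \neq p_2$. Choose disjoint closed disks $\overline V_1, \overline V_2$ compactly contained in $\Omega$, centered at $p_1, p_2$, small enough that $f$ is a diffeomorphism on each $\overline V_j$; then $q \notin f(\partial V_j)$ and the Brouwer degree satisfies $\deg(f - q, V_j, 0) = 1$ for $j=1,2$. Uniform convergence $f_k \to f$ on the compact set $\partial V_1 \cup \partial V_2$ and homotopy invariance of the degree yield $\deg(f_k - q, V_j, 0) = 1$ for all large $k$, so $f_k$ attains the value $q$ at some point of $V_1$ and also at some point of $V_2$, contradicting its injectivity. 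Therefore $f$ is globally injective on $\Omega$, and together with $J(\cdot, f) > 0$ this makes $f$ a $\mathscr C^\infty$-diffeomorphism of $\Omega$ onto $f(\Omega)$.
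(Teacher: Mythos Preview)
Your argument is correct and follows exactly the route the paper hints at: the paper's entire proof is the one line ``simply, consider a sequence $\{\partial f_k/\partial z\}$ of analytic functions,'' and your classical-Hurwitz/second-dilatation/maximum-modulus analysis is the natural unpacking of that hint. Your degree-theoretic upgrade from local to global injectivity is also fine; the paper actually defers that step to the surrounding proof (``Since $\widetilde h$ is a $c$-uniform limit of homeomorphisms, the map \dots\ must be a global homeomorphism''), so you have supplied a detail the paper leaves implicit.
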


The proof of this lemma presents no difficulty; simply, consider a sequence $\,\{\frac{\partial f_k}{\partial z} \}\,$ of analytic functions.  Curiously, it also holds for  $\,p$-harmonic mappings, but in this case the  proof requires much more work, see  \cite{IOa}.

Returning to our demonstration, the equality $\,J(x, \widetilde{h}) \equiv 0\,$ in $\,\mathbb K_n\,$ is ruled out by the following computation
\[
\begin{split}
\iint_{\U} J(x, \widetilde{h})\, \dtext x & = \lim_{k\to \infty} \iint_{\U} J(x, \widetilde{h}_{j_k})\, \dtext x \ge \lim_{k \to \infty}  \iint_{\U_k^{j_k}} J(x, \widetilde{h}_{j_k})\, \dtext x \\&  = \lim_{k \to \infty} \abs{ \widetilde{h}_{j_k} (\U_k^{j_k}) } = \lim_{k \to \infty} \abs{B_{j_k}}= \abs{B}>0\, .
\end{split}
\]
Here, in the first equation, we appealed to the $\,\mathscr L^1_{\textnormal{loc}}$ - weak convergence of nonnegative Jacobians, a property discovered by S. M\"{u}ller  \cite{Muller}, also  see ~\cite[Theorem 8.4.2]{IMb}.
Thus, for  large $\,n\,$ we have $\iint_{\mathbb K_n} J(x, \widetilde{h})\, \dtext x >0$ and  so $J(x, \widetilde{h}) \not \equiv 0$ in $\,\mathbb K_n\,$. We infer that $\,\widetilde{h}$ is a local diffeomorphism in $\,\U$. Since $\widetilde{h}$ is  a $c$-uniform limit of homeomorphisms, the map  $\widetilde{h} \colon \U \onto B\,$ must be a global homeomorphism. Finally, we apply the Dirichlet principle which asserts that if in a bounded simply connected domain $\,\mathbb U\,$  two functions $\, h , \widetilde{h} \in \mathscr C(\overline{\mathbb U}) \cap \mathscr W^{1,2}(\mathbb U)\,$ coincide on $\,\partial \mathbb U\,$, then

\[\mathscr E_{\U} [\widetilde{h}] \le \mathscr E_{\U} [h] \, \;,\;\;\;\;\;\;\textnormal{whenever} \;\;\widetilde{h} \;\;\textnormal{is harmonic} \]
Equality occurs if and only if $\,h = \widetilde{h}\,$ on $\U$. The proof of Case 1 is complete.
\begin{remark}\label{reminner}
In the above proof we actually did not use any regularity of $\X$ or $\Y$. Thus Proposition~\ref{replacement} holds for arbitrary domains $\,\X\,$ and $\,\Y\,$,  provided $\, B \subset \mathbb Y\,$ ; that is , $\,\U=f^{-1} (B)\,$ is an inner cell. It is the case of the boundary cells that we really benefit from  $\,\Y\,$ being a  Lipschitz domain.
\end{remark}

{\bf Case 2. The boundary cell.} Apart from a few  adjustments, necessitated by some geometric complications near the boundary of $\,\mathbb X\,$, the proof differs very little from that in Case 1. The idea is to extend the mapping $\,h \colon \overline{\mathbb X} \onto \overline{\mathbb Y}\,$ beyond $\,\overline{\mathbb X}\,$ an $\,\overline{\mathbb Y}\,$ so the boundary cell of $\,h\,$  will be enlarged accordingly to become an inner cell of the extended mapping.
In all that follows $\,\mathbb X\,$ will be a Schottky domain (i.e. bounded by circles) and $\,\mathbb Y\,$ a Lipschitz domain.
\begin{lemma}[Extension Lemma] \label{extension}
There exist neighborhoods $\,\mathbb X_+ \supset \overline{\mathbb X}\,$ and $\,\mathbb Y_+ \supset \overline{\mathbb Y}\,$ and an extension operator
$$\;\, \widehat{\,} \colon \overline{\mathscr{H}}_2(\mathbb \X, \mathbb Y)\,\rightarrow \overline{\mathscr{H}}_2(\mathbb \X_+, \mathbb Y_+)\, $$
so that for every $\, h \in \overline{\mathscr{H}}_2(\mathbb \X, \mathbb Y)\,$ its extension $\,\widehat{h} \in \overline{\mathscr{H}}_2(\mathbb \X_+, \mathbb Y_+)\,$  satisfies :
\begin{itemize}
\item[(i)]  $\widehat{h}(x) \,=\, h(x)\,,\;\;\;\textnormal{for}\;\;\; x\in \overline{\mathbb X }$
\item[(ii)] $ \widehat{h} \colon  \mathbb X_+ \setminus \overline{\mathbb X} \onto \mathbb Y_+ \setminus \overline{\mathbb Y} $
\item[(iii)] $$ \iint_{\mathbb X_+} |D\widehat{h}(x)|^2 \,\textnormal{d}x \;\;\leqslant \;\; C_{_{\mathbb X \mathbb Y}}\iint_{\mathbb X} |D{h}(x)|^2 \,\textnormal{d}x   $$
\item[(iv)] Let a sequence of homeomorphisms $\,h_j \colon  {\mathbb X} \onto {\mathbb Y}\,$ converge  to $\,h\,$ uniformly and strongly in $\,\mathscr W^{1,2}(\mathbb X\,,\mathbb Y)\,$. Then the extended mappings $\,\widehat{h}_j \colon  {\mathbb X_+} \onto {\mathbb Y_+}\,$ are also homeomorphisms; they  converge to $\,\widehat{h}\,$ uniformly and strongly in $\,\mathscr W^{1,2}(\mathbb X_+\,,\mathbb Y_+)\,$.
\end{itemize}
\end{lemma}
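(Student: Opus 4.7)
The plan is to extend $h$ across $\partial \X$ by first pulling back via a conformal reflection on the domain side, and then pushing forward by a bi-Lipschitz reflection on the target side. Since $\X$ is a Schottky domain, each boundary component $\partial \X_\nu$ is a circle and the Möbius inversion $\iota_\nu$ through that circle is a conformal involution, fixing $\partial \X_\nu$ pointwise and interchanging its two sides. For $\Y$, the Lipschitz hypothesis is used to construct a bi-Lipschitz involution $\mathbf{R}\colon \mathcal N \onto \mathcal N$, defined on a tubular neighborhood $\mathcal N$ of $\partial \Y$, fixing $\partial \Y$ pointwise and swapping the inner and outer sides of $\partial \Y$. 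The standard recipe is to cover $\partial \Y$ by finitely many charts in which, after rotation, $\partial \Y$ is the graph of a Lipschitz function $y_2 = \psi(y_1)$, use the local graph reflection $(y_1,y_2) \mapsto (y_1, 2\psi(y_1)-y_2)$ (which is bi-Lipschitz with constants depending only on $\Lip \psi$), and glue the local reflections via a partition of unity.

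With these two reflections in hand, pick $\delta>0$ small and set
\[ \X_+^\nu = \iota_\nu\bigl(\{x \in \X : \dist(x, \partial \X_\nu) < \delta\}\bigr) \cup \partial \X_\nu, \]
a one-sided collar glued along $\partial \X_\nu$ on the exterior side, and put $\X_+ = \X \cup \bigcup_\nu \X_+^\nu$. Because $h$ extends continuously up to $\overline{\X}$ and sends $\partial \X_\nu$ to $\partial \Y_\nu$, we may shrink $\delta$ so that $h$ carries the inner $\delta$-collar of each $\partial \X_\nu$ into $\mathcal N$. Define
\[ \widehat{h}(x) = \begin{cases} h(x), & x \in \overline{\X}, \\ \mathbf{R}\bigl(h(\iota_\nu(x))\bigr), & x \in \X_+^\nu \setminus \overline{\X},\end{cases}\]
and let $\Y_+ = \widehat{h}(\X_+)$. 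Property (i) is built in. Property (ii) holds because $\iota_\nu$ swaps the two sides of $\partial \X_\nu$ and $\mathbf{R}$ swaps the two sides of $\partial \Y_\nu$; the two branches of the definition agree continuously across $\partial \X_\nu$ since both $\iota_\nu$ and $\mathbf{R}$ restrict to the identity on their respective fixed sets.

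The energy bound (iii) follows from the chain rule. Since $\iota_\nu$ is conformal it preserves the Dirichlet integrand, while post-composition with the bi-Lipschitz $\mathbf R$ multiplies the integrand by at most a constant depending only on $\Lip \mathbf R$ and $\Lip \mathbf R^{-1}$, hence only on $\X$ and $\Y$, yielding
\[ \iint_{\X_+^\nu \setminus \overline{\X}} |D\widehat{h}|^2 \le C_{\X\Y} \iint_{\iota_\nu(\X_+^\nu \setminus \overline{\X})} |Dh|^2 \le C_{\X\Y} \iint_{\X} |Dh|^2. \]
For (iv), if $h_j$ are homeomorphisms converging uniformly and strongly in $\W^{1,2}$ to $h$, the very same formula produces $\widehat{h}_j$. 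Each $\widehat{h}_j \colon \X_+ \onto \Y_+$ is a homeomorphism, being assembled from the homeomorphism $h_j$ and the diffeomorphisms $\iota_\nu$ and $\mathbf R$ and matching continuously along $\partial \X$. Uniform convergence transfers because $\iota_\nu$ and $\mathbf R$ are uniformly continuous on the compacta in question, and strong $\W^{1,2}$-convergence transfers by the bi-Lipschitz change of variable.

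The main obstacle is the construction of the global bi-Lipschitz involution $\mathbf R$ across the Lipschitz boundary $\partial \Y$: the local graph reflections are elementary, but patching them into a single bi-Lipschitz involution on a full tubular neighborhood of $\partial \Y$, with consistent behavior across overlaps of charts, is the technical heart of the argument. It can be accomplished either by a careful partition-of-unity argument or by invoking the bi-Lipschitz collar theorem for Lipschitz hypersurfaces. This is precisely the point where the Lipschitz hypothesis on $\Y$ is essential; the circular boundary of $\X$ needs no analogue, since the Möbius inversion already provides a genuinely conformal involution.
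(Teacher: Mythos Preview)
Your approach is the right idea---reflect across $\partial\X$ on the domain side and across $\partial\Y$ on the target side---but there is a genuine gap in the execution that undermines both (ii) and (iv).

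You set $\Y_+ = \widehat{h}(\X_+)$. That makes $\Y_+$ depend on the particular map $h$, whereas the lemma asks for fixed neighborhoods $\X_+$ and $\Y_+$ and then an operator defined on all of $\overline{\mathscr H}_2(\X,\Y)$. Concretely, for a homeomorphism $h_j\colon\X\onto\Y$ the extension on the outer collar is $\widehat{h}_j(\X_+^\nu\setminus\overline{\X})=\mathbf R\bigl(h_j(\{x:\dist(x,\partial\X_\nu)<\delta\})\bigr)$, and this set varies with $h_j$. So the $\widehat{h}_j$ are homeomorphisms onto \emph{different} enlarged targets, not onto a single $\Y_+$; the conclusion ``$\widehat{h}_j\colon\X_+\onto\Y_+$'' in (iv) fails. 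The same problem is latent in your choice of $\delta$, which you shrink ``so that $h$ carries the inner $\delta$-collar into $\mathcal N$''---this too ties $\X_+$ to $h$. The difficulty is structural: your involution $\mathbf R$ lives only on a tubular neighborhood $\mathcal N$, so you can push at most a collar's worth of data across $\partial\Y$, and the image of a $\delta$-collar under an arbitrary homeomorphism is not a fixed set.

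The paper circumvents this by replacing the local collar involution with a \emph{global} device: it first takes a bi-Lipschitz automorphism $\Phi$ of $\C$ carrying $\Y$ onto a circular (Schottky) domain $\Y'$, and then reflects the \emph{entire} domains $\X$ and $\Y'$ through each of their boundary circles. The enlarged domains $\X_+$ and $\Y'_+$ (and hence $\Y_+=\Phi^{-1}(\Y'_+)$) are then manifestly independent of $h$; and because each $h_j$ is onto $\Y$, the reflected extension is automatically onto the fixed $\Y'_+$. This is exactly what a collar-only reflection cannot deliver.

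A secondary point: your claim that ``strong $\W^{1,2}$-convergence transfers by the bi-Lipschitz change of variable'' is where the real analytic content of (iv) lies, and it is not automatic. Post-composition with a merely bi-Lipschitz map $\mathbf R$ involves $D\mathbf R$, which is only $L^\infty$, so passing from $h_j\to h$ in $\W^{1,2}$ to $\mathbf R\circ h_j\to\mathbf R\circ h$ in $\W^{1,2}$ requires an argument. The paper flags this explicitly as a ``rather unexpected nuance about continuity of the composition of a Sobolev mapping with bi-Lipschitz transformation of the target domain'' and defers to \cite{IOa}; you should not wave it away.
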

\begin{proof} A brief sketch of such extension is the following. First, with the aid of a bi-Lipschitz automorphism of $\,\mathbb C\,$ we transform the target domain $\,\mathbb Y\,$ onto a circular domain, say $\,\mathbb Y'\,$. There is no need for such a transformation of $\,\mathbb X\,$ because it is  already a circular domain, record this assumption as  $\,\mathbb X' = \mathbb X\,$.
Next we reflect $\,\X'\,$ and $\,\Y'\,$ about their boundary circles and add the reflected images  to $\,\overline{\X'}\,$ and $\overline{\Y'}\,$, respectively. We obtain the neighborhoods $\,\X'_+\supset \overline{\mathbb X'}$ and $\,\Y'_+\supset \overline{\mathbb Y}\,$. Then we return, via inverse bi-Lipschitz automorphism,  to the
desired neighborhoods  $\,\X_+\supset \overline{\mathbb X}$ and $\,\Y_+\supset \overline{\mathbb Y}\,$. These same reflections give rise to the extensions  of the mappings $\,h_j\,$, denoted by   $\,{\widehat{h}_j} \colon \X_+ \onto \Y_+ \,$, and of the limit map $\,h\,$, denoted by   $\,{\widehat{h}} \colon \X_+ \onto \Y_+ \,$.  The interested reader is referred to~\cite{IOa} for details, especially to rather unexpected nuance about continuity of the composition of a Sobolev mapping  with  bi-Lipschitz transformation of the target domain. \end{proof}

Armed with Lemma (\ref{extension}) we are now able to repeat most of the arguments in Case 1. Recall the boundary cell $\,\mathbb U = \{ x \in \mathbb X ;\;\; h(x) \in B \cap \overline{\mathbb Y}\,\}\,$  for $\,h\,$. Here we may, and do, assume that the ball $\,B\,$ lies in $\,\mathbb X_+\,$, so the set $\{ x \in \mathbb X ;\;\; \widehat{h(}x) \in B \,\}$ is an inner cell for $\,\widehat{h}\,$. For convenience, we continue to use  the same notation   for the induced cells as in Case 1; namely,  $\,\mathbb U_n = \widehat{h}^{-1}(B_n)\,$,  $\,\mathbb U^j_n = \widehat{h_j}^{-1}(B_n)\,$ and $\,\Gamma ^j_n  = \partial \mathbb U^j_n\,$. These are subsets of $\,\mathbb X_+\,$. Fix any sequence $\{\mathbb K_n\}$ of domains compactly contained in $\U_n \cap \X\,$, such that $\,\bigcup_{n=1}^\infty \mathbb K_n = \U \cap \X$. However, this time  harmonic extensions of the boundary homeomorphisms $\, h_j \colon  \Gamma _n^j \onto \partial  B_n \,$ are useless.  They do not take the arcs $\,\mathbb U_n^j \cap \partial \mathbb X \,$ onto the corresponding arc $\,B_n\cap \partial \mathbb Y\,$; even if they do, such harmonic extensions  may not coincide with $\, h\,$ on these arcs. That is why we should perform harmonic replacements only on a portion of $\,\mathbb U_n^j\,$; precisely, on the set $\,\mathbb U_n^j \cap \mathbb X\,$. It is important to observe that $\,\mathbb U_n^j \cap \mathbb X\,$ is a simply connected Jordan domain.  To see this,  consider the open Jordan arc $\,\gamma = B_n\cap \partial \mathbb Y\,$, and its homeomorphic preimage $\,\beta = h^{-1}_j(\gamma)\,\subset \mathbb U_n^j \cap \partial \mathbb X\,$. The endpoints of the arc $\,\beta\,$ lie in the boundary of $\,\mathbb U_n^j\,$, as is easy to demonstrate. Thus $\,\beta\,$ is a crosscut of the simply connected domain $\,\mathbb U_n^j\,$. This is surely an elementary topological fact that such a crosscut $\beta\,$ splits $\, \mathbb U_n^j \,$  into two simply connected subdomains. The subdomain  of interest to us is exactly $\,\mathbb U_n^j \cap \mathbb X\,$. Now, upon harmonic replacements of $\,h_j\,$ in  $ \mathbb U_n^j \cap \X\,$, the proof of Proposition \ref{replacement} runs as in Case 1 with hardly any changes.
\end{proof}

\section{Proof of Theorem~\ref{nocracks}}
The case when $\X$ and $\Y$ are simply connected is obvious; in this case $\,h\,$ is a conformal mapping. Thus let $\X$ and $\Y$ be multiply connected Jordan domains, $\,\mathbb Y\,$ being a Lipschitz domain. Consider the boundary cell $\U= h^{-1} (B \cap \overline{\Y})$, where $\,a\in B \cap \overline{\Y}\,$, and the harmonic replacement $h_{\U} \in \overline{\Ho}_2 (\X, \Y)\,$ as  in Proposition~\ref{replacement}. Since $\,h\,$ is energy-minimal, we have $\,\mathcal E_\X [h] \le \mathcal E_\X [h_{\U}]\,$. Therefore, by condition~\eqref{en3} we must have equality  $\,\mathcal E_\X [h] = \mathcal E_\X [h_{\U}]\,$ which yields that $\,h \equiv h_\U\,$. The proof concludes by observing that
\[
\begin{split}
& h \colon \X \setminus \U \to \overline{\Y} \setminus B \, , \quad \textnormal{ thus } a \not \in h(\X \setminus \U)\\&
h \colon \U  \onto \mathbb D \, , \quad \textnormal{ thus } a \not \in h(\U)
\end{split}
\]
Therefore, $\,a \not \in h(\X \setminus \U) \;\cup\;  h(\U)\,=\, h(\X)\,$, as claimed. \qed

\section{The class $\Ho^{1,2}_{\lim} (\X, \Y)$\, revisited}
We emphasize that the boundary points of $\Y$ may be in the range of $h\in \,{\mathscr{H}}^{1,2}_{\lim}(\mathbb \X, \mathbb Y)\,$, which is the major source of difficulties.  Nevertheless, by Lemma  \ref{inclusions} we always have
\begin{equation}\label{range}
 \,\mathbb Y \subset h(\X) \subset \overline{\Y}, \quad \textnormal{ whenever } \;\;h \in \,{\mathscr{H}}^{1,2}_{\lim}(\mathbb \X, \mathbb Y)\;.
\end{equation}
\begin{lemma}\label{jacoprop}
For $ \,h\in \,{\mathscr{H}}^{1,2}_{\lim}(\mathbb \X, \mathbb Y)\,$ the Jacobian determinant vanishes almost everywhere in  $\,\X \setminus h^{-1}(\Y) = h^{-1} (\partial \Y) \,= \{\, x \colon h(x) \in \partial \mathbb Y\}\,$.
\end{lemma}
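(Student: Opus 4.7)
The strategy is to exploit an exact mass balance: the integral of $J(\cdot,h)$ over all of $\,\mathbb X\,$ is controlled from above by $|\mathbb Y|$, while the integral over $h^{-1}(\mathbb Y)$ already equals $|\mathbb Y|$. Since $J\geqslant 0$ and the two subsets $h^{-1}(\mathbb Y)$ and $h^{-1}(\partial\mathbb Y)$ partition $\mathbb X$, the remaining piece must contribute zero, which is the desired conclusion.

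First I would justify the decomposition. By~\eqref{range}, $h(\mathbb X)\subset \overline{\mathbb Y}$, so whenever $x\in\mathbb X$ fails to land in $\mathbb Y$, it necessarily lands in $\partial\mathbb Y$; hence $\mathbb X\setminus h^{-1}(\mathbb Y)=h^{-1}(\partial\mathbb Y)$, confirming the set identity in the statement. Next, invoking Lemma~\ref{inclusions} (the inclusion $\mathscr H^{1,2}_{\lim}(\mathbb X,\mathbb Y)\subset\mathfrak D(\mathbb X,\mathbb Y)$) together with the definition of a deformation, we dispose of the two ingredients
\[
J(x,h)\geqslant 0 \ \text{ a.e. in } \mathbb X, \qquad
\iint_{\mathbb X} J(x,h)\,\dtext x\leqslant |\mathbb Y|.
\]

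The key step is to evaluate $\,\iint_{h^{-1}(\mathbb Y)} J(x,h)\,\dtext x\,$ exactly. By partial harmonicity (Proposition~\ref{Partialharmonicity} as cited in the introduction), the restriction $h\colon h^{-1}(\mathbb Y)\onto\mathbb Y$ is a harmonic diffeomorphism; in particular it is a bijection with $J(x,h)>0$ there, so the classical change-of-variables formula applies and yields
\[
\iint_{h^{-1}(\mathbb Y)} J(x,h)\,\dtext x \;=\; |\mathbb Y|.
\]
(If one prefers to bypass full partial harmonicity, the same equality follows from the area formula for continuous $\mathscr W^{1,2}$-mappings with $J\geqslant 0$, combined with the a.e.\ injectivity bound $\#\{x\colon h(x)=y\}\leqslant 1$ from \cite[Lemma 3.8]{IKKO} and the range inclusion $\mathbb Y\subset h(\mathbb X)$, which together force the multiplicity to equal $1$ for a.e.\ $y\in\mathbb Y$.) Subtracting, and using $J\geqslant 0$:
\[
0\;\leqslant\;\iint_{h^{-1}(\partial\mathbb Y)} J(x,h)\,\dtext x
\;=\;\iint_{\mathbb X} J(x,h)\,\dtext x\;-\;\iint_{h^{-1}(\mathbb Y)} J(x,h)\,\dtext x
\;\leqslant\; |\mathbb Y|-|\mathbb Y|\;=\;0,
\]
so the nonnegative integrand $J(\cdot,h)$ vanishes a.e.\ on $h^{-1}(\partial\mathbb Y)$. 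The only delicate point in this plan is the exact equality $\iint_{h^{-1}(\mathbb Y)} J=|\mathbb Y|$: everything else is measure-theoretic bookkeeping. This relies on already-established results (partial harmonicity and the multiplicity-one property of limits of homeomorphisms), so I do not anticipate serious obstacles beyond carefully citing them.
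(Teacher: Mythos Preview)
Your mass-balance strategy is appealing, but both justifications you offer for the key equality $\iint_{h^{-1}(\mathbb Y)} J(x,h)\,\dtext x = |\mathbb Y|$ break down at the stated generality.

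First, Proposition~\ref{Partialharmonicity} is formulated only for \emph{energy-minimal} maps, not for arbitrary $h\in\mathscr H^{1,2}_{\lim}(\mathbb X,\mathbb Y)$. In fact partial harmonicity cannot hold in that generality: any non-harmonic Sobolev homeomorphism $h\colon\mathbb X\onto\mathbb Y$ already lies in $\mathscr H^{1,2}_{\lim}(\mathbb X,\mathbb Y)$ (as the limit of the constant sequence), has $h^{-1}(\mathbb Y)=\mathbb X$, and is plainly not a harmonic diffeomorphism there. So this citation does not apply.

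Second, the area-formula alternative is more delicate than you indicate. For Sobolev maps the Federer area formula gives
\[
\iint_{E} J_h\,\dtext x \;=\; \int_{\mathbb R^2}\#\bigl(h^{-1}(y)\cap E\cap A_d\bigr)\,\dtext y,
\]
where $A_d$ is the full-measure set of approximate differentiability points. Combined with the multiplicity bound $\#h^{-1}(y)\le 1$ this yields only $\iint_{h^{-1}(\mathbb Y)} J_h\le|\mathbb Y|$, which is the wrong direction for your subtraction. To obtain equality you would need the unique preimage of almost every $y\in\mathbb Y$ to lie in $A_d$, which amounts to a form of Lusin's condition~(N); this is neither automatic for $\mathscr W^{1,2}$ maps in the plane nor supplied anywhere in the paper for the class $\mathscr H^{1,2}_{\lim}$.

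The paper sidesteps this entirely. Instead of computing $\iint_{h^{-1}(\mathbb Y)}J_h$, it works directly on compact sets $\mathbb K\subset h^{-1}(\partial\mathbb Y)$ and invokes the $\mathscr L^1_{\textnormal{loc}}$-weak convergence of nonnegative Jacobians to write $\iint_{\mathbb K} J_h=\lim_j\iint_{\mathbb K}J_{h_j}=\lim_j|h_j(\mathbb K)|$. Since $h_j\to h$ uniformly on $\mathbb K$ and $h(\mathbb K)\subset\partial\mathbb Y$, the images $h_j(\mathbb K)$ are trapped in collars $\{y\in\mathbb Y:\dist(y,\partial\mathbb Y)\le\epsilon_j\}$ of vanishing measure. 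Exhausting $h^{-1}(\partial\mathbb Y)$ by compact subsets finishes the argument. This uses only local convergence of Jacobians for the approximating homeomorphisms and never a global change-of-variables identity for $h$ itself.
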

% for amost every $y\in \R^2$ the equation $h(x)=y$ has exactly one or no solution in $\X$.
\begin{proof}[Proof of Lemma~\ref{jacoprop}]
Consider the set $\, \mathcal B =  \X \setminus h^{-1}(\Y) = h^{-1} (\partial \Y)\,$ and let $\mathbb K\,$ be any compact subset of $\mathcal B$. We invoke the sequence of homeomorphisms  $\,h_j \colon \mathbb X  \onto \mathbb Y\,$ converging to $\,h\,$ weakly in the Sobolev space $\,W^{1,2}(\mathbb X, \mathbb Y)$.
By $\,\mathscr L^1_{\textnormal{loc}}$ - weak convergence of nonnegative Jacobians, ~\cite[Theorem 8.4.2]{IMb} , we can write
\[ \int_{\mathbb K} J(x,h)\, \dtext x = \lim_{j \to \infty} \int_{\mathbb K} J(x, h_j) (x) \, \dtext x = \lim_{j \to \infty} \abs{h_j (\mathbb K)} .\]
{\bf Claim.} We have $\,h_j (\mathbb K) \subset \mathbb Y_j\,$, where

\[\Y_j = \{y \in \Y \colon \dist (y, \partial \Y) \leqslant \epsilon_j\} \quad \textnormal{and} \quad \epsilon_j = \sup_{k \ge j} \norm{f_k -f}_{\mathscr L^\infty (\mathbb K)} \to 0. \]
Indeed, let $\,y = h_j(x)\in h_j (\mathbb K)\,$ for some $x\in \mathbb K\,$, so $h(x) \in \partial \Y\,$. Therefore,
\[
\begin{split}
\dist (y, \partial \Y) & = \dist (h_j(x), \partial \Y) \le \abs{h_j (x)-h(x)} \\ &\le \norm{h_j-h}_{\mathscr L^\infty (\mathbb K)} \le \sup_{k \ge j}\; \norm {h_j -h}_{\mathscr L^\infty (\mathbb K)} = \epsilon_j .
\end{split}
\]
Hence $y\in \Y_j$, as claimed.\\

Now it follows that
\[\int_{\mathbb K} J_h (x) \, \dtext x \leqslant \lim_{j \to \infty} \abs{\Y_j}\; \rightarrow 0 \]
 because we have a decreasing sequence $\Y_1 \supset \Y_2 \supset \dots \;$ of measurable subsets of $\Y$ whose intersection is empty.   This yields
\[\int_{\mathbb K} J_h (x) \, \dtext x =0.\]
Finaly, $\,\mathcal B\,$ can be expressed as a union  of increasing sequence $\mathbb K' \subset \mathbb K'' \subset \mathbb K''' \subset \dots$ of compact subsets of $\,\mathcal B\,$ and a set $\,\mathbb E \subset \mathcal B\,$ of measure zero. We see that
\[\int_{\mathcal B } J_h = \int_{\mathbb E} J_h + \int_{\mathbb K' \cup \,\mathbb K''\, \cup \dots}   J_h = 0+0 =0.\]
Since $J_h \ge 0$ almost every in $\mathcal B$, we conclude $J_h \equiv 0$ in $\mathcal B$.
\end{proof}

\subsubsection{The $\delta$-function}
Let $\X$  and $\,\mathbb Y\,$ be bounded doubly connected domains. We recall the notation $\X_I$ and $\X_O$ for the bounded (inner) and unbounded (outer) components of $\widehat{\C} \setminus \X$.  Thus  $\partial \X_I$ and $\partial \X_O$ are exactly the boundary components of $\X$, so $\widehat{\C}= \overline{\X}_I \cup \X \cup \overline{\X}_O$. The same notation applies to $\,\mathbb Y\,$. Let  $h \in \Ho^{1,2}_{\lim} (\X, \Y)$. We define a function $\delta \colon \widehat{\C} \onto [0,1]$  by the rule
\begin{equation}\label{deltafun}
\delta(x) = \begin{cases} 0\, ,  & \textnormal{ if } x\in \overline{\X}_I\\
\frac{\dist [h(x)\, , \,\partial \Y_I]}{\dist [h(x) \,,\, \partial \Y_I]\; + \;\dist [h(x)\, , \,\partial \Y_O] }\,\; , & \textnormal{ if } x\in {\X} \\
1 \, , & \textnormal{ if } x\in \overline{\X}_O \, .
\end{cases}
\end{equation}
\begin{lemma}\label{lemdel}
The $\delta$--function  is continuous and monotone.
\end{lemma}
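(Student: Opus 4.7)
My plan is to handle continuity and monotonicity separately. For continuity, I first note that the denominator $\dist[h(x),\partial\Y_I]+\dist[h(x),\partial\Y_O]$ is strictly positive on $\X$, since $\partial\Y_I$ and $\partial\Y_O$ are disjoint compact sets and therefore both distances cannot vanish simultaneously at any point of $\overline{\Y}$. From this, continuity of $\delta$ is immediate on each of the three open pieces $\X_I$, $\X$, $\X_O$ (constant on the first and last, and continuous on $\X$ by composition with the continuous map $h$). The only genuine work is at boundary points $x_0\in\partial\X$. For $x_0\in\partial\X_I$ and $x_n\to x_0$ with $x_n\in\X$, the inclusion $\Ho^{1,2}_{\lim}(\X,\Y)\subset\Ho_{cd}(\X,\overline{\Y})$ from Lemma \ref{inclusions} ensures every cluster point of $\{h(x_n)\}$ lies in $\partial\Y_I$, so $\dist[h(x_n),\partial\Y_I]\to 0$ while $\dist[h(x_n),\partial\Y_O]$ stays bounded below by $\dist[\partial\Y_I,\partial\Y_O]/2$ eventually. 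This forces $\delta(x_n)\to 0=\delta(x_0)$; the case $x_0\in\partial\X_O$ is symmetric.

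For monotonicity, my plan is to invoke Whyburn's characterization of monotone maps into a locally connected target: since $[0,1]$ is locally connected, it suffices to show that every fiber $\delta^{-1}(c)$, $c\in[0,1]$, is connected. The endpoint cases will be handled by Lemma \ref{cdLemma}(ii): the set $\delta^{-1}(0)=\overline{\X}_I\cup h^{-1}(\partial\Y_I)$ is connected because every component of $h^{-1}(a)$ with $a\in\partial\Y_I$ touches $\partial\X_I$, so every component of $h^{-1}(\partial\Y_I)$ meets $\overline{\X}_I$, and $\overline{\X}_I$ is itself connected. The case $c=1$ is symmetric.

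For the interior case $c\in(0,1)$, the fiber equals $h^{-1}(\phi^{-1}(c))$, where $\phi(y)=\dist(y,\partial\Y_I)/(\dist(y,\partial\Y_I)+\dist(y,\partial\Y_O))$ is the boundary-distance ratio on $\overline{\Y}$. Since $h:\overline{\X}\onto\overline{\Y}$ is a monotone continuous surjection (Lemma \ref{Equicontinuity}, after a conformal reduction to Lipschitz $\X$), connectedness of $h^{-1}(\phi^{-1}(c))$ will follow once $\phi^{-1}(c)$ is known to be connected in $\overline{\Y}$. I expect this to be the main obstacle, since distance-ratio functions on non-convex doubly connected domains can in principle have disconnected level sets. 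The plan would be either to approximate $\Y$ by smooth annular domains (for which the level sets of $\phi$ are visibly separating closed curves) and transfer monotonicity to the limit via Kuratowski--Lacher (Theorem \ref{Kuratowski-Lacher}), or to argue topologically that any component of $\phi^{-1}(c)$ not separating $\partial\Y_I$ from $\partial\Y_O$ would bound a region of $\Y$ on which $\phi$ attains an interior extremum, in conflict with the convex-combination structure of the distance ratio.
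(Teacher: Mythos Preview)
Your route differs substantially from the paper's. The paper does not attempt a direct fiber-by-fiber analysis of $\delta$. Instead it takes an approximating sequence of homeomorphisms $h_j\colon\X\onto\Y$ (available by the very definition of $\Ho^{1,2}_{\lim}$), forms the corresponding functions $\delta_j$ by the same recipe, notes that each $\delta_j$ is continuous and monotone, and then shows $\delta_j\to\delta$ uniformly on $\widehat{\C}$ via the uniform bound $\dist[f(x),\partial\Y]\le\eta_{_{\X\Y}}(x)\,\mathscr E_\X[f]$ from \cite{IOtr}. Continuity of $\delta$ is then immediate, and monotonicity passes to the limit by Kuratowski--Lacher (Theorem~\ref{Kuratowski-Lacher}).

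Your continuity argument is fine, and your treatment of the endpoint fibers $c=0,1$ through Lemma~\ref{cdLemma}(ii) is correct. But the interior case $c\in(0,1)$ has two genuine problems. First, you invoke monotonicity of $h\colon\overline{\X}\onto\overline{\Y}$ via Lemma~\ref{Equicontinuity}, yet that lemma requires \emph{both} $\X$ and $\Y$ to be Lipschitz, and no such hypothesis on $\Y$ is in force here; conformally straightening $\X$ does nothing for $\Y$. Second, even granting monotonicity of $h$, you still need $\phi^{-1}(c)$ to be connected in $\overline{\Y}$, and neither of your proposed fixes delivers this. The distance-ratio $\phi$ obeys no maximum principle (it is built from Euclidean distance functions, not harmonic data), so a hypothetical ``bubble'' component of a sublevel set is not ruled out by an extremum argument; and the idea of approximating $\Y$ by smooth annuli simply relocates the difficulty, since you would then have to know that the approximating $\phi$'s are themselves monotone and converge uniformly. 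The paper's approximation of $h$ rather than of $\Y$ sidesteps the first problem entirely --- homeomorphisms $h_j$ are monotone with no boundary regularity needed --- and reduces the second to a fixed statement about the target map $\Phi\colon\widehat{\C}\to[0,1]$, independent of $h$.
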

\begin{proof}
The mapping $h \in \Ho^{1,2}_{\lim} (\X, \Y)$ is a weak limit of homeomorphisms $h_j \colon \X \onto \Y$ in the Sobolev space $\W^{1,2} (\X\,, \Y)$. We appeal to~\cite[Theorem 1.1]{IOtr} which provides us with a uniform bound of the distance function of any homeomorphism $\,f \colon \X \onto \Y\,$ in the Sobolev space $\,\mathscr W^{1,2}(\mathbb X \,, \mathbb Y)\,$; namely,
\[\dist \left[f(x)\,, \partial \Y\right] \le \eta_{_{\X \, \Y} } (x) \, \mathscr E_{\X}[f] \quad \textnormal{for all} \;\;\;x \in \widehat{\C}\]
where $\,\,\eta_{_{\X \, \Y}} \colon \widehat{\C} \to [0\,, \infty )$ is (independent of $\,f\,$) a  continuous function  vanishing outside $\,\X\,$. Thus we have well defined continuous monotone functions
\[
\delta_j(x) = \begin{cases} 0\, ,  & \textnormal{ if } x\in \overline{\X}_I\\
\frac{\dist [h_j (x)\,, \,\partial \Y_I]}{\dist [h_j(x)\,,\, \partial \Y_I] \;+ \;\dist [h_j(x)\,,\, \partial \Y_O] } \, \;, & \textnormal{ if } x\in {\X} \\
1 \, , & \textnormal{ if } x\in \overline{\X}_O \, .
\end{cases}
\]
converging uniformly to $\delta (x)$. Therefore, the $\,\delta\,$ -function is also continuous and monotone.
\end{proof}
This implies that the set $\,h^{-1} (\Y)= \{x \in \widehat{\C} \colon 0< \delta (x) < 1\}\,$ is connected and its complement consists of two components \[\widehat{\C} \setminus h^{-1} (\Y)  =\{x \colon \delta(x)=0\} \cup \{x \colon \delta(x)=1\} \, . \]
These are disjoint continua containing $\partial \X_I$ and $\partial \X_O$, respectively. We have established the following fact.
\begin{lemma}\label{double}
Let $\X$ and $\Y$ be bounded doubly connected domains and $h \in \Ho^{1,2}_{\lim} (\X, \Y)$. Then $h^{-1} (\Y)$ is a doubly connected domain separating the boundary components of $\X$.
\end{lemma}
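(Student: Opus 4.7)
My plan is to read off all the required properties of $h^{-1}(\Y)$ directly from the continuous monotone $\delta$-function constructed in Lemma~\ref{lemdel}, using the identity
\[h^{-1}(\Y) \;=\; \delta^{-1}\!\big((0,1)\big),\]
which is immediate from definition~\eqref{deltafun}, since $\delta(x)\in\{0,1\}$ exactly on $\overline{\X}_I\cup\overline{\X}_O=\widehat{\C}\setminus\X$ and $\delta(x)\in(0,1)$ exactly when $x\in\X$ with $h(x)\in\Y$.

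First I would establish that $h^{-1}(\Y)$ is a domain. Openness is immediate from continuity of $\delta$ together with openness of $(0,1)$. Since $(0,1)\subset[0,1]$ is connected and $\delta\colon\widehat{\C}\onto[0,1]$ is monotone in the Kuratowski-Lacher sense, its preimage $\delta^{-1}((0,1))=h^{-1}(\Y)$ is connected. Nonemptiness is free: the inclusion $\Y\subset h(\X)$ in~\eqref{range} already gives $h^{-1}(\Y)\neq\varnothing$.

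For the complementary structure, apply monotonicity a second time to the connected singletons $\{0\}$ and $\{1\}$: the sets
\[C_I \;:=\; \delta^{-1}(\{0\}), \qquad C_O \;:=\; \delta^{-1}(\{1\})\]
are connected and closed in $\widehat{\C}$, and by~\eqref{deltafun} satisfy $\overline{\X}_I\subset C_I$ and $\overline{\X}_O\subset C_O$. In particular they are nonempty and disjoint continua, and by construction
\[\widehat{\C}\setminus h^{-1}(\Y) \;=\; C_I\cup C_O.\]
Hence the complement of $h^{-1}(\Y)$ has exactly two connected components, which is precisely what it means for $h^{-1}(\Y)$ to be doubly connected. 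Since $\partial\X_I\subset\overline{\X}_I\subset C_I$ and $\partial\X_O\subset\overline{\X}_O\subset C_O$ lie in the two different complementary components, $h^{-1}(\Y)$ separates the boundary components of $\X$, as claimed.

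I do not foresee any real obstacle: the lemma is a topological repackaging of Lemma~\ref{lemdel}, which did all the analytic work in promoting monotonicity of the approximating homeomorphisms to monotonicity of the limit through the uniform distance bound from~\cite{IOtr}. The only input from the rest of the theory, beyond continuity and monotonicity of $\delta$, is the nonemptiness of $h^{-1}(\Y)$, supplied by~\eqref{range}.
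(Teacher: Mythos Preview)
Your proposal is correct and follows essentially the same approach as the paper: identify $h^{-1}(\Y)=\delta^{-1}((0,1))$ via \eqref{deltafun}, then use the continuity and monotonicity of $\delta$ established in Lemma~\ref{lemdel} to read off openness, connectedness, and the two-component structure of the complement. The paper's argument is terser (a single paragraph preceding the lemma statement), but yours spells out the same steps with a bit more care regarding nonemptiness and separation.
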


\begin{proposition}\label{Partialharmonicity}
  Every energy-minimal map $\,h\in \overline{\mathscr{H}}_2(\mathbb \X, \mathbb Y)$ is a harmonic diffeomorphism of $h^{-1}(\Y) \subset \X$ onto $\Y$.
\end{proposition}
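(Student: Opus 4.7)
The plan is to run the harmonic replacement machinery from \S\ref{hrmRep} on \emph{inner} cells only, where no boundary regularity is required (see Remark~\ref{reminner}). Concretely, fix any point $x_\circ \in h^{-1}(\Y)$, set $y_\circ = h(x_\circ) \in \Y$, and choose a ball $B$ centered at $y_\circ$ so small that $\overline{B}\subset \Y$. Then $\mathbb D := B \cap \Y = B$ is convex, and
$$
\mathbb U := \{x\in \X : h(x) \in B\cap \overline{\Y}\} = h^{-1}(B)
$$
is an inner cell in the sense of \S\ref{hrmRep}. Monotonicity of $h$ (Lemma~\ref{Equicontinuity}) guarantees that $\mathbb U$ is a simply connected domain in $\X$.

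Apply Proposition~\ref{replacement}, in the form given by Remark~\ref{reminner} valid for arbitrary $\X$ and $\Y$ in the inner cell case, to obtain the harmonic replacement $h_{_\mathbb U} \in \overline{\mathscr H}_2(\X,\Y)$ satisfying $h_{_\mathbb U}=h$ on $\X\setminus \mathbb U$, mapping $\mathbb U$ harmonically and diffeomorphically onto $B$, and satisfying $\mathscr E_\X[h_{_\mathbb U}]\leqslant \mathscr E_\X[h]$ with equality if and only if $h\equiv h_{_\mathbb U}$. Since $h$ is energy-minimal and $h_{_\mathbb U}\in \overline{\mathscr H}_2(\X,\Y)$, the reverse inequality $\mathscr E_\X[h]\leqslant \mathscr E_\X[h_{_\mathbb U}]$ holds as well. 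Consequently $h\equiv h_{_\mathbb U}$, so $h$ is a harmonic diffeomorphism of $\mathbb U$ onto $B$.

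Varying $x_\circ$ across $h^{-1}(\Y)$, such inner cells cover $h^{-1}(\Y)$, and therefore $h$ is locally a harmonic diffeomorphism on all of $h^{-1}(\Y)$. To upgrade this to a global statement, observe that for any $y\in \Y$ we may fix a ball $B\subset \Y$ containing $y$; since $h^{-1}(y)\subset h^{-1}(B)=\mathbb U$ and $h|_{\mathbb U}$ is a diffeomorphism onto $B$, the preimage $h^{-1}(y)$ is a single point. Combined with $h(h^{-1}(\Y))=\Y$ from~\eqref{range}, this shows that $h\colon h^{-1}(\Y)\onto \Y$ is a bijective local harmonic diffeomorphism, hence a harmonic diffeomorphism, as claimed.

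The argument is essentially a bootstrapping of Proposition~\ref{replacement}, and I do not foresee a genuine obstacle: the only point requiring care is that we avoid the boundary cell case, which is why we insist on $\overline{B}\subset \Y$ and invoke Remark~\ref{reminner}. Global injectivity is then automatic from the local diffeomorphism property together with the fact that entire fibers of $h$ over points of $\Y$ sit inside a single inner cell.
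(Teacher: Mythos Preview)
Your proof is correct and follows essentially the same route as the paper: cover $h^{-1}(\Y)$ by inner cells, apply Proposition~\ref{replacement} (via Remark~\ref{reminner}), and use energy-minimality to force $h\equiv h_{_\U}$ on each cell. Your argument for global injectivity---noting that each fiber $h^{-1}(y)$ sits entirely inside a single inner cell on which $h$ is already known to be injective---is a clean alternative to the paper's one-line appeal to the fact that a local diffeomorphism arising as a $c$-uniform limit of homeomorphisms is globally injective.
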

\begin{proof}
This is immediate from Proposition \ref{replacement} on harmonic replacements. Indeed, we may cover $\mathbb X\,$ by inner cells. In each inner cell $\,\mathbb U\subset \mathbb X\,$ the map $\,h \,$ is a diffeomorphisms, exactly equal to its harmonic replacement, since otherwise we would have a map in $\, \overline{\mathscr{H}}_2(\mathbb \X, \mathbb Y)\,$ with smaller energy. It remains to observe that a local diffeomorphism which is a $\,c$ - uniform limit of homeomorphisms must be a global diffeomorphisms.
\end{proof}

The following existence and uniqueness result is from~\cite[Theorems 2.3 and 2.4]{IKKO}

\begin{theorem}\label{exstmodY}
Let $\X$ and $\Y$ be doubly connected domains so that $\Mod \Y \ge \Mod \X$. Then there exists a $\mathscr C^\infty$-diffeomorphism $\,H \colon \mathbb X \onto\mathbb Y\,$ in $\, \W^{1,2} (\X, \Y)$ that minimizes the Dirichlet energy among all mappings in $\,\mathfrak D (\X, \Y)$. Moreover, such a minimizer is unique up to a conformal change of variables in $\,\X\,$. If, however, $\,\Mod \Y\,$ is sufficiently small relative to $\,\Mod \X\,$, then no $\,\mathscr C^\infty\,$ -diffeomorphism of $\,\mathbb X\,$ onto $\,\mathbb Y\,$ is energy-minimal.
\end{theorem}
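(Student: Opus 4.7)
After a conformal change of variables in both domains, I would reduce to the case $\X = A(1,r)$ and $\Y = A(1,R)$ with $\log r = \Mod \X \le \log R = \Mod \Y$. The radial harmonic $H_\ast(\rho e^{i\theta}) = (a\rho + b/\rho)e^{i\theta}$, with coefficients determined by $a + b = 1$ and $ar + b/r = R$, satisfies $a > 0 \ge b$ under $R \ge r$, so its Jacobian is bounded below by $a^2 - b^2 = a - b > 0$ on $\X$, making $H_\ast$ a $\mathscr C^\infty$-diffeomorphism of finite Dirichlet energy. Therefore the infimum of $\,\mathscr E_\X\,$ over $\mathfrak D(\X,\Y)$ is finite. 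Since $\mathfrak D(\X,\Y)$ is weakly sequentially closed in $\W^{1,2}$ by Lemma~\ref{inclusions} and $\mathscr E_\X$ is weakly lower semicontinuous, the direct method of the calculus of variations produces a minimizer $H \in \mathfrak D(\X,\Y)$.

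Next I would apply Proposition~\ref{Partialharmonicity} to conclude that $H$ is a harmonic diffeomorphism on $H^{-1}(\Y)$, and Lemma~\ref{double} to identify this set as a doubly connected subdomain of $\X$ separating its boundary components. Inner-variation criticality together with the Laurent analysis of Proposition~\ref{HopfAnnulus} forces the Hopf differential on $\X$ to be $\varphi(z) = c/z^2$ with $c \in \R$; substituting the radial form $u = a\rho + b/\rho$ yields $c = -ab \ge 0$ under $R \ge r$. For $c > 0$, the vertical trajectories of $c/z^2$ are precisely the concentric circles $\{|z| = \mathrm{const}\}$. By Proposition~\ref{lily} any crack $H^{-1}(y_\circ)$, $y_\circ \in \partial \Y$, lies in the union of vertical trajectories up to measure zero, and by Lemma~\ref{cdLemma} it must touch $\partial \X$. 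The only concentric circles touching $\partial \X$ are the two boundary circles themselves, so no crack can enter $\X$; thus $H^{-1}(\Y) = \X$ and $H$ is a $\mathscr C^\infty$-diffeomorphism of $\X$ onto $\Y$ (the case $c = 0$ corresponds to $R = r$, where $H$ is conformal).

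For uniqueness, any two such minimizers share the same Hopf differential $c/z^2$; the classification of single-valued harmonic maps between concentric annuli with a prescribed Hopf differential — all of the form $\lambda z + \mu/\bar z$ modulo rotation — forces $H_2 = H_1 \circ R_\alpha$ for some rotation $R_\alpha$ of $A(1,r)$, which is a conformal automorphism of $\X$. The final assertion reduces to the Iwaniec--Kovalev--Onninen resolution of the Nitsche conjecture: when $R < (r+1/r)/2$, no harmonic diffeomorphism of $A(1,r)$ onto $A(1,R)$ exists. Since any $\mathscr C^\infty$-diffeomorphism energy-minimizer would, by the argument of the previous paragraph, be such a harmonic diffeomorphism, below the Nitsche bound no diffeomorphism can be energy-minimal — while a minimizer in $\mathfrak D(\X,\Y)$ still exists and must therefore develop cracks. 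The hardest part will be the sign computation $c = -ab$ and the trajectory-based exclusion of cracks in the second paragraph; the nonexistence regime ultimately rests on the harmonic Nitsche bound.
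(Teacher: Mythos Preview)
The paper does not prove this theorem; it simply quotes it from \cite{IKKO} (Theorems 2.3 and 2.4 there). Your sketch is an attempt at an independent proof, and the overall architecture is reasonable, but there are two genuine gaps.

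First, your opening reduction is invalid. You may transform $\X$ conformally (precomposition preserves the Dirichlet integral), but you may \emph{not} assume $\Y$ is a round annulus: postcomposition with a conformal map $\psi \colon \Y \to \A^\ast$ multiplies $|Dh|^2$ by $|\psi'(h)|^2$, so it does not preserve energy-minimality. Both your explicit competitor $H_\ast(\rho e^{i\theta}) = (a\rho + b/\rho)e^{i\theta}$ and your uniqueness argument (``classification of harmonic maps between concentric annuli, all of the form $\lambda z + \mu/\bar z$'') depend on $\Y$ being circular, so they do not establish the theorem for general doubly connected $\Y$. The actual proof in \cite{IKKO} works with an arbitrary target and is considerably more involved.

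Second, your crack-exclusion step misreads Proposition~\ref{lily}. That proposition does \emph{not} say that $H^{-1}(y_\circ)$ lies in vertical trajectories; it says the union of vertical trajectories meeting $H^{-1}(y_\circ)$ has measure zero. The sentence ``the only concentric circles touching $\partial \X$ are the boundary circles, so no crack can enter $\X$'' is a non sequitur --- cracks are not circles. The correct argument (this is exactly Theorem~\ref{thmcpo} in the paper) is: if some $x_\circ \in \X$ were mapped to $\partial \Y$, the continuum $\delta^{-1}(0)$ (or $\delta^{-1}(1)$) would join $x_\circ$ to a boundary circle of $\X$; every concentric circle $\mathcal C_\tau$ between them is a vertical trajectory of $c/z^2$ (since $c>0$) and meets this continuum; but then the union of these $\mathcal C_\tau$ is an annulus of positive measure, contradicting Proposition~\ref{lily}. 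Your invocation of Proposition~\ref{Partialharmonicity} also needs care: that proposition is stated for $\overline{\mathscr H}_2(\X,\Y)$, whereas your minimizer is a priori only in $\mathfrak D(\X,\Y)$; you should instead first derive the Hopf equation from inner variation and then invoke Theorem~\ref{Partial Harmonicity} (cited from \cite{CIKO}), which applies to deformations.
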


\section{Cracks in  doubly connected domains}
Let $\X$ and $\Y$ be bounded doubly connected domains of finite conformal modulus. We shall establish basic geometric features  of the solutions to the Hopf-Laplace equation
\begin{equation}\label{hleq}
h_z \overline{h_{\bar z}} = \varphi \qquad \textnormal{ for } h \in \W^{1,2} (\X , \Y)
\end{equation}
where $\varphi \in \mathscr L^1 (\X)$ is analytic. Without any additional assumptions of topological nature, this equation admits rather bizarre solutions, see \S \ref{bizarre}. Our main interest, however, is in the energy-minimal solutions. From this point of view it is natural to restrict the equation~\eqref{hleq} to the class $\Ho^{1,2}_{\lim} (\X, \Y)$ of weak limits of homeomorphisms $h_j \colon \X \onto \Y$ in the Sobolev space $\W^{1,2} (\X, \Y)$. We also assume that the Hopf differential $h_z \overline{h_{\bar z}} \, \dtext z \otimes \dtext z$ is real and nonvanishing along $\partial \X$. The simplest way to rigorously formulate this property is through a conformal transformation $f \colon \X \onto \mathbb A$ of $\X$ onto an annulus $\mathbb A = \{z \colon r< \abs{z}<R\}$. Upon such transformation the Hopf differential remains real along the boundary of $\mathbb A\,$, by definition. For $\X = \mathbb A$ this reads as
\begin{equation}\label{hleqr}
h_z \overline{h_{\bar z}}= \frac{c}{z^2} \, , \qquad c \in \R \, , \quad c \not=0 \, .
\end{equation}
Thus the rays $\,\RR^\theta=\{\rho e^{i\theta} \colon r< \rho < R\}\,$, $\,0 \le \theta < 2\pi\,,$ and the concentric circles $C_\rho =\{\rho e^{i \theta} \colon 0 \le \theta < 2 \pi\}$, $\,r< \rho <R\,,$ are the orthogonal families of the trajectories of $h_z \overline{h_{\bar z}} \, \dtext z \otimes \dtext z$. Their preimages under $f \colon \X \onto \mathbb A$, also known as isothermal curves, are the trajectories in $\X$. Note a dissonance that the differential \[\frac{c}{z^2}\,  \dtext z \otimes \dtext z\, , \qquad c \in \R \, , \quad c \not=0 \] is positive along $\,\partial \X\,$ if $\,c<0\,$ and negative along $\,\partial \X\,$ if $\,c>0\,$.
\begin{theorem}\label{thmcpo}
Suppose that the differential $\,h_z \overline{h_{\bar z}}\;  \dtext z\,  \otimes  \, \dtext z\,$ for $\,h \in \Ho^{1,2}_{\lim} (\X, \Y)\,$ is negative along the boundary of $\X$. Then $h \colon \X \onto \Y$ is  a harmonic diffeomorphism.
\end{theorem}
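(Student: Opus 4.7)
The plan is to conformally normalize $\X$ to a circular annulus, read off the explicit form of the Hopf differential there, and then combine the topology of cracks (Lemma~\ref{cdLemma}) with Proposition~\ref{lily} to exclude any boundary point of $\Y$ from the range of $h$ on $\X$.

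By a conformal change of variables in $\X$, which preserves both $\Ho^{1,2}_{\lim}(\X,\Y)$ and the Hopf quadratic differential (together with the ``real along $\partial\X$'' property, by Definition~\ref{RealDiff}), I may assume $\X = \A = \{r<|z|<R\}$. Since the Hopf differential is holomorphic in $\A$ and real along $\partial\A$, it has the form $(c/z^{2})\,\dtext z\otimes\dtext z$ for some real $c$, and the sign convention quoted just above the theorem identifies ``negative along $\partial\A$'' with $c>0$. For such $c$ the vertical trajectories of this differential are precisely the concentric circles $C_\rho=\{|z|=\rho\}$.

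The main step is to show $h(\A)\subset\Y$. Suppose, for a contradiction, that $a := h(x_0) \in \partial\Y$ for some $x_0 \in \A$, and let $\kappa$ be the connected component of $h^{-1}(a)$ containing $x_0$. By Lemma~\ref{cdLemma}(ii), the closure $\overline{\kappa}\subset\overline{\A}$ meets one of the boundary circles, say $\{|z|=r\}$. Since $\overline{\kappa}$ is a continuum and $z\mapsto|z|$ is continuous, the set $\{|x|:x\in\overline{\kappa}\}\subset[r,R]$ is a subinterval containing both $r$ and $|x_0|>r$, hence $\{|x|:x\in\kappa\}$ contains the non-degenerate interval $(r,|x_0|]$ and has positive one-dimensional Lebesgue measure. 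On the other hand, mappings in $\Ho^{1,2}_{\lim}(\X,\Y)$ satisfy $\#\{x:h(x)=y\}\le 1$ for a.e.\ $y \in \R^2$, so Proposition~\ref{lily} applies with $y_\circ = a$: the union $\bigcup_{\rho \in\{|x|:x\in\kappa\}} C_\rho$ of vertical trajectories meeting $\kappa$ has zero two-dimensional Lebesgue measure, which by polar coordinates is equivalent to $\{|x|:x\in\kappa\}$ having one-dimensional measure zero---a contradiction.

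Thus $h^{-1}(\partial\Y)\cap\A=\emptyset$, so $h(\A)\subset\Y$; combined with Lemma~\ref{inclusions} this forces $h(\A)=\Y$ and $h^{-1}(\Y)=\A$. The hypothesis makes the Hopf differential analytic in $\A$ and real along $\partial\A$, so Theorem~\ref{thmhopf} yields that $h$ is energy-minimal, and Proposition~\ref{Partialharmonicity} then identifies $h: \A \to \Y$ as a harmonic diffeomorphism. The hard part is the measure-vs-topology contradiction at the heart of the argument, where one must translate the area-zero conclusion of Proposition~\ref{lily} (about unions of concentric circles in $\A$) into a one-dimensional null statement about the set of radii, and carefully handle the passage between $\kappa$ and $\overline{\kappa}$ when extracting the full interval of attained radii.
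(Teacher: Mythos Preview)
Your core argument---normalizing to an annulus, identifying $c>0$, and using Proposition~\ref{lily} together with a continuum touching $\partial\A$ to derive a contradiction---is correct and matches the paper's strategy. You obtain the continuum via Lemma~\ref{cdLemma}(ii) (a crack component reaching $\partial\A$), whereas the paper uses the level set $\delta^{-1}(0)$ of the $\delta$-function from Lemma~\ref{lemdel}; both routes are fine.

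However, your final step is circular. You invoke Theorem~\ref{thmhopf} to conclude that $h$ is energy-minimal and then apply Proposition~\ref{Partialharmonicity}. But the proof of Theorem~\ref{thmhopf} in the negative-differential case (Proposition~\ref{propcp}) \emph{begins} by citing Theorem~\ref{thmcpo} itself to assert that $h$ is a harmonic diffeomorphism. So Theorem~\ref{thmhopf} cannot be used here. There is a second, smaller mismatch: Proposition~\ref{Partialharmonicity} is stated for $h\in\overline{\Ho}_2(\X,\Y)$, whereas the present theorem only assumes $h\in\Ho^{1,2}_{\lim}(\X,\Y)$, and the equality of these classes is only known under Lipschitz regularity of $\Y$.

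The paper avoids both issues by invoking Theorem~\ref{Partial Harmonicity} (from~\cite{CIKO}) instead: any deformation $h\in\mathfrak D(\X,\Y)$ satisfying the Hopf--Laplace equation is a harmonic diffeomorphism of $h^{-1}(\Y)$ onto $\Y$. Since $\Ho^{1,2}_{\lim}(\X,\Y)\subset\mathfrak D(\X,\Y)$ by Lemma~\ref{inclusions}, and you have just shown $h^{-1}(\Y)=\A$, this gives the conclusion directly without passing through energy-minimality.
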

\begin{proof}
Applying  a conformal change of variables  we may assume that $\X$ is an annulus $\A=\{z \colon r< \abs{z}<R\}$. Upon such a transformation the Hopf-Laplace equation reads as
\[h_z \overline{h_{\bar z}} = \frac{c}{z^2} \qquad \textnormal{ for some } c>0.\]
First we show that $h$ takes $\A$ onto $\Y$. For this, recall the inclusions $\,\Y \subset h(\A) \subset \overline{\Y}$, see~\eqref{range}. Suppose to the contrary that there is   $x_\circ \in \A$ such that $=y_\circ:= h(x_\circ) \in \partial \Y$. We invoke the $\delta$-function given by~\eqref{deltafun}. Accordingly, $\delta(x_\circ)$ is either equal to $0$ or $1$; say $\delta(x_\circ)=0$. We look at the zero set $\delta^{-1}(0)=\{x \in \widehat{\C} \colon \delta (x)=0\}$. Clearly, it contains $\A_I$ and the point $x_\circ$, $r < \abs{x_\circ} <R$. By Lemma~\ref{lemdel} the function $\delta$ is continuous and monotone, so $\delta^{-1}(0)$ is a continuum in $\widehat{\C}$. The circles $\mathcal C_\tau = \{z \colon \abs{z}=\tau \}$, $\,r< \abs{z}< \abs{x_\circ}\,$, separate $\,x_\circ\,$ from $\,\A_I\,$, thus intersect $\delta^{-1}(0)$. These circles are vertical trajectories of $\,\varphi (z)=  c\,z^{-2}\,$, $\,c>0\,$. We now appeal to  Proposition~\ref{lily} which tells us that the union $\bigcup_{r<\tau< \abs{x_\circ}} \mathcal C_\tau$ has zero measure, a clear contradiction. Thus $h(\A)=\Y\,$. Finally, we apply
Theorem 1.12 in \cite{CIKO} which asserts:
\begin{theorem}[Partial Harmonicity]\label{Partial Harmonicity}
Every deformation $\,h \in \mathfrak D(\mathbb X, \mathbb Y) \,$ between bounded multiply connected domains, which satisfies the Hopf-Laplace eqyation is a harmonic diffeomorphism of $\,h^{-1}(\mathbb Y) \subset \mathbb X\,$ onto $\,\mathbb Y\,$.
\end{theorem}
We then conclude that $h \colon \A \onto \Y$ is a harmonic diffeomorphism.
\end{proof}

The situation is dramatically different if $\,h_z \overline{h_{\bar z}}\, \, \dtext z \otimes \dtext z\,$ is positive along $\partial \X$; cracks along vertical trajectories emanating from the boundary components and ending inside  $\,\X\,$ are likely to occur. A complete description of such cracks is given in Theorem~\ref{thmcracks} below where we assume, without loss of generality, that $\X$ is an annulus. The rays and concentric circles of the annulus are vertical and horizontal trajectories.
\begin{theorem}\label{thmcracks}
Suppose that $h \in \Ho^{1,2}_{\lim} (\A, \Y)$ satisfies the Hopf-Laplace equation
\[h_z \overline{h_{\bar z}} = \frac{c}{z^2} \qquad \mbox{ for some } c<0 \]
in an annulus $\A = \{z \colon r< \abs{z}<R\}$. Then every ray $\RR^\theta =\{\rho e^{i \theta} \colon r< \rho <R\}$, $0 \le \theta < 2 \pi\,,$ consists of three disjoint subintervals (some can be empty)
\[\RR^\theta = \RR^\theta_{I} \cup \RR^\theta_{\mathcal A} \cup \RR^\theta_O\]
where
\[
\begin{split}
\RR^\theta_I & = \{\rho e^{i \theta} \colon r < \rho \le r_\theta\} \qquad \textnormal{ {\it inner ray} (possibly empty) }\\
\RR^\theta_{\mathcal A} & = \{\rho e^{i \theta} \colon r_\theta < \rho < R_\theta\} \qquad \textnormal{ {\it middle ray}}\\
\RR^\theta_O & = \{\rho e^{i \theta} \colon R_\theta \le \rho <R \} \qquad \textnormal{ {\it outer ray} (possibly empty) }
\end{split}
\]
where $r \le r_\theta < R_\theta \le R$. The union of middle intervals, denoted by
\[\mathcal A= \bigcup_{0 \le \theta < 2 \pi } \RR^\theta_{\mathcal A}\]
is a doubly connected domain separating the boundary circles of $\A$. Moreover,
\begin{enumerate}[(i)]
\item $h \colon \mathcal A \onto \Y$ is a harmonic homeomorphism
\item $h \colon \RR^\theta_I \to \partial \Y_I$ is constant (if not empty)
\item $h \colon \RR^\theta_O \to \partial \Y_O$ is constant (if not empty)
\end{enumerate}
\end{theorem}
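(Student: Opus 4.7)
The strategy exploits the special form $\varphi(z)=c/z^{2}$ with $c<0$: the rays $\RR^\theta$ are vertical trajectories and the concentric circles $\mathcal C_\rho$ are horizontal. Set $\mathcal A:=h^{-1}(\Y)\cap \A$. By Lemma~\ref{double}, $\mathcal A$ is a doubly connected subdomain of $\A$ separating its boundary circles. By Lemma~\ref{inclusions} we have $h\in \mathfrak D(\A,\Y)$, so Theorem~\ref{Partial Harmonicity} yields that $h\colon \mathcal A\onto \Y$ is a harmonic diffeomorphism. Once we identify this $\mathcal A$ with the union $\bigcup_{\theta}\RR^\theta_{\mathcal A}$ postulated in the statement, (i) is immediate and the remaining work is the ray decomposition.

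On the complementary crack set $\A\setminus \mathcal A=h^{-1}(\partial \Y)$, Lemma~\ref{jacoprop} gives $J_h=0$ a.e., and combined with $h_z\,\overline{h_{\bar z}}=c/z^{2}\not\equiv 0$ this forces $|h_z|^{2}=|h_{\bar z}|^{2}=|c|/|z|^{2}$ a.e.\ there. Using $h_z\,\overline{h_{\bar z}}\,e^{2i\theta}=c/\rho^{2}$, the radial and angular directional derivatives at $z=\rho e^{i\theta}$ compute to
\[|\partial_{\rho}h|^{2}=|h_z|^{2}+|h_{\bar z}|^{2}+\frac{2c}{\rho^{2}},\qquad |\partial_{t}h|^{2}=\rho^{2}(|h_z|^{2}+|h_{\bar z}|^{2})-2c,\]
which on $\{J_h=0\}$ reduce to $0$ and $4|c|>0$ respectively: $h$ is \emph{radially frozen} and \emph{angularly mobile} on the crack set. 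Since admissible solutions of Hopf--Laplace are locally Lipschitz (cf.\ end of \S\ref{bizarre}), for a.e.\ $\theta$ the Lipschitz curve $\gamma_\theta(\rho):=h(\rho e^{i\theta})$ satisfies $\gamma_\theta'=\partial_\rho h=0$ a.e.\ on $\gamma_\theta^{-1}(\partial \Y)$, hence is constant on each component of that set; the angular non-degeneracy $|\partial_t h|=2\sqrt{|c|}>0$ on $\{J_h=0\}\cap \mathcal C_\rho$ then forces every connected component of $h^{-1}(a)$, $a\in \partial \Y$, to have constant angular coordinate, so it lies in a single ray $\RR^\theta$. By Lemma~\ref{cdLemma} such a component emanates from $\partial \A_I$ (resp.\ $\partial \A_O$) when $a\in \partial \Y_I$ (resp.\ $\partial \Y_O$).

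Finally, for each $\theta$ set
\[r_\theta:=\sup\{\rho\in[r,R]:h(\rho e^{i\theta})\in \partial \Y_I\},\qquad R_\theta:=\inf\{\rho\in[r,R]:h(\rho e^{i\theta})\in \partial \Y_O\},\]
with the conventions $r_\theta=r$, $R_\theta=R$ when the set in question is trivial. Continuity of $h$ together with continuity and monotonicity of the $\delta$-function of Lemma~\ref{lemdel} yield $r_\theta<R_\theta$, the constancy of $h$ on $\RR^\theta_I$ and $\RR^\theta_O$ with values in $\partial \Y_I$ and $\partial \Y_O$, and $\RR^\theta_{\mathcal A}\subset h^{-1}(\Y)=\mathcal A$; consequently $\bigcup_\theta \RR^\theta_{\mathcal A}=\mathcal A$ and (i)--(iii) all follow. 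The hardest point is promoting the a.e.\ radial-freezing (valid only for almost every ray) to the pointwise containment of \emph{every} crack inside a single ray: the angular-speed lower bound supplies the heuristic, but handling merely Lipschitz $h$ rigorously seems to demand either a rectifiability/co-area argument or an appeal to Proposition~\ref{lily} (the set of vertical trajectories meeting $h^{-1}(a)$ has zero measure), combined with continuity of $\delta$, to exclude wild level-set components.
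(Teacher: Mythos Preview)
Your outline is correct and you have identified the decisive tool: Proposition~\ref{lily}. Once one knows that every connected component of $h^{-1}(a)$, $a\in\partial\Y$, lies in a single ray and (via Lemma~\ref{cdLemma}) touches the corresponding boundary circle of $\A$, the interval decomposition of each $\RR^\theta$ and the constancy on $\RR^\theta_I,\RR^\theta_O$ follow exactly as you indicate.

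The paper's argument is organized a little differently and is more direct. It isolates two short lemmas: Lemma~\ref{lemycirc}, which is precisely the projection argument you sketch in your final paragraph (if the angular projection of a level component were a nondegenerate arc, every ray through that arc would meet $h^{-1}(y_\circ)$, contradicting Proposition~\ref{lily}); and Lemma~\ref{lemycircgen}, which establishes the boundary--touching property by lifting to $\mathbb S^2$ and invoking Kuratowski--Lacher (this is equivalent in effect to your use of Lemma~\ref{cdLemma}). With these in hand the paper defines $r_\theta=\inf\{\rho:\rho e^{i\theta}\in\mathcal A\}$ and $R_\theta=\sup\{\rho:\rho e^{i\theta}\in\mathcal A\}$ and proves by contradiction that the open middle interval lies entirely in $\mathcal A$: if some $\rho_\circ e^{i\theta}$ with $r_\theta<\rho_\circ<R_\theta$ had $h(\rho_\circ e^{i\theta})\in\partial\Y_O$, then Lemma~\ref{lemycirc} and Lemma~\ref{lemycircgen} force the component through that point to be a ray--segment $[R'_\theta,R)e^{i\theta}$ with $R'_\theta\le\rho_\circ<R_\theta$, contradicting the definition of $R_\theta$. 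This sidesteps entirely the ``a.e.\ $\theta$'' issue you flag.

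Your detour through the explicit radial and angular derivatives ($h_N=0$ and $|h_T|^2=4|c|/|z|^2$ a.e.\ on the crack set) is correct and gives good geometric intuition for \emph{why} cracks must be radial, but as you yourself note it does not by itself promote the almost--everywhere statement to a pointwise one; the paper in fact records the same computation separately as Lemma~\ref{ctheory} and equation~\eqref{hopf2a}, but uses it only later for the energy identity~\eqref{remaining}, not for the crack geometry. So: same ingredients, slightly different packaging, and the paper's ordering avoids the step you found hardest.
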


%\begin{center}\begin{figure}[h]
%\includegraphics[width=0.8\textwidth]{DOC001.pdf} \caption{Cracks in a doubly connected domain and their conformal images (rays) in an annulus.}\label{fig1}
%\end{figure}\end{center}

\begin{center}\begin{figure}[h]
\includegraphics[width=0.8\textwidth]{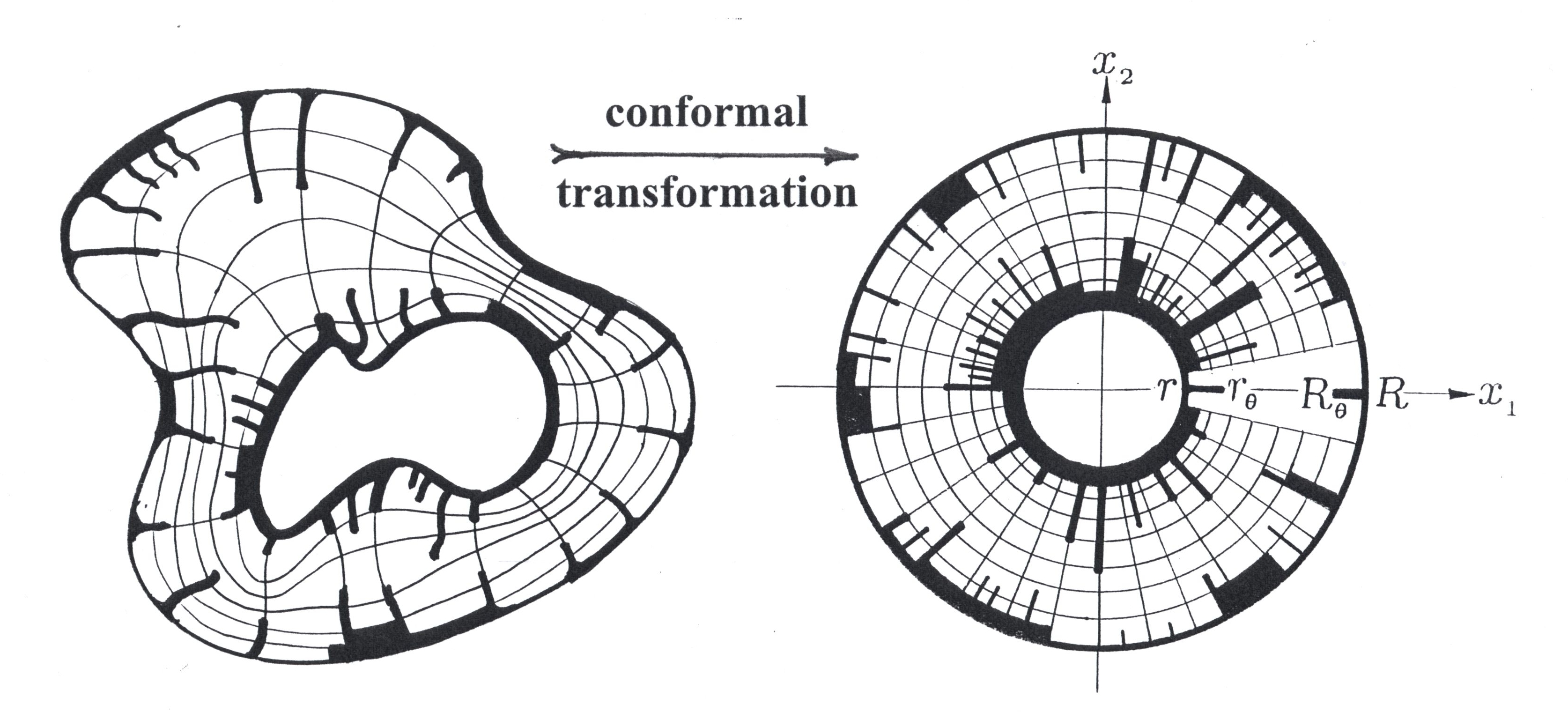} \caption{Cracks in a doubly connected domain and their conformal images (rays) in an annulus.}\label{fig1}
\end{figure}\end{center}

In what follows we denote the doubly connected domain $h^{-1} (\Y)$ by $\mathcal A$. Furthermore, $\mathcal A_I$ stands for the bounded  component of $\C \setminus \mathcal A$ and $\mathcal A_O$ for the unbounded  component of $\C \setminus \mathcal A$. We write $\mathcal B := \A \setminus \mathcal A$, $\mathcal B_r: = \mathcal A_I \cap \mathcal B$ and $\mathcal B_R: = \mathcal A_O \cap \mathcal B$. In view of~\eqref{range} $\mathcal B = h^{-1} (\partial \Y)$. By Lemma~\ref{double} the sets $\mathcal B_r$ and $\mathcal B_R$ are disjoint and relatively closed in $\A$.
\begin{remark}\label{unirem}
Knowing $\,h \colon \mathcal A \onto \Y\,$ will allow us to  determine uniquely the mapping $h \colon \A \to \overline{\Y}$.
\end{remark}
The proof of Theorem~\ref{thmcracks} will employ the following lemmas.

\begin{lemma}\label{lemycircgen}
If $h \colon \A \to \overline{\Y}$ belongs to $\Ho^{1,2}_{\lim} (\A, \Y)$ and $y_\circ \in \Y_I$  then the set $h^{-1} (y_\circ) \cup \A_I$  is connected. Similarly, if $y_\circ \in \Y_O$, then the set $h^{-1} (y_\circ) \cup \A_O$ is connected.
\end{lemma}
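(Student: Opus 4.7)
The plan is to reduce the assertion to Lemma~\ref{cdLemma}(ii) together with the elementary topological fact that if two connected subsets $\,A\,$ and $\,B\,$ of $\,\mathbb C\,$ satisfy $\,\overline A \cap B \neq \emptyset\,$, then $\,A \cup B\,$ is connected.

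I would split into two cases according to the position of $\,y_\circ\,$ inside the closed set $\,\Y_I\,$. If $\,y_\circ\,$ lies in the interior of $\,\Y_I\,$, then $\,y_\circ \not\in \overline{\Y}\,$; since $\,h \colon \A \to \overline{\Y}\,$, the preimage $\,h^{-1}(y_\circ)\,$ is empty, and $\,h^{-1}(y_\circ) \cup \A_I = \A_I\,$ is connected, being a component of $\,\mathbb C \setminus \A\,$. The nontrivial case is $\,y_\circ \in \partial \Y_I \subset \partial \Y\,$. Here Lemma~\ref{inclusions} supplies the inclusion $\,\Ho^{1,2}_{\lim}(\A, \Y) \subset \mathscr H_{cd}(\A, \overline{\Y})\,$, so Lemma~\ref{cdLemma}(ii) applies and guarantees that the closure of every component $\,C\,$ of $\,h^{-1}(y_\circ)\,$ meets $\,\partial \A_I\,$, which sits inside the closed continuum $\,\A_I\,$. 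Thus $\,\overline C \cap \A_I \neq \emptyset\,$, and by the topological fact above, $\,C \cup \A_I\,$ is connected.

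Since $\,h^{-1}(y_\circ) \cup \A_I\,$ is a union of sets of the form $\,C \cup \A_I\,$, one for each component $\,C\,$ of $\,h^{-1}(y_\circ)\,$, all of which share the common connected set $\,\A_I\,$, the whole union is connected. The symmetric claim for $\,y_\circ \in \Y_O\,$ is handled identically with $\,\partial \A_O\,$ and $\,\A_O\,$ in place of $\,\partial \A_I\,$ and $\,\A_I\,$.

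I do not anticipate any substantive obstacle: the entire argument is purely topological once Lemma~\ref{cdLemma}(ii) is invoked. The only subtle point worth noting is that the inner boundary component $\,\partial \A_I\,$ is contained in the closed continuum $\,\A_I \subset \mathbb C \setminus \A\,$, which is exactly what allows the otherwise disjoint sets $\,h^{-1}(y_\circ) \subset \A\,$ and $\,\A_I\,$ to be joined across $\,\partial \A\,$ through their closures in $\,\mathbb C\,$.
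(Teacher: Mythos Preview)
Your argument is correct. The case split is appropriate, the invocation of Lemma~\ref{inclusions} to place $h$ in $\mathscr H_{cd}(\A,\overline{\Y})$ is legitimate (the annulus $\A$ with $0<r<R<\infty$ is nondegenerate), and Lemma~\ref{cdLemma}(ii) then does exactly the work you claim. The elementary topological fact you quote---that if $A$, $B$ are connected and $\overline A\cap B\neq\emptyset$ then $A\cup B$ is connected---is valid (pick $p\in\overline A\cap B$; then $A\cup\{p\}$ is connected, as is $B\ni p$, hence so is their union $A\cup B$). The assembly of all components over the common piece $\A_I$ is fine.

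Your route differs from the paper's. The paper does not appeal to Lemma~\ref{cdLemma}; instead it compactifies both $\A$ and $\Y$ to $2$-spheres by collapsing each boundary component to a pole, lifts the approximating homeomorphisms $h_j$ to homeomorphisms $H_j\colon\mathbb S^2\to\mathbb S^2$, shows (via the uniform distance and oscillation estimates) that these converge uniformly to a map $H\colon\mathbb S^2\to\mathbb S^2$, and then invokes the Kuratowski--Lacher Theorem~\ref{Kuratowski-Lacher} to conclude that $H$ is monotone. Connectedness of $h^{-1}(y_\circ)\cup\A_I$ is then read off from connectedness of $H^{-1}(\text{north pole})$. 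Your approach is shorter and more economical because it cashes in the content of Lemma~\ref{cdLemma}(ii), which the paper stated earlier but left as ``a matter of topological routine.'' The paper's approach is more self-contained on this point---it effectively supplies, via the sphere lift and Kuratowski--Lacher, a proof of the needed special case of Lemma~\ref{cdLemma}(ii) rather than citing it.
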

\begin{proof}
The problem translates into a question about monotone mappings of $\mathbb S^2$ onto itself. We choose and fix homeomorphisms $\varphi \colon \A \onto \mathbb S^2_\ast$ and $\psi \colon \Y \onto \mathbb S^2_\ast$ where $\mathbb S^2_\ast$ is the $2$-sphere with two points removed, say the north and south poles. We assume that $\varphi$ takes $\partial \A_I$ and $\partial \A_O$ into north and south poles, respectively, in the sense of cluster limits. Similarly, we assume that $\psi$ takes $\partial \Y_I$ and $\partial \Y_O$ into north  and south poles, respectively. Every homeomorphism $f \in \Ho_2 (\A, \Y)$ induces a unique homeomorphism $F \colon \mathbb S^2 \onto \mathbb S^2$ such that $F \circ \varphi = \psi \circ f$. Note that $F$ takes the north pole into the north pole  and the south pole into the south pole. Moreover, if a sequence of homeomorphisms $h_j \in \Ho^{1,2}_{\lim} (\A, \Y)$ converges weakly to $h$, then upon lifting to $\mathbb S^2$ we obtain homeomorphisms $\,H_j \colon \mathbb S^2 \onto \mathbb S^2\,$ converging uniformly to a surjective mapping $H \colon \mathbb S^2 \onto \mathbb S^2$. This is immediate from the uniform bounds
\[
\begin{split}
\dist \left[ h_j (x), \partial \Y_I \right] & \le \eta_{_I} (x) \sqrt{\mathscr E_\X [h_j]} \\
\dist \left[ h_j (x), \partial \Y_O \right] & \le \eta_{_O} (x) \sqrt{\mathscr E_\X [h_j]}
\end{split}
\]
and
\[\abs{h_j(x_1)-h_j(x_2)}^2 \le \frac{C_{\mathbb K} \, \mathscr E_{\X}[h_j]}{\log \left( e+ \frac{\diam \mathbb K}{\abs{x_1-x_2}}  \right)}\]
for $x_1, x_2 \in \mathbb K\,$, where $\mathbb K$ is any compact subset of $\,\A\, $. Here the functions $\, \eta_{_I}\,,\, \eta_{_O} \in \mathscr C(\mathbb R^2)\,$ are nonnegative, $\,\eta_{_I}(x) = 0 \,$ on $\,\mathbb A_I\,$ and $\,\eta_{_O}(x) = 0 \,$ on $\,\mathbb A_O\,$, see Lemma~\ref{Equicontinuity}. The limit map $H \colon \mathbb S \onto \mathbb S^2$ keeps the north and south poles fixed and is monotone, see the Kuratowski-Lacher Theorem \ref{Kuratowski-Lacher}. In particular, if $C$ is a continuum in $\mathbb S^2$ then so is $H^{-1}(C)$. Now Lemma~\ref{lemycircgen} follows if we return to the domains $\A$ and $\Y$ via the inverse homeomorphisms $\varphi^{-1} \colon \mathbb S^2_\ast \to \A$ and $\psi^{-1} \colon \mathbb S^2_\ast \to \Y$.
\end{proof}

\begin{lemma}\label{lemycirc}
Suppose that $h\in \Ho^{1,2}_{\lim}(\mathbb A, \mathbb Y)$ and
\[h_z \overline{h_{\bar z}} = \frac{c}{z^2} \qquad \mbox{ for some } c<0. \]
If $C \subset \A$ is a nonempty connected set such that $h_{ |_{C}}=\{y_\circ\}$. Then $C \subset \RR^\theta = \{\rho e^{i \theta} \colon r< \rho < R \}\,$, \; for some $0 \le \theta < 2\pi$.
\end{lemma}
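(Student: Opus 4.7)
The plan is to combine the trajectory picture of the explicit Hopf differential $\varphi(z) = c/z^{2}$ with the measure-theoretic constraint in Proposition~\ref{lily} and then use connectedness of $C$.

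First I would identify the vertical trajectories. For $c<0$ and $\gamma(t)=t e^{i\theta_{0}}$, one computes $[\dot\gamma(t)]^{2}\varphi(\gamma(t)) = c/t^{2} < 0$, so every ray $\mathcal R^{\theta}$ is a vertical trajectory; on the other hand the concentric circles $C_{\rho}$ give $[\dot\gamma]^{2}\varphi = -c > 0$, so they are horizontal. Hence the family of vertical trajectories of $\varphi(z)\,\mathrm dz \otimes \mathrm dz$ in $\mathbb A$ consists precisely of the open radial segments $\mathcal R^{\theta}$, $0\le\theta<2\pi$.

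Next I would verify the hypotheses of Proposition~\ref{lily}. The mapping $h\in\mathscr H^{1,2}_{\lim}(\mathbb A,\mathbb Y)\subset \W^{1,2}(\mathbb A,\mathbb C)$ is continuous (indeed equicontinuous by Lemma~\ref{Equicontinuity}), and by \cite[Lemma~3.8]{IKKO} it has multiplicity at most one almost everywhere, so \eqref{muless} holds with $M=1$. The Hopf equation with $\varphi(z)=c/z^{2}\not\equiv 0$ holomorphic in $\mathbb A$ completes the hypotheses. Proposition~\ref{lily} then yields
\[
\left| \bigcup_{\theta\in S}\mathcal R^{\theta}\right|_{2} = 0, \qquad \text{where } S = \{\theta\in[0,2\pi): \mathcal R^{\theta}\cap h^{-1}(y_{\circ})\neq\emptyset\}.
\]
Passing to polar coordinates, the left side equals $\tfrac{1}{2}(R^{2}-r^{2})\,|S|_{1}$, so the one-dimensional Lebesgue measure of $S$ in the circle is zero.

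Finally I would exploit connectedness of $C$. Since $C\subset h^{-1}(y_{\circ})$, the argument function $\arg\colon\mathbb A\to \mathbb S^{1}$ (well defined and continuous as a map to $\mathbb R/2\pi\mathbb Z$) sends $C$ into $S$. The image $\arg(C)$ is a connected subset of $\mathbb S^{1}$; the only connected subsets of $\mathbb S^{1}$ are points, arcs, and the whole circle, and arcs have positive length. Since $\arg(C)\subset S$ has measure zero, it must be a single point $\theta_{0}$, whence $C\subset\mathcal R^{\theta_{0}}$, as required.

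I do not foresee a real obstacle here; the only delicate points are (a) checking the a.e.\ injectivity needed to invoke Proposition~\ref{lily}, which is a quoted fact, and (b) the topological fact that a connected null set in $\mathbb S^{1}$ is a singleton. The conceptual content is that vertical trajectories of $c\,z^{-2}\,\mathrm dz\otimes\mathrm dz$ (for $c<0$) are exactly the rays, so the Jenkins--Serrin-type obstruction supplied by Proposition~\ref{lily} collapses every crack onto one such ray.
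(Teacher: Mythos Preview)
Your argument is correct and is essentially the paper's own proof: project to $\mathbb S^{1}$ via $z\mapsto z/|z|$, observe that the image of $C$ is connected, and invoke Proposition~\ref{lily} to rule out any nondegenerate arc. The only cosmetic difference is that the paper phrases the last step as a contradiction (an arc would force a positive-measure union of vertical rays to lie in $h^{-1}(y_\circ)$), whereas you first deduce $|S|_1=0$ and then use that a connected null subset of $\mathbb S^{1}$ is a point.
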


\begin{proof}
Consider the projection $p \colon \A \to \mathbb S^1$, $p(z)= \frac{z}{\abs{z}}$. Thus $p(C)$, being a continuous image of a connected set, is a connected subset of $\mathbb S^1$. If $p(C)$ is a single point say $e^{i \theta}$, for some $\theta$, then $C \subset \RR^\theta$.  Suppose to the contrary that there is an arc $\{e^{i \theta} \colon \alpha < \theta < \beta\} \subset p(C)$. Thus every ray $\RR^\theta$, $\alpha < \theta < \beta$, intersects $C$. On the other hand, by Proposition~\ref{lily}, the union of rays which intersect $h^{-1}(y_\circ)$ has zero measure,  a contradiction.
\end{proof}

\begin{proof}[Proof of Theorem~\ref{thmcracks}]
Consider a ray $\RR^\theta \subset \A$. There is a point $z=\rho e^{i\theta} \in \mathcal A$, for some $\rho \in (r,R)$, since otherwise the set $\C \setminus \mathcal A= \mathcal A_I \cup \mathcal A_O \cup \RR^\theta$ would be connected, contradicting double connectivity of $\mathcal A$, see Lemma~\ref{double}. Denote
\[
\begin{split}
r_\theta &= \inf \{\rho \in (r,R) \colon \rho e^{i \theta} \in \mathcal A \} \\
R_\theta &= \sup \{\rho \in (r,R) \colon \rho e^{i \theta} \in \mathcal A \}
\end{split}
\]
so we have  $r \le r_\theta < R_\theta \le R$. Associated with these numbers is the partition of $\RR^\theta$ into: left-open right-closed interval $\RR^\theta_I = \{\rho e^{i\theta} \colon r < \rho \le r_\theta\}$  (possibly empty),  the open non-empty interval
$\RR^\theta_{\mathcal A} = \{\rho e^{i\theta} \colon r_\theta< \rho < R_\theta\}$,
and left-closed right-open interval $\RR^\theta_O = \{\rho e^{i\theta} \colon R_\rho \le \rho < R \}$ (possibly empty).

{\bf Claim.} {\it Every $\,z_\circ=\rho_\circ e^{i\theta} \in \RR^\theta_{\mathcal A}\,$; that is, $\,r_\theta < \rho_\circ < R_\theta\,$, lies in $\mathcal A$.} \\Suppose to the contrary that $h(z) \in \partial \Y$, say $h(z)=y_\circ \in \partial \Y_O\,$ (the case $y_\circ \in \partial \Y_I$ is similar).  Consider the set
\[C=h^{-1} (y_\circ) \subset \A\]
and its all connected components $\,\{C_\nu\,\}_{\nu \in \mathcal M}\,$, so $\,C= \bigcup\limits_{\nu \in \mathcal M} C_\nu$. Let $\,C_\circ \,$ denote the component that contains $z_\circ$. According to Lemma~\ref{lemycircgen} the set $\overline{C_\circ}$ intersects $\partial \A_O$. On the other hand, by Lemma~\ref{lemycirc}, $C_\circ$ lies in one and only one of the rays of $\A$, so in $\RR^\theta$. Thus $\,C_\circ\,$, being a connected set, takes the form
\[C_\circ = \{\rho e^{i \theta} \colon R'_\theta \le \rho < R\} \quad \mbox{ for some } r< R_\theta' <R.\]
Since $\rho_\circ e^{i \theta} \in C_\circ$, we see that
\begin{equation}\label{tah}
R_\theta' \le \rho_\circ < R_\theta.
\end{equation}
On the other hand, since $h(\rho e^{i \theta})=y_\circ$ for every $R_\theta' \le \rho <R$ and $h(\rho e^{i \theta}) \not= y_\circ$ for every $r_\theta < \rho < R_\theta$, it follows that $R'_\theta \ge R_\theta$, contradicting~\eqref{tah}. The claim is verified.

Now, by the definition of $R_\theta$, no point $\rho e^{i \theta}$ with $R_\theta < \rho < R$ lies in $\mathcal A$. Thus the interval $\RR_O^\theta$ lies in $\A \setminus \mathcal A = \mathcal B= \mathcal B_r \cup \mathcal B_R$. Actually $\RR^\theta_O$ has to lie in the outer component of $\C \setminus \mathcal A$,  the point $Re^{i \theta}$ belongs to $\overline{\RR_O^\theta}\,$. Thus $\RR_O^\theta \subset \A \cap \mathcal B_R$ and $h(\rho e^{i \theta})= y_\circ = h(R_\theta e^{i \theta})$ for every $R_\theta \le \rho < R$. In the same way we see that $\RR_I^\theta \subset \A \cap \mathcal B_r$ and $h(\rho e^{i \theta})= h(r_\theta e^{i \theta})$ for every $r <  \rho \le r_\theta$.
\end{proof}

\section{An integral identity}

\begin{lemma}\label{lemmaidentity}
 Let $\X$, $\Y$ and $\mathbb G$ be bounded  domains in $\C$. Suppose that $h \colon \mathbb G \onto\mathbb Y$ and $H \colon \mathbb \X \onto \Y$ are orientation preserving $\mathscr C^\infty$-diffeomorphisms  of finite energy.
Define
$\,f = H^{-1} \circ h \,\colon \mathbb G \onto \mathbb X$. Then we have
\begin{equation}\label{identity}
\begin{split}
\mathscr E_{\X}[H]-\mathscr E_{\mathbb G}[h] & = 4 \iint_{\mathbb G} \left[\frac{\abs{f_z-\gamma (z) f_{\bar z}}^2}{\abs{f_z}^2-\abs{f_{\bar z}}^2} -1 \right]
\, \abs{h_zh_{\bar z}}\, \dtext z\\
& + 4 \iint_{\mathbb G} \frac{(\,\abs{h_z}\,-\,\abs{h_{\bar z}}\,)^2\cdot \abs{f_{\bar z}}^2
}{\abs{f_z}^2-\abs{f_{\bar z}}^2}\,\dtext z
\end{split}
\end{equation}

where
\[
\gamma = \gamma(z) = \begin{cases}
{h_z\overline{h_{\bar z}}}{\, \abs{h_z  \overline{h_{\bar z}}}^{-1}} \qquad &\textnormal{\;if } h_z\overline{h_{\bar z}}\ne 0 \\
0 & \textnormal{\;otherwise.}
 \end{cases}
\]

 The integrals in  (\ref{identity}) converge .
\end{lemma}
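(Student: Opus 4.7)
The plan is a direct computation. Since $f = H^{-1}\circ h$ is an orientation-preserving $\mathscr C^\infty$-diffeomorphism with $J_f>0$, we have $h = H \circ f$ on $\mathbb G$, and the Wirtinger chain rule gives
\[h_z = p\,f_z + q\,\overline{f_{\bar z}}, \qquad h_{\bar z} = p\,f_{\bar z} + q\,\overline{f_z},\]
where $p := H_w(f)$, $q := H_{\bar w}(f)$. From these I would extract the three workhorse formulas
\[\abs{h_z}^2+\abs{h_{\bar z}}^2 = (|p|^2+|q|^2)(|f_z|^2+|f_{\bar z}|^2) + 4\,\re(p\bar q\,f_z f_{\bar z}),\]
\[J_h = (|p|^2-|q|^2)\,J_f, \qquad h_z\,\overline{h_{\bar z}} = (|p|^2+|q|^2)f_z\overline{f_{\bar z}} + p\bar q\,f_z^2 + \bar p q\,\overline{f_{\bar z}}^2.\]

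Next I would rewrite the left-hand side. The change of variables $w = f(z)$ yields $\mathscr E_\X[H] = 2\iint_\G (|p|^2+|q|^2) J_f\, \dtext z$, and subtracting $\mathscr E_\G[h]$ produces
\[\mathscr E_\X[H]-\mathscr E_\G[h] \;=\; -4\iint_\G\bigl[(|p|^2+|q|^2)|f_{\bar z}|^2 + 2\,\re(p\bar q\,f_z f_{\bar z})\bigr]\,\dtext z. \tag{$\ast$}\]
For the right-hand side of the target identity, I note that $|\gamma|=1$ and $\gamma\,|h_z h_{\bar z}| = h_z\overline{h_{\bar z}}$ on the set where the Hopf product is nonzero (the complementary set contributes nothing to either side). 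Expanding $|f_z-\gamma f_{\bar z}|^2 = |f_z|^2+|f_{\bar z}|^2 - 2\,\re(\gamma\,\overline{f_z} f_{\bar z})$ and combining both summands over the common denominator $|f_z|^2-|f_{\bar z}|^2$ converts the right-hand side into
\[4\iint_\G \frac{(|h_z|^2+|h_{\bar z}|^2)\,|f_{\bar z}|^2 \;-\; 2\,\re\!\bigl(h_z\overline{h_{\bar z}}\,\overline{f_z}\,f_{\bar z}\bigr)}{|f_z|^2-|f_{\bar z}|^2}\,\dtext z.\]

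The crux is a pointwise algebraic identity: substituting the chain-rule expressions and using the simplifications $f_z\overline{f_{\bar z}}\cdot \overline{f_z}f_{\bar z} = |f_z|^2|f_{\bar z}|^2$ and $\re\bigl(\bar p q\,\overline{f_{\bar z}}^{\,2}\,\overline{f_z}f_{\bar z}\bigr) = |f_{\bar z}|^2\,\re(p\bar q\,f_z f_{\bar z})$, the numerator collapses to
\[-(|f_z|^2-|f_{\bar z}|^2)\bigl[(|p|^2+|q|^2)|f_{\bar z}|^2 + 2\,\re(p\bar q\,f_z f_{\bar z})\bigr].\]
Cancelling the common factor $|f_z|^2-|f_{\bar z}|^2$ reproduces precisely the integrand in $(\ast)$, completing the identity.

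The main obstacle is the convergence claim, because the denominator $J_f$ may degenerate. The remedy is already built into the cancellation above: the combined integrand on the right is dominated by $(|p|^2+|q|^2)(|f_z|^2+|f_{\bar z}|^2)$, which by the first workhorse formula and the AM--GM bound $2|p||q||f_z||f_{\bar z}|\le(|p|^2+|q|^2)\tfrac12(|f_z|^2+|f_{\bar z}|^2)$ is comparable to $\abs{h_z}^2+\abs{h_{\bar z}}^2$, hence integrable by the finite-energy hypotheses. The second summand in the statement is manifestly nonnegative; once its integrability is extracted from this combined bound, the absolute integrability of the first summand follows from the identity together with the finiteness of $\mathscr E_\X[H]-\mathscr E_\G[h]$.
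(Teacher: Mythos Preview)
Your derivation of the identity is correct and follows essentially the same route as the paper: a direct chain-rule computation leading to the intermediate form
\[
\mathscr E_\X[H]-\mathscr E_\G[h]=4\iint_\G \frac{(|h_z|^2+|h_{\bar z}|^2)\,|f_{\bar z}|^2-2\,\re\bigl(h_z\overline{h_{\bar z}}\,\overline{f_z}f_{\bar z}\bigr)}{|f_z|^2-|f_{\bar z}|^2}\,\dtext z,
\]
followed by the split $|h_z|^2+|h_{\bar z}|^2=2|h_zh_{\bar z}|+(|h_z|-|h_{\bar z}|)^2$. The paper works from $H=h\circ f^{-1}$ and expresses $H_w,H_{\bar w}$ directly in terms of $h$ and $f$; you work from $h=H\circ f$ via the auxiliary $p=H_w(f),\,q=H_{\bar w}(f)$. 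The algebra is the same.

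One slip in your convergence paragraph: the claim that $(|p|^2+|q|^2)(|f_z|^2+|f_{\bar z}|^2)$ is \emph{comparable} to $|h_z|^2+|h_{\bar z}|^2$ is false where $J_f$ is small (e.g.\ $p=1,\,q=-1,\,f_z\approx f_{\bar z}\approx 1$ gives the former $\approx 4$ and the latter $\approx 0$). The fix is already contained in your derivation of $(\ast)$: the combined integrand equals pointwise $\tfrac12(|p|^2+|q|^2)J_f-\tfrac12(|h_z|^2+|h_{\bar z}|^2)$, and each of these two terms lies in $L^1(\G)$ separately by the finite-energy hypotheses (the first via the substitution $w=f(z)$). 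With this in hand, your closing strategy goes through once you add the elementary lower bound on the first summand, $\bigl[\frac{|f_z-\gamma f_{\bar z}|^2}{J_f}-1\bigr]|h_zh_{\bar z}|\ge -|h_zh_{\bar z}|$, which together with nonnegativity of the second summand yields separate integrability of both.
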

\begin{proof}
It is worth noting that $\,f \colon \mathbb G \onto \mathbb X\,$ need not have finite energy. The convergence of the integrals, not obvious at the first glance, is a consequence of the finite energy condition imposed on the mappings $\,h\,$ and $\, H\,$.

We begin with the chain rule applied to $\,H=h\circ f^{-1}\colon \X \onto \Y\,,$
\begin{equation*}
\begin{split}
\frac{\partial H(w)}{\partial w} &= h_z(z)\frac{\partial f^{-1}}{\partial w}+h_{\bar z}(z)  \frac{\overline{\partial f^{-1}} }{\partial \overline{w}} \\
\frac{\partial H(w)}{\partial \bar w} &= h_z(z)\frac{\partial f^{-1}}{\partial \bar w}+h_{\bar z}(z)\overline{\frac{\partial f^{-1}}{\partial w}}
\end{split}
\end{equation*}
where $\,w= f(z)\,$. We express the complex partial derivatives of $\,f^{-1} \colon \X \to \X\,$ at $\,w\,$ in terms $\,f_z(z)\,$ and $\,f_{\bar z}(z) \,$ at $\,z= f^{-1} (w)\,$,
\begin{equation*}
\frac{\partial f^{-1}}{\partial w} = \frac{\overline{ f_z (z)}}{J(z,f)} \quad \mbox{ and } \quad
\frac{\partial f^{-1}}{\partial \bar w} = - \frac{f_{\bar z}(z)}{J(z,f)}
\end{equation*}
Note that the Jacobian determinant $\,J(z,f)= |f_z|^2 - |f_{\overline{z}}|^2\, $ is strictly positive. These expressions yield
\begin{equation*}
\frac{\partial H}{\partial w}  =\frac{h_z\overline{f_z}-h_{\bar z} \overline{f_{\bar z}} }{|f_z|^2 - |f_{\overline{z}}|^2} \quad \mbox{ and } \quad \frac{\partial H}{\partial \bar w}  =\frac{h_{\bar z}{f_z}-h_{ z}{f_{\bar z}}}{|f_z|^2 - |f_{\overline{z}}|^2}
\end{equation*}
 Next we compute the energy of $H$ over the set $\,f (\mathbb G) = \mathbb X\,$ by substitution $w= \chi (z)$,
\begin{equation*}
\begin{split}
\mathscr E_{f(\mathbb G)}[H]&=2\iint_{f(\mathbb G)} \left(\abs{H_w}^2+\abs{H_{\bar w}}^2\right)\,\dtext w \\
&=2\iint_{\mathbb G}\frac{\abs{h_z \overline{f_z} - h_{\bar z} \overline{f_{\bar z}}}^2+
\abs{h_{\bar z} f_z - h_{z} f_{\bar z}}^2}{\abs{f_z}^2-\abs{f_{\bar z}}^2}\,\dtext z.
\end{split}
\end{equation*}
On the other hand, the energy of $h$ over the set $\,\mathbb G\,$ equals
\[
\mathscr E_{\mathbb G}[h]=2\iint_{\mathbb G}\left(\abs{h_z}^2+\abs{h_{\bar z}}^2\right)\,\dtext z
\]
The desired formula follows by subtracting these two integrals,
\begin{equation}\label{cchain}
\begin{split}
\mathscr E_{\mathbb X}[H]-\mathscr E_{\mathbb G}[h] & =  4\iint_{\mathbb G}  \frac{\left(\abs{h_z}^2+\abs{h_{\bar z}}^2\right)\cdot \abs{f_{\bar z}}^2
-2\re \left[h_z\overline{h_{\bar z}} \overline{f_z} f_{\bar z}\right]}{\abs{f_z}^2-\abs{f_{\bar z}}^2}\,\dtext z\\
& = 4 \iint_{\mathbb G} \frac{ 2\abs{h_zh_{\bar z}} \cdot \abs{f_{\bar z}}^2
-2\re \left[h_z\overline{h_{\bar z}} \overline{f_z}\chi_{\bar z}\right]}{\abs{f_z}^2-\abs{f_{\bar z}}^2}\,\dtext z\\
& + 4 \iint_{\mathbb G} \frac{(\,\abs{h_z}\,-\,\abs{h_{\bar z}}\,)^2\cdot \abs{f_{\bar z}}^2
}{\abs{f_z}^2-\abs{f_{\bar z}}^2}\,\dtext z\\
& = 4 \iint_{\mathbb G} \left[\frac{\abs{f_z-\gamma(z) f_{\bar z}}^2}{\abs{f_z}^2-\abs{f_{\bar z}}^2} -1 \right]
\, \abs{h_zh_{\bar z}} \,\dtext z\\
& + 4 \iint_{\mathbb G} \frac{(\,\abs{h_z}\,-\,\abs{h_{\bar z}}\,)^2\cdot \abs{f_{\bar z}}^2
}{\abs{f_z}^2-\abs{f_{\bar z}}^2}\,\dtext z
\end{split}
\end{equation}
\end{proof}

\section{Some Free Lagrangians, Normal and Tangential Distortions}
In the 1980-ies a novel approach towards minimization of polyconvex energy
functionals for mappings between domains in $\R^n$ was developed and published by  Ball \cite{Ba0}. The
underlying idea was to view the integrand as convex function of null
Lagrangians. The term null Lagrangian pertains to a nonlinear
differential expression whose integral over any open region depends
only on the boundary values of the mapping, see \cite{BCO, Ed, Iw}. In this paper the mappings of our interest are  homeomorphisms $h \colon \X \onto \Y \,$ that are  given only on a part of the boundary (possibly empty part, in which case we are dealing with a traction free problem).
There  exist some nonlinear differential forms, defined on a class of
 homeomorphisms, whose integral means remain independent of their boundary values.  These
are rather special null Lagrangians, called \emph{free Lagrangians}\,\cite{IOnAnnuli}. Precisely, a free Lagrangian for a pair of domains $\,\X, \Y \subset \mathbb R^n\,$ is a nonlinear differential $n$ -form $\,L(x,h,Dh)\, \dtext x\,$ whose integral mean over $\,\mathbb X\,$ depends only on the homotopy class of a homeomorphism $ h \colon \X \onto \Y$. Here are a few of them in the planar domains, where we make use of polar coordinates $\rho$ and $\theta$
\begin{equation}
z= \rho e^{i\theta} \, , \qquad 0 \le \rho < \infty \; \; \textnormal{ and }\; \; 0 \leqslant \theta < 2 \pi \, .
\end{equation}
 The normal (radial) and tangential (angular) derivatives of a Sobolev mapping $f$ are defined by
\begin{equation}
f_N (z) := \frac{\partial f(\rho e^{i \theta})}{\partial \rho} \, , \qquad  \rho=\abs{z}
\end{equation}
and
\begin{equation}
f_T (z) :=  \frac{1}{\rho}\frac{\partial f(te^{i \theta})}{\partial \theta} \, , \hskip1cm \rho=\abs{z} \, .
\end{equation}
The Jacobian determinant of $f$ is
\[
J(\cdot, f)=J_f= \abs{f_z}^2-\abs{f_{\bar z}}^2 = \im \overline{f_N} f_T.
\]
The following three free-Lagrangians are concerned with a pair of the annuli $\A= \{x \in \C \colon r< \abs{x} <R \}$ and $\A^\ast= \{y \in \C \colon r_\ast< \abs{y} <R_\ast \}$.
\begin{itemize}
\item Pullback of a $2$-form in $\Y$ via an orientation preserving  homeomorphism  $h \in \Ho (\X, \Y) \cap \W^{1,2} (\X, \Y)$ is a free Lagrangian
\begin{equation}\label{jacfree}
\iint_\X N(|h|)\, J(x,h)\, \dtext x  =\iint_{\Y} N(\abs{y}) \, \dtext y \, .
\end{equation}
\item Normal differentiation gives rise to a free Lagrangian for  $h \in \Ho (\X, \A^\ast) \cap \W^{1,1} (\X, \A^\ast)$ defined by
\begin{equation}\label{norfree}
\begin{split}
\left| \iint_{\X} A(\abs{h})  \frac{\abs{h}_N}{\abs{x}} \, \dtext x \right|  & =  2 \pi \left| \int_r^R A \big(|h|\big) \frac{\partial |h|}{\partial \rho}\, \dtext \rho \right| \\ &= 2 \pi  \left| \int^{R_\ast}_{r_\ast} A(\tau) \, \dtext \tau \right| \, .
\end{split}
\end{equation}
\item A dual free Lagrangian for  $h \in \Ho (\A, \Y) \cap \W^{1,1} (\A, \Y)$ arises from tangential differentiation
\begin{equation}\label{tanfree}
\begin{split}
\left| \iint_{\A} B \big( |x|\big) \textnormal{Im} \frac{h_T}{h} \, \dtext x \right| & = \left| \int_r^R {B(t)} \left(\int_{|x|=t} \frac{\partial \textnormal{Arg}\, h}{\partial \theta} \, \dtext \theta\right) \dtext t \right|\\ & =  2 \pi \left| \int_{r}^{R} {B(t)} \, \dtext t \right|
\end{split}
\end{equation}
\end{itemize}
\begin{lemma}\label{lemKn}
Let $\mathbb X$  be a bounded doubly connected domain that separates the origin $\,0\,$ from $\,\infty\,$, and let $\,\A^\ast=A(r_\ast, R_\ast)\,$ be a circular annulus. If $h\in \mathscr{H} (\X, \A^\ast) \cap \W^{1,2}(\X, \A^\ast)\,$ then
\[\iint_{\mathbb X}  \frac{\abs{h_N}^2}{J_h}\, \frac{\dtext z}{\abs{z}^2} \ge 2 \pi \log (R_\ast/r_\ast) .\]
\end{lemma}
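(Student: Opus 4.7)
The plan is to derive the bound by a single application of the Cauchy--Schwarz inequality, with the two factors evaluated by the free Lagrangians \eqref{jacfree} and \eqref{norfree} recorded just before the statement. The key trick is to factor the integrand
\[
\frac{|h|_N}{|h|\,|z|} \;=\; \frac{|h|_N}{\sqrt{J_h}\,|z|}\;\cdot\;\frac{\sqrt{J_h}}{|h|}
\]
so that one factor matches (after a pointwise comparison) the quantity we want to bound below, while the other is a pure Jacobian weight that integrates exactly.

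Carrying this out, I would apply the Jacobian free Lagrangian \eqref{jacfree} with $N(\tau)=1/\tau^2$ to evaluate
\[
\iint_\X \frac{J_h}{|h|^2}\,\dtext z \;=\; \iint_{\A^\ast}\frac{\dtext y}{|y|^2}\;=\;2\pi\log(R_\ast/r_\ast),
\]
and the normal free Lagrangian \eqref{norfree} with $A(\tau)=1/\tau$ to evaluate
\[
\left|\iint_\X \frac{|h|_N}{|h|\,|z|}\,\dtext z\right| \;=\; 2\pi\log(R_\ast/r_\ast).
\]
Cauchy--Schwarz on the factorization then gives
\[
\bigl(2\pi\log(R_\ast/r_\ast)\bigr)^{2} \;\le\; \iint_\X \frac{|h|_N^{\,2}}{J_h\,|z|^2}\,\dtext z \;\cdot\; 2\pi\log(R_\ast/r_\ast),
\]
and upgrading $|h|_N\le|h_N|$ (which follows by writing $h=|h|\,e^{i\arg h}$ and computing $|h_N|^2=(|h|_N)^2+|h|^2\bigl((\arg h)_N\bigr)^2$) yields the asserted lower bound after dividing by $2\pi\log(R_\ast/r_\ast)$.

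There is no serious obstacle; the only small technical point is that $J_h$ may vanish on a set of positive measure, in which case the integrand $|h_N|^2/J_h$ is declared $+\infty$ on $\{J_h=0,\; h_N\neq 0\}$ and the inequality is trivial there. Otherwise one restricts the Cauchy--Schwarz estimate to $\{J_h>0\}$, while both free Lagrangian identities remain valid over $\X$ because the left-hand sides involve only $h$ itself and not the auxiliary weight $1/J_h$.
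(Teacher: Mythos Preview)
Your proposal is correct and follows essentially the same route as the paper: apply the normal free Lagrangian \eqref{norfree} with $A(\tau)=1/\tau$, the Jacobian free Lagrangian \eqref{jacfree} with $N(\tau)=1/\tau^2$, combine them via Cauchy--Schwarz, and use the pointwise bound $|h|_N^{\,2}\le|h_N|^2$. The only cosmetic difference is ordering: the paper absorbs $||h|_N|\le|h_N|$ \emph{before} Cauchy--Schwarz (passing immediately to $\iint_\X \frac{|h_N|}{|h||z|}\,\dtext z$), while you apply it afterward; both are equivalent.
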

\begin{proof}
Choosing $A(\tau)= 1/\tau$ in~\eqref{norfree} we have
\[2 \pi \log (R_\ast/r_\ast) = 2 \pi \left| \int^{R_\ast}_{r_\ast} \frac{\dtext \tau}{\tau} \right| \le \iint_{\X} \frac{\abs{h_N}}{\abs{h}} \frac{\dtext z}{\abs{z}}\]
and
\[\left( \iint_{\X} \frac{\abs{h_N}}{\abs{h}} \frac{\dtext z}{\abs{z}} \right)^2 \le \iint_{\X} \frac{\abs{h_N}}{J_h} \frac{\dtext z}{\abs{z}^2} \iint_{\X} \frac{J_h}{\abs{h}^2} \, \dtext z =  \iint_{\X} \frac{\abs{h_N}}{J_h} \frac{\dtext z}{\abs{z}^2} \iint_{\A^\ast} \frac{\dtext y}{\abs{y}^2} \]
by~\eqref{jacfree}.

\end{proof}

\begin{lemma}\label{lemKt}
Let $\A=A(r,R)$ be a circular annulus and $\Y$ a bounded doubly connected domain of  finite conformal modulus. If $h\in \mathscr{H} (\mathbb A, \Y) \cap \W^{1,2}(\A, \Y)$, then
\[\iint_{\A}\frac{\abs{h_T}^2}{J_h} \, \frac{\dtext z}{\abs{z}^2} \ge 2 \pi \frac{\log^2 (R/r)}{\Mod \Y} .\]
\end{lemma}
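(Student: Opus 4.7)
The plan is to reduce to the case of a circular target by post-composing $\,h\,$ with a conformal isomorphism of $\,\Y\,$ onto a round annulus, and then to mimic the Cauchy--Schwarz argument of Lemma~\ref{lemKn}, using the tangential free Lagrangian~\eqref{tanfree} in the role that the normal free Lagrangian~\eqref{norfree} played there. Since $\,\Y\,$ has finite conformal modulus, I would fix a conformal isomorphism $\,\Phi \colon \Y \onto \A^\ast = A(r_\ast, R_\ast)\,$ with $\,\log (R_\ast/r_\ast) = \Mod \Y\,$ and set $\,g = \Phi \circ h \colon \A \onto \A^\ast\,$, which is again an orientation-preserving homeomorphism.

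The key calculation exploits the holomorphy of $\,\Phi\,$: one has $\,g_z = \Phi'(h)\, h_z\,$, $\,g_{\bar z} = \Phi'(h)\, h_{\bar z}\,$, hence $\,g_T = \Phi'(h)\, h_T\,$ and $\,J_g = \abs{\Phi'(h)}^2 J_h\,$, so the factor $\,\abs{\Phi'(h)}^2\,$ cancels to give the pointwise conformal invariance
\[
\frac{\abs{g_T}^2}{J_g} \;=\; \frac{\abs{h_T}^2}{J_h} \qquad \textnormal{a.e.\ in } \A .
\]
Therefore it suffices to establish the asserted inequality for $\,g\,$ in place of $\,h\,$, with $\,\A^\ast\,$ as target. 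I would specialise the tangential free Lagrangian~\eqref{tanfree} to $\,B(t)=1/t\,$ and use the pointwise bound $\,\abs{\im (g_T/g)} \le \abs{g_T}/\abs{g}\,$ to obtain
\[
2\pi \log (R/r) \;\leqslant\; \iint_\A \frac{\abs{g_T}}{\abs{g}\,\abs{z}}\,\dtext z .
\]
A Cauchy--Schwarz factorisation $\,\tfrac{\abs{g_T}}{\abs{g}\,\abs{z}} = \tfrac{\abs{g_T}}{\sqrt{J_g}\,\abs{z}} \cdot \tfrac{\sqrt{J_g}}{\abs{g}}\,$, together with the Jacobian free Lagrangian~\eqref{jacfree} applied with $\,N(\abs{y}) = 1/\abs{y}^2\,$ to evaluate
\[
\iint_\A \frac{J_g}{\abs{g}^2}\,\dtext z \;=\; \iint_{\A^\ast}\frac{\dtext y}{\abs{y}^2} \;=\; 2\pi \log (R_\ast/r_\ast) \;=\; 2\pi\,\Mod \Y ,
\]
then yields the claimed estimate upon squaring and rearranging.

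The only real obstacle is the bookkeeping that the free-Lagrangian identities apply to $\,g\,$; this is routine, since $\,g\,$ inherits the sense-preserving homeomorphism property from $\,h\,$ and $\,\Phi\,$, and should the left-hand integral in the statement turn out to be infinite, the desired inequality is trivially true.
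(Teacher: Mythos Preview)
Your proof is correct and follows essentially the same route as the paper's own argument: post-compose with a conformal map onto a round annulus, invoke the conformal invariance of $\abs{h_T}^2/J_h$, apply the tangential free Lagrangian~\eqref{tanfree} with $B(t)=1/t$, and finish with Cauchy--Schwarz together with the Jacobian identity~\eqref{jacfree}. Your write-up is in fact slightly more explicit (spelling out the cancellation of $\abs{\Phi'(h)}^2$ and the infinite-integral triviality), but the strategy and the key steps are the same.
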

\begin{proof}
There exists a conformal transformation $\,F \colon \Y \onto \A^\ast\,$, of $\,\mathbb Y\,$ onto an annulus  $\A^\ast = \{z \colon 0< r_\ast < \abs{z}< R_\ast\}$. We define $g=F\circ h \colon \Y \onto \A^\ast$.  Since $\,F\,$ is conformal
\[\iint_{\A}\frac{\abs{h_T}^2}{J_h} \, \frac{\dtext z}{\abs{z}^2}  =\iint_{\A}\frac{\abs{g_T}^2}{J_g} \, \frac{\dtext z}{\abs{z}^2}  \]
 Applying~\eqref{tanfree} with $B(t)=1/t$ we have
 \[2 \pi \log R/r = 2\pi \left| \int_r^R \frac{\dtext t}{t}\right| \le \iint_{\A} \frac{\abs{g_T}}{\abs{g}} \frac{\dtext z}{\abs{z}} \, .\]
 Now, it follows by H\"{o}lders inequality that
 \[\left(\iint_{\A} \frac{\abs{g_T}}{\abs{g}} \frac{\dtext z}{\abs{z}} \right)^2  \le \iint_{\A} \frac{\abs{g_T}^2}{J_g} \frac{\dtext z}{\abs{z}^2} \iint_{\A} \frac{J_g}{\abs{g}^2}= \iint_{\A} \frac{\abs{g_T}^2}{J_g} \frac{\dtext z}{\abs{z}^2}  \iint_{\A^\ast} \frac{\dtext y}{\abs{y}^2}\, . \]
 \end{proof}

\begin{lemma}\label{ctheory}
Let $\A=A(r,R)$ be a circular annulus, $0<r<R<\infty$, and $\Y$ a bounded doubly connected domain.
Suppose $\,h\in \mathscr W_{\loc}^{1,1}(\A, \Y)\,$ satisfies the Hopf-Laplace equation
\begin{equation}\label{hopf1}
h_z\overline{h_{\bar z}} \equiv \frac{c}{z^2}\qquad \text{in }\; \A\;,\;\text{and}\;\; J_h \ge 0 \;\;\text{almost everywhere}
\end{equation}
where $\,c\in\R\,$ is a constant.  Then we have point-wise inequalities
\begin{equation}\label{important}
\begin{cases}
\abs{h_N}^2 \le J_h, & \quad \mbox{if } c \le 0 \\
\abs{h_T}^2 \le J_h, & \quad \mbox{if } c \ge 0
\end{cases}
\end{equation}
\end{lemma}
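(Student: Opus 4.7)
The plan is direct and essentially a polar-coordinate identity combined with the arithmetic-geometric mean type bound forced by the Hopf equation together with non-negativity of the Jacobian. First I would pass from the Wirtinger frame $\{h_z, h_{\bar z}\}$ to the polar frame $\{h_N, h_T\}$. Writing $z = \rho e^{i\theta}$, the chain rule gives
\[
h_N = e^{i\theta} h_z + e^{-i\theta} h_{\bar z}, \qquad h_T = i\,e^{i\theta} h_z - i\,e^{-i\theta} h_{\bar z},
\]
so squaring and taking the real part produces
\[
|h_N|^2 = |h_z|^2 + |h_{\bar z}|^2 + 2\,\re\!\left(e^{2i\theta} h_z \overline{h_{\bar z}}\right), \qquad |h_T|^2 = |h_z|^2 + |h_{\bar z}|^2 - 2\,\re\!\left(e^{2i\theta} h_z \overline{h_{\bar z}}\right).
\]

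Next I would insert the Hopf-Laplace equation $h_z \overline{h_{\bar z}} = c/z^2 = c\,e^{-2i\theta}/\rho^2$. This makes the cross term purely real:
\[
e^{2i\theta} h_z \overline{h_{\bar z}} = \frac{c}{\rho^2} \in \mathbb R.
\]
Subtracting $J_h = |h_z|^2 - |h_{\bar z}|^2$ from each identity above gives the clean formulas
\[
|h_N|^2 - J_h = 2|h_{\bar z}|^2 + \frac{2c}{\rho^2}, \qquad |h_T|^2 - J_h = 2|h_{\bar z}|^2 - \frac{2c}{\rho^2}.
\]

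Finally, I would exploit the two remaining pieces of information. The Hopf equation gives $|h_z|^2\,|h_{\bar z}|^2 = c^2/\rho^4$, while $J_h \ge 0$ forces $|h_{\bar z}| \le |h_z|$. Multiplying these yields $|h_{\bar z}|^4 \le |h_z|^2 |h_{\bar z}|^2 = c^2/\rho^4$, hence
\[
|h_{\bar z}|^2 \le \frac{|c|}{\rho^2}.
\]
Substituting this bound into the two displayed formulas settles both cases at once: when $c \le 0$ we get $|h_N|^2 - J_h \le 2|c|/\rho^2 + 2c/\rho^2 = 0$, and when $c \ge 0$ we get $|h_T|^2 - J_h \le 2|c|/\rho^2 - 2c/\rho^2 = 0$. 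There is no serious obstacle here; the only subtlety worth flagging is that the pointwise identities must be read almost everywhere (since $h$ is only Sobolev), but this is automatic as every term in the computation is an $\mathscr L^1_{\mathrm{loc}}$ function and the Hopf equation holds pointwise a.e.\ under the hypotheses.
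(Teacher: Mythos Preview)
Your proof is correct. Both you and the paper rewrite the Hopf equation in polar coordinates and feed in $J_h\ge 0$, so the approaches are close cousins; the difference is in which intermediate identity carries the weight. The paper extracts from $h_z\overline{h_{\bar z}}=c/z^2$ the pair
\[
|h_N|^2-|h_T|^2=\frac{4c}{|z|^2},\qquad \re(\overline{h_N}\,h_T)=0,
\]
and the second of these, combined with $J_h=\im(\overline{h_N}\,h_T)\ge 0$, gives the clean product formula $J_h=|h_N|\,|h_T|$. The conclusion is then immediate: the sign of $c$ decides which of $|h_N|,|h_T|$ is smaller, and the smaller one squared is at most their product. Your route instead stays in the Wirtinger frame, isolates $|h_N|^2-J_h$ and $|h_T|^2-J_h$ explicitly, and closes with the bound $|h_{\bar z}|^2\le |c|/\rho^2$. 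Your computation is a bit more hands-on but entirely self-contained; the paper's version buys the memorable identity $J_h=|h_N||h_T|$ (orthogonality of the radial and angular derivatives), which is of independent interest and reappears elsewhere in the analysis.
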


\begin{proof}
The complex Hopf-Laplace equation ~\eqref{hopf1} reduces to the system of two real equations,
\begin{align}
\abs{h_N}^2 - \abs{h_T}^2 &= \frac{4c}{\abs{z}^2}\,; \label{hopf2a}\\
\re (\overline{h_N} h_T) &=0. \label{hopf2b}
\end{align}
Recall that $J_h = \im \overline{h_N} \,  h_T\ge 0$ which in view of~\eqref{hopf2b} reads as
\begin{equation}\label{hopf5}
J_h = \abs{h_N} \abs{h_T}
\end{equation}
Combining this and~\eqref{hopf2a} the inequalities~\eqref{important} follow.
\end{proof}

Geometric function theory is concerned with the distortion function
\[K^f = \frac{\abs{Df}^2}{J_f} = \frac{2 \left(\abs{f_z}^2\,+\,\abs{f_{\bar z}}^2\right)}{J_f}\, . \]
For the purpose of this paper we will decompose it as $K^f=K^f_N+K^f_T\,$,  where (using polar coordinates) the {\it normal} and {\it tangential distortions}  of $\,f\,$ are defined by the rules
\begin{align}
K_N^f & := \frac{\abs{f_z+ \frac{\bar z}{z}f_{\bar z}}^2}{J_f}= \frac{\abs{f_N}^2}{ J_f}\\
K_T^f & := \frac{\abs{f_z- \frac{\bar z}{z}f_{\bar z}}^2}{J_f}= \frac{\abs{f_T}^2} {J_f}
\end{align}
By convention, these two quotients are understood as $\,0\,$ whenever the numerator vanishes. Naturally,
they assume the value $\,+\infty\,$ if the Jacobian vanishes but the numerator does not.
For a mapping $\,f\in \W^{1,1}_{\rm loc}\,$ the quantities $\,f_N\,$, $\,f_T\,$, and $\,J_f\,$ are finite a.e.
and, therefore, $\,K_N^f\,$ and $\,K_T^f\,$ are well defined  measurable functions in the domain of
definition of $\,f\,$.

\subsection{The distortion of the difference of two solutions}\label{Distortion}
Suppose $h, H \in \mathscr W^{1,2}(\X, \C)$, $\,J_h \ge 0\,$ and $\,J_H \ge 0\,$, have the same \textit{Hopf-product},
\[h_z \overline{h}_{\bar z} = H_z \overline{H}_{\bar z} = \varphi (z) \not = 0 \quad \textnormal{ almost everywhere\;(not necessarily analytic).}\]
Consider the difference
\[F(z)=H(z)=h(z) \in  \mathscr W^{1,2}(\X, \C).\]
We have
\[h_{\bar z} \overline{F}_z = h_{\bar z} (\overline{H}_z - \overline{h}_z)  =h_{\bar z} \overline{H}_z  - \overline{\varphi} = h_{\bar z} \overline{H}_z -  \overline{H}_z H_{\bar z} =  \overline{H}_z  (h_{\bar z}- H_{\bar z})= - F_{\bar z}  \overline{H}_z     \]
where we notice that
\[
\begin{split}
\abs{h_{\bar z}}^2 \le \abs{h_{\bar z}} \abs{h_z}= \abs{\varphi} \\
\abs{H_{\bar z}}^2 \le \abs{H_{z}} \abs{H_{\bar z}}= \abs{\varphi}
\end{split}
\]
Hence $\abs{\varphi}^2 \abs{F_z}^2 \ge \abs{\varphi}^2 \abs{F_{\bar z}}^2$ so $J_F \ge 0$ almost everywhere. Next we introduce the Beltrami distortion coefficients
\[k_h(z) = \frac{\abs{h_{\bar z}}}{\abs{h_z}} \le 1 \quad \mbox{and} \quad k_H(z) = \frac{\abs{H_{\bar z}}}{\abs{H_z}} \le 1.\]
We find that
\[\abs{F_{\bar z}} = k_F (z) \abs{F_z} \quad \textnormal{ where } k_F (z)= \sqrt{k_h(z)\, k_H(z)} \le 1.\]
Indeed, we have
\[\abs{h_{\bar z}}^2 \abs{F_z}^2=\abs{H_{z}}^2 \abs{F_{\bar z}}^2 \;\;,\quad \textnormal{where } \abs{h_{\bar z}}^2 = k_h \abs{\varphi} \quad \textnormal{and} \quad k_H \abs{H_z}^2 = \abs{\varphi}.\]
Hence
\[k_hk_H \abs{F_z}^2 \abs{\varphi}= k_H \abs{h_{\bar z}}^2 \abs{F_z}^2 = k_H \abs{H_z}^2 \abs{F_{\bar z}}^2 = \varphi \, \abs{F_{\bar z}}^2\]
and therefore
\[\abs{F_{\bar z}}= \sqrt{k_h k_H} \abs{F_z} = k_F(z) \abs{F_z}. \]
Note that
\[
k_F(z) \; \begin{cases} <1 \quad \textnormal{whenever } J_h+J_H \not =0 \\
=1 \quad \textnormal{whenever } J_h+J_H  =0.
 \end{cases}
\]
In particular, $\, F\,$ has finite distortion whenever $\, J_h \not=0 \,$ or $\, J_H(z) \not=0\,$
\[\abs{DF}^2 =2 \left(\abs{F_z}^2+ \abs{F_{\bar z}}^2\right)  =2\, \frac{1+k_hk_H}{1-k_hk_H} J_F\;.\]

\section{Hopf solutions are energy-minimal, proof of Theorem \ref{thmhopf} }

Throughout this section $\X$ and $\Y$ are arbitrary bounded doubly connected domains, $\X$ being nondegenerate. We only need to prove the "if " part of Theorem~\ref{thmhopf}. The "only if " part follows from Corollary \ref{Weakreal} , because the energy-minimal mappings are critical points for all variations along  $\,\partial \mathbb X\,$. In the case of negative Hopf-differential we actually obtain slightly better result;  it holds in a larger class of the Hopf solutions. Precisely,

\begin{proposition}\label{propcp}
Suppose the Hopf-differential $h_z \overline{h_{\bar z}} \, \dtext z \otimes \dtext z$ defined for $h \in \Ho^{1,2}_{\lim} (\X, \Y)$ is holomorphic and real negative along $\partial \X$. Then
\begin{equation}\label{minHO}
\mathscr E_\X [h] = \inf \{\,\mathscr E_\X [g] \colon g \in \Ho^{1,2}_{\lim} (\X, \Y) \,\} \, .
\end{equation}
Furthermore, $h$ is a unique  (up to the conformal change of variables in $\X$) minimizer in $\Ho^{1,2}_{\lim} (\X, \Y)$.
 \end{proposition}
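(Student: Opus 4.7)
The plan is to apply the integral identity of Lemma~\ref{lemmaidentity} to $h$ against an arbitrary competitor, and to read off the required inequality from the normal-distortion bound of Lemma~\ref{lemKn}. First I would normalise by a conformal change of variables so that $\X=\A\deff\{z\colon r<|z|<R\}$; this is legitimate because the Dirichlet energy, the class $\Ho^{1,2}_{\lim}$, and the notion ``Hopf differential real on $\partial\X$'' are all conformally invariant. In these coordinates, a holomorphic quadratic differential real on both boundary circles must take the canonical form $(c/z^2)\,dz\otimes dz$ (as in Proposition~\ref{HopfAnnulus}), and ``real negative'' forces $c<0$. By Theorem~\ref{thmcpo}, this $h$ is then a $\mathscr C^\infty$ harmonic diffeomorphism $\A\onto\Y$, which is precisely what is needed to use it as a reference map in Lemma~\ref{lemmaidentity}.

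Fix any competitor $H\in\Ho^{1,2}_{\lim}(\A,\Y)=\overline{\Ho}_2(\A,\Y)=\overline{\textit{Diff}_{\,2}}(\A,\Y)$ and approximate it strongly in $\W^{1,2}$ by smooth diffeomorphisms $H_j\colon\A\onto\Y$, so that $\mathscr E_\A[H_j]\to\mathscr E_\A[H]$. Set $f_j=H_j^{-1}\circ h\colon\A\onto\A$ and apply Lemma~\ref{lemmaidentity} with $\mathbb G=\A$. Since $\varphi(z)=c/z^2$ with $c<0$, one computes $\gamma=\varphi/|\varphi|=-\bar z/z$ and
\[
\bigl|f_{j,z}-\gamma f_{j,\bar z}\bigr|=\Bigl|\tfrac{z}{|z|}f_{j,z}+\tfrac{\bar z}{|z|}f_{j,\bar z}\Bigr|=|f_{j,N}|,
\]
so the first integrand in the identity collapses to $(K_N^{f_j}-1)\cdot|c|/|z|^2$. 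Invoking Lemma~\ref{lemKn} for $f_j\in\mathscr H(\A,\A)\cap\W^{1,2}$ together with the elementary $\iint_\A|z|^{-2}\,dz=2\pi\log(R/r)$ makes the first integral non-negative; the second integral is manifestly non-negative, since $J_{f_j}>0$. Hence $\mathscr E_\A[H_j]\ge\mathscr E_\A[h]$, and passing to the limit yields \eqref{minHO}.

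For uniqueness, suppose $H$ also attains the infimum. Then $\mathscr E_\A[H_j]\to\mathscr E_\A[h]$ forces both non-negative integrals in the identity to tend to zero. Because $h$ is a smooth diffeomorphism, $(|h_z|-|h_{\bar z}|)^2$ is locally bounded below by a positive constant, so the vanishing of the second integral forces the Beltrami coefficients $k_{f_j}=|f_{j,\bar z}|/|f_{j,z}|$ to tend to zero locally in $L^2$. The locally uniform limit $f=H^{-1}\circ h$ is therefore a holomorphic self-map of $\A$; a Hurwitz-type argument (Lemma~\ref{Hurwitz}) promotes $f$ to a conformal automorphism of $\A$, whence $H=h\circ f^{-1}$. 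Undoing the initial conformal normalisation of $\X$ gives the asserted uniqueness up to a conformal change of variables in $\X$.

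The workhorse positivity of both integrals is routine once Lemma~\ref{lemKn} is in place; the main obstacle is the limiting step in the uniqueness argument, where one must rigorously deduce conformality of $f=H^{-1}\circ h$ even though $H$ is only a Sobolev limit of diffeomorphisms and $H^{-1}$ is a priori not defined up to $\partial\Y$. A clean way around is to note that by Proposition~\ref{Partialharmonicity} any energy-minimiser $H$ is already a harmonic diffeomorphism of $H^{-1}(\Y)$ onto $\Y$; combined with the $c<0$ analysis inside the proof of Theorem~\ref{thmcpo} (which rules out cracks), this upgrades to $H\colon\A\onto\Y$ being a genuine harmonic diffeomorphism, so that $f=H^{-1}\circ h$ is a literal $\mathscr C^\infty$ self-diffeomorphism of $\A$ and the identity yields conformality of $f$ without any limiting procedure.
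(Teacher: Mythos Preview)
Your argument has the right shape but contains a sign error and a genuine gap.

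\textbf{Sign.} As the paper notes just before Theorem~\ref{thmcpo}, the differential $(c/z^2)\,\dtext z\otimes\dtext z$ is \emph{negative} along $\partial\A$ precisely when $c>0$. Thus Proposition~\ref{propcp} corresponds to $c>0$, not $c<0$. With the correct sign one gets $\gamma=\bar z/z$, the first integrand becomes $|f_T|^2/J_f=K_T^f$, and the relevant lower bound is Lemma~\ref{lemKt}, not Lemma~\ref{lemKn}. Your computation is internally consistent with $c<0$, but that is the case of Proposition~\ref{propcn}, not the one at hand.

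\textbf{The approximation step is not available.} You write $\Ho^{1,2}_{\lim}(\A,\Y)=\overline{\Ho}_2(\A,\Y)=\overline{\textit{Diff}_{\,2}}(\A,\Y)$ and then approximate an arbitrary competitor $H\in\Ho^{1,2}_{\lim}$ strongly by diffeomorphisms. But, as the paper remarks immediately after stating Proposition~\ref{propcp}, the equality $\Ho^{1,2}_{\lim}=\overline{\Ho}_2$ has only been established when $\Y$ is Lipschitz, and Proposition~\ref{propcp} makes no such assumption on $\Y$. Without strong approximation you cannot pass from $\mathscr E_\A[H_j]\ge\mathscr E_\A[h]$ to $\mathscr E_\A[H]\ge\mathscr E_\A[h]$ for general $H\in\Ho^{1,2}_{\lim}$; weak lower semicontinuity goes the wrong way. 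The same issue undermines your uniqueness argument, and your proposed workaround in the last paragraph (upgrade the competing minimizer $H$ to a diffeomorphism via Theorem~\ref{thmcpo}) presupposes that $H$'s Hopf differential is negative along $\partial\A$, which you have not established.

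\textbf{How the paper closes the gap.} The paper does not compare $h$ against arbitrary competitors. Instead it first applies Lemmas~\ref{ctheory} and~\ref{lemKt} to $h$ itself to obtain $\Mod\A\le\Mod\Y$, and then invokes Theorem~\ref{exstmodY}: this produces a $\mathscr C^\infty$-diffeomorphism $H$ minimizing energy over the larger class $\mathfrak D(\A,\Y)\supset\Ho^{1,2}_{\lim}(\A,\Y)$, together with its uniqueness. Now both $h$ and $H$ are genuine diffeomorphisms, so Lemma~\ref{lemmaidentity} applies once, with no limiting procedure, yielding $\mathscr E_\A[H]\ge\mathscr E_\A[h]$ via the $K_T$--estimate. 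Equality follows, and uniqueness is inherited from Theorem~\ref{exstmodY}.
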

 Obviously, if $\,h\,$ happen to belong to $\,\overline{\Ho}_2 (\X, \Y)\subset \Ho^{1,2}_{\lim} (\X, \Y)\,$, then the equation (\ref{minHO}) yields  (\ref{EM}), as desired. However, as yet,  the equality  $\, \Ho^{1,2}_{\lim} (\X, \Y)\, = \,\overline{\Ho}_2 (\X, \Y)\,$ has been established only under Lipschitz regularity of  $\,\mathbb Y\,$, \cite{IOa}.

\begin{proof}

By virtue of  Theorem~\ref{thmcpo} $\,h \colon \X \onto \Y$  is a harmonic diffeomorphism. A conformal transformation of $\X$ onto an annulus $\A = \{z \colon r< \abs{z}<R\}$ takes $\Ho^{1,2}_{\lim} (\X, \Y)$ onto $\Ho^{1,2}_{\lim} (\A, \Y)$. Thus it involves no loss of generality in assuming that $\X=\A$, so we have the equality
\[h_z \overline{h_{\bar z}}= \frac{c}{z^2} \qquad c \in \R \setminus \{0\} \, .\]
The assumption that $\varphi (z)\, \dtext z \otimes \dtext z$ is negative along $\partial \A$ yields $c>0$. In view of Lemmas~\ref{ctheory} and~\ref{lemKt} we have
\[2 \pi \frac{\log^2 \frac{R}{r}}{\Mod \Y} \le \iint_{\A} \frac{\abs{h_T}^2}{J_h} \frac{\dtext z}{\abs{z}^2} \le \iint_{\A}  \frac{\dtext z}{\abs{z}^2}  = 2 \pi \log \frac{R}{r}\]
This shows that $\Mod \A \le \Mod \Y$. Now we appeal to Theorem~\ref{exstmodY} which asserts that there exists a $\C^\infty$-diffeomorphism $H \colon \A \onto \Y$  such that
\[\mathscr E_\X [H] = \inf \{\;\mathscr E_\X [g] \;\colon\; g \in \Ho^{1,2}_{\lim} (\X, \Y) \;\} \, .\]
Furthermore, $\,H\,$ is  unique  (up to the conformal change of variables in $\X$) minimizer in $\Ho^{1,2}_{\lim} (\X, \Y)$.
It remains to show that $\mathscr E_\X [H]  = \mathscr E_\X [h]$. For this purpose we consider a diffeomorphism
\[f:=H^{-1} \circ h \,\colon\, \A \onto \A.\]
Applying Lemma~\ref{lemmaidentity}, we have
\begin{equation}\label{identitykt}
\begin{split}
\mathscr E_{\A}[H]-\mathscr E_{\mathbb A}[h] & = 4 \, {c}\iint_{\mathbb A} \left[\frac{\abs{f_z-\frac{z}{\bar z}f_{\bar z}}^2}{\abs{f_z}^2-\abs{f_{\bar z}}^2} -1 \right]
\, \frac{ \dtext z }{\abs{z}^2}\\
& + 4 \iint_{\mathbb A} \frac{(\,\abs{h_z}\,-\,\abs{h_{\bar z}}\,)^2\, \abs{f_{\bar z}}^2
}{\abs{f_z}^2-\abs{f_{\bar z}}^2}\,\dtext z
\end{split}
\end{equation}
Since
\[\frac{\abs{f_z-\frac{z}{\bar z} f_{\bar z}}^2}{\abs{f_z}^2-\abs{f_{\bar z}}^2}= K_T^f\]
by Lemma~\ref{lemKt} we conclude with the inequality
\[\iint_{\A} \frac{\abs{f_z-\frac{z}{\bar z} f_{\bar z}}^2}{\abs{f_z}^2- \abs{f_{\bar z}}^2} \frac{\dtext z}{\abs{z}^2} \ge 2\pi \log (R/r)= \iint_{\A} \frac{\dtext z}{\abs{z}^2}.\]
Furthermore, $\,f\,$ is an orientation-preserving diffeomorphism. Therefore,
\begin{equation}\label{unikt}
\mathscr E_{\A}[H]-\mathscr E_{\mathbb A}[h]  \ge  0,
\end{equation}
and so $\mathscr E_{\A}[H]=\mathscr E_{\mathbb A}[h] $.
\end{proof}
It is the case of positive Hopf-differential that, for the proof of
Theorem~\ref{thmhopf},  we need to take the solutions in the class of strong limits of homeomorphisms.

\begin{proposition}\label{propcn}
Suppose that a Hopf-differential $\,h_z \overline{h_{\bar z}} \, \dtext z \otimes \dtext z\,$, defined for $h \in \overline{\Ho}_2 (\X, \Y)\,$, is holomorphic and real positive along $\,\partial \X\,$. Then $\,h\,$ is an energy-minimal.
 \end{proposition}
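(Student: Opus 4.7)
\medskip

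The strategy is to reduce the problem to a circular annulus via a conformal change of variables, identify the open subdomain $\mathcal A \subset \X$ on which $h$ is a genuine harmonic diffeomorphism onto $\Y$, explicitly compute the "crack" contribution to the energy on $\mathcal B = \X \setminus \mathcal A$, and then compare $h$ with any smooth competitor via the integral identity of Lemma~\ref{lemmaidentity} together with the free-Lagrangian estimate of Lemma~\ref{lemKn}. To begin, I apply a conformal transformation of $\X$ onto an annulus $\A = \{r < |z| < R\}$; the Hopf-Laplace equation then becomes $h_z\overline{h_{\bar z}} = c/z^{2}$. The hypothesis that the Hopf-differential is positive along $\partial \X$ forces $c<0$ (cf.\ the dissonance noted after Theorem~\ref{thmcpo}). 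By Theorem~\ref{thmcracks}, the set $\mathcal A = h^{-1}(\Y)$ is a doubly connected subdomain separating the boundary circles of $\A$, and $h\colon \mathcal A \onto \Y$ is a harmonic diffeomorphism.

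Next, I compute the energy of $h$ on the crack set $\mathcal B = \A \setminus \mathcal A$. By Lemma~\ref{jacoprop} the Jacobian vanishes a.e.\ on $\mathcal B$, so $|h_z| = |h_{\bar z}|$ there; combined with $|h_z|\cdot|h_{\bar z}| = |c|/|z|^{2}$ this forces $|Dh|^{2} = 4|c|/|z|^{2}$ almost everywhere on $\mathcal B$. Consequently
\begin{equation*}
\mathscr E_{\A}[h] = \mathscr E_{\mathcal A}[h] + \mathscr E_{\mathcal B}[h], \qquad \mathscr E_{\mathcal B}[h] = 4|c|\iint_{\mathcal B}\frac{\dtext z}{|z|^{2}}.
\end{equation*}

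Now I turn to the comparison step. Let $H \in \Ho_{2}(\A,\Y)$ be an arbitrary competitor; by the density $\overline{\Ho}_{2}(\A,\Y) = \overline{\textit{Diff}_{\,2}}(\A,\Y)$ (see Definition~\ref{spaces}) I may assume $H$ is a $\mathscr C^{\infty}$-diffeomorphism, the general case following by strong $\W^{1,2}$-approximation and continuity of the Dirichlet energy. Then $f := H^{-1}\circ h \colon \mathcal A \onto \A$ is a $\mathscr C^{\infty}$-diffeomorphism between two doubly connected domains. Since $c<0$, the coefficient $\gamma$ in Lemma~\ref{lemmaidentity} equals $-\bar z/z$, so $|f_{z}-\gamma f_{\bar z}|^{2}/J_{f} = K_{N}^{f}$. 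Applying the identity with $\mathbb G = \mathcal A$ and $\X = \A$ gives
\begin{equation*}
\mathscr E_{\A}[H] - \mathscr E_{\mathcal A}[h] = 4|c|\iint_{\mathcal A}\bigl(K_{N}^{f} - 1\bigr)\frac{\dtext z}{|z|^{2}} + 4\iint_{\mathcal A}\frac{(|h_{z}|-|h_{\bar z}|)^{2}|f_{\bar z}|^{2}}{J_{f}}\,\dtext z.
\end{equation*}
The second integral is manifestly non-negative. For the first, Lemma~\ref{lemKn} applied to $f\colon \mathcal A \to A(r,R)$ (valid because $\mathcal A$ is a bounded doubly connected domain separating $0$ from $\infty$) yields $\iint_{\mathcal A} K_{N}^{f}\,\dtext z/|z|^{2} \ge 2\pi\log(R/r) = \iint_{\A} \dtext z/|z|^{2}$, so that
\begin{equation*}
\iint_{\mathcal A}\bigl(K_{N}^{f}-1\bigr)\frac{\dtext z}{|z|^{2}} \;\ge\; \iint_{\A}\frac{\dtext z}{|z|^{2}} - \iint_{\mathcal A}\frac{\dtext z}{|z|^{2}} \;=\; \iint_{\mathcal B}\frac{\dtext z}{|z|^{2}}.
\end{equation*}
Multiplying by $4|c|$ and using the decomposition of $\mathscr E_{\A}[h]$ above I obtain $\mathscr E_{\A}[H] \ge \mathscr E_{\mathcal A}[h] + \mathscr E_{\mathcal B}[h] = \mathscr E_{\A}[h]$, which is exactly the energy-minimality of $h$.

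The main delicacy I anticipate is the bookkeeping that matches the crack contribution $\mathscr E_{\mathcal B}[h] = 4|c|\iint_{\mathcal B}\dtext z/|z|^{2}$ with the slack in Lemma~\ref{lemKn} once the domain of integration shrinks from $\A$ to the proper subdomain $\mathcal A$; the free-Lagrangian bound has to be sharp enough on $\mathcal A$ to account for the full logarithmic modulus of $\A$, which is precisely what Lemma~\ref{lemKn} provides. A secondary point is to verify that Lemma~\ref{lemmaidentity} applies cleanly when $\mathbb G = \mathcal A$ is a proper subdomain of $\A$ (its hypotheses require only that $h$ and $H$ be orientation-preserving diffeomorphisms of finite energy on their respective domains, which holds here), and that the approximation by smooth diffeomorphisms of $H$ survives passage to the limit.
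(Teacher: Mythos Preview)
Your proposal is correct and follows essentially the same approach as the paper's proof: reduce to an annulus, identify the harmonic-diffeomorphism region $\mathcal A = h^{-1}(\Y)$, compute the crack energy on $\mathcal B$ as $4|c|\iint_{\mathcal B}|z|^{-2}\,\dtext z$, apply the integral identity of Lemma~\ref{lemmaidentity} with $f = H^{-1}\circ h$ to an arbitrary smooth diffeomorphism $H$, and close the estimate via the normal-distortion bound of Lemma~\ref{lemKn}. The only cosmetic differences are that you invoke Theorem~\ref{thmcracks} where the paper cites Partial Harmonicity (Theorem~\ref{Partial Harmonicity}), and you compute $|Dh|^{2}$ on $\mathcal B$ directly from $|h_{z}| = |h_{\bar z}|$ whereas the paper routes through $|h_{N}| = 0$ via Lemma~\ref{ctheory}; both yield the same $4|c|/|z|^{2}$.
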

\begin{proof}
 A conformal transformation of $\X$ onto an annulus $\A = \{z \colon r< \abs{z}<R\}$ induces an isometry of $\overline{\Ho}_2 (\X, \Y)$ onto $\overline{\Ho}_2 (\A, \Y)$. Thus we may  assume that $\X=\A$, so as to apply the equality
\[h_z \overline{h_{\bar z}}= \frac{c}{z^2} \qquad c \in \R \setminus \{0\} \, .\]
The assumption that $\varphi (z)\, \dtext z \otimes \dtext z$ is positive  along $\partial \A$ simply means that $c<0$.

We write $\mathbb G:=h^{-1}(\Y)$. In view of  Theorem \ref{Partial Harmonicity} the mapping $h \colon \A \to \overline{\Y}$ is a harmonic diffeomorphism  from $\mathbb G \subset \A$ onto $\Y$.
Let $H \colon \A \onto \Y$ be an orientation preserving $\mathscr C^\infty$-diffeomorphism.  We denote
\[f=H^{-1} \circ h \colon \mathbb G \onto \A.\]
Applying Lemma~\ref{lemmaidentity}, we have
\begin{equation}\label{identitykn0}
\begin{split}
\mathscr E_{\A}[H]-\mathscr E_{\mathbb G}[h] & = 4 \, \abs{c}\iint_{\mathbb G} \left[\frac{\abs{f_z+\frac{z}{\bar z}f_{\bar z}}^2}{\abs{f_z}^2-\abs{f_{\bar z}}^2} -1 \right]
\, \frac{ \dtext z }{\abs{z}^2}\\
& + 4 \iint_{\mathbb G} \frac{(\abs{h_z}\,-\,\abs{h_{\bar z}})^2\, \abs{f_{\bar z}}^2
}{\abs{f_z}^2-\abs{f_{\bar z}}^2}\,\dtext z \\
&= 4 \, \abs{c} \iint_{\mathbb G} \left[ K_N^f -1 \right] \frac{\dtext z}{\abs{z}^2} \\
& + 4 \iint_{\mathbb G} \frac{(\abs{h_z}-\abs{h_{\bar z}})^2\, \abs{f_{\bar z}}^2
}{J_f}\,\dtext z .
\end{split}
\end{equation}
Before estimating the right hand side we will show that
\begin{equation}\label{remaining}
\mathscr E_{\mathbb A}[h]= \mathscr E_{\mathbb G}[h] + 4 \, \abs{c} \iint_{\A \setminus \mathbb G} \frac{\dtext z}{\abs{z}^2}.
\end{equation}
In view of Lemma~\ref{jacoprop} $J_h=0$ in $\mathbb A \setminus \mathbb G$. Since $c<0$, by Lemma~\ref{ctheory}, $\abs{h_N}=0$ in $\mathbb A \setminus \mathbb G$. Therefore, $\abs{Dh}^2=\abs{h_T}^2$  in $\mathbb A \setminus \mathbb G$. On the other hand, in view of~\eqref{hopf2a}, we have $\abs{h_T}^2 = - 4 c \abs{z}^{-2}\,$  in $\,\mathbb A \setminus \mathbb G$. Therefore
\[\iint_{\A \setminus \mathbb G} \abs{Dh}^2 = -4\, c \int_{\A \setminus \mathbb G} \frac{\dtext}{\abs{z}^2}. \]
Combining~\eqref{identitykn0} with~\eqref{remaining} we arrive at the identity
\begin{equation}\label{givename}
\begin{split}
\mathscr E_{\A}[H]-\mathscr E_{\mathbb A}[h]  & = 4 \, \abs{c}\left[ \iint_{\mathbb G} K_N^f (z)\, \dtext z - \iint_{\A} \frac{\dtext z}{\abs{z}^2}\right]\\ & + 4 \iint_{\mathbb G} \frac{(\abs{h_z}\,-\,\abs{h_{\bar z}})^2\, \abs{f_{\bar z}}^2
}{J_f}\,\dtext z .
\end{split}
\end{equation}
According to Lemma~\ref{lemKn}
\[ \iint_{\mathbb G} K_N^f \ge 2 \pi \log \left(R/r\right) =  \iint_{\A} \frac{\dtext z}{\abs{z}^2}. \]
Therefore, if $H \colon \A \onto \Y$ is a $\mathscr C^\infty$-diffeomorphism, we can write
\begin{equation}\label{identitykn}
\mathscr E_{\A}[H]-\mathscr E_{\mathbb A}[h]  \ge  4 \iint_{\mathbb G} \frac{(\abs{h_z}-\abs{h_{\bar z}})^2\, \abs{f_{\bar z}}^2
}{J_f}\,\dtext z \ge 0.
\end{equation}
The last inequality follows from the fact that $f$ preserves the  orientation. Hence $\mathscr E_{\A}[H] \ge \mathscr E_{\mathbb A}[h]$ for an arbitrary $H\in \overline{\mathscr{H}_{2}}(\mathbb A, \mathbb Y)$, meaning that $h$ is an energy-minimal map.

\section{Uniqueness, Proof of Theorem~\ref{thmuni}}
Let $H$ and $h$ be energy-minimal mappings in $\overline{\mathscr{H}_{2}}(\mathbb X, \mathbb Y)$. According to Theorem~\ref{thmhopf} the Hopf differentials of $h$ and $H$ are analytic and real along $\partial \X$.

{\bf Case 1} If the mapping $h$ has  negative Hopf-differential along $\partial \X\,$, then the uniqueness ($\,h\,$ differs from $\,H\,$ by a conformal change of variables in $\,\X\,$) was already established in  Proposition~\ref{propcp}.

{\bf Case 2}   Suppose the mapping $\,h\,$ has  positive Hopf-differential along $\,\partial \X\,$.  We may assume that $\,\X\,$ is an annulus. Therefore,
\[
\begin{split}
h_z\overline {h_{\bar z}} &= \frac{c}{z^2} \qquad \textnormal{ for some } c<0 \\
H_z\overline {H_{\bar z}} &= \frac{c_{_H}}{z^2} \qquad \textnormal{ for some } c_{_H}\in \R.
\end{split}
\]
The reader may wish to notice that at this point we do not know (though it will be shown to be true) that the real valued constant $\,c_{_H} \,$ coincides with $\,c\,$. We also cannot assume that $\,c_{_H} < 0\,$.
Let us denote by  $\mathbb G:=h^{-1} (\Y)$ and $\mathbb G_{_H}:=H^{-1}(\Y)$. These are doubly connected domains separating the boundary circles, Lemma~\ref{double}.  In view of Theorem ~\ref{Partial Harmonicity} the mapping $\,H\,$ is a harmonic diffeomorphism from $\mathbb G_{_H}$ onto $\Y$. Recall that $\mathbb G = h^{-1}(\Y)$.  Thus $f:= H^{-1} \circ h\colon  \mathbb G \onto \mathbb G_H$ is an orientation  preserving diffeomorphism which also preserves the order of the boundary components. We denote the inverse of $\,f\,$ by $\,g =f^{-1}=h^{-1} \circ H \colon \mathbb G_H \onto \mathbb G\,$. Choose and fix a disk $\,\mathbb D \subset \overline{\mathbb D}\subset \mathbb G_H\,$. The aim is to prove that $g$ is analytic in $\mathbb D$ and, therefore, analytic in $\mathbb G_H\,$.
Since $H\in \overline{\mathscr{H}}_2(\mathbb A, \mathbb Y)= \overline{\textit{Diff}}_{_{\,2}}\,(\mathbb A \, ,\mathbb Y)$  there exists a sequence of diffeomorphisms $H_k \colon \mathbb A \onto \Y$, $\,k =1,2, \dots\,$, converging to $\,H\,$ in $\,\mathscr W^{1,2}(\A)\,$ and $\,c$-uniformly. In analogy  to $\,f\,$ and $\,g\,$ we define
\[f^k := H_k^{-1} \circ h \;\colon \mathbb G_h \onto \mathbb A \quad \mbox{ and } \quad g^k := H_k^{-1} \circ h \;\colon \mathbb A \onto \mathbb G_h . \]
Since $\,H_k \colon \overline{\mathbb D} \to \Y\,$ converge uniformly to $\,H \colon \mathbb D \to H(\overline{\mathbb D})\,$, where $\,H(\overline{\mathbb D})\,$ is a compact subset of $\,\Y\,$,  there is a neighborhood $\,\mathbb U\,$ of $\,H(\overline{\mathbb D})\,$, compactly contained in $\,\Y\,$, such that $\,H_k (\overline{\mathbb D}) \subset \mathbb U\,$
for all sufficiently large $\,k\,$, say $\,k \ge k_\circ$. Since $\,\overline{\mathbb U}\,$ is compact in $\,\Y\,$ the set $\,\mathbb F:= h^{-1} (\mathbb U)\,$ is compact in $\,\mathbb G\,$. Then we note that
\[g^k (\overline{\mathbb D})= h^{-1} \big(H_k (\overline{\mathbb D})  \big) \subset h^{-1} (\overline{\mathbb U}) = \mathbb F \quad \mbox{ for } k \ge k_\circ.\]
Furthermore $\,g^k\,$ converge uniformly to $\,g=h^{-1} \circ H \colon \overline{\mathbb D} \to h^{-1}\big( H (\overline{\mathbb D})\big) \subset \mathbb F\,$. In view of~\eqref{identitykn} we have
\[
\begin{split}
\mathscr E_{\A}[H_k]-\mathscr E_{\mathbb A}[h]  & \ge  4 \iint_{\mathbb G} \frac{(\abs{h_z}-\abs{h_{\bar z}})^2\, \abs{f^k_{\bar z}}^2
}{J_{f^k}}\,\dtext z \\
&  \ge  4 \iint_{\mathbb F} \frac{(\abs{h_z}-\abs{h_{\bar z}})^2\, \abs{f^k_{\bar z}}^2
}{J_{f^k}}\,\dtext z \\
& \ge 4 \epsilon^2 \iint_{\mathbb F} \frac{ \abs{f^k_{\bar z}}^2
}{J_{f^k}}\,\dtext z, \qquad \epsilon >0.
\end{split}
\]
Here we used the inequality
\[\left(\abs{h_z}- \abs{h_{\bar z}}  \right)^2 \ge \epsilon^2 >0 \quad \mbox{ for } z \in \mathbb F\]
because $\,h\,$ is a diffeomorphism on $\,\mathbb G\,$.
In the later integral we will make a substitution $\,z= g^k (w)\,$, where $\,w\in f^k (\mathbb F)\,$, to obtain
\[
\begin{split}
\mathscr E_{\A}[H_k]-\mathscr E_{\mathbb A}[h]   \ge 4 \epsilon^2 \iint_{f^k (\mathbb F)} \abs{g^k_{\bar w} (w)}^2  \,\dtext w \ge 4 \epsilon^2 \iint_{\mathbb D} \abs{g^k_{\bar w} (w)}^2  \,\dtext w
\end{split}
\]

Letting $\,k \to \infty$ we find that $\,g^k_{\bar w} \to 0\,$ in $\,\mathscr L^2 (\mathbb D)\,$. Since $\,g^k \to g\,$ uniformly on $\,\mathbb D\,$ we see that $\,g_{\bar w}=0\,$ on $\,\mathbb D\,$, where we recall that $\,\mathbb D\,$ was an arbitrary open disk compactly contained in $\mathbb G_H$.  Consequently, $g \colon \mathbb G_H \onto \mathbb G$ is a conformal map, so is $f \colon \mathbb G \onto \mathbb G_H\,$. This conformal map preserves the order of the boundary components, so
\[\frac{1}{2\pi i}\int_{\gamma} \frac{f'(z)\, \dtext z}{f(z)}=1.\]
where $\gamma$ can be any  smooth Jordan curve that separates the boundary components of $\mathbb G$, and is oriented counterclockwise.
Next we differentiate the equation $\,H \big(f(z) \big) =h(z)\,$ with respect to  $\,z \in \mathbb G\,$. Chain rule yields,
\[\frac{\partial H}{\partial f} f'(z) =h_z (z) \quad \mbox{ and } \quad \frac{\partial H}{\partial \overline{f}}  \overline{f'(z)}= h_{\bar z} (z). \]
These formulas together with the following two Hopf-Laplace equations
\[\frac{\partial H}{\partial f} \frac{\overline{\partial H}}{\partial \overline{f}} = \frac{c_{_H}}{f^2} \quad \mbox{ and } \quad h_z \overline{h_{\bar z}} = \frac{c}{z^2} \]
yield a differential equation for $f$,
\begin{equation}\label{kav1}
\left(\frac{z\, f'}{f}\right)^2 = \frac{c}{c_{_H}}\;; \quad \textnormal{this is a nonzero constant.}
\end{equation}
Therefore, we have a continuous branch of the square root
\[\frac{z\, f'}{f} = \alpha \quad \textnormal{a constant (real or imaginary).}
\]
On the other hand,
\[\alpha = \frac{1}{2\pi i} \int_{\gamma} \frac{\alpha \, \dtext z}{z} = \frac{1}{2\pi i}\int_{\gamma} \frac{f'(z)\, \dtext z}{f(z)}=1.\]
Therefore, equation~\eqref{kav1} reduces to  $\left(\frac{f}{z}\right)'=0$ in $\mathbb G$. Thus $f= \lambda z$ for some constant $\lambda \not= 0$. Substituting into~\eqref{kav1} we find that $c=c_{_H}$. We may, and do, assume that $\lambda$ is a real positive number, simply by rotating the $z$-variable  $\,H = H(z)\,$ if necessary. We may also assume that $0< \lambda \le 1$, because $H(z)= h (z/\lambda)$ for $z\in \mathbb G_{_H}$ and the mappings $\,H\,$ and $\,h\,$ are interchangeable. Summarizing
\[\mathbb G_{_H}= \lambda \mathbb G \subset \mathbb A, \quad 0< \lambda \le 1, \quad H(\lambda z)=h(z) \quad \mbox{for } z\in \mathbb G \quad \mbox{and} \quad c=c_{_H}.\]

Now come the key ingredients of the proof.  Consider the difference
\[F(z)=H(z)-h(z).\]
{\bf Claim.} {\it For every $0 \le \theta < 2 \pi$ we have
\begin{equation}\label{claim68976}
\lim_{\rho \searrow r} F(\rho e^{i \theta}) = 0= \lim_{\rho \nearrow r} F(\rho e^{i \theta})
\end{equation}
}
\begin{proof}[Proof of Claim] We recall that
\[H(\lambda z)= h(z) \qquad \textnormal{ for } z \in \mathbb G\]
and $\mathbb G_{_H}= \lambda \mathbb G \subset \mathbb A$ for some $0 < \lambda \le 1$. Using the same notation as in Theorem \ref{thmcracks} we recall the ray segments $\,\RR^\theta\,$ and the corresponding partitions
\[
\begin{split}
r \le r_\theta < R_\theta \le R \qquad \textnormal{ for } h \\
r \le \lambda\, r_\theta < \lambda\, R_\theta \le R \qquad \textnormal{ for } H.
\end{split}
\]
We shall only demonstrate the proof of the limit in (\ref{claim68976}) at the inner boundary, the proof for the outer boundary is similar.

The case $\lambda=1$ is obvious, because $\mathbb G = \mathbb G_H$ and $H(z)=h(z)$ for $z\in \mathbb G$. According to Remark~\ref{unirem} the values of $H$ and $h$ outside $\mathbb G$ are uniquely determined by those on $\mathbb G$, so $H \equiv h$ on $\mathbb A$. Now assume that $0 < \lambda < 1$.

{\bf Case 1}. Let  $\,r<\lambda\, r_\theta\,$, so the intervals $(r, r_\theta]$ and $(r, \lambda \,r_\theta]$ are not empty. We have, for every $r< \rho \le \lambda\, r_\theta < r_\theta$, the following equalities,
\[H(\rho \,e^{i \theta}) = H(\lambda\, r_\theta \,e^{i \theta}) = h (r_\theta\, e^{i \theta}) = h(\lambda\, r_\theta \,e^{i \theta}).\]
Therefore,
\[F(\rho \,e^{i \theta})=0 \qquad \textnormal{ for all } r< \rho \le \lambda\, r_\theta .\]

{\bf Case 2}. Let $\,r= \lambda \,r_\theta < r_\theta$. For every $\,r< \delta \le r_\theta$ we have
\[h(\delta \,e^{i \theta})= h(r_\theta\, e^{i \theta}) = \lim_{\delta \searrow r_\theta} h(\delta \,e^{i \theta}), \quad \textnormal{because } r_\theta\, e^{i \theta} \in \overline{\mathbb G} \cap \mathbb A .\]	
Since $\,\delta \,e^{i \theta} \in \mathbb G\,$ we it follows that
\[h(r_\theta \,e^{i \theta})=  \lim_{\delta \searrow r_\theta }h(\lambda\, \delta  \,e^{i \theta}) = \lim_{\rho \searrow r} H(\rho\, e^{i \theta}) .\]
Thus
\[\lim_{\rho \searrow r} F(\rho \,e^{i \theta})=0.\]
\end{proof}
The function $F=H-h \in \mathscr W^{1,2}(\mathbb A)$, admits an extension to the entire complex plane as a member of $\mathscr W^{1,2}(\mathbb C)$. In particular, $F$ restricted to almost every ray
\[\RR^\theta = \{\rho \,e^{i \theta} \colon r< \rho < R  \}, \qquad 0 \le \theta < 2 \pi\]
has an extension  as an absolutely continuous function on $\,\RR_\ast^\theta = \{\rho\, e^{i \theta} \colon r_\ast \le \rho \le R_\ast  \}\,$, where $\,0<r_\ast <r<R <R_\ast < \infty\,$. Since
\[\lim_{\rho \searrow r} F(\rho \,e^{i \theta}) = 0= \lim_{\rho \nearrow r} F(\rho \,e^{i \theta})  \]
by~\eqref{claim68976} the zero extension of $\,F \colon \mathbb A \to \C\,$ to the entire complex plane, denoted by $\,F_\circ \colon \C \to \C\,$, is absolutely continuous on almost every ray $\,\RR^\theta_\circ =\{\rho \,e^{i \theta} \colon 0 \le \rho \le R  \}\,$. Obviously, $\,F_\circ\,$ is absolutely continuous on almost every circle $\,\mathcal C_\rho =\{\rho\, e^{i \theta} \colon 0 \le \theta < 2 \pi \}\,$. In other words $\,F_\circ \in \mathscr W^{1,2}(\mathbb C)\,$ and  vanishes outside $\,\mathbb A\,$.  This yields
\[\iint_{\mathbb A} J_F = \iint_{\C} J_{F_\circ}=0.\]
Since $\,J_{F} \ge 0\,$ almost everywhere in $\,\mathbb A\,$ we conclude that $\,J_{F} =0\,$ almost everywhere in $\,\mathbb A\,$. Next for $\,z\in \mathbb G \cup \mathbb G_{_H}\,$ we have the distortion inequality
\[\abs{DF(z)}^2 \le \frac{1+k_F(z)}{1-k_F(z)} J_F (z), \quad 0 \le k_F(z) < 1.\]
Thus $\,DF =0\,$ almost everywhere on $\,\mathbb G \cup \mathbb G_{_H}\,$ which implies that $\,F= \textnormal{constant}\,$ on every component of $\,\mathbb G \cup \mathbb G_{_H}\,$. We claim that $\,\mathbb G = \mathbb G_{_H}\,$. Suppose to the contrary that $\, \mathbb G_{_H} \setminus \mathbb G \not= \emptyset\,$, so there is a disk $\,\mathbb D \subset \mathbb G_{_H} \setminus \mathbb G \,$, and $\,F= \textnormal{constant}\,$ on $\,\mathbb D\,$. Let $\,\RR \subset \mathbb D\,$ be a ray segment. Since $\,\mathbb D \subset \mathbb A \setminus \mathbb G\,$ the function $\,h\,$ is constant on $\,\RR\,$. In particular, $\,H=F+h\,$ is constant on $\,\mathbb I \subset \mathbb G_{_H}\,$, contrary to the injectivity of $\,H\,$ when restricted to $\,\mathbb G_{_H}\,$. In summary, $\,\mathbb G = \mathbb G_{_H}\,$, so $\,\lambda =1\,$ and $\,H(z)=h(z)\,$ on $\,\mathbb G\,$. This yields $\,H=h\,$ in $\,\mathbb A\,$, by Remark~\ref{unirem}.
\end{proof}

\section{How to make a square hole washer from an annulus}
To illustrate the ideas  developed above we shall take on stage the energy minimal deformations of an annulus
\[ \mathbb X = \{x_1\,+\,i\,x_2 \colon 1< x_1^2 \,+\,x_2^2 < R^2\} \]
  onto a \textit{square washer}
  $$\,\mathbb Y = \{y_1\,+\,i\,y_2 \colon 1< |y_1|\, +\,|y_2|\, , \quad y_1^2\, +\,y_2^2 < \rho^2\} \, , \qquad \rho >1  $$
The method of exploring symmetries in  this example actually applies to hexagonal nuts or many more symmetric doubly connected polygonal figures.  We shall look for the energy-minimal deformations among strong $\,\mathscr W^{1,2}$-limits  of orientation preserving homeomorphisms $\,h\colon \mathbb X \onto\mathbb Y\,$. Recall the notation $\,\overline{\mathscr H_{_2}}(\mathbb X\,,  \mathbb Y)\,$ for the class of such mappings. By virtue of Theorems \ref{Emin}\, and \,\ref{thmuni} ,  we see that

\begin{theorem}\label{washer-nut}  There is a map  $\,\hbar \in \overline{\mathscr H_{_2}}(\mathbb X\,,  \mathbb Y)\,$ (unique up to the rotation of $\,\mathbb X\,$) with least energy.
\end{theorem}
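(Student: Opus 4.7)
The plan is to obtain Theorem \ref{washer-nut} by a direct appeal to the general existence result (Theorem \ref{Emin}) and the general uniqueness result (Theorem \ref{thmuni}); the only additional work is to verify the hypotheses on the pair $(\mathbb{X},\mathbb{Y})$ and to translate the phrase ``conformal change of variables in $\mathbb{X}$'' into ``rotation of $\mathbb{X}$'' for a round annulus.

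First I would check that the hypotheses are met. The annulus $\mathbb{X}=\{1<|z|<R\}$ is bounded and doubly connected, and it is nondegenerate because each boundary component is a genuine continuum (a circle), not a puncture. The square washer $\mathbb{Y}$ is doubly connected; its outer boundary is the smooth circle $|y|=\rho$ and its inner boundary is the square $|y_1|+|y_2|=1$, which is a Lipschitz curve (every convex polygon is a Lipschitz domain in the sense described in the introduction). Hence $\mathbb{Y}$ is a bounded doubly connected Lipschitz domain. Theorem \ref{Emin} then furnishes an energy-minimal map $\hbar \in \overline{\mathscr{H}_2}(\mathbb{X},\mathbb{Y})$, which proves the existence half of the statement.

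For the uniqueness assertion I would invoke Theorem \ref{thmuni}, which tells us that any two energy-minimal mappings in $\overline{\mathscr{H}_2}(\mathbb{X},\mathbb{Y})$ differ by a conformal change of variables in $\mathbb{X}$. It therefore suffices to identify the admissible group of such changes. The orientation-preserving conformal automorphisms of the round annulus $\{1<|z|<R\}$ are exhausted by the rotations $z\mapsto e^{i\theta}z$, which fix each boundary circle setwise, together with the maps $z\mapsto e^{i\theta}R/z$, which swap the two boundary components. Because the class $\mathscr{H}_2(\mathbb{X},\mathbb{Y})$ (and hence its strong closure $\overline{\mathscr{H}_2}(\mathbb{X},\mathbb{Y})$) requires preservation of the \emph{order} of boundary components, the inversion-type automorphisms are excluded, and only the rotations remain. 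This yields the claimed uniqueness up to rotation of $\mathbb{X}$.

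There is no genuine obstacle to this argument: the theorem is a bundled corollary of Theorems \ref{Emin} and \ref{thmuni}, and everything else reduces to the elementary verifications above. The real work of this section will come afterwards, when one exploits the four-fold dihedral symmetry of $(\mathbb{X},\mathbb{Y})$ together with Theorem \ref{thmhopf} (analyticity of the Hopf differential and its reality along $\partial\mathbb{X}$) to pin down the analytic form of $\hbar$ and to decide, as a function of $R$ and $\rho$, whether cracks emanate from the four corners of the inner diamond.
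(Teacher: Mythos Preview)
Your proposal is correct and follows exactly the approach taken in the paper, which simply states that Theorem~\ref{washer-nut} follows ``by virtue of Theorems~\ref{Emin} and~\ref{thmuni}.'' Your explicit identification of the orientation-preserving, order-preserving conformal automorphisms of the round annulus with rotations is a welcome clarification of the ``up to rotation'' clause, and is correct.
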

 The geometric features of the energy-minimal mapping $\,\hbar\,$ depend on  the outer radius of the annulus $\,\mathbb X\,$, so we vary it while keeping the target $\,\mathbb Y\,$ unchanged.
\begin{theorem}\label{critical radii}
There are two \textit{critical outer radii} $\, 1< R_\circ \leqslant R^\circ\,$ such that:

\flushleft
\begin{enumerate}

\item[1.] For $\,1 < R \leqslant R_\circ\,$, the energy-minimal map $\, \hbar\,$ is a harmonic diffeomorphism of $\,\mathbb X\,$ onto $\,\mathbb Y \,$; no cracks occur in $\mathbb X\,$.
\item[2.] For $\,R \geqslant R^\circ\,$,   $\,\hbar\,$  fails to be injective; cracks are unavoidable.
  \end{enumerate}
For all $\,R > R^\circ\,$, the energy-minimal deformation has the following properties:
\begin{enumerate}
\item[3.]  There are four straight line segments (possibly points in $\,\partial \mathbb X_I\,$)
  $\,\textbf{I},\,i \textbf{I},\, - \textbf{I} ,\, -i \textbf{I}\,$, \;\;where\;\; $\;\,\textbf{I} = [1 ,\,a ] \subset \mathbb R\,\;, 1 \leqslant  a < R \,$, such that\\
  \begin{itemize}
\item
     $\hbar(\textbf{I}) = \{1\},\,\;\;\hbar(i\textbf{I}) = \{i\},\,\;\; \hbar(-\textbf{I}) = \{-1\}\,\; and \,\;\;\hbar(-i\textbf{I}) = \{-i\}\,$
\item The set $\,\mathbb X'\deff \mathbb X \setminus [\,\textbf{I}\,\cup\,i \textbf{I}\,\cup\, - \textbf{I} \,\cup\, -i \textbf{I} \;]\,$ is a doubly connected domain and $\,\hbar \colon \mathbb X'  \onto \mathbb Y\,$ is a harmonic diffeomorphism.
\item  The map   $\,\hbar\,$ is invariant under the rotation by the right angle;  $\,\hbar(iz) = i \hbar(z)\,$ for all $\,z \in \mathbb X\,$.
\end{itemize}
\end{enumerate}
\end{theorem}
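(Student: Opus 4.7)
The plan is to combine Theorems \ref{Emin}, \ref{thmuni}, \ref{thmhopf}, \ref{thmcpo}, \ref{thmcracks}, \ref{nocracks} and \ref{exstmodY} with the $D_4$ symmetry of $\Y$. Theorem~\ref{washer-nut} itself is immediate from Theorems~\ref{Emin} and~\ref{thmuni}, since $\Y$ is a bounded doubly connected Lipschitz domain. For the symmetry $\hbar(iz) = i\hbar(z)$, I would observe that $i\Y = \Y$, so $T\hbar(z) := -i\hbar(iz)$ again belongs to $\overline{\mathscr{H}_{_2}}(\X,\Y)$ and is energy-minimal. Theorem~\ref{thmuni} then forces $T\hbar = \hbar\circ R_\gamma$ for a rotation $R_\gamma(z)=e^{i\gamma}z$ of the annulus, and the identity $T^4=\mathrm{id}$ restricts $\gamma$ to $\{0,\pi/2,\pi,3\pi/2\}$. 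Computing $T^2\hbar(z)=-\hbar(-z)=\hbar(R_{2\gamma}z)$ rules out $\gamma=\pm\pi/2$; and $\gamma=\pi$ would give $\hbar(iz)=-i\hbar(z)$, which makes the induced boundary correspondence $\partial\X_I \rightsquigarrow \partial\Y_I$ intertwine rotation by $+\pi/2$ with rotation by $-\pi/2$, hence have degree $-1$---incompatible with being a $c$-uniform limit of orientation-preserving homeomorphisms. Thus $\gamma = 0$.

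Next, by Theorem~\ref{thmhopf} and Proposition~\ref{HopfAnnulus}, the Hopf differential of $\hbar$ has the form $cz^{-2}\,\dtext z\otimes\dtext z$ for some $c \in \R$. When $c>0$, Theorem~\ref{thmcpo} asserts that $\hbar$ is a harmonic diffeomorphism; $c=0$ forces $\hbar$ to be holomorphic, hence conformal (which requires $\Mod\X=\Mod\Y$); and $c<0$ triggers the crack structure of Theorem~\ref{thmcracks}. The outer boundary $\partial\Y_O$ is a convex circle, so Theorem~\ref{nocracks} yields $\RR^\theta_O=\emptyset$ for every $\theta$; the inner boundary $\partial\Y_I$ is the diamond, convex except at the corners $\{1,i,-1,-i\}$, so every nonempty $\RR^\theta_I$ must map to one of these four corners.

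Set $R_\circ := \sup\{R>1 : \hbar_R \text{ is a diffeomorphism}\}$ and $R^\circ := \inf\{R>1 : \hbar_R \text{ has cracks}\}$. Theorem~\ref{exstmodY} applied in both directions yields $1<R_\circ \le R^\circ<\infty$. Indeed, when $R$ is close enough to $1$ we have $\Mod\X\le\Mod\Y$, so a smooth diffeomorphism minimizer exists and, by Theorem~\ref{thmuni}, coincides with $\hbar_R$; hence $R_\circ>1$. For $R$ so large that $\Mod\Y$ is sufficiently small relative to $\Mod\X$, the second half of Theorem~\ref{exstmodY} forbids any smooth diffeomorphism from being energy-minimal; combined with Proposition~\ref{Partialharmonicity} (which would make an injective $\hbar_R$ automatically a smooth harmonic diffeomorphism of $\X$ onto $\Y$), this forces $\hbar_R(\X)\cap\partial\Y\neq\emptyset$, i.e.~cracks. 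So $R^\circ<\infty$ and $R_\circ\le R^\circ$ by definition.

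For $R>R^\circ$ the cracks are pinned down as claimed. Lemma~\ref{lemycirc} forces every connected component of $\hbar^{-1}(a)$, $a\in\{1,i,-1,-i\}$, to lie on a single ray $\RR^\theta$. Exactly one component per corner follows from a topological obstruction: if two disjoint cracks $\RR^{\theta_1}_I,\RR^{\theta_2}_I$ both mapped to the same corner $a$, then the simply connected set $B(a,\epsilon)\cap\Y$, pulled back through the diffeomorphism $\hbar\colon \hbar^{-1}(\Y)\onto\Y$ supplied by Proposition~\ref{Partialharmonicity}, would be a connected open neighborhood of the two disjoint rays---impossible for small $\epsilon$. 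So there are exactly four cracks, cyclically permuted by $\hbar(iz)=i\hbar(z)$; after a rotation of $\X$ we may take one to be $\mathbf{I}=[1,a]$ with $a<R$ (since $\RR^0_O=\emptyset$), and the remaining three are $i\mathbf{I},-\mathbf{I},-i\mathbf{I}$, mapping to $i,-1,-i$. Removing the four radial segments preserves double connectivity since none reaches $\partial\X_O$, and Proposition~\ref{Partialharmonicity} then yields that $\hbar\colon \X'\onto\Y$ is a harmonic diffeomorphism. The two delicate points to watch are the boundary-degree argument ruling out $\gamma=\pi$ in Step~1 and the \emph{one-crack-per-corner} topological step in Step~4; everything else assembles directly from the machinery already developed.
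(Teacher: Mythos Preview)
Your deductive approach---derive the $\pi/2$-rotational symmetry from uniqueness, then invoke Theorems~\ref{thmcracks} and~\ref{nocracks} to constrain the cracks---is a reasonable alternative to the paper's route, and the symmetry step essentially works (the degree argument excluding $\gamma=\pi$ is correct once you note that the monotone boundary map $\hbar\colon\partial\X_I\to\partial\Y_I$ inherits degree $+1$ from the approximating orientation-preserving homeomorphisms, while $\hbar(iz)=-i\hbar(z)$ would force degree $\equiv -1\pmod 4$).

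The genuine gap is your \emph{one-crack-per-corner} step. Your claim that $U_\epsilon=\hbar^{-1}(B(a,\epsilon)\cap\Y)$ being a ``connected open neighborhood of the two disjoint rays'' is impossible does not hold. The set $U_\epsilon$ lies in $\mathcal A=\hbar^{-1}(\Y)$ and does \emph{not} contain the cracks; it only accumulates on them. More importantly, by monotonicity of the boundary map $\hbar\colon\partial\X_I\to\partial\Y_I$, the arc of $\partial\X_I$ between the two crack feet $e^{i\theta_1},e^{i\theta_2}$ also maps to $a$, and $U_\epsilon$ accumulates on that entire arc as well. So $\overline{U_\epsilon}$ is a connected set containing both cracks \emph{and} the connecting boundary arc---there is no topological obstruction. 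Nothing in Theorems~\ref{thmcracks} or~\ref{nocracks} rules out several radial cracks (at a measure-zero set of angles, by Proposition~\ref{lily}) all collapsing to the same corner.

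The paper avoids this difficulty altogether by working constructively: it solves the energy-minimization problem for \emph{quadrilateral} maps from the first-quadrant sector $\mathbb S\subset\X$ onto the first-quadrant piece $\mathbb T\subset\Y$. Since $\mathbb T$ is convex, Rad\'o--Kneser--Choquet produces a harmonic \emph{diffeomorphism} $h\colon\mathbb S\onto\mathbb T$; its Hopf differential is $c\,z^{-2}$ by Proposition~\ref{HopfAnnulus}. Reflecting across the two straight sides of $\mathbb S$ assembles $\hslash$ on all of $\X$ with the same Hopf differential, hence energy-minimal by Theorem~\ref{thmhopf}. By construction $\hslash$ is a diffeomorphism on each open sector, so cracks can only lie on the four separating rays---and that is exactly the information your argument is missing.
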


\begin{center}\begin{figure}[h]
\includegraphics[width=0.8\textwidth]{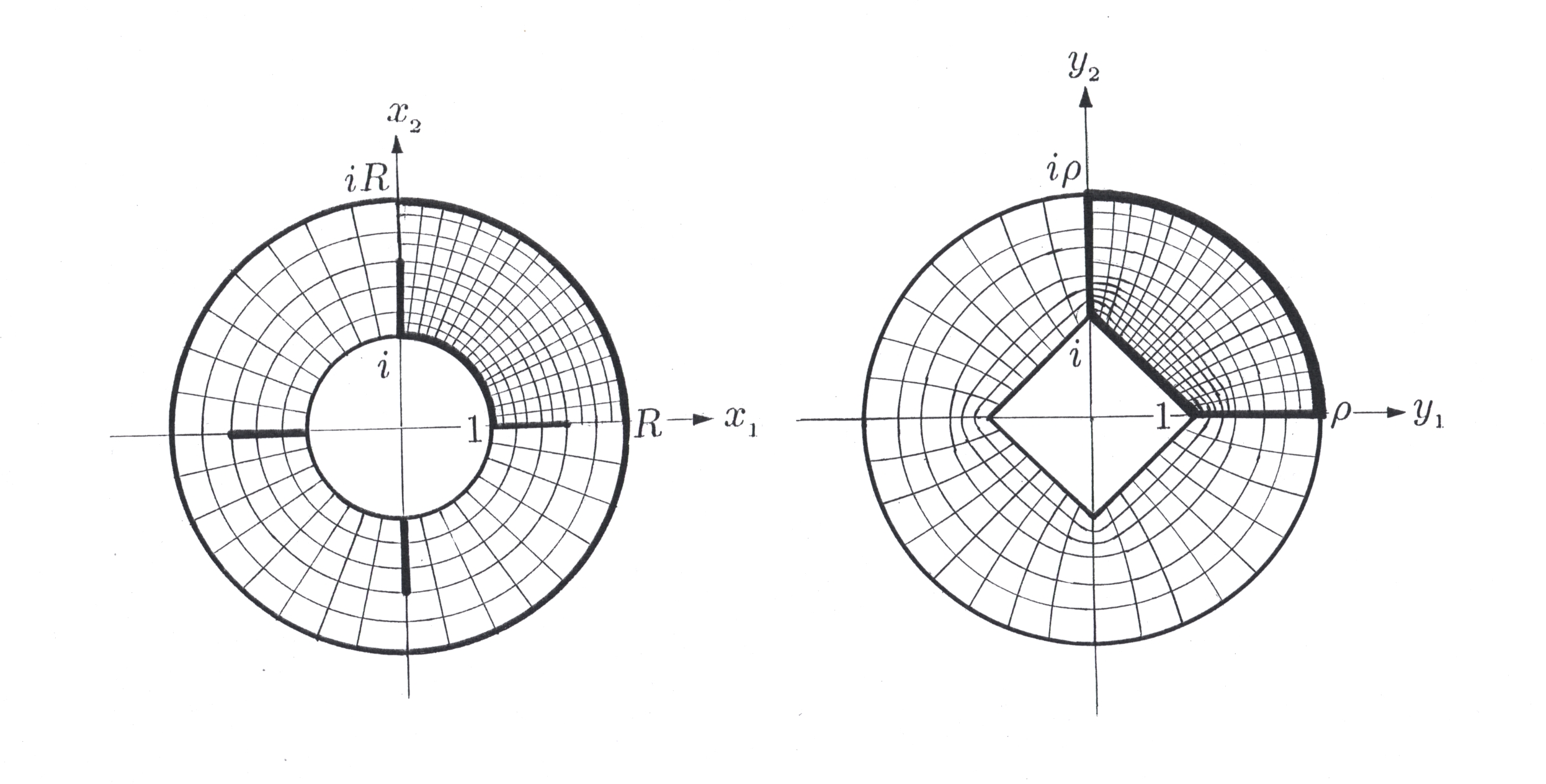} \caption{How to make a square hole washer from an annulus.}\label{fig1}
\end{figure}\end{center}

 An interpretation of this result is that in order to make a square hole washer from a thick annulus one must make cuts  in the annulus. The energy-minimal deformation takes those cracks into the corners of the inner boundary  of $\,\mathbb Y \,$, exactly  where it fails to be convex.

 \begin{theorem}\label{globLip}
In spite of the presence of cracks in case $\,R \geqslant R^\circ\,$, the energy-minimal map is Lipschitz continuous up to the closure  $\,\overline{\mathbb X}\,$.
\end{theorem}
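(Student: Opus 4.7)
The subcritical case $R\leqslant R_\circ$ is straightforward: $\hbar$ is a harmonic diffeomorphism of $\mathbb X$ onto the Lipschitz domain $\mathbb Y$, so standard boundary regularity for harmonic maps, combined with Schwarz reflection across the straight sides of the square at the convex corners $\pm 1,\pm i$, delivers the global Lipschitz bound. In what follows I therefore concentrate on the supercritical regime $R\geqslant R^\circ$ where cracks are present. By Theorem~\ref{thmhopf} applied on the annulus $\mathbb X=\{1<|z|<R\}$, the Hopf differential takes the form $\hbar_z\overline{\hbar_{\bar z}}=c/z^2$ for some $c\in\mathbb R$, and Theorem~\ref{thmcracks} forces $c<0$. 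Since $|z|\geqslant 1$, this yields the universal pointwise bound $|\hbar_z|\,|\hbar_{\bar z}|\leqslant|c|$ on $\overline{\mathbb X}$, and the admissible-solution theory referenced in \S\ref{bizarre} already supplies interior Lipschitz regularity on compacta, with constant depending only on $|c|$ and the Dirichlet energy.

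The proof now reduces to a boundary estimate. Since $\hbar$ is harmonic on $\mathbb X'=\mathbb X\setminus(\textbf{I}\cup i\textbf{I}\cup(-\textbf{I})\cup(-i\textbf{I}))$, the Wirtinger derivative $\hbar_z$ is holomorphic and $\hbar_{\bar z}$ is antiholomorphic there, so both $|\hbar_z|^2$ and $|\hbar_{\bar z}|^2$ are subharmonic on $\mathbb X'$; the maximum principle therefore reduces the problem to a pointwise bound on the entire boundary $\partial\mathbb X'$. Along each crack the constancy $\hbar\equiv 1$ kills the tangential derivative of $\hbar$, which combined with the Hopf equation forces $\hbar_z=-\hbar_{\bar z}$ and hence $|\hbar_z|=|\hbar_{\bar z}|=\sqrt{|c|}/|z|\leqslant\sqrt{|c|}$ on both sides of $\textbf{I}$. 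On the outer circle $\{|z|=R\}$ and on each open arc of the inner circle between two consecutive crack endpoints, a double Schwarz reflection---through the boundary circle in the source and through the corresponding circular or straight piece of $\partial\mathbb Y$ in the target---extends $\hbar$ harmonically past the boundary, giving $C^\infty$-smoothness and hence a pointwise bound on $|D\hbar|$ there.

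Only the eight exceptional points $\{\pm 1,\pm i,\pm a,\pm i a\}$ remain. Using the $4$-fold symmetry $\hbar(iz)=i\hbar(z)$ together with the reflection symmetry $\hbar(\bar z)=\overline{\hbar(z)}$ (itself a consequence of Theorem~\ref{thmuni} applied to the competitor $z\mapsto\overline{\hbar(\bar z)}$, after the normalization $\hbar(1)=1$), this reduces to two representative points on the positive real axis, $z=1$ and $z=a$. At the corner $z=1$ one combines a conformal straightening of the unit circle with a conformal unfolding of the slit (standard Schwarz--Christoffel), then reflects the target through the two adjacent sides of the square meeting at $1$; this realizes $\hbar$ locally as a harmonic map on a Jordan domain with piecewise smooth Lipschitz boundary data, where $C^{0,1}$ regularity at a convex boundary corner of the target is classical. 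At the interior crack tip $z=a$ I would pull back through the analytic square root $w=\sqrt{z-a}$ with branch cut along $[1,a]$: the composition $\hat\hbar(w):=\hbar(w^2+a)$ is harmonic in a half-disk around $w=0$, identically equal to $1$ on the imaginary diameter, and odd reflection across that diameter extends $\hat\hbar-1$ to a smooth harmonic function in a full disk about $w=0$. The chain rule $\partial_w\hat\hbar=2w\,\hbar_z(w^2+a)$ forces $D\hat\hbar(0)=0$, so the Taylor expansion of $\hat\hbar-1$ starts at second order in $w$, which is first order in $z-a=w^2$; each real component of the leading quadratic term is then a linear combination of $|z-a|$, $\re(z-a)$ and $\im(z-a)$, all globally Lipschitz in $z$.

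The main obstacle is precisely this last analysis at $z=a$: $\hbar$ is not a local diffeomorphism in any neighborhood of the interior crack tip (its Jacobian vanishes along the approaching slit), so no off-the-shelf elliptic boundary theory applies, and a careless use of the change of variable $w=\sqrt{z-a}$ would introduce a spurious H\"older $1/2$ loss. What rescues the Lipschitz bound is the automatic second-order vanishing of $\hat\hbar-1$ at $w=0$ built into the chain rule, which exactly cancels the quadratic degeneration of the substitution $z-a=w^2$.
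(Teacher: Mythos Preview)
Your reduction via subharmonicity of $|\hbar_z|^2$, $|\hbar_{\bar z}|^2$ on $\mathbb X'$ is attractive, but the boundary analysis breaks at two places. On the outer circle $\{|z|=R\}$ the corresponding target piece is the circle $\{|y|=\rho\}$, and reflection through a circle is the inversion $w\mapsto\rho^2/\bar w$, which is \emph{not} affine. Hence the reflected map $z\mapsto\rho^2/\overline{\hbar(R^2/\bar z)}$ is \emph{not} harmonic (for a merely harmonic $h$, the function $1/\bar h$ fails to be harmonic), so your claim that the double reflection ``extends $\hbar$ harmonically past the boundary'' is false there, and the maximum-principle reduction collapses. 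At the corners $z=\pm1,\pm i$ the argument is too vague to constitute a proof, and contains an error of sign: the vertex $1$ of the inner square is a \emph{concave} corner of $\mathbb Y$ (this concavity is precisely what triggers the crack), so ``$C^{0,1}$ regularity at a convex boundary corner of the target'' does not apply. You also do not track how the Jacobian of your conformal ``straightening $+$ unfolding'' interacts with whatever boundary regularity you obtain for $\hbar\circ\phi^{-1}$. Finally, your crack-tip analysis at $z=a$ is superfluous (the point $a$ is interior to $\mathbb X$, where the admissible-solution theory already gives the Lipschitz bound), and the chain-rule step ``forces $D\hat\hbar(0)=0$'' tacitly uses that very interior bound on $\hbar_z$.

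The paper takes a different route that sidesteps harmonicity altogether. The key tool is Theorem~\ref{LipHopf2}: a $\mathscr W^{1,2}$-map with nonnegative Jacobian and \emph{bounded, H\"older-continuous Hopf product} is locally Lipschitz. The reflections used---$\hbar(z)=\rho^2/\overline{\hbar(R^2/\bar z)}$ across $|z|=R$, and near $z=1$ two separate affine target-reflections across the adjacent sides of the square (one for $\arg z>0$, one for $\arg z<0$)---are chosen so that the extended map stays in $\mathscr W^{1,2}$, keeps $J\geqslant0$, and has Hopf product equal to $c/z^2$ (near the corners) or to $c\rho^4|\hbar(R^2/\bar z)|^{-4}z^{-2}$ (across the outer circle). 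Neither extension is harmonic, but both have bounded H\"older Hopf product, and Theorem~\ref{LipHopf2} then yields the Lipschitz bound. The moral is that the structure surviving the reflections is the Hopf--Laplace equation, not the Laplace equation.
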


\subsection{Proof of Theorem \ref{critical radii}}
\begin{proof}
\textit{Conditions 1}   and \textit{2} are immediate from Theorem  \ref{exstmodY}. In the proof of \textit{Condition 3} we shall explore the symmetries in both the domain $\X$ and the target $\Y$. We shall first construct an energy-minimal harmonic diffeomorphism of the first quadrant sector
\[\mathbb S = \{\rho e^{i \theta} \colon 1< \rho < R \, , \quad 0< \theta < \pi/2\}\]
onto the first quadrant of the square hole washer,
\[\mathbb T = \{\,y_1 \,+\, i\,y_2 \;\colon\, 1< y_1 \,+\, y_2 \;\;,\;\;\; y_1^2 + y_2^2 < \rho^2 \, , \quad y_1>0 , \; y_2 >0\,\} \, . \]
We view $\mathbb S$ and $\mathbb T$ as quadrilaterals with corners.
\[
1,\, R,\, iR,\, i\, \in \partial \mathbb S \qquad \textnormal{and} \qquad
1,\, \rho,\, i\rho, \,i \,\in \partial \mathbb T  \,, \;\;\textnormal{respectively}.
\]

Consider the class $\,\mathbb Q_2 (\mathbb S , \mathbb T)\,$ of all quadrilateral mappings from $\,\mathbb S\,$ onto $\,\mathbb T\,$. These are homeomorphisms $\,f \colon \mathbb S \onto \mathbb T\,$ which extend continuously to $\,f \colon \overline{\mathbb S} \onto \overline{\mathbb T}\,$ and take the corners of $\,\mathbb S\,$ into the corners of $\,\mathbb T\,$ in the following order
\[f(1)=1\, , \quad f(R)=\rho \, , \quad f(iR)=i\rho \, , \quad f(i)=i \, .\]
\begin{lemma}\label{lem34}
There exists $h \in \mathbb Q_2 (\mathbb S, \mathbb T)$ of smallest energy.
\end{lemma}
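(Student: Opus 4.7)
The plan is to apply the direct method of the calculus of variations and then upgrade the weak limit to a genuine quadrilateral homeomorphism via harmonic replacement. The key geometric fact making this possible is that $\mathbb T$ is a convex Jordan domain, being the intersection of the first quadrant with the disk $\{y_1^2+y_2^2<\rho^2\}$ and the half-plane $\{y_1+y_2>1\}$. First, the infimum is finite: composing the canonical conformal equivalences of $\mathbb S$ and $\mathbb T$ with their representative rectangles, and gluing via a bi-Lipschitz rectangle-to-rectangle map, produces an explicit element of $\mathbb Q_2(\mathbb S,\mathbb T)$ of finite Dirichlet energy. Now let $\{f_j\}\subset \mathbb Q_2(\mathbb S,\mathbb T)$ be a minimizing sequence, hence bounded in $\mathscr W^{1,2}(\mathbb S,\mathbb C)$. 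Since each $f_j$ pins the four corners of $\mathbb S$ to those of $\mathbb T$, the equicontinuity estimate in the form sketched in Remark \ref{normalization} for simply connected Lipschitz domains with three fixed boundary points (Lemma \ref{Equicontinuity} adapted) delivers equicontinuity up to $\overline{\mathbb S}$. Passing to a subsequence, $f_j\to f$ uniformly on $\overline{\mathbb S}$ and weakly in $\mathscr W^{1,2}(\mathbb S,\mathbb C)$, and weak lower semicontinuity yields $\mathscr E_{\mathbb S}[f]\le \inf$.

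Next I identify the structure of $f$ and produce a diffeomorphic minimizer. By the Kuratowski-Lacher theorem (Theorem \ref{Kuratowski-Lacher}), applied first on $\overline{\mathbb S}$ and then on $\partial \mathbb S$, the limit $f\colon \overline{\mathbb S}\onto \overline{\mathbb T}$ is continuous and monotone, and its boundary restriction $f|_{\partial \mathbb S}\colon \partial\mathbb S\onto\partial\mathbb T$ is also continuous and monotone. Uniform convergence preserves the pointwise corner conditions $f_j(1)=1$, $f_j(R)=\rho$, $f_j(iR)=i\rho$, $f_j(i)=i$, so monotonicity plus corner correspondence forces $f|_{\partial \mathbb S}$ to wrap each open side of $\partial \mathbb S$ monotonely onto the corresponding open side of $\partial \mathbb T$. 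Let $h\colon \mathbb S\to\mathbb C$ be the harmonic function with boundary values $f|_{\partial\mathbb S}$. Since $f|_{\partial \mathbb S}$ is a continuous monotone surjection onto the boundary of the convex Jordan domain $\mathbb T$, Theorem \ref{RaKnCh} (Rad\'{o}-Kneser-Choquet) guarantees that $h$ is a $\mathscr C^\infty$-diffeomorphism of $\mathbb S$ onto $\mathbb T$; since $h$ shares its boundary trace with $f$, it carries corners to corners in the prescribed order, so $h\in \mathbb Q_2(\mathbb S,\mathbb T)$. The Dirichlet principle then gives $\mathscr E_{\mathbb S}[h]\le \mathscr E_{\mathbb S}[f]\le \inf$, forcing equality and identifying $h$ as the sought energy-minimal quadrilateral map.

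The main obstacle is that weak $\mathscr W^{1,2}$-limits of homeomorphisms typically fail to be injective, so the direct method alone does not land the minimizer inside the class $\mathbb Q_2(\mathbb S,\mathbb T)$ of genuine quadrilateral homeomorphisms. The convexity of $\mathbb T$ is the precise hypothesis that allows Rad\'{o}-Kneser-Choquet to restore injectivity via harmonic replacement of the boundary values, converting the generalized monotone limit into a bona fide quadrilateral diffeomorphism of smallest energy.
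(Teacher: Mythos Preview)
Your proposal is correct and follows essentially the same approach as the paper: minimizing sequence, uniform convergence via the equicontinuity in Remark~\ref{normalization}, monotonicity of the boundary limit, harmonic extension via Rad\'o--Kneser--Choquet (using that $\mathbb T$ is convex), and the Dirichlet principle to conclude. Your version is slightly more detailed in that you explicitly verify the infimum is finite, name Kuratowski--Lacher for monotonicity, and spell out why the corner correspondence persists, whereas the paper's proof leaves these as routine observations.
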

\begin{remark}
The interested reader may wish to notice that this lemma and its proof given below actually hold for every pair of quadrilaterals in which the target is convex.
\end{remark}
\begin{proof}[Proof of Lemma~\ref{lem34}]
Let $\{f_j\}_{j=1,2, \dots}$ denote the minimizing sequence of quadrilateral mappings $f_j \colon \overline{\mathbb S} \onto \overline{\mathbb T}\,$, so
\[\mathscr E_{_{\mathbb S}} [f_j] \to \inf \{\;\mathscr E_{_{\mathbb S}}[f]\; \colon \;\;f \in \mathbb Q_2 (\mathbb S , \mathbb T)\;\}\, \deff \,m \, . \]
We assume that $\{f_j\}$ converges weakly in $\W^{1,2} (\mathbb S)$ to some mapping $f \in \W^{1,2} (\mathbb S)\,$ and observe that $\,f_j \colon \overline{\mathbb S} \onto \overline{\mathbb T}\,$ also converge uniformly to $\,f \colon \overline{\mathbb S} \to \overline{\mathbb T}\,$, see Remark \ref{normalization}.

The boundary map $\,f \colon \partial \mathbb S \onto \partial \mathbb T\,$, being a uniform limit of homeomorphisms $\,f_j \colon \partial \mathbb S \onto \partial \mathbb T\,$, is monotone. At this point we appeal to a generalized variant of Theorem
 of Rad\'o-Kneser-Choquet~\cite{Duren}, see Theorem \ref{RaKnCh},  which ensures that the harmonic extension of $f \colon \partial \mathbb S \onto \partial \mathbb T$, denoted by $h \colon \mathbb S \onto \mathbb T$ is a diffeomorphism. Certainly $h$ is a quadrilateral map, i.e. $h \in \mathbb Q_2 (\mathbb S , \mathbb T)$. We shall now see that $h$ is an energy-minimal map. Indeed,  by the Dirichlet energy principle and because $\,f\,$ is a weak $\W^{1,2}\,$ -limit of $f_j$, the energy of $h$ is estimated by
\[\mathscr E_\X [h] \le \mathscr E_{\X}[f] \le \liminf \mathscr E_{\X}[f_j]=m \, .\]
But $\,m\,$ was the infimum energy in the class $\,\mathbb Q_2 (\mathbb S , \mathbb T)\,$, so $\,\mathscr E_{\X}[h]=m\,$. Thus $\,h \in \mathbb Q_2 (\mathbb S , \mathbb T)\,$ is the energy-minimal map.
\end{proof}
The reader may see that the sequence $\,f_j\,$ actually converges strongly in $\W^{1,2} (\mathbb S)$ and that $f=h$ in $\mathbb S$.
\begin{lemma}
The energy-minimal map $h \colon \mathbb S \onto \mathbb T$ satisfies the Hopf-Laplace equation
\begin{equation}\label{hl5}
h_z \overline{h_{\bar z}}= \frac{c}{z^2} \qquad \textnormal{for some } c \in \R \, .
\end{equation}
\end{lemma}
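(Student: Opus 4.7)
The plan is to extract the Hopf--Laplace equation by testing the energy of $h$ against inner variations of the independent variable that respect the quadrilateral structure of $\mathbb S$, and then to invoke the quadrilateral case of Proposition~\ref{HopfAnnulus}.

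First I would compose $h$ with the explicit one-parameter families of diffeomorphisms $\Psi^\varepsilon \colon \mathbb S \onto \mathbb S$ exhibited in the proof of Proposition~\ref{HopfAnnulus} for annular sectors --- those which keep each of the four corners $1, R, iR, i$ fixed while allowing slipping along the circular arcs and along the radial sides. Since each $\Psi^\varepsilon$ is $K$-quasiconformal with $K$ bounded independently of small $\varepsilon$, the composition $h \circ \Psi^\varepsilon$ lies in $\mathscr W^{1,2}(\mathbb S, \mathbb T)$ and is a homeomorphism sending the corners of $\mathbb S$ onto the corresponding corners of $\mathbb T$. Thus $h \circ \Psi^\varepsilon \in \mathbb Q_2(\mathbb S, \mathbb T)$, and the energy-minimality of $h$ given by Lemma~\ref{lem34} implies that $\varepsilon \mapsto \mathscr E_{\mathbb S}[h \circ \Psi^\varepsilon]$ attains its minimum at $\varepsilon = 0$. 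Differentiating through the decomposition of $|D(h \circ \Psi^\varepsilon)|^2$ via the Cauchy--Green tensor of $h$ and the Ahlfors infinitesimal deformation operator of $\lambda := \frac{\dtext}{\dtext \varepsilon}\big|_{\varepsilon = 0}\Psi^\varepsilon$, exactly as in Lemma~\ref{WeakHL}, produces the weak variational identity
\begin{equation*}
\re \iint_{\mathbb S} \overline{h_{\bar z}}\, h_z\, \lambda_{\bar z}\, \dtext z \;=\; 0
\end{equation*}
for every admissible infinitesimal variation $\lambda$.

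Next, after conformally transferring $\mathbb S$ onto an upper half annulus by a power map (as in the opening step of the proof of Proposition~\ref{HopfAnnulus}), I would run the Laurent-expansion argument of that proposition: the Schwarz reflection of $\varphi := h_z \overline{h_{\bar z}}$ across the real slits, combined with Weyl's lemma in the weak form furnished by the admissible test fields $\lambda(z) = z \bar z^{\,k} - z^{\,k+1}$, forces $\varphi$ to be analytic in $\mathbb S$ and makes every Laurent coefficient vanish except the one multiplying $z^{-2}$. Testing, finally, against the logarithmic variation with $\lambda_{\bar z} = i z/\bar z$ forces this surviving coefficient to be real. This delivers $h_z \overline{h_{\bar z}} = c/z^2$ with $c \in \mathbb R$, as claimed.

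The main obstacle is verifying that the explicit variations enumerated in Proposition~\ref{HopfAnnulus} transfer intact to the present geometric setting, i.e.\ that under the power conformal identification of $\mathbb S$ with a half annulus they genuinely pin the four corners, preserve the sector as a set, and slip only along the four sides. Once that bookkeeping is in place --- it is immediate from the explicit formulas for $\Psi^\varepsilon$ --- no further analytic input is needed beyond what is already developed in \S\ref{InnerVar}.
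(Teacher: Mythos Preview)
Your proposal is correct and follows essentially the same route as the paper: the paper's proof is the one-line observation that $h$, being energy-minimal in $\mathbb Q_2(\mathbb S,\mathbb T)$, is a critical point for all quadrilateral variations in $\mathbb S$, whence Proposition~\ref{HopfAnnulus} applies directly. You have simply unpacked the content of that proposition (the reflection, Weyl's lemma, and the Laurent-coefficient testing) rather than citing it; note in particular that since $\mathbb S=\mathbb S_0^{\pi/2}(1,R)$ is already an annular sector, no ``transfer'' of the variations is needed---Proposition~\ref{HopfAnnulus} is stated for exactly such domains.
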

\begin{proof}
This is immediate from Proposition  \ref{HopfAnnulus}, because $\, h\,$ is a critical point for all quadrilateral variations in $\,\mathbb S\,$.
\end{proof}

Our next step is to extend $\,h \colon \overline{\mathbb S} \to \overline{\mathbb T}\,$ to the whole annulus $\,\X\,$. First we reflect $\mathbb S$ about its horizontal side $\,(1,R) \subset \partial \mathbb S\,$,  to obtain right-half of the annulus
\[\{z \colon 1 < \abs{z}<R \, , \quad \re z >0\}\, , \]
and then about the imaginary axis to obtain the whole annulus $\X$. These reflections, when applied to $\,h\,$, give rise to a mapping, denoted by $\,\hslash \colon \overline{\X} \onto \overline{\Y}\,$, which satisfies the Hopf-Laplace equation
\begin{equation}\label{Hopfagain}
\hslash_z \overline{\hslash_{\bar z}}= \frac{c}{z^2} \qquad \textnormal{in } \X \,
\end{equation}
Recall that $\,\hslash \colon \overline{\mathbb S} \onto \overline{\mathbb T}\,$ was obtained as a limit of homeomorphisms $f_j \colon \overline{\mathbb S} \onto \overline{\mathbb T}\,$ converging uniformly and strongly in $\,\W^{1,2} (\mathbb S , \mathbb T)\,$.
With the aid of the same reflections we obtain a sequence of homeomorphisms $\,f_j \colon \mathbb X \onto \mathbb Y\,$ converging uniformly and  strongly in $\,\mathscr W^{1,2}(\mathbb X, \mathbb Y)\,$ to $\,\hslash\,$. Thus $\,\hslash \in \,\overline{\mathscr H}_2(\mathbb X, \mathbb Y)\,$. Now Theorem \,\ref{thmhopf}  tells us that $\,\hslash\,$ is an energy-minimal map within the class $\,\overline{\mathscr H}_2(\mathbb X, \mathbb Y)\,$, and it is unique up to the rotation of $\,\mathbb X\,$, by Theorem \,\ref{thmuni}\,. In particular $\,\hslash\,$ is unique once we normalize it by setting $\,\hslash(1) = 1 \,$. In addition, the uniqueness combined with the fact that the quadrilateral map $\,\hslash \colon \overline{\mathbb S} \onto \overline{\mathbb T }\,$ takes corners into corners in the respective order imply the equation
\begin{equation}\label{symmetry}
\hslash (i\,z ) \;=\; i\,\hslash (z)\;,\;\;\;\textnormal{for all}\;\; z \in \mathbb X .
\end{equation}

Now let us look more closely at the case $\,R \geqslant R^\circ\,$, with $\,R^\circ\,$ small enough so the cracks in $\,\mathbb X\,$ are present. This yields, via Theorem \,\ref{thmcpo},  that the constant $\,c\,$ in (\ref{Hopfagain})\, is negative; that is, the differential  $\,\hslash_z \overline{\hslash_{\bar z}}\,\,\textnormal{d}z \otimes \textnormal{d}z\,$  is positive along the boundary circles. According to Lemma\, \ref{lemycirc} the cracks must lie in the rays of the annulus $\,\mathbb X \,$. Since $\,\hslash \colon \overline{\mathbb S} \onto \overline{\mathbb T}$ is  a diffeomorphism the cracks can occur only along the rays  $\,\textbf{I},\,i \textbf{I},\, - \textbf{I} ,\, -i \textbf{I}\,$, \;where\; $\,\textbf{I}\,$ is a straight line segment $\,\textbf{I} = [1 ,\,a ] \subset [1, \, R\,]\,$; they cannot reach the outer boundary of $\mathbb X\,$
because $\,\mathbb Y\,$ is convex at the outer boundary, see Theorem \,\ref{nocracks}\,. Moreover, in view of the equation  (\ref{symmetry}), all the cracks have the same length. The proof of Theorem  \ref{critical radii}\,\,is complete.

\end{proof}

\subsection{Lipschitz regularity, Proof of Theorem \ref{globLip}}
\subsubsection{Lipschitz regularuity of $\,\hslash\,$} Local Lipschitz continuity of the admissible solutions to the Hopf-Laplace equation (with analytic right hand side) has been shown in \cite{CIKO}. Thus, it only remains to establish Lipschitz  continuity of  $\,\hslash\,$ near the boundary points. We follow the classical method of extending $\,\hslash\,$  beyond the boundary of $\,\mathbb X\,$ in order to  take advantage of the interior regularity results. It  is characteristic of this method that sometimes the delicate structure of the equation is lost beyond the boundary. And that is exactly  what happens here. For this reason we shall need a generalization of \cite{CIKO} that has been  settled in \cite{IKOli} as follows:

\begin{theorem}\label{LipHopf2}
Let $h\in \mathscr W^{1,2}(\Omega)$ be  a mapping with nonnegative Jacobian.
Suppose that the Hopf product $\;h_z\,\overline{h_{\bar z}} \; $ is bounded and H\"{o}lder continuous.
Then $h$ is locally Lipschitz (but not necessarily $\mathscr C^1$-smooth).
\end{theorem}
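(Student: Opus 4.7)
The plan is to bound $|Dh|$ pointwise on compact subsets of $\Omega$; this is equivalent to local Lipschitz continuity. From $J_h \ge 0$ and the Hopf identity $|h_z|\,|h_{\bar z}| = |\varphi|$, one immediately gets $|h_{\bar z}|^2 \le |h_z|\,|h_{\bar z}| = |\varphi| \le \|\varphi\|_\infty$, so $h_{\bar z} \in L^\infty_{\mathrm{loc}}$ and the entire task reduces to bounding $h_z$. A crucial consequence of the same inequalities is that \emph{at points of the degenerate set} $\{J_h = 0\}$ one already has $|h_z|^2 = |h_{\bar z}|^2 = |\varphi| \le \|\varphi\|_\infty$; hence $|h_z|$ can only become large on the open set $\mathcal N := \{J_h > 0\}$, and $\partial \mathcal N$ serves as an internal boundary on which $|h_z|$ is automatically controlled.

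The next step is to derive a useful PDE for $p := h_z$ on $\mathcal N$. Since $|p|^2 > |\varphi|$ on $\mathcal N$, the relation $\overline{h_{\bar z}} = \varphi/p$ rearranges to $h_{\bar z} = \overline{\varphi}/\bar p$, and the compatibility $(h_z)_{\bar z} = (h_{\bar z})_z$, together with its complex conjugate, yields a $2\times 2$ linear system for $(p_{\bar z}, \overline{p_{\bar z}})$ with determinant $|p|^4 - |\varphi|^2 = |p|^2 J_h \neq 0$. Solving gives
\[
|p|^2 J_h\, p_{\bar z} \;=\; p\,\bigl(|p|^2\overline{\varphi_{\bar z}} - \overline{\varphi}\,\varphi_{\bar z}\bigr),
\]
and in particular $|p_{\bar z}|\,J_h \le 2\,|p|\,|\varphi_{\bar z}|$ on $\mathcal N$. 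Since $\varphi \in C^{0,\alpha}$, the distribution $\varphi_{\bar z}$ must be handled in a Campanato sense: for every ball $B_r(z_0)$, $\fint_{B_r}|\varphi - \varphi(z_0)|^2 \le C r^{2\alpha}$, and this replaces pointwise estimates on $\varphi_{\bar z}$ in the integral identities below.

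The third step is a Campanato/freezing argument. On small balls $B_r(z_0) \subset \mathcal N$, compare $h$ with the constant-coefficient solution $h_0$ of $h_{0,z}\overline{h_{0,\bar z}} = \varphi(z_0)$; for the frozen equation the identity above forces $p_{0,\bar z} = 0$, so $h_{0,z}$ is \emph{holomorphic} and enjoys the optimal oscillation decay $\fint_{B_\rho}|h_{0,z} - (h_{0,z})_{B_\rho}|^2 \le C \rho^2$. The perturbation argument, with error controlled by $[\varphi]_{C^\alpha}\, r^\alpha$, upgrades this to $\fint_{B_r}|p - (p)_{B_r}|^2 \le C r^{2\alpha}$ for $h$ itself, whence $p \in C^{0,\alpha}_{\mathrm{loc}}(\mathcal N)$. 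A De Giorgi/Moser-type maximum principle then propagates the boundary estimate $|p|^2 \le \|\varphi\|_\infty$ along $\partial\mathcal N$ (Step~1) to a uniform bound $|p| \le C$ on all of $\mathcal N$, hence on $\Omega$; Morrey's embedding concludes $h \in \mathrm{Lip}_{\mathrm{loc}}(\Omega)$.

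The main obstacle is the degeneracy at $\partial \mathcal N$: the equation for $p$ degenerates ($J_h \to 0$) precisely where the maximum principle must close the argument. The careful technical point is to verify that the degenerate-elliptic maximum principle (or equivalently, the Caccioppoli inequality underlying a Moser iteration) still applies, leveraging the a priori bound on $\partial \mathcal N$ together with the Hölder modulus of $\varphi$. The qualifier ``not necessarily $\mathscr C^1$-smooth'' in the statement reflects exactly that this argument yields no modulus of continuity for $Dh$ across $\partial\mathcal N$; the origami-type solutions from \S\ref{bizarre} show that $\mathscr C^1$ regularity genuinely fails, even for $\varphi \equiv 0$.
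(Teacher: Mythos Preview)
The paper does not prove this theorem; it is quoted from \cite{IKOli} and used as a black box in \S\,\ref{globLip}. So there is no ``paper's own proof'' to compare against, and I assess your sketch on its own merits.

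Your first paragraph is correct and is exactly the observation the paper itself makes immediately after stating the theorem: from $J_h\ge 0$ one gets $|h_{\bar z}|^2 \le |h_z h_{\bar z}| = |\varphi| \le \|\varphi\|_\infty$, hence $h_{\bar z}\in L^\infty$, and on $\{J_h=0\}$ also $|h_z|^2=|\varphi|$.

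The remainder has real gaps. First, $\mathcal N=\{J_h>0\}$ is defined only almost everywhere; treating it as an open set with a topological boundary $\partial\mathcal N$ on which pointwise bounds hold is not justified a priori. Second, your derivation of the equation for $p=h_z$ presupposes weak second derivatives of $h$ on $\mathcal N$; at the $W^{1,2}$ level this must itself be proved (usually via difference quotients coupled to a Caccioppoli inequality), and you skip it. Third, the freezing step is underspecified: the ``constant-coefficient solution'' $h_0$ of the \emph{nonlinear} constraint $h_{0,z}\overline{h_{0,\bar z}}=\varphi(z_0)$ is not a linear comparison problem, and you do not say what boundary data or normalization determines $h_0$, nor why the usual perturbation estimate transfers. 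Most seriously, the closing ``maximum principle'' is where the actual content lies, and you do not supply it: $C^{0,\alpha}_{\mathrm{loc}}(\mathcal N)$ regularity of $p$ gives no bound on $|p|$, and a maximum principle on $\mathcal N$ using data on $\partial\mathcal N$ cannot close because $\mathcal N$ may reach $\partial\Omega$ (or equal all of $\Omega$), where no data is prescribed. If what you intend is an \emph{interior} Moser-type estimate $\sup_{B_{r/2}}|p|^2 \lesssim \fint_{B_r}|p|^2 + \|\varphi\|_\infty$, then the reference to $\partial\mathcal N$ is a red herring and you must instead exhibit the subsolution inequality (for $|p|^2$, or for $|Dh|^2$) that drives the iteration across the degenerate set. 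As written, the argument stops exactly at the hard step.
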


  Recall that $\hslash \colon \overline{\X} \onto \overline{\Y}\,$ is continuous and $\,|\hslash(x)| \equiv \rho\,$ for $\,|x| = R\,$. We extend $\,\hslash\,$ to the annulus $\,\mathbb A = \mathbb A(1, R^2)\,$ by the rule

  $$\,\hslash(z) =  \rho^2\, \big /\,\overline{\hslash(R^2/\bar{z})} \,,\;\;\;\textnormal{for}\;\;\; R\leqslant  |z|  < R^2$$

  This extension certainly belongs to the Sobolev space $\,\mathscr W^{1,2}(\mathbb A)\,$ and it has nonnegative Jacobian determinant. Its Hopf product equals

\begin{equation}\label{Hopfagain}
\hslash_z \overline{\hslash_{\bar z}}=  \begin{cases}  c \, z^{-2} \qquad\qquad\qquad\qquad \textnormal{for }\;\; 1<|z|\leqslant R  \,\\
\,\\
 c\,\rho^4   \; |\hslash(R^2/\bar{z}) |^{- 4}\, z ^{-2} \qquad \textnormal{for }\;\; R\leqslant |z| < R^2  \,
 \end{cases}
\end{equation}
 From here we grasp the pointwise inequality $\,|\hslash_z \,\hslash_{\bar z}| \leqslant | c |\,$. Since the Jacobian is nonnegative, it follows that $\,|\hslash_{\bar z}|^2  \leqslant |\hslash_z \,\hslash_{\bar z}| \leqslant | c |\,$, so $\,\hslash_{\bar z} \in \mathscr L^\infty (\mathbb A)\,$. Hence $\,\hslash_{z} \in \mathscr L^p_{\textnormal{loc}} (\mathbb A)\,$, for every exponent $\,2\leqslant p < \infty\,$. Actually, $\,\hslash_{z} \in \textrm{BMO}_{\textnormal{loc}} (\mathbb A)\,$, which places $\,\hslash\,$ somewhere close to $\,{Lip}_{\,_{\textnormal{loc}}}(\mathbb A)\,$  but not quite there. Nonetheless, Sobolev imbedding theorem tells us the $\,\hslash\,$ is locally H\"{o}lder continuous of exponent $\,\alpha = 1 - \frac{2}{p} \,> 0\,$, so is the Hopf product $\,\hslash_z \overline{\hslash_{\bar z}}\,$.  By Theorem \ref{LipHopf2} we conclude that $\,\hslash \in \,{Lip}_{\,_{\textnormal{loc}}}(\mathbb A)\,$, as desired.\\
 Similarly, one obtains Lipschitz continuity of $\,\hslash\,$ near any of the four circular arcs in $\,\partial \mathbb X_I\,$, which $\,\hslash\,$ takes into a  straight line segment of $\,\partial \mathbb Y_I\,$, by analogous reflection trick.

But dealing near the corners $\, 1,\,i,\,-1,\,-i\,\in \partial \mathbb X_I\,$ requires an additional trick. Recall the length $\,a\,$ of the crack, $\,1 < a < R\,$ and consider the sector $\,\Omega = \{ z ;\; \frac{1}{a} <|z| < a \;\,, \;\; \re z > 0\,\}\, \deff \mathbb S^{\pi/2}_{-\pi/2} \left( \frac{1}{a} ,\, a  \right )\, $  as a neighborhood of the point $\,1 \in \partial \mathbb X_I\,$. We shall extend $\,\hslash\,$ onto $\Omega\,$ using  two different reflection procedures.
\begin{itemize}
\item reflection in the $\,\mathbb X\,$ -space, along the arc $\,\{ z = e ^{i\,\theta}\,;\; 0 < \theta < \pi/2\,\} \subset \partial \mathbb X\,$, followed by the reflection in the $\,\mathbb Y\,$ -space, along the straight line that passes through  the segment $\,(-1 ,\, i )\subset \partial \mathbb Y\,$.
\item reflection in the $\,\mathbb X\,$ -space, along the arc $\,\{ z = e ^{i\,\theta}\,;\; -\pi/2 < \theta < 0\,\} \subset \partial \mathbb X\,$, followed by the reflection in the $\,\mathbb Y\,$ -space, along the straight line that passes through  the segment $\,(-i ,\, 1 )\subset \partial \mathbb Y\,$.
\end{itemize}
Precisely, we set the values of $\,\hslash\,$ for $\, z = t e^{i\,\theta}\,$, with $\, -\pi/2 < \theta < \pi/2\,$ and $\, \frac{1}{a} < t \leqslant 1\,$, by the rule

\begin{equation}\label{Extension}
\hslash_z \overline{\hslash_{\bar z}}=  \begin{cases}  -i\, \overline{\hslash(1/\bar{z})} \,+\,1 +\, i \, & \textnormal{\;,\;for}\; 0 \leqslant \theta < \pi/2\\
\hslash(z) = 1\, & \textnormal{\;,\;for} \,\, \theta = 0\\
i\, \overline{\hslash(1/\bar{z})} \,+\, 1 - \,i \, & \textnormal{\;,\;for}\; 0 \leqslant \theta < \pi/2
 \end{cases}
\end{equation}

This is certainly a continuous map of Sobolev class $\,\mathscr W^{1,2}(\Omega)\,$, with nonnegative Jacobian, and its Hopf product equals
$$
\hslash _z \,\overline{\hslash_{\bar{z}}} = \frac{c}{z^2}\;\;,\;\;\textnormal{almost everywhere in }\;\;\Omega\;.
$$
By Theorem \ref{LipHopf2}   we conclude that $\,\hslash \in Lip_{\textnormal{loc}}(\Omega)\,$,  as desired.

\section{Two examples}
Let us illustrate how Theorem \,\ref{thmhopf}\, works in practice for mappings $\,h \colon \mathbb X \onto\mathbb Y\,$ between doubly connected domains. Thus we shall look at the Hopf differential $\,\,\mathfrak h_z \overline{\mathfrak h_{\bar{z}}}\,\,\textnormal{d} z \otimes\textnormal{d} z \,\,$ to check as to whether  it is real along $\,\partial\mathbb X\,$. The two examples here also serve to show  a delicate difference between Hopf differentials being positive  or  negative. We can, and do, assume without affecting the results that the domain $\,\mathbb X\,$ is an  annulus . Thus in either case we are dealing with admissible solutions to the Hopf-Laplace equation
\begin{equation}
\mathfrak h_z \overline{\mathfrak h_{\bar{z}}}\, = \frac{c}{z^2} \,\,,  \;\;\;\textnormal{in an annulus}\;\;\; \mathbb X
\end{equation}
\subsection{Case $\,c > 0\,$, Hopf differentials are negative along $\,\partial \mathbb X\,$}
This is the case in which no cracks emerge, by virtue of Theorem  \ref{thmcpo}.
Consider the following infinite series of orthogonal harmonic functions
 \begin{equation}
 \begin{split}
 \mathfrak h(z) &= -\frac{2}{R} \,\log |z| \;+\; \frac{R^2-1}{R} \sum_{n=1}^\infty \frac{z^n\,-\,\bar{z}^{-n}}{n\,R^n} \\ & =\;
 \big(R -\frac{1}{R} \big)\, \textnormal{\large{Log}} \frac{ R \,z\bar{z} - z }{ R -z}\;\;-\;\; 2 R \log |z|
 \end{split}
 \end{equation}
 The series converges in the closed annulus $\, \mathbf A \deff \{ z\,;\, R^{-1} \leqslant |z| \leqslant R\,\}\,$, except for two boundary  points $\, z= R^{\pm 1}$. Her the symbol $\, \textnormal{\large{Log}}\,$   stands for the continuous branch of logarithm in  $\,\mathbb C+ \deff \{\xi \in \mathbb C\,; \;\re  \xi > 0 \,\}\,$ that is specified by $\, \textnormal{\large{Log}}\,1 \,=\,0  $.
 Observe that the expression $\,\xi = \frac{ R \,z\bar{z} - z }{ R -z}\,$ takes values in $\,\mathbb C+\,$, whenever $\,R^{-1} \leqslant |z|\leqslant R\,$ and $\; z \neq R^{\pm 1}\,$. Clearly $\,\mathfrak h(z) = 0\,$ for $\,|z| = 1\,$ and we have the following identity $\,\mathfrak h(1/z) = - \mathfrak h(\bar{z}) \,$. Elementary geometric considerations show that $\,\mathfrak h \,$ takes the open
 annulus $\,\mathbb A_R = \{ z\,; 1< |z| < R\,\}\,$ homeomorphically into a simply connected domain with puncture at the origin which is contained in a horizontal strip
 \begin{equation}\label{strip}
   \mathfrak h(\mathbb A_R)  \subset\Big\{ \zeta\,; \;|\im  \zeta | <  \frac{\pi}{2} \Big(R - \frac{1}{R}  \Big)\;\Big\}
 \end{equation}
 Passing to the limit as $\,R \rightarrow \infty\,$, we obtain harmonic homeomorphism $\,\mathfrak h^\infty =z - \bar{z}^{-1}\,$ of $\,\mathbb A_\infty = \{ z\,; 1< |z| < \infty\,\}\,$ onto the punctured complex plane  $\,\mathbb C_\circ\,$.
 Next observe that the function $\,\xi(z) = \frac{ R \,z\bar{z} - z }{ R -z}\,$ agrees with the M\"{o}bius transformation  $\,\xi(z) = \frac{ R \,\rho^2 - z }{ R -z}\,$  when restricted to any circle $\,\mathcal C_\rho \,=\,\{ z\,;\; |z| = \rho\,\}$. Thus the image of $\,\mathcal C_\rho\,$ under $\,\xi = \xi(z)\,$ is a circle in $\,\mathbb C+\,$. We now observe that $\, \textnormal{\large{Log}}\,$ takes circles in $\,\mathbb C+\,$ into strictly convex smooth Jordan curves. The curves $\,\mathfrak h(\mathcal C_\rho)\,, 1<\rho< R\, $,  resemble a family of ellipses with common focus at the origin. But the image of the outer circle, $\,\rho = R\,,$ looks more like a parabola, but it has been  flattened to fit into the horizontal strip \, (\ref{strip}).

\begin{center}\begin{figure}[h]
\includegraphics[width=0.9\textwidth]{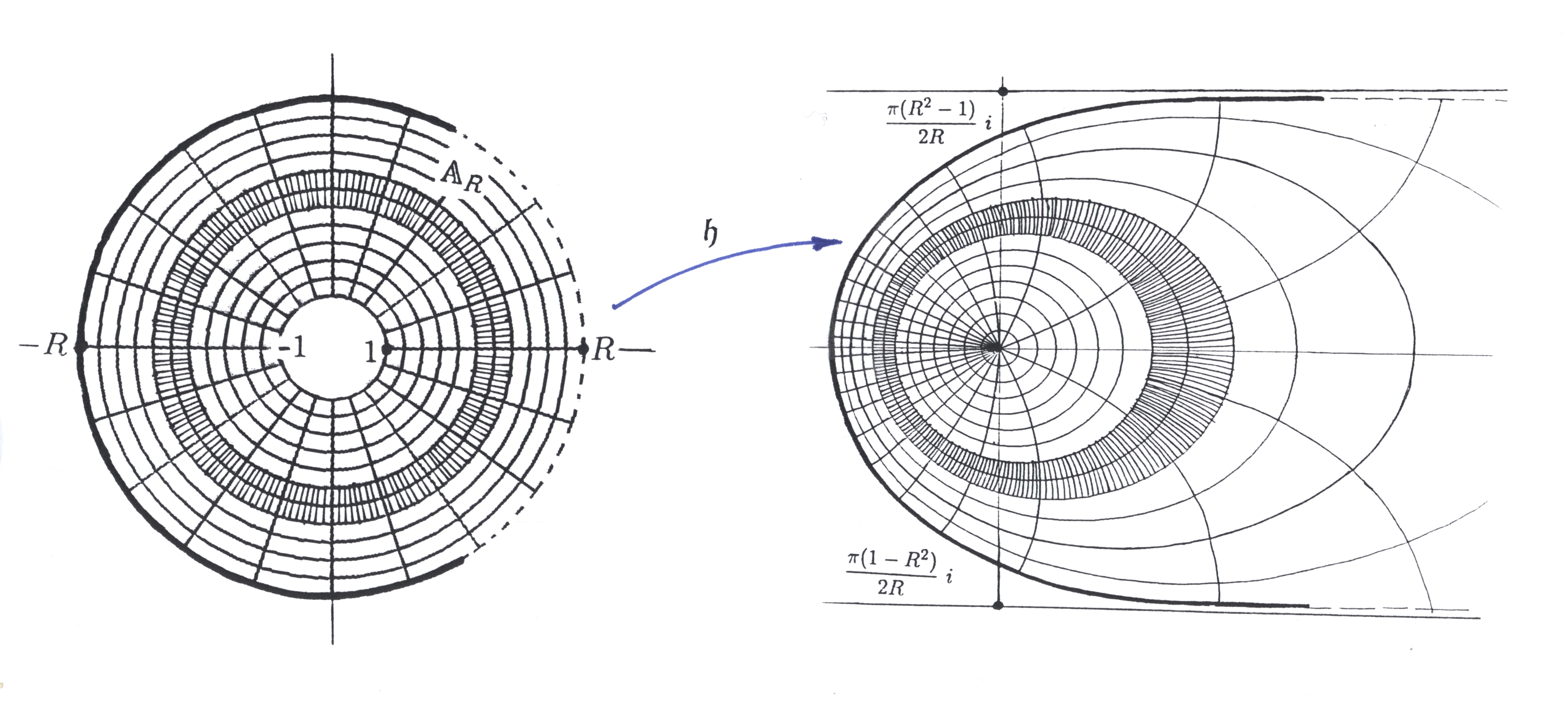} \caption{The map $\mathfrak h$ is energy-minimal because $\mathfrak h_z \overline{h_{\bar{z}}} \;=\; \frac {1}{z^2}$}\label{fig3}
\end{figure}\end{center}

\subsubsection{Hopf-Laplace Equation}
The Hopf differential $\,\,\mathfrak h_z \overline{\mathfrak h_{\bar{z}}}\,\,\textnormal{d} z \otimes\textnormal{d} z \,\,$  is real and negative on every circle $\,\mathcal C_\rho\,,\;\; R^{-1} < \rho <\R$. These circles are horizontal trajectories while rays are vertical trajectories. Precisely, we have
\begin{equation}\label{ExampleHopf}
 \mathfrak h_z \overline{h_{\bar{z}}} \;=\; \frac {1}{z^2}\;,\;\;\;\;\;\; \textnormal{for all}\;\; R^{-1} \leqslant |z| \leqslant R\,, \;\; \textnormal{except for}\;\;  z = R^{\pm 1}
\end{equation}
Indeed, a straightforward differentiation shows that
\begin{equation}
\mathfrak h_z = \frac{Rz - 1}{R -z}\,\frac{1}{z}\,, \;\;\;\textnormal{ and}\;\;\;\overline{\mathfrak h_{\bar{z}}} = \frac{R - z}{R z -1}\,\frac{1}{z}\;,
\end{equation}
 whence the equation~\eqref{ExampleHopf}. The Jacobian determinant
$$
 J(z, \mathfrak h)  = |
 \mathfrak h_z|^2 - |\mathfrak h_{\bar{z}}|^2 \;=\; \frac{(R^2 -1 ) (|z|^2 - 1 )}{|R-z|^2 |Rz -1 |^2}\;
$$
changes sign when crossing the unit circle.

\subsubsection{The map $\,\mathfrak h \,$ is energy-minimal}
In general, finding the energy-minimal homeomorphism between designated domains is not a trivial matter. Sometimes it comes unplanned, like in the above example, in which the Hopf equation~\eqref{ExampleHopf} combined with Theorem~\ref{thmhopf} yields:
\begin{proposition}
Denote by  $\,\mathcal A_\rho \;= \mathfrak h(\mathbb A_\rho)\,$,\,where $\,1<\rho < R\,$. Among all homeomorphisms $\, f \colon  \mathbb A_\rho \onto \mathcal A_\rho\,$ the minimum Dirichlet energy is attained for $f = \mathfrak h$, uniquely up to a rotation of $\A_\rho$. The minimum of energy equals
$$
 2 \,\iint_{\mathbb A_\rho}\left( |\mathfrak h_z |^2 \; +\; |\mathfrak h_{\bar{z}}\,|^2 \right ) \; = \frac{4\,\pi\, \log\,\rho}{R^2} \;+\;
 \frac{2\,\pi ( R^2 - 1 )^2}{R^2} \;\textnormal{\Large{log}}\,\frac{R^2 \rho^2 - 1}{R^2 \rho^2 - \rho^4}
$$
\end{proposition}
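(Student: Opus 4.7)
The plan is to verify the hypotheses of Theorem~\ref{thmhopf} and Theorem~\ref{thmuni} for $\mathfrak h$ and then carry out the explicit integration.

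The map $\mathfrak h$ is a $\mathscr C^\infty$-diffeomorphism of $\mathbb A_\rho$ onto $\mathcal A_\rho$ of finite Dirichlet energy (both properties are evident from the closed-form formulas, the latter also confirmed a posteriori by the explicit value we compute). Thus $\mathfrak h \in \mathscr H_2(\mathbb A_\rho, \mathcal A_\rho) \subset \overline{\mathscr H}_2(\mathbb A_\rho, \mathcal A_\rho)$. The identity $\mathfrak h_z \overline{\mathfrak h_{\bar z}} = z^{-2}$ shows the Hopf differential is analytic in the annulus; on either boundary circle, writing $z = r_0 e^{i\theta}$ with $r_0 \in \{1,\rho\}$, one has $z^{-2}\,\mathrm dz \otimes \mathrm dz = -\mathrm d\theta\otimes\mathrm d\theta$, which is real. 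Hence by Theorem~\ref{thmhopf}, $\mathfrak h$ is energy-minimal in $\overline{\mathscr H}_2(\mathbb A_\rho, \mathcal A_\rho)$; infinite-energy homeomorphisms trivially contribute $+\infty$, so $\mathfrak h$ minimises the Dirichlet integral over the whole class of homeomorphisms $f\colon \mathbb A_\rho \onto \mathcal A_\rho$.

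Uniqueness is immediate from Theorem~\ref{thmuni}: any other energy-minimal map differs from $\mathfrak h$ by a conformal change of variables in $\mathbb A_\rho$. The conformal automorphisms of the annulus $\mathbb A_\rho$ that preserve the natural ordering of the two boundary components are precisely the rotations $z\mapsto e^{i\alpha}z$; the remaining conformal self-maps interchange the two boundary circles and are excluded, yielding uniqueness up to rotation.

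To obtain the closed-form value of the energy, I would exploit the harmonic decomposition $\mathfrak h = F + \overline{G}$ with
\[
F'(z) \;=\; \frac{Rz-1}{z(R-z)} \;=\; -\frac{1}{Rz}+\sum_{k=0}^{\infty}\frac{R^2-1}{R^{k+2}}\,z^{k}, \qquad G'(z) \;=\; \frac{R-z}{z(Rz-1)} \;=\; -\frac{1}{Rz}+\sum_{m=1}^{\infty}\frac{R^2-1}{R^{m+1}}\,z^{-m-1},
\]
the Laurent expansions being valid on $1<|z|<R \supset \mathbb A_\rho$. Since $|\mathfrak h_z|^2 = |F'|^2$ and $|\mathfrak h_{\bar z}|^2 = |G'|^2$, passage to polar coordinates together with the orthogonality $\int_0^{2\pi}e^{in\theta}\,\mathrm d\theta = 0$ for $n\ne 0$ reduces the Dirichlet integral to a sum over squared Laurent coefficients weighted by $\int_1^\rho r^{2k+1}\,\mathrm dr$. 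The $z^{-1}$ terms of $F'$ and $G'$ (both with coefficient $-1/R$) each contribute a $\log\rho$ via $\int_1^\rho r^{-1}\,\mathrm dr$; the two remaining geometric tails telescope through $\sum_{n\geq 1} x^n/n = -\log(1-x)$ into logarithms whose product collapses algebraically to $\log\tfrac{R^2\rho^2-1}{R^2\rho^2-\rho^4}$, yielding the stated formula.

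The argument is essentially mechanical once the two theorems are invoked. The only item demanding care is the bookkeeping of the coincident $z^{-1}$ contributions from $F'$ and $G'$ (they add rather than cancel) together with the final algebraic simplification that combines the two log ratios into the single form displayed in the statement.
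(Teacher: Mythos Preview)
Your proposal is correct and follows essentially the same route as the paper's own proof: invoke Theorems~\ref{thmhopf} and~\ref{thmuni} for minimality and uniqueness, then compute the energy by expanding $\mathfrak h_z$ and $\overline{\mathfrak h_{\bar z}}$ into Laurent series and using orthogonality of $\{z^n\}$. You supply more of the details (the explicit expansions, the role of the coincident $z^{-1}$ coefficients, and why ``conformal change of variables'' collapses to rotations), but the strategy is identical.
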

\begin{proof}
That $\,\mathfrak h \,$ represents the energy-minimal deformation and is unique up to the rotation follows from Theorems~\ref{thmhopf}  and ~\ref{thmuni}. As for the computation of the minimum energy, we write
\begin{equation}
\begin{split}
 \mathscr E_{\mathbb A_\rho}[\mathfrak h ] \;&=\;2 \,\iint_{\mathbb A_\rho}\left( |\mathfrak h_z |^2 \; +\; |\mathfrak h_{\bar{z}}\,|^2 \right ) \; \\& =
  2 \,\iint_{1<|z|<\rho}\left( \left|\frac{R z - 1}{R - z}\right|^2 \; +\; \left|\frac{R - z}{R z - 1 }\right|^2 \right ) \frac{\textnormal{d} z}{|z|^2}\;
\end{split}
\end{equation}
We expand the holomorphic functions $\,\frac{R z - 1}{R - z}\,$ and $\,\frac{R - z}{R z - 1 }\,$ into Laurent series and use orthogonality of the system $\,\{z^n\}_{ n = 0, \pm 1, \pm2, ...}\,$ to arrive at the claimed formula.
\end{proof}
\begin{remark}
The mapping $\,\mathfrak h \,$ also represents the energy-minimal deformation of any sub-annulus $\,\mathbb A_{r_2} \setminus \mathbb A_{r_1}\,$  onto a doubly connected shell $\, \mathcal A_{r_2} \setminus \mathcal A_{r_1}\,$ where $\, 1< r_1 < r_2<\R\, $. The  energy  $\,\mathscr E_{\mathbb A_{r_2} \setminus \mathbb A_{r_1}}[\mathfrak h ] =  \mathscr E_{\mathbb A_{r_2}}[\mathfrak h ] - \mathscr E_{\mathbb A_{r_1}}[\mathfrak h ] \,$.
\end{remark}

\subsubsection{A dual Example}
Somewhat dual harmonic map can be assembled from the orthogonal basis of harmonic functions in the annulus $\, \{ z\,; 1 \leqslant |z| \leqslant R\,\}\,$ with a cut along the interval $\, [0,\,R]\,$. View it as degenerate quadrilateral.

\begin{equation}
\begin{split}
 \mathfrak h &= \mathfrak h_R (z) =   - \frac{2\,i}{R}\, \textnormal{Arg}\, z \; +\; \frac{R^2-1}{R} \sum_{n=1}^\infty \,\frac{z^n + \bar{z}^{-n}}{n R^n}\\&=
 - \frac{2\,i}{R}\, \textnormal{Arg\,z}  \;-\;\frac{R^2-1}{R} \,\textnormal{\Large{log}}\, \frac{(R-z)(\bar{z} R - 1 )}{\bar{z} R^2}\;,\;\;\;0 <  \textnormal{Arg\,z} < 2\,\pi
 \end{split}
\end{equation}
Straightforward computation reveals that
$$
\mathfrak h_z = \frac{zR - 1}{z(R - z)}\,\;\;\;\textnormal{and}\;\;\; \overline{\mathfrak h_{\bar{z}}} = \frac{z - R}{z(zR - 1)} \;,\;\;\textnormal{hence}\;\;\; \mathfrak h_z\,\overline{\mathfrak h_{\bar{z}}} = \frac{-1}{z^2}
$$
We observe that the Hopf differential
\begin{equation}
\mathfrak h_z\,\overline{\mathfrak h_{\bar{z}}}\;\,\textnormal{d}z\otimes \textnormal{d}z = \,-\,\frac{ \,\textnormal{d}z\otimes \textnormal{d} z}{z^2}
\end{equation}
 is  positive along the boundary circles  and negative along the cut; this is the energy-minimal quadrilateral deformation.

\subsection{Case $\,c < 0\,$, Hopf differentials are positive along $\,\partial \mathbb X\,$ }\label{NitscheHammering}
This is the case in which cracks may, though need not, emerge. We shall exploit the typical example from the studies of the energy minimization problem via the concept of \textit{free Lagrangians}\,\cite{IOnAnnuli}. As suggested by their name, free Lagrangians  are nonlinear differential expressions defined for mappings  $\,h \colon \mathbb X \onto\mathbb Y\,$ whose integral means are independent of the boundary values, they depend only on the homotopy class of the mapping. These are special\textit{ null Lagrangians}. The latter were proposed by J. Ball \cite{Ba0} and  successfully exploited in \cite{BCO} . The most obvious novelty of free Lagrangians is that, once they are identified properly for a specific energy-minimization problem, the problem  is solved instantly.\\ Theorem \,\ref{thmhopf} offers another approach. The utility of this approach is illustrated by the quick proof of the following\\

\begin{theorem}\label{thn=2}
Let $\mathbb X= \mathbb A(r,R)$ and $\mathbb Y = \mathbb A(r_\ast, R_\ast)$ be planar annuli. \\
\underline{Case 1.}  If
\begin{equation}\label{Nitsche1}
\frac{R_\ast}{r_\ast} \geqslant \frac{1}{2} \left(\frac{R}{r}+ \frac{r}{R}\right)
\end{equation}
then the harmonic homeomorphism
\[\frak h(z)= \frac{r_\ast}{2} \left(\frac{z}{r}+ \frac{r}{\bar z}\right), \quad \frak h \, \colon \mathbb X \onto \mathbb Y\]
has the smallest energy among all homeomorphisms $h \colon \mathbb X \onto \mathbb Y\,$, and  is unique up to a rotation of $\mathbb A$.\\
\underline{Case 2.}  If
\begin{equation}\label{Nitsche2}
\frac{R_\ast}{r_\ast} < \frac{1}{2} \left(\frac{R}{r}+ \frac{r}{R}\right)
\end{equation}
then the infimum energy among all homeomorphisms $\,h \colon \mathbb X \onto \mathbb Y$ is not attained. Let a radius $r< \sigma < R$ be determined by the equation
\[\frac{R_\ast}{r_\ast}= \frac{1}{2} \left(\frac{R}{\sigma}+ \frac{\sigma}{R}\right) \]
Then the following mapping
\[\frak h(z) =  \left\{\begin{array}{lll} r_\ast \frac{z}{|z|} \; \;\;&\;\;\; r <|z|\leqslant \sigma \;\;\;\;\;\textnormal{\textit{cracks along the rays}}\;[r, \rho]\, e^{i\, \theta}\\ \\
 \frac{r_\ast}{2} \left(\frac{z}{\sigma} + \frac{\sigma}{\bar z}  \right)\; & \;\;\;\;\sigma \leqslant |z|<R   \;\;\;\;\;\textnormal{\textit{harmonic diffeomorphism}}\;\end{array} \right.    \]
has smallest energy within the class $\,\overline{\mathscr H}(\mathbb X, \mathbb Y)\,$. This energy-minimal map is unique up to a rotation of $\mathbb X\,$.
\end{theorem}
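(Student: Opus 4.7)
The plan is to reduce both cases to the two central theorems of the paper: Theorem~\ref{thmhopf} (a map in $\overline{\Ho}_2(\X,\Y)$ is energy-minimal iff its Hopf product is analytic on $\X$ and real along $\partial\X$) and Theorem~\ref{thmuni} (uniqueness up to a conformal self-map of $\X$). For each candidate $\mathfrak{h}$ I would verify only two things: (i) $\mathfrak{h}\in\overline{\Ho}_2(\X,\Y)$, and (ii) $\mathfrak{h}_z\,\overline{\mathfrak{h}_{\bar z}} = c/z^2$ for some real $c$. Once (ii) holds, the form $(c/z^2)\,dz\otimes dz$ is manifestly analytic on the annulus $\X=\mathbb A(r,R)$, and on any concentric circle $|z|=\rho$ it restricts to $-c\,d\theta^2\in\R$, so it is automatically real along $\partial\X$. ``Uniqueness up to a conformal change of variables in $\X$'' reduces to uniqueness up to a rotation, since these are the only orientation-preserving conformal self-maps of an annulus that preserve boundary-component order.

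\textbf{Case 1.} I would first verify directly that $\mathfrak{h}$ is a harmonic diffeomorphism onto $\Y$. A short Wirtinger computation yields
\[
\mathfrak{h}_z = \tfrac{r_\ast}{2r},\qquad \mathfrak{h}_{\bar z} = -\tfrac{r\,r_\ast}{2\bar z^2},\qquad \mathfrak{h}_z\,\overline{\mathfrak{h}_{\bar z}} = -\tfrac{r_\ast^2}{4z^2},\qquad J(z,\mathfrak{h}) = \tfrac{r_\ast^2}{4r^2}\bigl(1 - \tfrac{r^4}{|z|^4}\bigr) \ge 0 \text{ on } \X,
\]
with strict positivity in the interior. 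The boundary-modulus check confirms that $|\mathfrak{h}|\equiv r_\ast$ on $|z|=r$ and $|\mathfrak{h}|\equiv R_\ast$ on $|z|=R$ when (\ref{Nitsche1}) holds. (For the strict-inequality range one would use instead the more general form $\mathfrak{h}(z)=Az+B/\bar z$ with $A,B$ fixed by the two boundary conditions; the Nitsche inequality is exactly $Ar^2\ge B$, which is equivalent to $J\ge 0$.) Thus $\mathfrak{h}\in\Ho_2(\X,\Y)\subset\overline{\Ho}_2(\X,\Y)$ carries the required Hopf product, and Theorems~\ref{thmhopf} and~\ref{thmuni} close the case.

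\textbf{Case 2.} I would split $\X = \X'\cup\X''$ with $\X' = \{r<|z|\le\sigma\}$ and $\X'' = \{\sigma\le |z|<R\}$. On $\X''$ the formula $\mathfrak{h}(z) = \tfrac{r_\ast}{2}(z/\sigma + \sigma/\bar z)$ is exactly the Case 1 map for the sub-annulus $\mathbb A(\sigma,R)$ mapped onto $\Y$: by the defining equation for $\sigma$, this sub-problem lies precisely at the Nitsche borderline, so $\mathfrak{h}$ is a harmonic diffeomorphism of $\X''$ onto $\Y$. On $\X'$ the radial projection $\mathfrak{h}(z) = r_\ast z/|z|$ crushes the thin shell onto $\partial\Y_I$. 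Two brief computations show that both pieces have the \emph{same} Hopf product,
\[
\mathfrak{h}_z\,\overline{\mathfrak{h}_{\bar z}} = -\frac{r_\ast^2}{4z^2} \qquad \text{on all of } \X,
\]
so $\mathfrak{h}$ solves the Hopf--Laplace equation throughout $\X$ with Hopf differential analytic on $\X$ and real along $\partial\X$. The substantive step is to realize $\mathfrak{h}$ as a strong $\W^{1,2}$-limit of homeomorphisms $h_j\colon\X\onto\Y$: choose $\sigma_j\nearrow\sigma$ and $\epsilon_j\searrow 0$, define $h_j$ on $\{r<|z|\le\sigma_j\}$ as a radial diffeomorphism squeezing the shell onto the thin annulus $\mathbb A(r_\ast,r_\ast+\epsilon_j)$, and on $\{\sigma_j\le|z|<R\}$ take a Case 1 harmonic map onto $\mathbb A(r_\ast+\epsilon_j,R_\ast)$, matched across $|z|=\sigma_j$ by a small smoothing. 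A direct energy computation then gives $\mathscr E_\X[h_j]\to\mathscr E_\X[\mathfrak{h}]$, placing $\mathfrak{h}\in\overline{\Ho}_2(\X,\Y)$. Theorems~\ref{thmhopf} and~\ref{thmuni} now deliver energy-minimality and uniqueness up to rotation. Non-attainment in $\Ho(\X,\Y)$ is automatic: any homeomorphism minimizer would also minimize in $\overline{\Ho}_2(\X,\Y)$ and hence, by uniqueness, equal $\mathfrak{h}$ up to a rotation -- impossible since $\mathfrak{h}$ is not injective on $\X'$.

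\textbf{Main obstacle.} The hard part is the $\W^{1,2}$-approximation step in Case 2. The geometric picture -- squeeze the inner shell radially and slide the harmonic diffeomorphism outward -- is transparent, but to secure \emph{strong} (not merely weak) Sobolev convergence the squeeze rate $\epsilon_j/(\sigma_j-r)$ and the shift $\sigma-\sigma_j$ must be coupled so that the Dirichlet energy contributed by the nearly-radial squeeze on $\{r<|z|\le\sigma_j\}$ and the extra energy gained by enlarging the harmonic domain cancel precisely in the limit. This is the one place where a careful, explicit calculation is unavoidable.
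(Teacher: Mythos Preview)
Your approach is correct and essentially identical to the paper's: compute that $\mathfrak{h}_z\,\overline{\mathfrak{h}_{\bar z}} = -\tfrac{r_\ast^2}{4z^2}$ in either case, then invoke Theorems~\ref{thmhopf} and~\ref{thmuni}. The paper's proof is a single sentence that omits the verifications you spell out (membership in $\overline{\Ho}_2$, non-attainment, the diffeomorphism check); your ``main obstacle'' is real but milder than you suggest---a linear radial squeeze $\phi(\rho)$ on $\{r<|z|\le\sigma\}$ with $\phi(\sigma)-\phi(r)=\epsilon\to 0$ contributes energy $O(\epsilon^2)$ in the normal direction and converges to $2\pi r_\ast^2\log(\sigma/r)$ in the tangential direction, so strong $\W^{1,2}$-convergence is immediate without any delicate coupling of rates.
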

\begin{proof}
The proof  is immediate from Theorem \,\ref{thmhopf} once we notice that
\begin{equation}
\mathfrak h_z\,\overline{\mathfrak h_{\bar{z}}}\;\,\textnormal{d}z\otimes \textnormal{d}z = \,-\,\frac{r_*^2}{4}\frac{ \,\textnormal{d}z\otimes \textnormal{d} z}{z^2}\;,\;\;\;\textnormal{in either case}
\end{equation}
\end{proof}

\begin{center}\begin{figure}[h]
\includegraphics[width=0.9\textwidth]{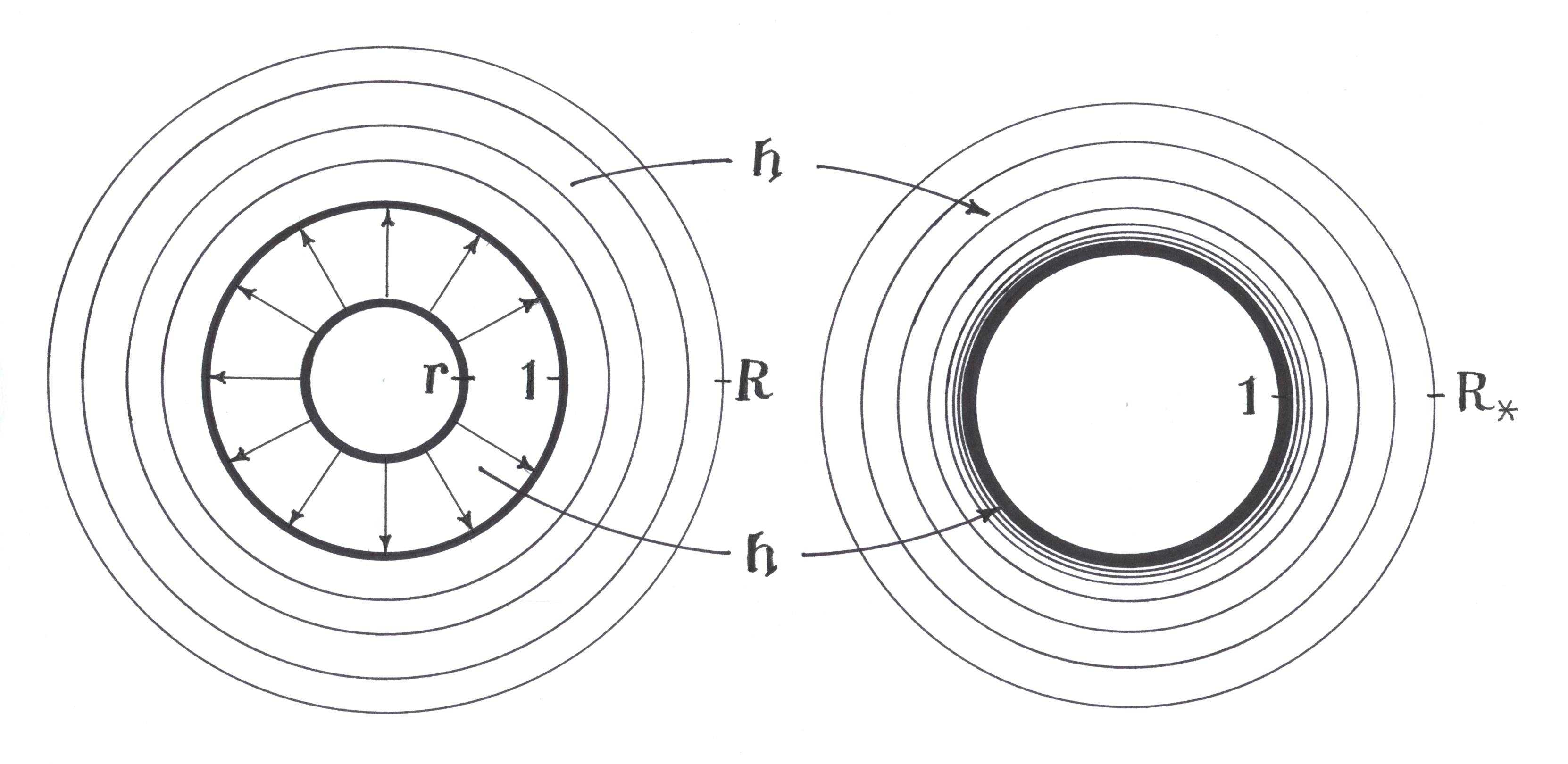} \caption{Energy-minimal map; $\,\mathfrak h(z) = \frac{z}{|z|}\,$ for $\,r < |z| \leqslant 1\,$ and  $\,\mathfrak h(z) = \frac{1}{2}(z + \bar{z}^{-1})\,$ for $\,1 \leqslant |z| < R\,$.}\label{fig4}
\end{figure}\end{center}

\subsubsection{The upper halves of the annuli} Let $\,\mathbb X^+ = \mathbb X \cap \mathbb C^+\,$ and $\,\mathbb Y_+ = \mathbb Y \cap \mathbb C^+\,$
denote the upper halves of the annuli. One can view them as quadrilaterals with corners at $\,\{-R,\, -r,\,r ,\, R\}\,$ and  $\,\{-R_* \,,\, -r_*\,,\,r_*\, ,\, R_*\}\,$, respectively. Every quadrilateral map $\,  h \, \colon \mathbb X^+ \onto \mathbb Y^+ \,$ extends, homeomorphically by reflection, into $\,  H \, \colon \mathbb X \onto \mathbb Y \,$. The energy computation,
$$\,\mathscr E_{\mathbb X^+}[h] = \frac{1}{2} \mathscr E_{\mathbb X}[H] \geqslant \frac{1}{2} \mathscr E_{\mathbb X}[\frak h] = \mathscr E_{\mathbb X^+}[\frak h]\,$$
shows that the same mapping $\, \frak h \, \colon \mathbb X^+ \onto \mathbb Y^+ \,$ is the unique energy-minimal among strong limits of all quadrilateral homeomorphisms $\,  h \, \colon \mathbb X^+ \onto \mathbb Y^+ \,$. Thus, in\textit{ Case 2},  the same cracks develop along the rays $\;[r, \rho]\, e^{i\, \theta}\,, 0< \theta < \pi\,$. \\

%\begin{figure}[!h]
%\begin{center}
%\psfrag{H}{\small {${H^\circ}$}} \psfrag{t}{\small ${t}$}
%\psfrag{rh}{\small ${1}$} \psfrag{r}{\small ${r}$}
%\psfrag{r'}{\small ${r^\ast}$} \psfrag{R}{\small ${R}$}
%\psfrag{R'}{\small ${R^\ast}$} \psfrag{h}{$h^\circ$}
%\psfrag{A}{${\mathbb A}$} \psfrag{A'}{${\mathbb A}^\ast$}
%\includegraphics*[height=1.2in]{picture1.eps}
%\caption{Hammering the inner ring into the unit circle.}\label{figur1}
%\end{center}
%\end{figure}

\begin{remark}
 A priori these results do not rule out the existence of
univalent harmonic mappings from $\,\mathbb X\,$ onto $\,\mathbb
Y\,$, or quadrilateral harmonic mappings from $\,\mathbb X^+\,$ onto $\,\mathbb
Y^+\,$,  simply because harmonic homeomorphisms need not minimize the energy. Such mappings are admissible solutions to the Hopf-laplace equation, good enough to include them into a study of elastic deformations.   Nevertheless, nonexistence of harmonic homeomorphisms between annuli $\,\mathbb X\,$ and $\,\mathbb Y\,$ in \textit{Case 2} was conjectured by
J. C. C. Nitsche~\cite{Ni}, ~\cite{Nib}\,, prominent conjecture indeed. After several efforts in~\cite{HS, Ka, Ly, Lyz0, Lyz, We}, the conjecture  was finally confirmed  in~\cite{IKO2}.
  Similar cracking phenomena  are observed in higher dimensions \cite{IOnAnnuli}, see \cite{JM} for the $\,\mathscr L^p$ -setting.\\
\end{remark}
We end this section by showing, via a normal family  argument, that
 \begin{proposition}
 There is no quadrilateral harmonic homeomorphism  $\,  h \, \colon \mathbb X^+ \onto \mathbb Y^+ \,$ between upper halves of annuli $\mathbb X= \mathbb A(r,R)$ and $\mathbb Y = \mathbb A(r_\ast, R_\ast)$ if the ratio $\,\frac{R_*}{r_*} \,$  is too small relative to $\,\frac{R}{r} \,$.
 \end{proposition}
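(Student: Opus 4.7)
The plan is to argue by contradiction using a normal-family extraction, reducing the statement to a winding-number impossibility. Suppose there is a sequence of quadrilateral harmonic homeomorphisms $h_n \colon \X_n^+ \onto \Y_n^+$, with $\X_n = \A(r_n, R_n)$ and $\Y_n = \A(r_*^{(n)}, R_*^{(n)})$, for which
\[
\frac{R_n/r_n}{R_*^{(n)}/r_*^{(n)}} \;\longrightarrow\; \infty .
\]
Because harmonicity, injectivity, and the quadrilateral structure are preserved under independent dilations of source and target, I normalize $r_n = r_*^{(n)} = 1$, so that $R_n \to \infty$ while $R_*^{(n)} \le M < \infty$ along a subsequence. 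Since a quadrilateral map extends continuously to the closed half-annulus and sends the horizontal sides $[1,R_n] \cup [-R_n,-1]$ of $\X_n^+$ into the real horizontal sides of $\Y_n^+$, the Schwarz reflection principle applies, and setting $H_n(z) := \overline{h_n(\bar z)}$ for $\im z < 0$ produces an orientation-preserving harmonic homeomorphism $H_n \colon \A(1, R_n) \onto \A(1, R_*^{(n)})$ of the full annuli, with $1 \le |H_n(z)| \le M$.

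Next I invoke a normal-family compactness argument on the exterior $\Omega := \{z \in \C \colon |z| > 1\}$. For every compact $K \subset \Omega$, eventually $K \subset \A(1, R_n)$, and $\{H_n\}$ is uniformly bounded there. Interior gradient estimates for harmonic functions then yield uniform $\mathscr C^k$-bounds on $K$ for each $k$, so a diagonal subsequence converges locally uniformly in $\mathscr C^\infty$ to a harmonic map $H \colon \Omega \to \C$. The limit satisfies $|H| \ge 1$ and is bounded. Applying the Kelvin-type inversion, the function $\tilde H(w) := H(1/w)$ is bounded and harmonic on the punctured disk $\{0 < |w| <1\}$, so its singularity at the origin is removable. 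Consequently $H$ admits a uniform limit
\[
L \;:=\; \lim_{|z|\to\infty} H(z), \qquad |L| \ge 1,
\]
and in particular $L \neq 0$.

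The contradiction is now obtained from a winding count. Fix any $\rho > 1$; for all $n$ large, $\{|z|=\rho\} \subset \A(1, R_n)$, and since each $H_n$ is an orientation-preserving homeomorphism of annuli that matches inner with inner and outer with outer boundary components, the map $H_n|_{\{|z|=\rho\}}$ has winding number $+1$ about $0$. The uniform convergence $H_n \to H$ on the compact circle $\{|z|=\rho\}$, combined with $|H| \ge 1 > 0$ there, ensures that the straight-line homotopy between $H_n$ and $H$ avoids $0$ for large $n$, so winding numbers pass to the limit: $H|_{\{|z|=\rho\}}$ has winding number $+1$ for every $\rho > 1$. On the other hand, for $\rho$ sufficiently large the uniform asymptotics $H(z) \to L$ confine $H(\{|z|=\rho\})$ to the open disk $\{|w - L| < |L|/2\}$, which excludes $0$, forcing winding number $0$. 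This contradicts the previous conclusion and completes the proof.

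The two delicate points, neither of which should cause real trouble, are first the justification of the Schwarz reflection (it only requires continuity of $h_n$ up to the horizontal sides and the real-valued boundary correspondence there, both built into the definition of a quadrilateral map together with the classical continuity of harmonic maps up to smooth boundary arcs where the boundary data is smooth), and second the removable-singularity-at-infinity step for the limit $H$, which rests solely on its boundedness on $\Omega$ and follows by the standard Kelvin inversion argument. I regard the main conceptual step as the winding-number transfer in the limit, which is what makes the normal-family machinery bite: the degree of the homeomorphism onto an annulus encircling $0$ is incompatible with the limiting map being asymptotically constant at infinity.
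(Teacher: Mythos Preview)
Your Schwarz reflection step is the gap. You claim that $H_n(z) := \overline{h_n(\bar z)}$ for $\im z < 0$ gives a harmonic map on the full annulus, but continuity up to the real segments together with real boundary values is \emph{not} sufficient for this. Writing $h_n = u_n + iv_n$, the condition $h_n(\R) \subset \R$ says only that $v_n = 0$ on the horizontal sides, so the odd reflection of $v_n$ is indeed harmonic across; but under your formula $u_n$ is extended by \emph{even} reflection, and the even reflection of a harmonic function is harmonic only when its normal derivative vanishes on the reflecting line. There is no reason for $\partial_y u_n(x,0) = 0$ here (a quick example: $h(z) = z^2 - 2\,\im z$ is harmonic and real on $\R$, yet $\partial_y \re h|_{y=0} = -2$, so the reflected map is not even $\mathscr C^1$ across $\R$). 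The paper makes exactly this point, reflecting only the imaginary part and remarking that ``the real part, however, may not admit harmonic extension.''

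This break is fatal for the rest of your argument: without harmonicity of $H_n$ across the real axis you lose the interior gradient estimates on compact sets that meet $\R$, hence equicontinuity on the full circle $|z|=\rho$, hence the passage of the winding number to the limit. The paper avoids the issue by a different normalization, fixing $\X$ and letting $R_\ast/r_\ast \to 1$; then the harmonic limit $h$ on $\X^+$ has $|h|\equiv 1$, and the identity $0 = \Delta|h|^2 = 2|Dh|^2$ forces $h$ to be constant, while the odd-reflected imaginary part is harmonic on the \emph{full} annulus and cannot be constant. Your winding-number idea is attractive, but to rescue it you would need an independent source of equicontinuity for $\re H_n$ near the real axis, and none is evident.
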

\begin{proof} Fix $\, R > r  = r_* = 1  \,$ while varying  $\, R_* = \rho_k >1 \,$,  $\,\rho_k \rightarrow  1\,$. Suppose that, contrary to our claim, there exist harmonic quadrilateral mappings $\,  h_k  = u_k + i\,v_k \, \colon \mathbb X^+ \onto \mathbb Y_k^+ \,$.  The key fact is that  the imaginary parts $ \, v_k = v_k(x, y)\, $ extend continuously up to the horizontal sides of the quadrilateral. Indeed,
$$\, \lim_{\varepsilon\rightarrow 0} v_k(x, \varepsilon) = \; \lim_{\varepsilon\rightarrow 0}\; \textnormal{dist}[ h_k(x, \varepsilon),\, \partial \mathbb Y_k^+ \cap \mathbb R ] \; =  0 $$

Thus the odd extension $\,v_k(x, \,-y) = - v_k(x, y)\,$ beyond the horizontal sides defines a harmonic function, again denoted by $\,v_k(x,y)\,$,   in the entire annulus $ \,\mathbb X\,$. The real part, however, may not admit harmonic extension. Note that each of the two sequences of harmonic functions $\,  h_k  \colon \mathbb X^+ \onto \mathbb Y_k^+ \,$ and $\,v_k \, \colon \mathbb X \onto [-\rho_k,\, \rho_k ] \,$,    being bounded, contains a subsequence converging uniformly on compact subsets of $\,\mathbb X^+\,$and $\,\mathbb X\,$, respectively. Let $\,  h  \colon \mathbb X^+ \to \mathbb C \,$  and $\,  v\,  \colon \mathbb X \to \mathbb R \,$ be their limits. Certainly, $\,v\,$ is not constant in $\,\mathbb X^+\,$, because it assumes  all values from the interval  $\,(0,\,1 )\,$. On the other hand $\, |h(z)| \equiv 1\,$ on $\,\mathbb X\,$. Hence
$$
0 = \frac{\partial ^{\,2}}{\partial z\; \partial{\bar{z}}}\; |h(z)| ^2 = \frac{\partial }{\partial z }\left( h_{\bar{z}} \,\overline{h}\; +\; h\, \overline{h_z}  \right) = \,|h_{\bar{z}}|^2 \; +\; |h_z|^2 \;\;,\;\;\textnormal{so}\;\;\; h \equiv\, const.
$$
clear contradiction.
\end{proof}
However, the question as to whether the same condition (\ref{Nitsche1})
  draws the line between the existence and nonexistence of quadrilateral harmonic mappings $\,  h \, \colon \mathbb X^+ \onto \mathbb Y^+ \,$ remains unclear.

\subsection{Open Question}
\begin{question}

It is not difficult to see that if the class of harmonic diffeomorphisms $\,h \colon \mathbb X\,\onto \,\mathbb Y\,$ is not empty,  then it contains the one with smallest energy. Does such harmonic  diffeomorphism represent the energy-minimal mappings among all homeomorphisms  $\,f \colon \mathbb X\,\onto \,\mathbb Y\,$?
The same question is pertinent to $\,p$ -harmonic homeomorphisms.
\end{question}

\bibliographystyle{amsplain}

\end{document}